\numberwithin{equation}{section}
\newcommand{\N}{\mathbb{N}}
\newcommand{\R}{\mathbb{R}}
\renewcommand{\S}{\mathbb{S}}
\newcommand{\Z}{{\rm Z}}
\newcommand{\HH}{\mathscr{H}}
\newcommand{\MM}{\mathscr{M}}
\newcommand{\PP}{\mathscr{P}}
\renewcommand{\phi}{\varphi}
\newcommand{\cQ}{{\ensuremath{\mathcal Q}}}
\newcommand{\mm}{{\mbox{\boldmath$m$}}}
\newcommand{\oo}{{\mbox{\boldmath$o$}}}
\newcommand{\soo}{{\mbox{\scriptsize\boldmath$o$}}}
\newcommand{\dD}{{\mbox{\boldmath$D$}}}
\newcommand{\DDelta}{{\mbox{\boldmath$\Delta$}}}
\newcommand{\sfd}{{\sf d}}
\newcommand{\sfe}{{\sf e}}
\newcommand{\restr}[1]{\lower3pt\hbox{$|_{#1}$}}
\newcommand{\la}{{\big\langle}}                  % brackets
\newcommand{\ra}{{\big\rangle}}
\newcommand{\eps}{\varepsilon}  
\newcommand{\nchi}{{\raise.3ex\hbox{$\chi$}}}
\newcommand{\weakto}{\rightharpoonup}
\newtheorem{theorem}{Theorem}[section]
\newtheorem{corollary}[theorem]{Corollary}
\newtheorem{lemma}[theorem]{Lemma}
\newtheorem{proposition}[theorem]{Proposition}
\newtheorem{definition}[theorem]{Definition}
\theoremstyle{definition}
\newtheorem{remark}[theorem]{Remark}
\newcommand{\Per}[1]{{\rm Per}(#1)}
\newcommand{\Haus}[1]{{\mathscr H}^{#1}}     % Misura di Hausdorff
\newcommand{\Leb}[1]{{\mathscr L}^{#1}}      % Misura di Lebesgue
\newcommand{\LIP}{\mathrm{LIP}}
\newcommand{\Lip}{\mathrm{Lip}}
\newcommand{\lip}{\mathrm{lip}}
\newcommand{\diam}{\mathrm{diam}}
\newcommand{\fr}{\hfill$\blacksquare$}   %quadratino nero alla fine del remark, se non vi piace, la cosa migliore e' `svuotare' la macro, cosi' non bisogna intervenire sul testo
\newcommand{\rcd}[2]{\mathrm{RCD}(#1,#2)}
\newcommand{\RCD}{\mathrm{RCD}}
\newcommand{\cd}{\mathrm{CD}}
\renewcommand{\mm}{\mathfrak m}
\renewcommand{\limsup}{\varlimsup}
\renewcommand{\liminf}{\varliminf}
\renewcommand{\d}{{\rm d}}
\newcommand{\X}{{\rm X}}
\newcommand{\Y}{{\rm Y}}
\newcommand{\Xdm}{(\X,\sfd,\mm)}
\newcommand{\supp}{\text{supp}}
\newcommand{\rmCh}{{\rm Ch}}
\newcommand{\eucl}{{\sf Eucl}}
\newcommand{\vol}{{\rm Vol}}
\newcommand{\aopt}{A^{\rm opt}}
\renewcommand{\Cap}{{\rm Cap}}
\newcommand{\sca}{{\rm S}}
	\def\l@subsection{\@tocline{2}{0pt}{2.5pc}{5pc}{}}
\address{SISSA, Via Bonomea 265, 34136 Trieste}
\author[]{Francesco Nobili}
\email{francesco.f.nobili@jyu.fi}
\author[]{Ivan Yuri Violo}
\email{ivan.y.violo@jyu.fi}
\title[]{Rigidity and almost rigidity of  Sobolev inequalities on compact
 spaces with lower Ricci curvature bounds}
\begin{document}

\begin{abstract}
We prove that if $M$ is a closed $n$-dimensional Riemannian manifold, $n \ge 3$, with ${\rm Ric}\ge n-1$ and for which the optimal constant in the critical Sobolev inequality equals the one of the $n$-dimensional sphere $\S^n$, then $M$ is isometric to $\S^n$. An almost-rigidity result is also established, saying that if equality is almost achieved, then $M$ is close in the measure Gromov-Hausdorff sense to a spherical suspension. These statements are obtained in the  $\RCD$-setting  of (possibly non-smooth)  metric measure spaces satisfying synthetic lower Ricci curvature bounds.

An independent result of our analysis is the characterization of the best constant in the Sobolev inequality on any compact $\cd$ space, extending to the non-smooth setting a classical result by Aubin. Our arguments are based on a new concentration compactness result for mGH-converging sequences of $\RCD$ spaces and on a P\'olya-Szeg\H{o} inequality of Euclidean-type in $\cd$ spaces.

As an application  of the technical tools developed  we prove both an existence result for the Yamabe equation and  the continuity  of the generalized Yamabe constant under measure Gromov-Hausdorff convergence, in the $\RCD$-setting.
\end{abstract}

\maketitle

%\newpage
\setcounter{tocdepth}{2}
\tableofcontents
\medskip
\section{Introduction}
The standard Sobolev inequality in sharp form reads as
\begin{equation} \|u\|_{L^{p^*}(\R^n)} \le \eucl(n,p) \|\nabla u\|_{L^p(\R^n)}, \qquad \forall u \in W^{1,p}(\R^n),\label{eq:EuclSob}\end{equation}
where $p \in (1,n)$, $p^*:=\frac{pn}{n-p}$ is the Sobolev conjugate exponent and $\eucl(n,p)$ is the smallest positive constant for which the inequality \eqref{eq:EuclSob} is valid. Its precise value (see \eqref{eq:euclnp} below) was computed independently by Aubin \cite{Aubin76-2} and Talenti \cite{Talenti76} (see also \cite{C-ENV04}).

In the setting of compact Riemannian manifolds, the presence of constant functions in the Sobolev space immediately shows that an inequality of the kind of \eqref{eq:EuclSob} must fail. Yet, Sobolev embeddings are certainly valid also in this context and they can be expressed by calling into play the full Sobolev norm:
\begin{equation}\label{eq:ManifoldSob}\tag{{\color{red}$\star$}}
	\|u\|_{L^{p^*}(M)}^p\le A \|\nabla u\|_{L^p(M)}^p + B \|u\|_{L^p(M)}^p , \qquad \forall u \in W^{1,p}(M),
\end{equation}
where $M$ is a compact $n$-dimensional Riemannian manifold and $A,B>0.$
From the presence of the two parameters $A,B$,  it is not straightforward which is the notion of best constants in this case. The issue of defining and determining the best  constants in \eqref{eq:ManifoldSob} has been the central role of the celebrated $AB$-program, we refer to \cite{Hebey99} for a thorough presentation of this topic (see also \cite{DH02}). The starting point of this program is the definition of the following two  different notions of ``best Sobolev constants'':
\[
\begin{array}{cc}
\alpha_p(M)\coloneqq \inf \{ A \ : \  \eqref{eq:ManifoldSob} \text{ holds for some $B$}  \}, &\quad  \beta_p(M)\coloneqq \inf \{ B \ : \  \eqref{eq:ManifoldSob} \text{ holds for some $A$} \}. 
\end{array}
\]
Then the first natural problem is to determine the value of $\alpha_p(M)$ and $\beta_p(M)$. It is rather easy to see  that  $$\beta_p(M) = {\sf Vol}(M)^{p/p^*-1},$$ indeed  constant functions give automatically $\beta_p(M) \ge {\sf Vol}(M)^{p/p^*-1}$, while the other inequality follows from the Sobolev-Poincar\'e inequality (see, e.g. \cite[Sec 4.1]{Hebey99}). It is instead  more subtle to determine whether $\beta_p(M)$ is attained, in the sense that the infimum in its definition is actually a minimum. This is true for $p=2$ and due to Bakry \cite{Bakry94} (see also Proposition \ref{prop:sobolev embedding general}), but actually false for $p>2$ (see e.g. \cite[Prop. 4.1]{Hebey99}). 

 Concerning instead the value of $\alpha_p(M)$, it turns out to be related to the sharp constant in the Euclidean Sobolev inequality \eqref{eq:EuclSob}. More precisely  Aubin in \cite{Aubin76-2} (see also \cite{Hebey99}) showed  that on any compact $n$-dimensional Riemannian manifold $M$ with $n \ge 2$, we have
\begin{equation}\label{eq:alfa manifold}
	\alpha_{p}(M)= \eucl(n,p)^p\, \qquad \forall p \in(1,n).
\end{equation}
We point out that it is a hard task to show that $\alpha_p(M)$ is attained, namely that there exists some $B>0$ for which \eqref{eq:ManifoldSob} holds with $A=\alpha_p(M)$ and $B$. This has been verified for $p=2$ in \cite{HV96},  answering affirmatively to a conjecture of Aubin.

On the other hand, knowing the value of $\beta_p(M)$ (and that it is attained for $p=2$), we can define a further notion of  optimal-constant $A$, ``relative" to  $B=\beta_2(M)$. More precisely we define
$$\aopt_{2^*}(M)\coloneqq \vol(M)^{1- 2/{2^*}} \cdot  \inf \{A \ : \ \eqref{eq:ManifoldSob} \text{ for $p=2$ holds  with $A$ and }B=\vol(M)^{ 2/{2^*}-1} \}  .$$ 

For the sake of generality we will actually consider $\aopt$ also in the so-called \emph{subcritical case}, meaning that we enlarge the class of Sobolev inequalities and consider for every $q \in (2,2^*]$
\begin{equation}\label{eq:critical sobolev manifold}\tag{{\color{red}$\star \star$}}
	\|u\|_{L^{q}(M)}^2\le A   \|\nabla u\|_{L^2(M)}^2 +\vol(M)^{ 2/{q}-1}  \|u\|_{L^2(M)}^2 , \qquad \forall u \in W^{1,2}(M),
\end{equation}
for some constant $A\ge0$. Then we define 
$$\aopt_q(M)\coloneqq \vol(M)^{1- 2/q} \cdot  \inf \{A \ : \ \eqref{eq:critical sobolev manifold} \text{ holds}\}  .$$ 
Note that the infimum above is always a minimum and that $\vol(M)^{ 2/{q}-1}$ is the ``minimal B'' that we can take in \eqref{eq:critical sobolev manifold}.

\begin{remark}\rm 
We bring to the attention of the reader  the renormalization factor  $\vol(M)^{1- 2/q} $ in the definition of $\aopt_q(M).$ This is usually not  present in the literature concerning the $AB$-program (see e.g. \cite{Hebey99}), however this choice will allow us to have cleaner inequalities. This  also makes $\aopt_q$ invariant under rescalings of the volume measure of $M.$ \fr  
\end{remark}

One of the main questions that we will investigate in this note  concerns the value of $\aopt_q(M).$ So far $\aopt_q(M)$  is   known explicitly only in the case of $\S^n$ and was firstly computed by Aubin in \cite{Aubin76-3} in the case of $q=2^*$ and  by Beckner in \cite{Beckner93} for a general $q$:
\begin{equation}\label{eq:aubin}
\aopt_q(\S^n)=\frac{q-2}{n}, \qquad   \forall n\ge 3.
\end{equation}
Aubin also exhibited a family of non-constant functions that achieve  equality in \eqref{eq:critical sobolev manifold} with $A=\aopt_{2^*}(\S^n)$. For a general manifold $M$ instead it can be proved that
\begin{equation}\label{eq:upper bound curvature manifold}
\aopt_q(M)\le C(K,D,N),
\end{equation}
where $K\in \R$ is a lower bound on the Ricci curvature of $M$, $N$ is an upper bound on the dimension and  $D \in \R^+$ an upper bound on its diameter. This follows from the Sobolev-Poincar\'e inequality combined with an inequality by Bakry (see e.g. \cite[Theorem 4.4]{DH02} and also Section \ref{sec:upper bound A}). On the other hand, for positive Ricci curvature we have the following celebrated comparison result originally proven in  \cite{Said83} (see also \cite{Ledoux00,BGL14} for the case of a general $q$):
\begin{theorem}\label{thm:Aopt upper}
	Let $M$ be an $n$-dimensional Riemannian manifold, $n\ge 3$, with ${\rm Ric }\ge n-1$. Then, for every $q\in(2,2^*]$, it holds
	\begin{equation}\label{eq:Aopt upper}
		\aopt_q(M)\le \aopt_q(\S^n).
	\end{equation}
\end{theorem}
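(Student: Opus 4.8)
First, I would reduce Theorem \ref{thm:Aopt upper} to a single sharp Sobolev inequality on $M$. A direct rescaling computation with the normalized measure $\bar\mm:=\vol(M)^{-1}\vol_M$ shows that the bound $\aopt_q(M)\le\aopt_q(\S^n)$ is \emph{equivalent} to
\begin{equation}\label{eq:plan-target}
\|u\|_{L^q(\bar\mm)}^2\le\frac{q-2}{n}\int_M|\nabla u|^2\,\d\bar\mm+\|u\|_{L^2(\bar\mm)}^2,\qquad\forall\,u\in W^{1,2}(M),
\end{equation}
where we used the value $\aopt_q(\S^n)=\tfrac{q-2}{n}$ from \eqref{eq:aubin}. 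Since $|\nabla|u||\le|\nabla u|$ a.e., $\||u|\|_{L^p(\bar\mm)}=\|u\|_{L^p(\bar\mm)}$ and Lipschitz functions are dense in $W^{1,2}(M)$, it is enough to prove \eqref{eq:plan-target} for nonnegative Lipschitz $u$.

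The core of the argument is a comparison with the sphere via symmetrization. Since ${\rm Ric}\ge n-1$, the L\'evy--Gromov isoperimetric inequality gives that the isoperimetric profile $I_{(M,\bar\mm)}$ dominates the profile $I_{(\S^n,\bar\sigma)}$ of the round unit sphere equipped with its normalized volume $\bar\sigma$ (in the possibly non-smooth $\cd$ setting this is the Cavalletti--Mondino theorem, but only the classical smooth version is needed here). Given nonnegative Lipschitz $u$ on $M$, I would introduce its \emph{decreasing spherical rearrangement} $u^\star\colon\S^n\to[0,\infty)$: the unique function depending only on the geodesic distance to a fixed pole and equimeasurable with $u$, i.e.\ $\bar\sigma(\{u^\star>t\})=\bar\mm(\{u>t\})$ for all $t>0$. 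Equimeasurability immediately gives $\|u\|_{L^p(\bar\mm)}=\|u^\star\|_{L^p(\bar\sigma)}$ for every $p\in[1,\infty)$, in particular for $p=q$ and $p=2$. The nontrivial ingredient is the P\'olya--Szeg\H{o} inequality
\begin{equation}\label{eq:plan-PS}
u^\star\in W^{1,2}(\S^n)\qquad\text{and}\qquad\int_{\S^n}|\nabla u^\star|^2\,\d\bar\sigma\le\int_M|\nabla u|^2\,\d\bar\mm,
\end{equation}
which I would obtain in the classical way: expressing both Dirichlet energies through the coarea formula, bounding the inner integrals on $M$ by Cauchy--Schwarz on the level sets $\{u=t\}$ in terms of $\Per{\{u>t\}}$ and $-\tfrac{\d}{\d t}\bar\mm(\{u>t\})$, then using $\Per{\{u>t\}}\ge I_{(M,\bar\mm)}(\bar\mm(\{u>t\}))\ge I_{(\S^n,\bar\sigma)}(\bar\sigma(\{u^\star>t\}))=\Per{\{u^\star>t\}}$, and observing that all these steps are equalities on the model because $u^\star$ is radial, so its level sets are geodesic spheres on which $|\nabla u^\star|$ is constant.

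Given \eqref{eq:plan-PS}, the proof closes in one line. In normalized form, \eqref{eq:aubin} says precisely that $\|v\|_{L^q(\bar\sigma)}^2\le\tfrac{q-2}{n}\int_{\S^n}|\nabla v|^2\,\d\bar\sigma+\|v\|_{L^2(\bar\sigma)}^2$ for all $v\in W^{1,2}(\S^n)$ (the infimum defining $\aopt_q(\S^n)$ being attained). Applying this with $v=u^\star$ and combining with equimeasurability and \eqref{eq:plan-PS},
\begin{equation*}
\|u\|_{L^q(\bar\mm)}^2=\|u^\star\|_{L^q(\bar\sigma)}^2\le\frac{q-2}{n}\int_{\S^n}|\nabla u^\star|^2\,\d\bar\sigma+\|u^\star\|_{L^2(\bar\sigma)}^2\le\frac{q-2}{n}\int_M|\nabla u|^2\,\d\bar\mm+\|u\|_{L^2(\bar\mm)}^2,
\end{equation*}
which is \eqref{eq:plan-target}; undoing the rescaling yields $\aopt_q(M)\le\aopt_q(\S^n)$.

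The main obstacle is making the P\'olya--Szeg\H{o} inequality \eqref{eq:plan-PS} rigorous: one has to handle the set of critical values of $u$ (negligible by Sard-type arguments), the absolute continuity and a.e.\ differentiability of $t\mapsto\bar\mm(\{u>t\})$, the behaviour of $u^\star$ near the two poles, and the exact normalization of the isoperimetric profiles. A route that bypasses rearrangement altogether is the semigroup/$\Gamma_2$ method of Bakry--Ledoux: ${\rm Ric}\ge n-1$ yields the dimensional Bochner (Bakry--\'Emery $\mathrm{BE}(n-1,n)$) inequality, hence a sharp dimensional gradient estimate for the heat flow, from which \eqref{eq:plan-target} follows by a one-dimensional interpolation along the semigroup; this is analytically cleaner but requires setting up the dimensional $\Gamma_2$-calculus, whereas the symmetrization argument stays closer to the isoperimetric and P\'olya--Szeg\H{o} tools developed elsewhere in this paper.
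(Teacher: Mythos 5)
Your proof is correct and follows exactly the symmetrization route of Ilias, which is precisely the reference the paper cites for Theorem \ref{thm:Aopt upper} and recalls in Section \ref{sec:polya} as resting on the P\'olya--Szeg\H{o} inequality for manifolds with Ricci lower bounds. The paper does not reprove the smooth statement itself, but it uses the same strategy (P\'olya--Szeg\H{o} combined with the sharp Sobolev inequality \eqref{eq:sobolev model} on the one-dimensional model $I_N$) both in Case 1 of the proof of Theorem \ref{thm:rigidity} and in the remark after Proposition \ref{prop:sobolev embedding general} that Theorem \ref{thm:Aopt upper cd} can be deduced this way, so your argument matches both the cited source and the paper's own machinery.
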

One of the main consequence of the results
in this note is the characterization of the equality in \eqref{eq:Aopt upper}, in particular we show:

\begin{theorem}\label{thm:rigidity manifold}
    Equality in \eqref{eq:Aopt upper} holds for some $q\in(2,2^*]$ if and only if $M$ is isometric to $\S^n.$
\end{theorem}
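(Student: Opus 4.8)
The ``if'' direction is immediate: since the sphere $\S^n$ with its round metric satisfies ${\rm Ric}\ge n-1$, if $M$ is isometric to $\S^n$ then trivially $\aopt_q(M)=\aopt_q(\S^n)$. The content is the ``only if'' direction, which should be deduced from the general $\RCD$-rigidity statement that the paper is building towards. The plan is to view the Riemannian manifold $M$ (normalized to unit volume, using the rescaling-invariance of $\aopt_q$ noted in the remark) as an $\RCD(n-1,n)$ space and invoke the rigidity theorem for Sobolev inequalities on such spaces: equality $\aopt_q(\X)=\aopt_q(\S^n)$ forces $\X$ to be a spherical suspension, and the smoothness of $M$ together with the dimension bound then forces the suspension to be the round $\S^n$.

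In more detail, I would proceed as follows. First, recast the quantity $\aopt_q(M)$ as the optimal constant $A$ in \eqref{eq:critical sobolev manifold}; by the $\RCD$-version of Aubin's characterization proved in this paper (the ``independent result'' advertised in the abstract) together with the comparison Theorem~\ref{thm:Aopt upper}, equality in \eqref{eq:Aopt upper} means that the Sobolev inequality on $M$ holds with the \emph{sphere's} constants and no better. Second, I would show this extremal inequality is \emph{attained}: a nonconstant minimizer $u$ of the associated quotient exists. This is where compactness enters — one uses the concentration-compactness dichotomy for the Sobolev functional on $\RCD(n-1,n)$ spaces (the new tool of the paper) to rule out vanishing/dichotomy, the point being that the ``concentration at a point'' alternative would cost exactly the Euclidean constant $\eucl(n,2)$, and under ${\rm Ric}\ge n-1$ with the sphere's constant there is no room for that to happen without equality in the local Euclidean Sobolev inequality, which a compact manifold cannot sustain. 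Third, with a positive minimizer $u$ in hand, analyze the Euler--Lagrange equation: $u$ solves a Yamabe-type PDE $-\Delta u + \lambda u = \mu u^{q-1}$, and the equality case in the P\'olya--Szeg\H{o} / comparison argument underlying Theorem~\ref{thm:Aopt upper} forces the level sets of $u$ to be ``isoperimetric'', i.e. the rigidity in the Lévy--Gromov / Obata-type inequality is saturated. By the $\RCD$ rigidity (Obata's theorem in the synthetic setting), saturation forces $M$ to split as a spherical suspension $[0,\pi]\times_{\sin}^{} Z$; since $M$ is a smooth $n$-manifold, $Z$ must be a smooth $(n-1)$-manifold which is again forced (by iterating, or by the equality discussion at the poles) to be the round $\S^{n-1}$, whence $M$ is isometric to $\S^n$.

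The main obstacle I anticipate is the existence/attainment step combined with extracting genuine rigidity from the almost-rigidity machinery. Showing the extremal Sobolev constant is attained on a possibly low-regularity or merely compact space is delicate precisely at the critical exponent $q=2^*$, where concentration at a point is scaling-invariant; one must quantitatively exploit that ${\rm Ric}\ge n-1$ both bounds the diameter (Bonnet--Myers) and, via the sharp constant matching the sphere, leaves no slack for a bubble to form. Once a minimizer exists, the passage from ``minimizer exists with sphere constants'' to ``space is a spherical suspension'' should follow from the equality case of the symmetrization/comparison inequality (the Euclidean-type P\'olya--Szeg\H{o} inequality in $\cd$ spaces referenced in the abstract), analyzed along the lines of the rigidity in the Lévy--Gromov inequality; the final reduction from ``spherical suspension'' to ``round sphere'' in the smooth category is then a soft argument using that a smooth suspension with a conical or smooth apex that is itself an Einstein manifold of the right constant must be the standard sphere.
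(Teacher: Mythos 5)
Your overall reduction is the same as the paper's: deduce the smooth statement from the $\RCD$ rigidity (Theorem \ref{thm:rigidity}), then use that a smooth closed manifold which is a spherical suspension has diameter $\pi$, so Cheng's/Obata's theorem gives $\S^n$. The gap is in your second step, where you claim that the concentration alternative can be \emph{ruled out}, so that a nonconstant extremal always exists. This is false, and it is precisely the point the paper has to work around. Concentration at a point costs exactly $\alpha_2(M)=\eucl(n,2)^2$ (Aubin, \eqref{eq:alfa manifold}), and this equals $\aopt_{2^*}(\S^n)\vol(\S^n)^{-2/n}$; so under the equality hypothesis the concentration scenario is exactly borderline, not contradictory — indeed on $\S^n$ itself, for $q=2^*$, the Aubin extremals form a noncompact family and extremizing sequences can concentrate. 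A compact manifold can perfectly well ``sustain'' this. The paper therefore does not exclude concentration but extracts rigidity from it: in that case $\aopt_{2^*}(M)\le \alpha_2(M)$ (after normalization), which combined with equality in \eqref{eq:Aopt upper} yields $\vol(M)\ge\vol(\S^n)$; Bishop--Gromov then forces $\vol(M)=\vol(\S^n)$, hence $\diam(M)=\pi$, and the maximal diameter theorem concludes. Your proposed contradiction argument has no replacement for this volume/diameter mechanism.

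A second, related omission: even when the extremizing sequence does converge, the limit may be a \emph{constant}, in which case no nonconstant minimizer is produced and the P\'olya--Szeg\H{o}/Euler--Lagrange route never starts. This branch (which is the only nontrivial one in the subcritical range $q<2^*$ as well) is handled in the paper by linearizing the Sobolev inequality around the constant (Lemma \ref{lem:expansion}), obtaining $\aopt_q(M)=\tfrac{q-2}{\lambda^{1,2}(M)}$, hence $\lambda^{1,2}(M)=n$, and invoking Obata. So the correct structure is the trichotomy of Theorem \ref{thm:alternative} — nonconstant extremal, constant limit, or concentration — with a \emph{different} rigidity theorem for each branch (P\'olya--Szeg\H{o} rigidity, Obata, maximal diameter), rather than a single ``attainment plus equality in symmetrization'' step. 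Your Case-1 analysis and the final smooth reduction are fine; the missing content is the treatment of the other two branches.
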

It is important to point out that the novelty of the above result is that it covers the case $q=2^*.$ Indeed, for $q<2^*$, Theorem \ref{thm:rigidity manifold} was already established (see e.g. \cite[Remark 6.8.5]{BGL14}) and follows from an improvement (only for $q<2^*$) of \eqref{eq:Aopt upper} due to \cite{Fontenas97} involving the spectral gap (see Remark \ref{rmk:sharper bound}  for more details). On the other hand, up to our knowledge, this is the first time that it appears  in the critical case $q=2^*.$

It is also worth to compare  Theorem \ref{thm:rigidity manifold} with the rigidity result in \cite{Ledoux99} for the Sobolev inequality on manifolds with non-negative Ricci curvature (and later improved in \cite{Xia01}, see also \cite{BK21}). In \cite{Ledoux99} it is proved that if \eqref{eq:EuclSob} is valid on a non-compact manifold with non-negative Ricci curvature, then the manifold must be  the Euclidean space. Here instead we consider compact manifolds and the rigidity is obtained in comparison with the Sobolev inequality on the sphere. For this reason, our arguments will also be substantially different from the ones in \cite{Ledoux99,Xia01}. Nevertheless, we will also deal with the former types of rigidity in Corollary \ref{thm:sharpSobgrowthtop} below.

Theorem \ref{thm:rigidity manifold} will be proved in the context of  metric measure spaces with synthetic Ricci curvature bounds. One of the main reasons to approach the problem in this more general setting is that it will allow us  to characterize also the ``almost-equality'' in  \eqref{eq:Aopt upper} (see Theorem \ref{thm:almrigidity} below). Indeed, as we will see, in this case we need to compare the manifold $M$ to a class of singular spaces, rather than to the round sphere.

\subsection{Best constant in the Sobolev inequality on compact $\cd$ spaces}\label{sec:alfa intro}

The notion of metric measure spaces with synthetic Ricci curvature bounds originated in the independent seminal works of \cite{Sturm06-1,Sturm06-2} and \cite{Lott-Villani09}, where the celebrated \emph{curvature-dimension condition} $\cd(K,N)$ was introduced. Here $K\in \R$ is a lower bound for the Ricci curvature  and $N\in[1,\infty]$ is an upper bound on the dimension. The definition is given via optimal transport, by requiring some convexity properties of entropy functionals (see Definition \ref{def:CDKN} below).

The proof of the rigidity (and almost rigidity) of  $\aopt_q$ in the case $q=2^*$, will force us to study also the value of $\alpha_p$  in the context of $\cd$-spaces. The connection of this with the proof of Theorem \ref{thm:rigidity}  will be explained towards the end of Section \ref{sec:strategy}, where we provide a sketch of the proof yielding the main rigidity theorem. 

Let then $\Xdm$ be a $\cd(K,N)$ space  with $N\in(1,\infty)$. For any $p\in (1,N)$ set $p^*\coloneqq \frac{Np}{N-p}$ and, in the same fashion of \eqref{eq:ManifoldSob}, we consider:
\begin{equation}\label{eq:alfa sobolev}
	\|u\|_{L^{p^*}(\mm)}^p\le A  \||D u|\|_{L^p(\mm)}^p + B  \|u\|_{L^p(\mm)}^p , \qquad \forall u \in W^{1,p}(\X).
\end{equation}
We are then interested in the minimal $A$ for which \eqref{eq:alfa sobolev} holds. In other words we set (with the usual convention that the inf is $\infty$ when no $A$ exists):
\begin{equation}\label{eq:alfa}
\alpha_p(\X)\coloneqq \inf \{ A \ : \  \eqref{eq:alfa sobolev} \text{ holds for some $B$}  \}.
\end{equation}
We will be able to compute the value of $\alpha_p(\X)$ for every compact $\cd(K,N)$ space $\X$, extending the result of Aubin for Riemannian manifolds (see \eqref{eq:alfa manifold} above). Before passing to the actual statement, it is useful to explain first the intuition behind it and the geometrical meaning of the constant $\alpha_p(\X).$ The rough idea is that its value is tightly linked to the  local structure of the space. Indeed, the key observation is that $\alpha_p(\X)$ is invariant under rescaling of the form $(\X,\sfd/r,\mm/r^N)$. For example, since manifolds are locally Euclidean, it is not  surprising that in \eqref{eq:alfa manifold} the optimal Euclidean-Sobolev constant appears. On the other hand,  $\cd(K,N)$ spaces  have a more singular local behavior and additional parameters must be taken into account. In particular the value of $\alpha_p(\X)$ turns out to be related to the Bishop-Gromov density:
\[
(0,+\infty]\ni \theta_{N}(x)\coloneqq \lim_{r \to 0^+}\,\frac{\mm(B_r(x))}{\omega_N r^N}, \quad x \in \X,
\]
where $\omega_N$ is the volume of the Euclidean unit ball (see \eqref{eq:omegasigma} for non integer $N$). Our result is then the following:
\begin{theorem}\label{thm:alfa}
	Let $\Xdm$ be a compact $\cd(K,N)$ space for some $K \in \R$ and $N \in(1,\infty)$. Then for every $p \in(1,N)$
	\begin{equation}\label{eq:alfa rcd}
		\alpha_{p}(\X)=\left(\frac{\eucl(N,p)}{\min_{x\in \X} \theta_N(x)^{\frac1N}}\right)^p.
	\end{equation}
\end{theorem}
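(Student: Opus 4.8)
The plan is to prove the two inequalities in \eqref{eq:alfa rcd} separately, exploiting the scaling invariance of $\alpha_p$ that was emphasized just before the statement.

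\medskip

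\emph{Lower bound $\alpha_p(\X)\ge \big(\eucl(N,p)/\min_x\theta_N(x)^{1/N}\big)^p$.}
Fix a point $x_0$ realizing the minimum of the density, so $\theta_N(x_0)=\min_{x\in\X}\theta_N(x)=:\theta_0$ (the minimum is attained by lower semicontinuity of $\theta_N$ on a compact space, which should be recalled). Suppose \eqref{eq:alfa sobolev} holds with some constants $A,B$. The idea is to test the inequality with functions concentrating near $x_0$: take $u_r$ supported in $B_r(x_0)$ obtained by transplanting (a rescaling of) the Talenti extremal profile via the radial distance $\sfd(\cdot,x_0)$, and let $r\to 0$. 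Rescaling the space as $(\X,\sfd/r,\mm/(\theta_0\omega_N r^N))$, the measure of small balls converges, by definition of $\theta_N(x_0)$, to the Lebesgue measure of balls in $\R^N$; hence in the limit the ratio $\|u_r\|_{L^{p^*}}/\||Du_r|\|_{L^p}$ tends to the Euclidean ratio $\eucl(N,p)\cdot\theta_0^{1/p^*-1/p}=\eucl(N,p)\,\theta_0^{-1/N}$, while the zeroth-order term $B\|u_r\|_{L^p}^p$ becomes negligible (it scales with a higher power of $r$). This forces $A\ge (\eucl(N,p)\,\theta_0^{-1/N})^p$. The technical point here is to justify the convergence of the Rayleigh-type quotients along the concentrating sequence; this can be done using the measured-Gromov--Hausdorff convergence of the rescaled spaces to the tangent structure at $x_0$ — which for $\cd(K,N)$ spaces need not be Euclidean, but whose ball volumes still converge correctly to the Euclidean value $\theta_0\omega_N r^N$ by definition of $\theta_N$ — together with the fact that the constructed profiles are explicit Lipschitz functions of the distance to $x_0$, so their $p$-energies and $L^q$-norms can be computed by the coarea/layer-cake formula and are continuous under this convergence.

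\medskip

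\emph{Upper bound $\alpha_p(\X)\le \big(\eucl(N,p)/\theta_0^{1/N}\big)^p$.}
Here we must show that for every $\eps>0$ there is $B=B(\eps)$ such that \eqref{eq:alfa sobolev} holds with $A=(\eucl(N,p)\,\theta_0^{-1/N})^p+\eps$. This is a local-to-global argument of Aubin type. By a covering/partition-of-unity argument it suffices to establish a local Sobolev inequality with the right leading constant on small balls $B_\rho(x)$: on such a ball, after rescaling, the space is close in mGH sense to a cone/tangent space with density $\theta_N(x)\ge\theta_0$, on which a Euclidean-type Sobolev inequality with constant governed by $\eucl(N,p)\,\theta_N(x)^{-1/N}\le \eucl(N,p)\,\theta_0^{-1/N}$ holds; absorbing the error from the mGH-closeness into the lower-order term $B\|u\|_{L^p}^p$ and summing the local inequalities (paying for the overlaps and the cut-off gradients in the $B$-term, not the $A$-term) yields the global inequality with leading constant $(\eucl(N,p)\theta_0^{-1/N})^p+\eps$. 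Letting $\eps\to0$ gives the claim. A cleaner route, which I expect the authors to take, is to invoke a Euclidean-type P\'olya--Szeg\H{o} inequality in $\cd(K,N)$ spaces (advertised in the abstract): rearranging $u$ radially around $x_0$ into a function on a model cone with density $\theta_0$ decreases the $L^p$-norm of the gradient and preserves the $L^{p^*}$-norm, reducing the sharp constant to the one on the model, which is exactly $\eucl(N,p)\,\theta_0^{-1/N}$; the lower-order defect is again harmless.

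\medskip

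\emph{Main obstacle.}
The crux is the sharp matching of constants in the localization: one must show that no loss occurs in the leading coefficient $A$ when passing from the model Euclidean/conical Sobolev inequality to the space $\X$, i.e. that all the errors genuinely land in the $B$-term. In the lower-bound direction the delicate point is instead the convergence of Rayleigh quotients of the concentrating test functions; since tangent cones of $\cd(K,N)$ spaces are not classified, one cannot simply quote a Euclidean extremal, but must rely only on the volume-density normalization and on explicit radial profiles, controlling the energy via the coarea formula and the Bishop--Gromov monotonicity. I also expect some care to be needed for non-integer $N$, where $\omega_N$ and the model densities are defined through the $\sin^{N-1}$-type weight rather than genuine Euclidean balls.
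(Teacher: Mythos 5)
Your lower bound is essentially the paper's argument: rescale at a density-minimizing point $x_0$, pass to a pmGH tangent $\cd(0,N)$ space $\Y$ whose ball volumes satisfy $\mm_\Y(B_r)=\theta_0\omega_N r^N$ for \emph{all} $r$, use the stability of $(p^*,p)$-Sobolev inequalities along pmGH-convergence (Lemma \ref{lem:sobolev stability}), and then test on $\Y$ with radial Bliss profiles. The paper makes rigorous the ``compute by coarea'' step via the pushforward identity of Lemma \ref{lem:volume cone} (exact constant density from a point yields $\int\phi(\sfd_{x_0})\,\d\mm=\sigma_{N-1}\int_0^\infty\phi(r)r^{N-1}\d r$), together with $|D(v\circ\sfd_{x_0})|\le |v'|\circ\sfd_{x_0}$; your mention of Bishop--Gromov monotonicity is not really what is used, but the idea is right and the argument goes through.

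For the upper bound, the overall architecture you describe (local almost-Euclidean Sobolev inequality on small balls plus a partition-of-unity patching in which all the lower-order errors land in $B$) is exactly the paper's Theorem \ref{thm:Amin}. But both mechanisms you propose to obtain the local inequality have problems. The ``cleaner route'' of a \emph{global} Euclidean-type P\'olya--Szeg\H{o} around $x_0$ cannot work on a compact space: the required isoperimetric inequality $\Per(E)\ge C\mm(E)^{(N-1)/N}$ fails as $\mm(E)\uparrow\mm(\X)$ (the perimeter goes to $0$ while the right-hand side does not), so the rearrangement inequality only holds for functions with small support, and you are inevitably back to the local-plus-patching picture. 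Your first route, obtaining the local inequality by ``mGH-closeness to the tangent cone plus error absorption,'' is not what the paper does and would require a genuinely non-trivial ingredient: a \emph{quantitative} transfer of a sharp Sobolev inequality from the tangent cone back to the nearby rescaled space, with the error controlled in the $L^p$ term. The paper's Lemma \ref{lem:sobolev stability} gives stability only in the other direction (from the approximating spaces to the limit), and there is no such quantitative converse available. What the paper does instead is purely intrinsic and avoids compactness: it proves an almost-Euclidean \emph{local isoperimetric inequality} on small balls directly from the Brunn--Minkowski inequality (Theorem \ref{theorem:almost euclidean isop}), feeds this into the Euclidean-type P\'olya--Szeg\H{o} inequality of Theorem \ref{thm:eu_polyaszego} (which is the new tool developed in Section \ref{sec:euclidean polya}), and only then runs the covering and partition-of-unity argument. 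If you want to make your upper bound argument watertight, you should replace the mGH-approximation step with this isoperimetric route; as stated, that step is the genuine gap in your proposal.
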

We point out that, since $\X$ is compact, $\min_{x\in \X} \theta_N(x)$ always exists because $\theta_N$ is lower semicontinuous (see Section \ref{sec:cd rcd}).

\begin{remark}
	Note that if $\X$ is a $n$-dimensional Riemannian manifold, $\theta_n(x)=1$ for every $x \in \X$, hence in this case \eqref{eq:alfa rcd} (with $N=n$) is exactly Aubin's result in \eqref{eq:alfa manifold}.  Recall also that here $N$ needs not to be an integer and thus $\eucl(N,p)$ has to be defined for arbitrary $N\in(1,\infty)$ (see \eqref{eq:euclnp}). \fr
\end{remark}
\begin{remark}
	We are not assuming $\Xdm$ to be renormalized. In particular observe that if we rescale the reference measure $\mm$ as $c\cdot \mm$, then $\alpha_p$ gets multiplied by $c^{-p/N}$, which is in accordance with the scaling in \eqref{eq:alfa rcd}.  \fr
\end{remark}
\begin{remark}
	Theorem \ref{thm:alfa} gives non-trivial information even in the  ``collapsed'' case, i.e. when $\theta_N=+\infty$ in a set of positive (or even full) measure. Indeed, to have $\alpha_p(\X)>0$ it is sufficient that $\theta_N(x)<+\infty$ at a single point $x \in \X$. As an example, consider the model space $([0,\pi],|.|,\sin^{N-1}\Leb 1)$ which is $\cd(N-1,N)$ with $\theta_N(x)<+\infty$  only for $x \in \{0,\pi\}.$ \fr
\end{remark}
 Theorem \ref{thm:alfa} will be proved in two steps, by the combination of an upper bound (Theorem \ref{thm:Amin}), obtained via local Sobolev inequalities (Theorem \ref{thm:local sobolev intro}), and a lower bound (Theorem \ref{thm:A lower bound}) derived with a blow-up analysis.

We end this part  with a question that naturally arises  from the validity of Theorem \ref{thm:alfa}:

\vspace{\medskipamount}

\noindent{\bf Question:} Let $\Xdm$ be a compact $\cd (K,N)$ (or $\rcd NK$) space with $N \in(1,\infty)$ and suppose that $\alpha_{2^*}(\X)\in(0,\infty).$ Is there a constant $\overline B<+\infty$ such that 
\begin{equation}\label{eq:conjecture}
	\|u\|_{L^{2^*}(\mm)}^p\le \alpha_{2}(\X)  \||D u|\|_{L^2(\mm)}^2 + \overline B  \|u\|_{L^2(\mm)}^2 , \qquad \forall u \in W^{1,2}(\X)\,\, ?
\end{equation}
This has positive answer  in the smooth setting  \cite{HV96}. However in \cite[Proposition 5.1]{Hebey99} it is shown that on a Riemannian manifold  $M$ of dimension $n\ge 4,$ the scalar curvature of $M$ is bounded above by $c_n\overline B$, for a dimensional constant $c_n>0$. This  points to a negative answer, since we are assuming only a  Ricci lower bound on the space, however it is not  clear to us how to prove or disprove \eqref{eq:conjecture}.\medskip

\subsection{Main rigidity and almost rigidity results in compact $\RCD$ spaces}

Even if some of our results will hold for the general class of $\cd(K,N)$ spaces, our main focus will be the smaller class of  spaces satisfying the \emph{Riemannian curvature-dimension condition} $\rcd{K}{N}$, which adds to the $\cd$ class the linearity of the heat flow (see Definition \ref{def:RCDKN} below). This notion appeared first in the infinite dimensional case ($N=\infty$) in \cite{AGS14}  (see also \cite{AGMR15} in the case of $\sigma$-finite reference measure) while, in the finite dimensional case ($N<\infty$), it was introduce in \cite{Gigli12}. We also mention the slightly weaker  $\RCD^*(K,N)$ condition (coming from the \emph{reduced curvature-dimension condition} $\cd^*(K,N)$ introduced in \cite{BacherSturm10}) which has been proved in \cite{EKS15,AmbrosioMondinoSavare13-2} to be equivalent to  the validity of a weak $N$-dimensional Bochner-inequality (see also \cite{AmbrosioGigliSavare12} for the same result in the infinite dimensional case). We recall that in the compact case (or more generally for finite reference measure) which will be the main setting of this note, the $\RCD^*(K,N)$ and the $\RCD(K,N)$ conditions turn out to be perfectly equivalent after the work in \cite{CM16}. The main advantage for us to work in the $\RCD$ class, as opposed to the more general $\cd$ class, is that it enjoys  rigidity and stability properties that are analogous to the Riemannian manifolds setting.

To state our main results for metric measure spaces we need to define first the notion of optimal constant in the Sobolev inequality in the non-smooth setting. Given a (compact) $\RCD(K,N)$ space  (or more generally a $\cd(K,N)$ space) $\Xdm$, for some $K \in \R$,  $N\in (2,\infty)$, we set $2^*\coloneqq 2N/(N-2)$ and consider the analogous of \eqref{eq:critical sobolev manifold}:
\begin{equation}\label{eq:critical sobolev}
	\|u\|_{L^{q}(\mm)}^2\le A \||D u|\|_{L^2(\mm)}^2 + \mm(\X)^{2/q-1}  \|u\|_{L^2(\mm)}^2 , \qquad \forall u \in W^{1,2}(\X),
\end{equation}
for $q \in (2,2^*]$ and a constant $A\ge0$. Then we define $$\aopt_q(\X)\coloneqq \mm(\X)^{1- 2/q} \cdot \inf  \{A \ : \ \eqref{eq:critical sobolev} \text{ holds}\}, $$ with the convention that $\aopt_q(\X)=\infty$ when no $A$ exists. Note that $\aopt_q(\X)$, when is finite, is actually a minimum. Observe also that, as in the smooth case, there is a renormalization factor $\mm(\X)^{1- 2/q}$ in the definition. However, being not restrictive, we will mainly work asking $\mm(\X)=1$ so that the value of $\aopt_q(\X)$ is equivalent to the  non-renormalized one.

Remarkably in this more general framework, a comparison analogous to \eqref{eq:Aopt upper} holds.
\begin{theorem}[\cite{CM17}]\label{thm:Aopt upper cd}
	Let $\Xdm$ be an essentially non-branching $\cd(N-1,N)$ space, $N \in (2,\infty)$. Then, for every $q\in(2,2^*]$
	\begin{equation}\label{eq:Aopt upper cd}
		\aopt_q(\X)\le \frac{q-2}{N}.
	\end{equation}
\end{theorem}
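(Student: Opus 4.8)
\emph{Proof idea.} The plan is to obtain \eqref{eq:Aopt upper cd} from the sharp L\'evy--Gromov isoperimetric comparison on essentially non-branching $\cd(N-1,N)$ spaces, combined with a symmetrization (P\'olya--Szeg\H{o}) argument. First I would normalize $\mm(\X)=1$, so that \eqref{eq:critical sobolev} reads $\|u\|_{L^q(\mm)}^2\le A\,\||Du|\|_{L^2(\mm)}^2+\|u\|_{L^2(\mm)}^2$ with $A=(q-2)/N$; and, since $\|u\|_{L^q}$ and $\|u\|_{L^2}$ depend only on $|u|$, since $|u|$ has the same minimal weak upper gradient as $u$ $\mm$-a.e., and since bounded Lipschitz functions are dense in $W^{1,2}(\X)$ with convergence of the energies, it is enough to prove the inequality for every bounded Lipschitz $u\ge 0$.

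The core input is the comparison $I_\X\ge I_{\S^N}$ between the isoperimetric profiles of $\X$ and of the round sphere; this is the content of \cite{CM17}, where it is proved by the $L^1$-localization technique, disintegrating $\mm$ along transport rays into one-dimensional $\cd(N-1,N)$ measures and reducing to the sharp one-dimensional isoperimetric inequality --- I would take this as given. Note that $I_{\S^N}$ coincides with the isoperimetric profile of the one-dimensional model $\mathcal I_N:=([0,\pi],|\cdot|,c_N\sin^{N-1}\Leb1)$ normalized to unit mass (whose isoperimetric sets are the sub-intervals $[0,r]$); below we write $\mathcal I_N$ also for its reference measure. Granting the comparison, given $u$ as above let $u^\star:[0,\pi]\to[0,\infty)$ be the nonincreasing rearrangement of $u$ onto $\mathcal I_N$, i.e. the nonincreasing function with $\mathcal I_N(\{u^\star>t\})=\mm(\{u>t\})$ for every $t>0$. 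By Cavalieri's principle $\|u^\star\|_{L^s(\mathcal I_N)}=\|u\|_{L^s(\mm)}$ for all $s$, while the coarea formula together with $I_\X\ge I_{\mathcal I_N}$ yields the P\'olya--Szeg\H{o} inequality $\|(u^\star)'\|_{L^2(\mathcal I_N)}\le\||Du|\|_{L^2(\mm)}$. Hence it suffices to establish the one-dimensional Sobolev inequality
\[
\|v\|_{L^q(\mathcal I_N)}^2\le\tfrac{q-2}{N}\,\|v'\|_{L^2(\mathcal I_N)}^2+\|v\|_{L^2(\mathcal I_N)}^2,\qquad v\in W^{1,2}(\mathcal I_N),
\]
which is Beckner's sharp inequality \eqref{eq:aubin} on $\S^N$ restricted to functions of the distance from a pole (equivalently, it can be verified directly by analysing the Euler--Lagrange ODE of the one-dimensional variational problem with weight $\sin^{N-1}$, the equality cases being the constants and the Aubin functions).

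I expect the delicate points to be the following. The L\'evy--Gromov comparison in the $\cd$ setting is itself a substantial theorem --- the localization argument of \cite{CM17}, where essential non-branching is crucially used --- but we are allowed to quote it. The step I would scrutinize most is making the rearrangement/P\'olya--Szeg\H{o} argument rigorous at the \emph{critical} exponent $q=2^*$: one needs a sufficiently robust coarea and perimeter calculus on $\cd$ spaces, absolute continuity of the distribution function of $u$, and no loss of gradient energy in the rearrangement, whereas the subcritical range $q<2^*$ is softer. I would also note that a naive attempt to localize the Sobolev inequality directly fails, since the concavity of $t\mapsto t^{2/q}$ (for $q>2$) makes the needle-by-needle recombination go the wrong way; routing the argument through the isoperimetric profile and the model $\mathcal I_N$ appears to be the right move.
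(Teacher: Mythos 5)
Your proposal is correct, and it matches the alternative proof route that the paper itself points out but does not carry out in detail. The statement is attributed to \cite{CM17}, whose own proof runs the localization technique directly on the Sobolev inequality; the paper, in the remark following Proposition~\ref{prop:sobolev embedding general}, explicitly observes that the same bound can instead be deduced by combining the P\'olya--Szeg\H{o} inequality of \cite{MS20} (Theorem~\ref{thm:KNpolya}, which is exactly your symmetrization step, built on the L\'evy--Gromov comparison) with the sharp one-dimensional Sobolev inequality on the model space $I_N$ (inequality~\eqref{eq:sobolev model}, which is your last display). Your proof is precisely this second route: quote $I_\X\ge I_{\S^N}$, rearrange onto the $\sin^{N-1}$-weighted interval with preservation of $L^s$ norms and non-increase of Dirichlet energy, and conclude from Beckner's one-dimensional inequality. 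The density reduction to non-negative bounded Lipschitz functions and the normalization are also fine.

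One remark worth calibrating: you say a ``naive'' direct localization of the Sobolev inequality must fail because $t\mapsto t^{2/q}$ is concave and the needle-wise recombination goes the wrong way. This is a reasonable heuristic, but \cite{CM17} does in fact localize the Sobolev inequality directly; the point is that they do not try to recombine the inequality in the form~\eqref{eq:critical sobolev body} needle by needle, but rather localize a suitably chosen scalar functional whose needle contributions do add favourably. So ``naive'' is carrying some weight there. Your route via the isoperimetric profile and P\'olya--Szeg\H{o} sidesteps this issue entirely and is arguably cleaner for the purposes of this paper, since Theorem~\ref{thm:KNpolya} is needed anyway for the rigidity argument; but both proofs ultimately rest on the same localization-based L\'evy--Gromov input, so neither is more elementary than the other.
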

The essentially nonbranching condition is a technical property of mass transportation that, roughly said, requires a suitable nonbranching property of transportation geodesics. It was introduced in \cite{RS14} where it was shown that it is satisfied in the $\RCD(K,N)$-class. We also mention that Theorem \ref{thm:Aopt upper cd} in the $\RCD$ case was previously obtained in \cite{Profeta15}. Observe also that, whenever $N$ is an integer and thanks to \eqref{eq:aubin}, for a $N$-dimensional Riemannian manifolds \eqref{eq:Aopt upper cd} is exactly \eqref{eq:Aopt upper} and in particular Theorem \ref{thm:Aopt upper cd} generalizes Theorem \ref{thm:Aopt upper}.

We can now state our main rigidity result in the setting of metric measure spaces.

%The study of Sobolev (or more generally, geometric) sharp inequalities comes very naturally with stability questions. This is also the case of the Theorem below, where we investigate the shape of spaces attaining almost best Sobolev constant.
\begin{theorem}[Rigidity of $\aopt_q$]\label{thm:rigidity}
	Let $\Xdm$ be an $\RCD(N-1,N)$ space for some $N\in(2,\infty)$ and let $q\in(2,2^*]$. Then, equality holds in \eqref{eq:Aopt upper cd} if and only if $\Xdm$ is isomorphic to a spherical suspension, i.e. there exists an $\RCD(N-2,N-1)$ space $(\Z,\sfd_\Z,\mm_\Z)$ such that $\Xdm \simeq [0,\pi] \times_{\sin}^{N-1} \Z.$
\end{theorem}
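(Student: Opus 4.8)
The plan is to characterize equality in the comparison $\aopt_q(\X)\le (q-2)/N$ by combining the concentration-compactness machinery with the rigidity of the "Euclidean" Pólya–Szegő inequality and with Theorem \ref{thm:alfa}. The "only if" direction is the substantial one; the "if" direction should reduce to an explicit computation on spherical suspensions (one checks that $[0,\pi]\times_{\sin}^{N-1}\Z$ achieves equality, using the extremal functions of Aubin–Beckner pulled back along the first coordinate, exactly as in the computation behind \eqref{eq:aubin}), so I focus on the forward implication. Assume $\aopt_q(\X)=(q-2)/N$ with $\mm(\X)=1$. The idea is to take a minimizing sequence $(u_k)$ for the variational problem defining $\aopt_q(\X)$, i.e. functions normalized by $\|u_k\|_{L^q(\mm)}=1$ for which the Rayleigh-type quotient $\frac{\||Du_k|\|_{L^2}^2+\|u_k\|_{L^2}^2-1}{\text{(slack)}}$ forces the constant in \eqref{eq:critical sobolev} down to the extremal value; concretely one works with the functional whose infimum equals $N/(q-2)$ and studies whether it is attained.

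The first key step is to show that the extremal problem for $\aopt_q(\X)$, when equality holds, \emph{is attained} by some nonconstant $u\in W^{1,2}(\X)$, or else the minimizing sequence concentrates at a point. This is precisely where the new concentration-compactness result for mGH-converging sequences of $\RCD$ spaces (advertised in the abstract and presumably proved earlier in the paper) enters: a minimizing sequence either converges strongly in $L^q$ to a genuine extremal, or its mass concentrates at a single point $x_0\in\X$, in which case a blow-up $(\X,\sfd/r_k,\mm/r_k^N,x_0)\to$ (metric tangent cone) carries the lost mass into a Euclidean-type Sobolev inequality on the tangent. In the concentration scenario, the energy transferred to the blow-up is governed by $\eucl(N,2)^2/\theta_N(x_0)^{2/N}$, and Theorem \ref{thm:alfa} (specifically the value $\alpha_2(\X)=(\eucl(N,2)/\min_x\theta_N(x)^{1/N})^2$) forces a relation between $\theta_N(x_0)$ and the constant $(q-2)/N$; one then checks that this is incompatible with equality unless $q=2^*$ \emph{and} a specific density condition holds — and even then the subcritical correction term $\mm(\X)^{2/q-1}\|u\|_{L^2}^2$ prevents pure concentration from being optimal when $q<2^*$, so for $q\in(2,2^*)$ one concludes concentration cannot occur and an extremal exists, while for $q=2^*$ the concentration alternative must be ruled out by the sharp form of the local Sobolev inequality (Theorem \ref{thm:local sobolev intro}) combined with $\theta_N\ge 1$ forced by $\rcd{N-1}{N}$ (Bishop–Gromov).

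Once a nonconstant extremal $u$ exists, the second key step is the rigidity analysis of the Euler–Lagrange equation together with the Pólya–Szegő / Obata-type argument that underlies Theorem \ref{thm:Aopt upper cd}. The point is that the proof of \eqref{eq:Aopt upper cd} (in \cite{CM17}, via essentially nonbranching $\cd(N-1,N)$ and needle decomposition / localization) proceeds by a one-dimensional reduction along $\mm$-a.e. transport ray; equality propagates to equality in each one-dimensional model Sobolev inequality on $([0,\pi],\sin^{N-1}\Leb 1)$, and equality there (by the classical analysis behind Aubin–Beckner) forces the ray densities to be exactly $\sin^{N-1}$ and the conditional measures to be the model ones. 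I would then invoke the rigidity of the localization/disintegration — the same mechanism by which the Lévy–Gromov isoperimetric rigidity or the Obata spectral-gap rigidity yields a spherical suspension in the $\RCD$ setting (cf. the structure theorems the paper relies on) — to conclude $\Xdm\simeq [0,\pi]\times_{\sin}^{N-1}\Z$ for some $\rcd{N-2}{N-1}$ space $\Z$, the base being identified from the fibers of the transport decomposition.

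The main obstacle I expect is the $q=2^*$ case of the first step: ruling out that the minimizing sequence concentrates at a point of density one (which would make the extremal problem \emph{not} attained, exactly the phenomenon flagged in the Question after Theorem \ref{thm:alfa}). Here the subcritical regularization is absent, so one genuinely needs the sharp Euclidean-type Pólya–Szegő inequality on $\cd$ spaces together with the precise value $\alpha_{2^*}(\X)=\eucl(N,2^*)^2$ (when $\theta_N\equiv 1$) to show that concentration would already saturate the \emph{local} Sobolev constant, and then that such saturation, via the blow-up being exactly Euclidean $\R^N$ and the rigidity in \cite{Ledoux99}-type statements, is incompatible with $\X$ being a closed space of finite diameter unless the global structure is already that of a spherical suspension carrying Aubin's explicit extremals. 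Threading the concentration-compactness dichotomy through both the critical and subcritical regimes, while keeping the density bookkeeping consistent with Theorem \ref{thm:alfa}, is the delicate heart of the argument.
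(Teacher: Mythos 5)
Your dichotomy for the extremizing sequence (strong convergence to a nonconstant extremal versus concentration at a point) misses a third case that genuinely occurs: the sequence can converge strongly to a \emph{constant}. A constant limit gives you nothing to feed into a P\'olya--Szeg\H{o} or needle-rigidity argument, and in that case the extremal problem is simply not attained by any nonconstant function --- this is exactly what happens on spherical suspensions when $q<2^*$. The paper's proof goes through the ``Sobolev alternative'' (Theorem \ref{thm:alternative}), whose constant-limit branch is handled by the quantitative linearization Lemma \ref{lem:expansion}: writing $u_n=1+f_n$ with $\|f_n\|_{W^{1,2}(\X)}\to 0$, the Sobolev quotient linearizes to the Poincar\'e quotient, equality forces $\aopt_q(\X)=(q-2)/\lambda^{1,2}(\X)$, hence $\lambda^{1,2}(\X)=N$, and one concludes with the spectral-gap rigidity (Theorem \ref{thm:diameter rigidity}). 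Without this branch your argument has a hole precisely in the most common scenario.

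Your treatment of the concentration case at $q=2^*$ is also not correct. First, $\theta_N\ge 1$ is \emph{not} forced by $\RCD(N-1,N)$: Bishop--Gromov \eqref{eq:Bishop} together with \eqref{eq:theta equivalent} gives, for $\mm(\X)=1$, only $\theta_N(x)\ge 1/v_{N-1,N}(\diam(\X))\ge \sigma_N^{-1}$; the bound $\theta_N\ge 1$ belongs to the non-collapsed setting $\mm=\Haus{N}$, which is not assumed. Second, and more importantly, concentration cannot and should not be ruled out: in the paper it is converted into the identity $\aopt_{2^*}(\X)=\alpha_2(\X)$ (case $(iii)$ of Theorem \ref{thm:alternative}, obtained via Theorem \ref{thm:CC_Sob} and the definition of $\alpha_2$), and then equality $\aopt_{2^*}(\X)=(2^*-2)/N$ together with Theorem \ref{thm:alfa} and \eqref{eq:eucln2} pins down $\min_\X\theta_N=\sigma_N^{-1}$; Bishop--Gromov and \eqref{eq:integral sin} then force $\diam(\X)=\pi$, and the maximal diameter theorem (Theorem \ref{thm:obata}) yields the spherical suspension. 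So concentration is a third rigidity channel, not an obstruction, and the Ledoux/Xia-type noncompact rigidity you invoke has no role in this compact setting. Finally, in the case where a nonconstant extremal does exist, you propose to run an equality analysis inside the localization proof of \eqref{eq:Aopt upper cd}; the paper instead rearranges the extremal into the model space $I_N$ and applies the P\'olya--Szeg\H{o} rigidity (Theorem \ref{thm:Polyarigidity}), which is immediate once the extremal is in hand, whereas your route would require a rigidity statement for the needle decomposition that you do not supply.
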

Differently from the smooth case, in the more abstract setting of $\RCD$ spaces the above result is instead new for all $q$. As anticipated above, we can also prove an ``almost-rigidity'' statement linked to the almost-equality case in \eqref{eq:Aopt upper cd} (see Section \ref{sec:mgh} for the notion of measure-Gromov-Hausdorff convergence and distance $\sfd_{mGH}.$).
\begin{theorem}[Almost-rigidity of  $\aopt_q$]\label{thm:almrigidity}
		For every $N\in(2,\infty)$, $q \in(2,2^*]$ and every  $\eps>0$, there exists $\delta:= \delta(N,\eps,q)>0$ such that the following holds. Let $\Xdm$ be an $\RCD(N-1,N)$ space with $\mm(\X)=1$ and suppose that 
	\[ 
	\aopt_q(\X)\ge \frac{(q-2)}{N}-\delta,
	\]
	Then, there exists a spherical suspension $(\Y,\sfd_\Y,\mm_\Y)$ (i.e. there exists an $\RCD(N-2,N-1)$ space $(\Z,\sfd_\Z,\mm_\Z)$  so that $\Y$ is isomorphic as a metric measure space to $[0,\pi] \times^{N-1}_{\sin} \Z$) such that
	\[ \sfd_{mGH}( (\X,\sfd,\mm),(\Y,\sfd_\Y,\mm_\Y)) <\eps.\]
\end{theorem}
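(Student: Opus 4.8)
The plan is to prove Theorem \ref{thm:almrigidity} by a compactness-contradiction argument, leveraging the rigidity statement of Theorem \ref{thm:rigidity} together with the stability of the $\RCD$ condition under measure Gromov--Hausdorff convergence. Suppose the conclusion fails: then there exist $N\in(2,\infty)$, $q\in(2,2^*]$, $\eps_0>0$ and a sequence $(\X_k,\sfd_k,\mm_k)$ of $\RCD(N-1,N)$ spaces with $\mm_k(\X_k)=1$ such that $\aopt_q(\X_k)\ge \frac{q-2}{N}-\frac1k$ but $\sfd_{mGH}\big((\X_k,\sfd_k,\mm_k),\Y\big)\ge \eps_0$ for every spherical suspension $\Y$. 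Since all $\X_k$ are $\RCD(N-1,N)$ with unit mass, the Bonnet--Myers theorem gives a uniform diameter bound and hence, by Gromov's precompactness theorem in the $\RCD$ setting, up to a subsequence $(\X_k,\sfd_k,\mm_k)\to(\X_\infty,\sfd_\infty,\mm_\infty)$ in the mGH topology, with the limit again an $\RCD(N-1,N)$ space of unit mass.

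The crux is then to show that the limit space $\X_\infty$ realizes equality in \eqref{eq:Aopt upper cd}, i.e. $\aopt_q(\X_\infty)=\frac{q-2}{N}$; once this is established, Theorem \ref{thm:rigidity} forces $\X_\infty$ to be a spherical suspension, contradicting $\sfd_{mGH}(\X_k,\Y)\ge\eps_0$ for all suspensions $\Y$ (since $\X_k\to\X_\infty$ would then bring $\X_k$ arbitrarily close to a suspension). So the key analytic input is the \emph{lower semicontinuity of $\aopt_q$ under mGH convergence}, i.e. $\aopt_q(\X_\infty)\ge \limsup_k \aopt_q(\X_k)\ge \frac{q-2}{N}$; combined with the a priori upper bound $\aopt_q(\X_\infty)\le\frac{q-2}{N}$ from Theorem \ref{thm:Aopt upper cd} this pins down equality. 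To prove this semicontinuity, I would argue on the level of the Sobolev inequality \eqref{eq:critical sobolev}: one shows that if $A\ge \aopt_q(\X_\infty)$ then, for every $\eta>0$, the inequality with constant $A+\eta$ and $B=\mm(\X)^{2/q-1}$ passes to the approximating spaces $\X_k$ for $k$ large. This requires (i) a Mosco-type / $\Gamma$-convergence of the Cheeger energies along mGH-converging $\RCD$ sequences (available from the work of Gigli--Mondino--Savar\'e and Ambrosio--Honda), which controls $\||Du|\|_{L^2}$; (ii) the stability of $L^2$ and $L^q$ norms under the natural embedding of the $\X_k$'s into a common metric space realizing the mGH convergence; and (iii) a uniform bound preventing loss of mass at the $L^q$ level, for which the concentration-compactness result announced in the abstract is the natural tool — indeed this is precisely where that machinery enters, ruling out ``bubbling'' that would otherwise break the passage to the limit. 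Equivalently, one can phrase the whole argument in terms of the extremal functions: take near-optimizers $u_k$ for $\X_k$, normalized in $L^q$, extract a limit $u_\infty$, and show via concentration-compactness that either $u_\infty$ is a genuine extremal on $\X_\infty$ (the desired case) or the mass concentrates at a point, which in an $\RCD(N-1,N)$ space with $\aopt_q$ close to $\frac{q-2}{N}$ can be excluded using the local Sobolev inequalities and the comparison $\alpha_2(\X)\le\eucl(N,2)^2/\min_x\theta_N(x)^{2/N}$ from Theorem \ref{thm:alfa}, since concentration would be governed by $\alpha_2$ which is strictly smaller than what equality in \eqref{eq:Aopt upper cd} demands.

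The main obstacle I anticipate is exactly this concentration-compactness dichotomy in the critical exponent case $q=2^*$: there the Sobolev embedding $W^{1,2}\hookrightarrow L^{2^*}$ is not compact even on a fixed space, so the limit $u_\infty$ of near-optimizers need not retain full $L^{2^*}$ mass, and one must quantitatively rule out the concentration alternative. This is where the interplay between the two ``best constants'' $\alpha_{2}$ and $\aopt_{2^*}$ becomes essential — the local (blow-up) Sobolev constant governing concentration phenomena is $\alpha_2(\X_\infty)$, and one needs to know that in an $\RCD(N-1,N)$ space this is strictly less than the threshold forced by $\aopt_{2^*}=\frac{2^*-2}{N}$, so that a concentrating sequence could not be asymptotically optimal. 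Once this quantitative gap is in hand, the vanishing and dichotomy cases are excluded by standard Lions-type arguments adapted to the metric setting, the compactness case gives a genuine extremal on $\X_\infty$, hence equality in \eqref{eq:Aopt upper cd}, hence (via Theorem \ref{thm:rigidity}) that $\X_\infty$ is a spherical suspension, and the contradiction with $\sfd_{mGH}(\X_k,\Y)\ge\eps_0$ closes the proof. The subcritical cases $q<2^*$ are easier, as the embedding is compact and no concentration can occur, so the semicontinuity of $\aopt_q$ follows directly from the Mosco convergence of Cheeger energies and the uniform control on norms.
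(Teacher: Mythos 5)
Your overall skeleton is the paper's: argue by contradiction, use precompactness of unit-mass $\RCD(N-1,N)$ spaces (Theorem \ref{thm:mGHcompact}), prove $\limsup_k \aopt_q(\X_k)\le \aopt_q(\X_\infty)$ (the hard half of Theorem \ref{thm:mGHAopt}), and conclude via the rigidity Theorem \ref{thm:rigidity}. The gaps are in your sketch of the semicontinuity step. First, the dichotomy you describe for the $L^q$-normalized near-optimizers $u_k$ (``either the limit $u_\infty$ is a genuine extremal on $\X_\infty$, or the mass concentrates'') misses a third case that genuinely occurs and that your tools do not cover: $u_\infty$ may be the constant $1$. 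In that case the Sobolev ratio degenerates ($\||Du_k|\|_{L^2}\to 0$), lower semicontinuity of the Cheeger energy gives nothing, and there is no extremal to speak of; this is precisely the regime behind the Obata alternative in the rigidity. The paper handles it with the quantitative linearization Lemma \ref{lem:expansion} combined with the mGH-continuity of the spectral gap \eqref{eq:lambdalsc}, which yields $\limsup_k \aopt_q(\X_k)\le (q-2)/\lambda^{1,2}(\X_\infty)\le \aopt_q(\X_\infty)$ by Proposition \ref{prop:aopt2gapbound}. Neither Mosco convergence of the Cheeger energies nor concentration compactness produces this estimate, so without an ingredient of this type your argument does not close (note this case is also present, and equally unaddressed in your plan, for subcritical $q$).

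Second, in the critical case your treatment of concentration rests on a strict inequality that is false in exactly the regime you are in: for the normalized round sphere $\S^N$ (and for any $\RCD(N-1,N)$ space with $\min_\X\theta_N=\sigma_N^{-1}$) one has $\alpha_2(\X)=\aopt_{2^*}(\X)=(2^*-2)/N$, and Aubin's extremal family on $\S^N$ concentrates at a point while achieving equality; hence concentrating sequences \emph{can} be asymptotically optimal and no contradiction can be extracted from the concentration alternative. The repair is that exclusion is not needed: if $|u_k|^{2^*}\mm_k\rightharpoonup\delta_{y_0}$, one shows $\limsup_k \aopt_{2^*}(\X_k)\le \eucl(N,2)^2\,\theta_N(y_0)^{-2/N}\le \alpha_2(\X_\infty)\le \aopt_{2^*}(\X_\infty)$, which is all the semicontinuity requires. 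Moreover this estimate cannot be obtained, as in the fixed-space rigidity argument, by invoking on $\X_\infty$ a Sobolev inequality with constants $\alpha_2(\X_\infty)+\eps$ and $B_\eps$, since $B_\eps$ is not controlled along the varying spaces $\X_k$; one needs the local analysis at the concentration point carried out in Lemma \ref{lem:concpoint}, based on the local Sobolev inequalities of Theorem \ref{thm:local sobolev intro} and Proposition \ref{prop:local sobolev embedding}. With these two repairs your proposal coincides with the paper's proof.
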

\begin{remark} \rm We briefly point out two important facts concerning the two above statements.
\begin{itemize}
    \item[$i)$]	 In the smooth setting,  for $q <2^*$, the almost rigidity follows ``directly'' from the sharper version of \eqref{eq:Aopt upper} cited above (see Remark \ref{rmk:sharper bound} for the explicit statement) and using the almost-rigidity of the $2$-spectral gap \cite{CM17,CMS19}. Nevertheless, we are not aware of any such statement in the literature and anyhow, our proof does not rely on any improved version of \eqref{eq:Aopt upper}.
    \item[$ii)$] The key feature of Theorem \ref{thm:rigidity} and Theorem \ref{thm:almrigidity} is that they include the ``critical'' exponent. Indeed, the difference between the ``subcritical'' case $q<2^*$ and $q=2^*$ is not only technical but a major issue linked to the lack of compactness in the Sobolev embedding. As it will be clear in the sequel, the proof of the critical case requires several additional arguments that constitute the heart of this note.\fr
\end{itemize}
\end{remark}

The almost-rigidity result contained in Theorem \ref{thm:almrigidity} will be actually a consequence of a stronger statement, that is the continuity of $\aopt_q$ under measure Gromov-Hausdorff convergence. More precisely we will prove the following:
\begin{theorem}[Continuity of $\aopt_q$ under mGH-convergence]\label{thm:mGHAopt}
Let $(\X_n,\sfd_n, \mm_n)$, $n \in \N\cup \{\infty\}$, be a sequence of compact $\RCD(K,N)$-spaces with $\mm_n(\X_n)=1$ and for some $K\in \R$, $N \in(2,\infty)$ so that $\X_n\overset{mGH}{\to} \X_\infty$. Then, $A_q^{\rm opt}(\X_\infty)=\lim_n \aopt_q(\X_n)$, for every $q \in (2,2^*]$.
\end{theorem}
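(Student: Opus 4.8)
The plan is to prove Theorem~\ref{thm:mGHAopt} by establishing separately a lower semicontinuity bound $\aopt_q(\X_\infty)\le\liminf_n\aopt_q(\X_n)$ and an upper semicontinuity bound $\aopt_q(\X_\infty)\ge\limsup_n\aopt_q(\X_n)$. The heart of the matter is the critical case $q=2^*$, where the Sobolev embedding $W^{1,2}\hookrightarrow L^{2^*}$ is not compact and hence convergence of extremal functions along the mGH-sequence cannot be taken for granted; the subcritical case $q<2^*$ should follow from the same scheme but with a routine compactness argument replacing the delicate step.

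\textbf{Upper semicontinuity (the easy direction, morally).} Fix $q\in(2,2^*]$ and set $A_n:=\aopt_q(\X_n)$. To show $\aopt_q(\X_\infty)\ge\limsup_n A_n$ one argues by contradiction: pass to a subsequence along which $A_n\to\ell>\aopt_q(\X_\infty)=:A_\infty$. On $\X_\infty$ there exists a function $u\in W^{1,2}(\X_\infty)$ that makes the inequality \eqref{eq:critical sobolev} fail for $A=A_\infty+\text{small}$; in fact, since $\aopt_q(\X_\infty)$ is a minimum, one can take $u$ to be an extremal (or near-extremal) function, and by truncation/approximation one may assume $u$ is bounded and Lipschitz. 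Using the good approximation properties of Sobolev functions along mGH-converging $\RCD(K,N)$ sequences — concretely, one constructs $u_n\in W^{1,2}(\X_n)$ with $u_n\to u$ strongly in $L^2$, $\||Du_n|\|_{L^2}\to\||Du|\|_{L^2}$, and $\|u_n\|_{L^{q}}\to\|u\|_{L^{q}}$ (the last limit is where $q<2^*$ would be free and $q=2^*$ needs care, but for a fixed \emph{given} bounded Lipschitz $u$ one can still pass to the limit in $L^{2^*}$ by dominated convergence against the uniformly controlled approximants) — one tests \eqref{eq:critical sobolev} on $\X_n$ with $u_n$ and passes to the limit, contradicting $A_n\to\ell>A_\infty$. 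The only subtlety is the construction of the recovery sequence $u_n$; this is by now standard in the $\RCD$ literature and I would cite the relevant stability results for Cheeger energies under mGH-convergence (Mosco/$\Gamma$-convergence of $\rmCh$).

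\textbf{Lower semicontinuity (the hard direction).} This is where the concentration-compactness result advertised in the abstract must be invoked. Set $A_\infty:=\aopt_q(\X_\infty)$ and suppose for contradiction that along a subsequence $A_n\to\ell<A_\infty$. For each $n$ pick an extremal function $u_n\in W^{1,2}(\X_n)$ for $\aopt_q(\X_n)$ normalized by $\|u_n\|_{L^{q}(\mm_n)}=1$; then $A_n\||Du_n|\|_{L^2}^2+\|u_n\|_{L^2}^2=1$, so $u_n$ is bounded in $W^{1,2}$. Embedding everything into a common proper metric space realizing the mGH-convergence, one extracts a limit $u_\infty\in W^{1,2}(\X_\infty)$ with $u_n\to u_\infty$ weakly in $W^{1,2}$ and strongly in $L^2$ (the latter by the Rellich-type compactness that does hold on $\RCD(K,N)$ spaces with uniformly bounded measure). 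The obstruction is that $\|u_n\|_{L^{2^*}}$ may \emph{not} converge to $\|u_\infty\|_{L^{2^*}}$: mass can escape to zero in $L^{2^*}$ by concentrating at points. The concentration-compactness dichotomy (the paper's new mGH version) says that, up to subsequence, either (i) $u_\infty$ carries all the $L^{2^*}$-mass — in which case a lower semicontinuity / weak-to-strong argument gives $A_\infty\le\ell$, a contradiction — or (ii) a portion of the mass concentrates at a point $x_\infty\in\X_\infty$, and a blow-up analysis around $x_\infty$ forces the local Sobolev constant there to be at least the Euclidean one, hence $\ell\ge\eucl(N,2)^2/\theta_N(x_\infty)^{2/N}\cdot(\text{volume factor})$; combined with $\theta_N\le1$ (which holds in $\RCD(N-1,N)$ by Bishop-Gromov, or more precisely the relevant density bound) and the explicit value of $\eucl(N,2)$ and $\aopt_{2^*}(\S^N)=(2^*-2)/N$, case (ii) would in fact also contradict $\ell<A_\infty\le(2^*-2)/N$, or else force concentration to be impossible for energetic reasons (the "$\geq$" in the local Sobolev inequality combined with the sharp global constant leaves no room for a nontrivial bubble when $\ell$ is strictly below the sphere value). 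The delicate bookkeeping here — splitting $u_n$ into a bubble part and a remainder, controlling cross-terms via a Brezis–Lieb type lemma adapted to the mGH setting, and ruling out the bubble — is, I expect, the main obstacle, and it is precisely the content the authors package into their concentration-compactness theorem and the $\cd$-type P\'olya–Szeg\H o inequality mentioned in the abstract.

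\textbf{Conclusion.} Combining the two semicontinuity bounds yields $\aopt_q(\X_\infty)=\lim_n\aopt_q(\X_n)$. Theorem~\ref{thm:almrigidity} then follows formally: if it failed, one would get a sequence $\X_n$ with $\aopt_q(\X_n)\to(q-2)/N$ staying mGH-far from every spherical suspension; by compactness of the class of $\RCD(N-1,N)$ spaces with unit mass (Gromov precompactness plus closure of the $\RCD$ condition) one extracts an mGH-limit $\X_\infty$, which by Theorem~\ref{thm:mGHAopt} satisfies $\aopt_q(\X_\infty)=(q-2)/N$, hence by the rigidity Theorem~\ref{thm:rigidity} is a spherical suspension — contradicting that $\X_n$ stayed uniformly far. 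The main work, then, is entirely in the lower-semicontinuity half of Theorem~\ref{thm:mGHAopt} in the critical case.
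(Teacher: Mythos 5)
You have assembled the right toolbox (recovery sequences/stability of the Sobolev inequality, a concentration--compactness dichotomy for the extremizers, a blow-up at concentration points using almost-Euclidean local Sobolev inequalities), and this is indeed the paper's toolbox; but you have attached each tool to the wrong inequality, and as a consequence neither of your two contradiction arguments actually closes. Transplanting a fixed near-extremal $u$ of $\X_\infty$ into $\X_n$ via a recovery sequence and testing the inequality on $\X_n$ gives, in the limit, $\cQ_q^{\X_\infty}(u)\le \lim_n \aopt_q(\X_n)$, i.e.\ it proves $\liminf_n\aopt_q(\X_n)\ge \aopt_q(\X_\infty)$ (the easy direction, which in the paper is Lemma \ref{lem:sobolev stability}); it produces no contradiction with $\ell>\aopt_q(\X_\infty)$, and moreover no function can ``make \eqref{eq:critical sobolev} fail for $A=\aopt_q(\X_\infty)+\text{small}$'', since by definition the inequality holds for every $A\ge \aopt_q(\X_\infty)$. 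Symmetrically, in your ``hard'' half you assume $\ell<\aopt_q(\X_\infty)$ and run concentration--compactness on the extremizers of $\X_n$: but every branch of the dichotomy then yields a conclusion of the form $\ell\le \aopt_q(\X_\infty)$, perfectly consistent with your hypothesis, so again no contradiction. In particular your concentration estimate has the wrong sign: concentration of the normalized extremizers at $x_\infty$ together with the local almost-Euclidean Sobolev inequality yields the \emph{upper} bound $\limsup_n\aopt_{2^*}(\X_n)\le \eucl(N,2)^2\,\theta_N(x_\infty)^{-2/N}$ (Lemma \ref{lem:concpoint}), not a lower bound on $\ell$; and this upper bound is exactly what one needs for the genuinely hard direction, because by Theorem \ref{thm:alfa} one has $\eucl(N,2)^2\theta_N(x_\infty)^{-2/N}\le\alpha_2(\X_\infty)\le\aopt_{2^*}(\X_\infty)$. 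So the correct architecture is the mirror image of yours: stability/recovery sequences give $\liminf_n\aopt_q(\X_n)\ge\aopt_q(\X_\infty)$, while the concentration--compactness analysis of near-extremizers on $\X_n$ is what rules out $\limsup_n\aopt_q(\X_n)>\aopt_q(\X_\infty)$. (Your side claims ``$\theta_N\le 1$'' and the ensuing numerology are also unjustified for general $K$ and normalized, possibly collapsed, measure, but they become unnecessary once the inequalities point the right way.)

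Even after swapping the roles, two ingredients are missing from your plan. First, in the compact branch of the dichotomy the $L^{2^*}$-strong limit of the normalized extremizers may be a \emph{constant}, in which case the quotient $\cQ_q$ degenerates and no lower-semicontinuity argument applies; the paper handles this with the quantitative linearization Lemma \ref{lem:expansion} combined with the mGH-continuity of the spectral gap \eqref{eq:lambdalsc} and the bound \eqref{eq:Aopt2gap}, obtaining $\limsup_n\aopt_q(\X_n)\le (q-2)/\lambda^{1,2}(\X_\infty)\le\aopt_q(\X_\infty)$. This constant-limit case occurs for every $q\in(2,2^*]$, so the subcritical case is not merely ``routine compactness'' either. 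Second, extremal functions for $\aopt_q(\X_n)$ need not exist (above all at the critical exponent), so one must work with near-extremizers $u_n$ satisfying $\cQ_q^{\X_n}(u_n)\ge\aopt_q(\X_n)-1/n$; this is harmless but has to be said, since your bubbling analysis is phrased for exact extremals.
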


\subsection{Additional results and application to the Yamabe equation}
\hphantom{1cm}\medskip

\noindent\emph{Euclidean-type P\'olya-Szeg\H{o} inequality on ${\cd(K,N)}$ spaces.}
We will develop  a  P\'olya-Szeg\H{o} inequality (see Section \ref{sec:euclidean polya}), which is roughly a Euclidean-variant of the P\'olya-Szeg\H{o} inequality for $\cd(K,N)$ spaces, $K>0$, derived in \cite{MS20}. The main feature of this inequality is that it holds on arbitrary $\cd(K,N)$ spaces, $K \in \R$, but  assumes the validity of an isoperimetric inequality of the type
\[ 
\Per{E} \ge {\sf C_{Isop}} \mm(E)^{\frac{N-1}N}, \qquad \forall E \subset \Omega \text{ Borel},
\]
for some $\Omega\subset \X$ open and where  ${\sf C_{Isop}}$ is a positive constant independent of $E$.  For our purposes this P\'olya-Szeg\H{o} inequality will be used to derive local Sobolev inequalities of Euclidean-type (see Theorem \ref{thm:local sobolev intro}), however it allows us to obtain also sharp Sobolev inequalities under Euclidean-volume growth assumption.

\medskip
\noindent\emph{Sharp and rigid Sobolev inequalities under Euclidean-volume growth.} As a by-product of our analysis, we achieve sharp Sobolev inequalities on ${\cd (0,N)}$ spaces with Euclidean-volume growth. We recall that a ${\cd (0,N)}$ space $\Xdm$ has Euclidean-volume growth if
\[
{\sf AVR}(\X)\coloneqq \lim_{R\to +\infty} \frac{\mm(B_R(x_0))}{\omega_NR^N}>0,
\] 
for some (and thus any) $x_0 \in \X$. We will prove the following.
\begin{theorem}\label{thm:sharpSobgrowth}
	Let $\Xdm$ be a $\cd(0,N)$ space for some $N \in (1,\infty)$ and with Euclidean volume growth. Then, for every $p \in (1,N)$, it holds
	\begin{equation}\label{eq:sharp sobolev growth}
		\|u\|_{L^{p^*}(\mm)}\le \eucl(N,p){\sf AVR(\X)}^{-\frac{1}{N}}\|| D u|\|_{L^p(\mm)}, \qquad \forall u \in \LIP_c(\X).
	\end{equation}
Moreover \eqref{eq:sharp sobolev growth} is sharp.
\end{theorem}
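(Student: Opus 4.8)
The plan is to obtain \eqref{eq:sharp sobolev growth} by symmetrization, combining the Euclidean-type P\'olya--Szeg\H{o} inequality of Section~\ref{sec:euclidean polya} with the sharp isoperimetric inequality available on $\cd(0,N)$ spaces with Euclidean volume growth, and then feeding the result into the sharp Euclidean Sobolev inequality \eqref{eq:EuclSob}. First I would recall that on a $\cd(0,N)$ space $\Xdm$ with ${\sf AVR}(\X)>0$ one has the sharp isoperimetric estimate
\[
\Per{E}\ \ge\ N\,\omega_N^{1/N}\,{\sf AVR}(\X)^{1/N}\,\mm(E)^{\frac{N-1}{N}},\qquad \forall\,E\subset\X\ \text{bounded Borel},
\]
so the hypothesis of the P\'olya--Szeg\H{o} inequality holds with $\Omega=\X$ and ${\sf C_{Isop}}=N\omega_N^{1/N}{\sf AVR}(\X)^{1/N}$, which is precisely the isoperimetric constant of the weighted Euclidean space $(\R^N,{\sf AVR}(\X)\,\Leb N)$. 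Consequently, for $u\in\LIP_c(\X)$ the P\'olya--Szeg\H{o} inequality produces a radially nonincreasing rearrangement $u^\#$ on $(\R^N,{\sf AVR}(\X)\,\Leb N)$, equimeasurable with $|u|$, such that $\|u^\#\|_{L^{p^*}({\sf AVR}(\X)\Leb N)}=\|u\|_{L^{p^*}(\mm)}$ and $\||\nabla u^\#|\|_{L^p({\sf AVR}(\X)\Leb N)}\le\||D u|\|_{L^p(\mm)}$.

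Next I would use that, under a dilation of the reference measure $\Leb N\mapsto c\,\Leb N$, both sides of \eqref{eq:EuclSob} scale so that the optimal constant gets multiplied by $c^{1/p^*-1/p}=c^{-1/N}$; hence the sharp Sobolev constant of $(\R^N,{\sf AVR}(\X)\,\Leb N)$ equals $\eucl(N,p)\,{\sf AVR}(\X)^{-1/N}$. Combining this with the two relations for $u^\#$ above yields at once
\[
\|u\|_{L^{p^*}(\mm)}=\|u^\#\|_{L^{p^*}({\sf AVR}(\X)\Leb N)}\le \eucl(N,p)\,{\sf AVR}(\X)^{-1/N}\,\||\nabla u^\#|\|_{L^p({\sf AVR}(\X)\Leb N)}\le \eucl(N,p)\,{\sf AVR}(\X)^{-1/N}\,\||D u|\|_{L^p(\mm)},
\]
which is \eqref{eq:sharp sobolev growth}.

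For the sharpness I would concentrate the Euclidean extremals around a base point at large scales. Fix $x_0\in\X$ and let $U(r)=(1+r^{p/(p-1)})^{-(N-p)/p}$ be an Aubin--Talenti profile, so that $\|U\|_{L^{p^*}(\R^N)}=\eucl(N,p)\,\||\nabla U|\|_{L^p(\R^N)}$. For $\lambda,M>0$ set $u_{\lambda,M}:=\chi_M(\sfd(x_0,\cdot)/\lambda)\,U(\sfd(x_0,\cdot)/\lambda)\in\LIP_c(\X)$, with $\chi_M$ a radial cutoff equal to $1$ on $[0,M]$, vanishing off $[0,2M]$, and $|\chi_M'|\lesssim 1/M$. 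Since $\sfd(x_0,\cdot)$ is $1$-Lipschitz, $|D u_{\lambda,M}|\le\lambda^{-1}\big(|U'|+\tfrac{c}{M}U\big)(\sfd(x_0,\cdot)/\lambda)$, while by Bishop--Gromov the rescaled volume ratios $r\mapsto\lambda^{-N}\mm(B_{\lambda r}(x_0))$ are nonincreasing in $\lambda$ and decrease to $\omega_N r^N{\sf AVR}(\X)$; equivalently $(\sfd(x_0,\cdot)/\lambda)_\#(\lambda^{-N}\mm)$ converges weakly to the pushforward of ${\sf AVR}(\X)\,\Leb N$ under $x\mapsto|x|$. A layer-cake/weak-convergence computation then gives, letting first $\lambda\to\infty$ and then $M\to\infty$,
\[
\frac{\|u_{\lambda,M}\|_{L^{p^*}(\mm)}}{\||D u_{\lambda,M}|\|_{L^p(\mm)}}\ \longrightarrow\ \frac{{\sf AVR}(\X)^{1/p^*}\,\|U\|_{L^{p^*}(\R^N)}}{{\sf AVR}(\X)^{1/p}\,\||\nabla U|\|_{L^p(\R^N)}}=\eucl(N,p)\,{\sf AVR}(\X)^{-1/N},
\]
where the cancellation of the powers of $\lambda$ is exactly $N/p^*-(N-p)/p=0$ and the tail contribution of the cutoff vanishes as $M\to\infty$. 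This shows the constant in \eqref{eq:sharp sobolev growth} cannot be decreased, and together with the inequality just proved it is the optimal one.

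The step I expect to be the main obstacle is securing the sharp isoperimetric inequality with exactly the constant $N\omega_N^{1/N}{\sf AVR}(\X)^{1/N}$ on a general (possibly non-smooth, possibly partially collapsed) $\cd(0,N)$ space with Euclidean volume growth, and verifying that the P\'olya--Szeg\H{o} scheme of Section~\ref{sec:euclidean polya} genuinely applies with the unbounded, infinite-measure choice $\Omega=\X$. Granted those two points, Steps $2$--$4$ above reduce to a standard rearrangement argument together with a routine dilation/concentration test-function computation, and the value of $\eucl(N,p)$ for non-integer $N$ is the one fixed in \eqref{eq:euclnp}.
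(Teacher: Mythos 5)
Your argument is essentially sound, but it follows a genuinely different route from the paper's. For the inequality you invoke the \emph{global} sharp isoperimetric inequality $\Per{E}\ge N\omega_N^{1/N}{\sf AVR}(\X)^{1/N}\mm(E)^{\frac{N-1}{N}}$ on $\cd(0,N)$ spaces with Euclidean volume growth (the result of \cite{BK21} recalled in Remark \ref{rmk:eucl isop}), feed it into the Euclidean P\'olya--Szeg\H{o} inequality of Theorem \ref{thm:eu_polyaszego} and conclude with Bliss. The paper instead never uses that external theorem: it applies its own local almost-Euclidean isoperimetric inequality (Theorem \ref{theorem:almost euclidean isop}), packaged as the local Sobolev inequality of Theorem \ref{thm:local sobolev intro}, to functions in $\LIP_c(B_r(x))$, and then lets $R\to\infty$ (so that $\theta_{N,R}(x)\to{\sf AVR}(\X)$ and the Brunn--Minkowski error terms vanish for $K=0$), then $\eps\to0$ and $r\to\infty$; this keeps the proof self-contained. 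Your worry about applying the P\'olya--Szeg\H{o} scheme with $\Omega=\X$ (infinite measure) is not a real obstruction: since $u\in\LIP_c(\X)$ you may take $\Omega$ to be any large ball containing $\supp u$, on which the global isoperimetric inequality trivially restricts with the same constant ${\sf C_{Isop}}=N\omega_N^{1/N}{\sf AVR}(\X)^{1/N}$, and Theorem \ref{thm:eu_polyaszego} applies verbatim; whether you rearrange into $(\R^N,{\sf AVR}(\X)\Leb N)$ or into $(I_{0,N},\mm_{0,N})$ carrying the factor ${\sf AVR}(\X)^{1/N}$ from \eqref{eq:euclpolya} is only a cosmetic difference, using the $c^{-1/N}$ scaling of $\eucl(N,p)$ you point out. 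For sharpness, your large-scale truncated Aubin--Talenti bubbles, with Bishop--Gromov giving weak convergence of $(\sfd(x_0,\cdot)/\lambda)_\#(\lambda^{-N}\mm)$ to ${\sf AVR}(\X)\sigma_{N-1}t^{N-1}\,\d t$, do produce an extremizing sequence (the exponent bookkeeping $N/p^*-N/p+1=0$ and the vanishing of the cutoff error, which uses $p<N$, are exactly as you say); the paper obtains the same conclusion more structurally from Theorem \ref{thm:sobolev implies growth}, a blow-down plus stability argument (Lemma \ref{lem:sobolev stability}) that moves the computation to the limit cone, where Lemma \ref{lem:volume cone} gives exact polar coordinates and Lemma \ref{lem:extremal sequence cone} plays the role of your bubble computation. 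What your route buys is brevity, at the cost of relying on the deep nonsmooth isoperimetric theorem of \cite{BK21}; what the paper's route buys is independence from that result and, as a by-product, the rigidity statement of Theorem \ref{thm:sobolev implies growth}, which is reused for Corollary \ref{thm:sharpSobgrowthtop}.
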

This extends a result recently derived in \cite{BK21} in the case of Riemannian  manifolds and answers positively to a question posed in \cite[Sec. 5.2]{BK21}.

Combining Theorem \ref{thm:sharpSobgrowth} with the volume rigidity for 
\emph{non-collapsed} $\RCD$ spaces  in \cite{GDP17} and the results in \cite[Appendix A]{Cheeger-Colding97I} (see also  \cite[Theorem 3.5]{KapMon21}) we immediately get  the following topological rigidity which extends to the non-smooth setting the results for Riemannian manifolds in \cite{Ledoux99,Xia01}.
Recall that an $\RCD(K,N)$ space $\Xdm$ is said to be non-collapsed (see Definition \ref{def:ncRCD}) if $\mm=\Haus{N}$, the $N$-dimensional Hausdorff measure   (this notion has been introduced in \cite{GDP17}, see also \cite{Kita17} and inspired by \cite{Cheeger-Colding97I}).

\begin{corollary}[Topological-rigidity of Sobolev embeddings]\label{thm:sharpSobgrowthtop}
For every $N \in \N$, $p\in(1,N)$ and $\eps>0$ there exists $\delta>0$ such that the following holds.
	Let $(\X,\sfd,\Haus{N})$ be an $\RCD(0,N)$ space  with Euclidean volume growth and such that
	\begin{equation}\label{eq:sharp sobolev growth top}
		\|u\|_{L^{p^*}(\mm)}\le (\eucl(N,p)+\delta)\|| D u|\|_{L^p(\mm)}, \qquad \forall u \in \LIP_c(\X).
	\end{equation}
Then $\X$ is homeomorphic to $\R^N$ and $\sfd_{GH}(B_r(x),B_r(0^N))\le \eps r$ for every $x \in \X$ and $r>0.$
\end{corollary}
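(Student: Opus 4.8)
The plan is to combine the sharpness of the Sobolev constant in Theorem \ref{thm:sharpSobgrowth} with the (quantitative) volume rigidity for non-collapsed $\RCD(0,N)$ spaces of \cite{GDP17} and an intrinsic Reifenberg-type theorem. \emph{First}, I would use \eqref{eq:sharp sobolev growth top} to pin down the asymptotic volume ratio. By Theorem \ref{thm:sharpSobgrowth} the smallest constant for which the Sobolev inequality \eqref{eq:sharp sobolev growth} is valid on $\X$ equals $\eucl(N,p)\,{\sf AVR}(\X)^{-1/N}$; since \eqref{eq:sharp sobolev growth top} holds with constant $\eucl(N,p)+\delta$, this forces
\[
\eucl(N,p)\,{\sf AVR}(\X)^{-1/N}\le \eucl(N,p)+\delta,
\]
hence ${\sf AVR}(\X)\ge \big(1+\delta/\eucl(N,p)\big)^{-N}=:1-\eta$ with $\eta=\eta(N,p,\delta)\downarrow 0$ as $\delta\downarrow 0$. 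Moreover, since $\X$ is $\RCD(0,N)$ and non-collapsed, Bishop--Gromov monotonicity of $r\mapsto \Haus N(B_r(x))/(\omega_N r^N)$, together with $\theta_N\le 1$, gives
\[
1-\eta\le {\sf AVR}(\X)\le \frac{\Haus N(B_r(x))}{\omega_N r^N}\le 1,\qquad \forall x\in\X,\ \forall r>0,
\]
so that every ball, at every scale, is almost-Euclidean in volume.

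\emph{Second}, I would invoke the almost volume rigidity in the non-collapsed $\RCD$ setting: for every $\eps'>0$ there is $\eta_0=\eta_0(N,\eps')>0$ such that if $(\X,\sfd,\Haus N)$ is $\RCD(0,N)$ and $\Haus N(B_r(x))\ge (1-\eta_0)\,\omega_N r^N$ for all $x\in\X$ and $r>0$, then $\sfd_{GH}\big(B_r(x),B_r(0^N)\big)\le \eps' r$ for all $x\in \X$ and $r>0$. This is the quantitative form of the rigidity theorem of \cite{GDP17} (namely ${\sf AVR}(\X)=1$ implies $\X$ isometric to $\R^N$), and it follows from it by the standard contradiction-and-compactness argument: one rescales a hypothetical bad ball to unit size, passes to a pointed-mGH limit using the precompactness and stability of (non-collapsed) $\RCD(0,N)$ spaces, and uses the lower semicontinuity of volume ratios along non-collapsed limits to deduce that the limit has all volume ratios equal to $1$, hence is $\R^N$, contradicting the bad-ball assumption. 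Applying this with the volume bound from the first step, and choosing $\delta$ (hence $\eta$) small enough in terms of $N$, $p$ and $\eps'$, we obtain $\sfd_{GH}\big(B_r(x),B_r(0^N)\big)\le \eps' r$ for all $x\in\X$ and $r>0$.

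\emph{Third}, I would conclude. Taking $\eps'\le \eps$ and $\eps'$ below the dimensional threshold of the intrinsic Reifenberg-type theorem in \cite[Appendix A]{Cheeger-Colding97I} (see also \cite[Theorem 3.5]{KapMon21}), the uniform scale-invariant closeness of all balls of $\X$ to Euclidean balls yields a (bi-H\"older) homeomorphism between $\R^N$ and $\X$, and in particular $\sfd_{GH}(B_r(x),B_r(0^N))\le \eps r$ for all $x,r$. Unwinding the choices, $\delta$ depends only on $N$, $p$ and $\eps$, as required. The main obstacle is the second step: one must ensure that the quantitative almost volume rigidity, with \emph{scale-invariant} and \emph{basepoint-uniform} Gromov--Hausdorff estimates, is genuinely available in the (possibly non-smooth) $\RCD$ framework --- this relies on the full non-collapsed theory of \cite{GDP17} and on careful bookkeeping of the measure normalization and of the behavior of volume ratios and of ${\sf AVR}$ under pointed-mGH limits; once this is in place, the remaining steps are black-box applications of the cited results.
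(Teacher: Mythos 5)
Your proposal is correct and follows essentially the same route the paper intends: the lower bound on ${\sf AVR}(\X)$ from Theorem \ref{thm:sobolev implies growth} (equivalently, the sharpness in Theorem \ref{thm:sharpSobgrowth}), the Bishop--Gromov squeeze giving scale-invariant almost-Euclidean volume of all balls, the volume (almost-)rigidity of \cite{GDP17}, and the intrinsic Reifenberg theorem of \cite[Appendix A]{Cheeger-Colding97I} (see also \cite[Theorem 3.5]{KapMon21}). The only remark is that your compactness re-derivation in the second step is not needed, since the quantitative almost volume rigidity with the scale-invariant Gromov--Hausdorff estimate is exactly the statement of \cite[Theorem 1.6]{GDP17} after rescaling.
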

To deduce the above result, a lower bound on the optimal constant in \eqref{eq:sharp sobolev growth} is actually sufficient (see Theorem \ref{thm:sobolev implies growth}).

\medskip
\noindent\emph{Concentration compactness and mGH-convergence.}
As often happens for almost-rigidity results in ${\rm RCD}$ spaces, Theorem \ref{thm:almrigidity} will be proved by compactness. However, in the case $q=2^*$ we have a strong lack of compactness, hence for the proof we will need an additional tool, which is a concentration compactness result under mGH-convergence of compact ${\rm RCD}$-spaces. In particular, we will prove a concentration-compactness dichotomy principle (see Lemma \ref{lem:conccompII} and Theorem \ref{thm:CC_Sob} below) in the spirit of \cite{Lions85} (see also the monograph \cite{Struwe08}), but under varying underlying measure. As far as we know, this is the first result of this type dealing with varying spaces and we believe it to be interesting on its own.
\medskip

\noindent\emph{Existence for the Yamabe equation and mGH-continuity of Yamabe constant on $\RCD$ spaces}
As an application of Theorem \ref{thm:alfa} we show that on a compact $\RCD(K,N)$ space a (non-negative and non-zero) solution to the so-called Yamabe equation
\begin{equation}\label{eq:yamabe intro}
    -\Delta u+\sca \, u =\lambda u^{2^*-1}, \qquad \text{for }\lambda \in \R,\  \sca \in L^p(\mm),\,p>N/2,
\end{equation}
exists provided 
\[\lambda_\sca(\X)\coloneqq \inf_{u \in W^{1,2}(\X)\setminus \{0\}} \frac{\int |D u|^2+ \sca |u|^2 \,\d \vol}{\|u\|_{L^{2^*}(M)}^2}<\frac{\min \theta_N^{N/2}}{\eucl(N,2)^2},\]
where $\lambda_\sca$ is called generalized Yamabe constant (see Theorem \ref{thm:4.3}). This extends a classical result on smooth Riemannian manifolds (see Section \ref{sec:scalarPDE} for more details and references).

We also show the continuity of the generalized Yamabe constant under measure Gromov-Hausdorff convergence. More precisely for a sequence $\X_n$ of compact $\RCD(K,N)$ spaces such that $\X_n \overset{mGH}{\rightarrow}\X_\infty$ with $\X_\infty$ a compact $\RCD(K,N)$ space, we show that
$$\lim_n \lambda_{\sca_n}(\X_n)=\lambda_{\sca}(\X_\infty),$$
where $\sca_n$ converges $L^p$-weak to $\sca$ for some $p>N/2$. See Theorem \ref{thm:GHstablelambda} for a precise statement and Section \ref{sec:mgh} for the definition of $L^p$-weak convergence with varying spaces. This result extends and sharpens an analogous statement proved for Ricci-limits in \cite{Honda18}, where an additional boundedness assumption on the sequence $\lambda_{\sca_n}(\X_n)$ is required.

\subsection{Proof-outline of the rigidity of $\aopt_q$}\label{sec:strategy}

Here we explain the scheme of the proof of the rigidity  result in Theorem \ref{thm:rigidity}.

We consider only the case $q = 2^*$, since it is the most interesting one and we also restrict to
the case of manifolds, which already contains all the main ideas.

Let $M$ be a  compact $n$-manifold $M$, with ${\rm Ric}\ge n-1$ and $\aopt_{2^*}(M)=\aopt_{2^*}(\S^n)$, $n\ge 3$. This is equivalent to the existence of a sequence $(u_i)\subset W^{1,2}(M)$ of non-constant functions satisfying $\|  u_i \|^2_{L^{2^*}(\mm)}=1$ and
	\begin{equation} \cQ(u_i):=\frac{\|  u_i \|^2_{L^{2^*}}- \vol(M)^{-2/n}\|  u_i \|^2_{L^2} }{\vol(M)^{-2/n}\| \nabla   u_i\|^2_{L^2} } \rightarrow A^{\rm opt}_{2^*}(\S^n). \label{eq:extremal}
	\end{equation}
	In a nutshell, the strategy of the proof consists in a fine investigation of these sequences.

We will show (Theorem \ref{thm:CC_Sob}) that $(u_i)$ up to a subsequence can have only  three  possible behaviors. For each case the conclusion will follow applying a different rigidity theorem.

\noindent\textsc{Case 1} (Convergence to extremal). The sequence $u_i $ converges in $L^{2^*}$ to a \emph{non constant} extremal function $u$ such that $\cQ(u)= \aopt_{2^*}(M)=\aopt_{2^*}(\S^n)$. This forces the monotone rearrangement $u^*: \S^n\to \R$ (as defined in Section \ref{sec:polya}) to achieve equality in the P\'olya-Szeg\H{o} inequality. Since $u$ is assumed not constant, the rigidity case of the P\'olya-Szeg\H{o} inequality (see Theorem \ref{thm:Polyarigidity}) ensures $M=\S^n$.

\noindent\textsc{Case 2}  (Convergence to constant). The sequence $u_i $ converges  in $L^{2^*}$ to a \emph{constant} function $u\equiv c$. Up to renormalization (of the volume measure), it can be assumed that $\int u_i=1$ and $u\equiv 1$. In this case the rigidity follows exploiting that the linearization of the Sobolev inequality is the Poincaré inequality.  More precisely we write $u_i= 1+v_i,$ so that $v_i\coloneqq u_i-1$ has zero mean. Then it can be shown that:
\[ \frac{2^*-2}{n}=\aopt_{2^*}(\S^n) = \lim_{i\to \infty} \cQ (u_i) = \lim_{i\to \infty }\frac{(2^*-2)\|v_i\|_{L^2}^2}{\| \nabla v_i\|_{L^2}^2} \le \frac{2^*-2}{\lambda_1(M)}, \]
where $\lambda_1(M)$ is the spectral gap. This forces $\lambda_1(M)=n$ and the conclusion follows by classical Obata's rigidity theorem.

\noindent\textsc{Case 3}  (Concentration in a single point). The sequence $u_i$ vanishes, i.e. $\|u_i\|_{L^2}\to 0$ (in fact the following concentration happens:  $|u_i|^{2^*}\rightharpoonup \delta_{p}$ for some point $p\in M$).  Here is where the constant $\alpha_{2}(M)$ enters into play. Indeed, by definition of $\alpha_2(M)$, for every $\eps>0$ there exists $B_\eps$ such that 
\[
	1=\|u_i\|_{L^{2^*}}^2\le (\alpha_2(M)+\eps)  \|\nabla  u_i\|_{L^2}^2 + B_\eps  \|u_i\|_{L^2}^2, \qquad \forall \,i\in \N.
\]
Moreover, from $\|u_i\|_{L^2}\to 0$ we must have $\liminf _i \|\nabla  u_i\|_{L^2}^2>0 $.  Combining these two observation we obtain that
\[
\limsup_i \frac{\|u_i\|_{L^{2^*}}^2}{ \|\nabla  u_i\|_{L^2}^2}\le (\alpha_2(M)+\eps).
\]
By assumption $\cQ(u_i)\to \aopt_{2^*}(\S^n)$, which implies
\[
\liminf_i \frac{\|u_i\|_{L^{2^*}}^2}{ \|\nabla  u_i\|_{L^2(M)}^2}\ge \vol(M)^{-2/n}\aopt_{2^*}(\S^n).
\]
Therefore $\alpha_2(M)\ge \vol(M)^{-2/n}\aopt_{2^*}(\S^n).$ However combining \eqref{eq:alfa manifold} with $$ \eucl(n,2)^2=\aopt_{2^*}(\S^n)\vol(\S^n)^{-2/n},$$ we have $\alpha_2(M)=\aopt_{2^*}(\S^n)\vol(\S^n)^{-2/n}$, that coupled with the previous observation yields $
\vol(M)\ge  \vol(\S^n).$
This  and the Bishop-Gromov volume ratio implies that $\vol(M)=\vol(\S^n)$, which forces $\diam(M)=\pi$ and the required rigidity follows from Cheng's diameter rigidity theorem.
\medskip

\noindent \textbf{Structure of the paper}. This note is organized as follows:

We begin in Section \ref{sec:preliminaries} with the necessary preliminaries concerning Sobolev calculus on metric measure spaces and the main properties of $\cd / \RCD$ spaces. 

Section \ref{sec:upper bound alpha} is devoted to show the upper bound of $\alpha_p(\X)$ in \eqref{eq:alfa rcd}. This upper bound is  obtained from a class of local Euclidean Sobolev inequalities (Theorem \ref{thm:local sobolev intro}). To prove these inequalities we  develop, in the general framework of $\cd(K,N)$ spaces, a Euclidean P\'olya-Szeg\H{o} inequality (Section \ref{sec:euclidean polya}), which is then  coupled with a local isoperimetric inequality of Euclidean type (Theorem \ref{theorem:almost euclidean isop}).

Section \ref{sec:lower bound alfa} is devoted to achieve the lower bound of $\alpha_p(\X)$ in \eqref{eq:alfa rcd} and, combined with the previous section, the proof of Theorem \ref{thm:alfa}. Here we also derive, as an application, sharp Sobolev inequalities on $\cd(0,N)$ spaces (Section \ref{sec:sharp sobolev}).

In Section \ref{sec:Aopt}, we consider three different geometric bounds on the optimal constant $\aopt_q$ in the  Sobolev inequality \eqref{eq:critical sobolev}: an upper bound depending on the Ricci curvature bounds (Section \ref{sec:upper bound A}), a lower bound in terms of the first eigenvalue (Section \ref{sec:lower bound aopt}) and a lower bound in terms of the diameter (Section \ref{Secdiameterbounds}).

In Section \ref{sec:rigidity}, we prove our main rigidity result on $\aopt_q$, namely Theorem \ref{thm:rigidity}. To this aim we  develop  a concentration compactness dichotomy principle under mGH-convergence, in the $\RCD(K,N)$ setting (Theorem \ref{thm:CC_Sob}). The second ingredient for the rigidity is instead a quantitative linearization lemma for the Sobolev inequality that we prove in Section \ref{sec:linearization}.

In Section \ref{sec:almrigidity}, we  prove the main almost-rigidity result of this note stated in Theorem \ref{thm:almrigidity}. This will be obtained as a consequence of the continuity of the constant $\aopt_q$ under mGH-convergence (Section \ref{sec:continuity aopt}). For this result we will need to fully exploit the concentration compactness tools under mGH-convergence developed in the previous section.

Finally, in Section \ref{sec:scalarPDE} we conclude this note by studying the so-called generalize Yamabe equation on $\RCD(K,N)$ spaces. We will prove a classical existence result in Section \ref{sec:existence yamabe} while in Section \ref{sec:continuity yamabe} we will show a continuity result for the generalized Yamabe constant under mGH-convergence.

\section{Preliminaries}\label{sec:preliminaries}
\subsection{Basic notations}
We collect once and for all the key constants appearing in this note.

\begin{quote}
For all $ N \in [1,\infty), p \in (1,N)$, we define the generalized\footnote{For an integer $N$, $\omega_N$ is the volume of the unit ball in $\R^N$ and $\sigma_N$ is the volume of the $N$-sphere $\S^N.$} unit ball and unit sphere volumes by
\begin{equation}
\omega_N\coloneqq\frac{\pi^{N/2}}{\Gamma\left(N/2+1\right)}, \quad \sigma_{N-1}\coloneqq N\omega_{N},\label{eq:omegasigma}
\end{equation}
where $\Gamma$ is the Gamma-function, and the sharp Euclidean Sobolev constant by
\begin{equation}
\eucl(N,p)\coloneqq \frac 1N\Big(\frac{N(p-1)}{N-p}\Big)^{\frac{p-1}p}\Big(\frac{\Gamma(N+1)}{N\omega_N\Gamma(N/p)\Gamma (N+1-N/p)}\Big)^{\frac 1N}.\label{eq:euclnp}
\end{equation}
For $N>2$ and $p=2$, the above reduces to
\begin{equation}
\eucl(N,2)= \Big(\frac{4}{N(N-2)\sigma_N^{2/N}}\Big)^\frac{1}{2}.\label{eq:eucln2}
\end{equation}
We will sometimes need also the following identity: 
\begin{equation}\label{eq:integral sin}
    \int_0^\pi \sin^{N-1}(t)\, \d t = \frac{\sigma_{N}}{\sigma_{N-1}}, \quad \forall N >1.
\end{equation}
\end{quote}

Throughout this note a metric measure space will be a triple $\Xdm$, where
\[\begin{split}
(\X,\sfd)\qquad & 	\text{is a complete and separable metric space},\\
\mm\neq 0\qquad & \text{is non negative and boundedly finite Borel measure}.
\end{split}
\]
To avoid technicalities, we will work under the assumption that $\supp(\mm)=\X$. 

We will denote by  $\LIP(\X),\, \LIP_b(\X),\, \LIP_{bs}(\X)$, $\LIP_c(\X)$, $C(\X)$, $C_b(\X)$ and $C_{bs}(\X)$ respectively the spaces of Lipschitz functions, Lipschitz and bounded functions, Lipschitz functions with bounded support, Lipschitz functions with compact support, continuous functions, continuous and bounded functions and continuous functions with bounded support on $\X$. We will also denote  by $\LIP_c(\Omega)$ and $\LIP_{loc}(\Omega)$, for $\Omega\subset \X$ open, the spaces of Lipschitz functions with compact support and locally Lipschitz functions in $\Omega$. Moreover, if $f \in \LIP(\X)$, we denote by $\Lip(f)$ its Lipschitz constant, and we say that $f$ is $L$-Lipschitz, for $L>0$, if $\Lip(f)\le L$. Also, we recall the notion of local Lipschitz constant for a locally Lipschitz function $f$:
\[ \lip \ f(x) := \limsup_{y\rightarrow x} \frac{|f(y)-f(x)|}{\sfd(x,y)},\]
taken to be $0$ if $x$ is isolated.

We will denote by  $ \MM^+_b(\X)$ and $\PP(\X)$ respectively the  space of  Borel non-negative finite measure and Borel probability measures on $\X$. By  $\PP_2(\X)$, we denote the class of probability measures with finite second moment, that is the space of all $\mu \in \PP(\X)$ so that $\int \sfd^2(x,x_0)\, \d \mu(x)<\infty$ for some (and thus, any) $x_0 \in \X$. Given two complete metric spaces $(\X,\sfd),(\Y,\sfd_\Y)$ a Borel measure $\mu$ on $\X$ and a Borel map $\varphi \colon \X\to\Y$, the \emph{pushforward} of $\mu$ via $\varphi$, is the measure  $(\varphi)_\sharp \mu$ on $\Y$ defined by $(\varphi)_\sharp \mu(E):= \mu(\varphi^{-1}(E))$ for every $E\subset \Y$. Then two metric measure spaces $(\X_i,\sfd_i,\mm_i)_{i=1,2}$ are said to be \emph{isomorphic}, $\X_1 \simeq \X_2$ in short, if there exists an isometry $\iota : \X_1\to \X_2$ such that $(\iota)_\sharp\mm_1=\mm_2.$

%We will consider equipping these classes with the \emph{weak topology} in duality with $C_b(\X)$ and we recall that, since we are assuming $\X$ to be Polish, it is metrizable. Sometimes, we will deal with compact metric measure spaces $\X$, where evidently this topology reduces to the weak$^*$-topology. 

For $B \subset \X$ we will denote by diam$(B)$ the quantity $\sup_{x,y \in B}\sfd(x,y)$. We say that a metric measure space $\Xdm$ is locally doubling if for every $R>0$, there exists a constant $C:=C(R)$ so that
\[ \mm(B_{2r}(x)) \le C\mm(B_r(x)), \qquad \forall x \in \X, r \in (0,R).\]
 Whenever $C(R)$ can be taken independent of $R$ we say that $(\X,\sfd,\mm)$ is doubling. 
 
 A geodesic for us will denote a constant speed length-minimizing curve between its endpoints and defined on $[0,1]$, i.e. a curve $\gamma :[0,1]\to \X$ so that $\sfd(\gamma_t,\gamma_s)=|t-s|\sfd(\gamma_0,\gamma_1)$, for every $t,s \in [0,1]$. Also, we denote by ${\rm Geo}(\X)$ the set of all geodesics and call $\X$ a geodesic metric space, provided for any two couple of points, there exists a geodesic linking the two as already discussed. To conclude, we define the evaluation map $\sfe_t$, $t \in [0,1]$, as the assignment $\sfe_t \colon C([0,1],\X) \to \X$ defined via $\sfe_t(\gamma):= \gamma_t$.

\subsection{Calculus on metric measure spaces}
\subsubsection{Sobolev spaces}
We start recalling the notion of Sobolev spaces in a metric measure space. We refer to \cite{HKST15,Gigli14, GP20} for more details on this topic. 

The concept of Sobolev space for a metric measure space was introduced in the seminal works of Cheeger \cite{Cheeger99} and of Shanmugalingam \cite{Shan00}, while here we  adopt the approach  via Cheeger energy developed in \cite{AGS13} and proved there to be  equivalent with the notions in  \cite{Shan00,Cheeger99}.

Let $p \in (1,\infty)$ and $\Xdm$ be a metric measure spaces.  The $p$-Cheeger energy $\rmCh_p \colon L^p(\mm)\to [0,\infty]$ is defined as the convex and lower semicontinuous functional 
\[ \rmCh_p(f) := \inf \Big\{ \liminf_{n\to\infty} \int \lip^p  f_n\, \d \mm \colon (f_n) \subset L^p(\mm)\cap \LIP(\X), \lim_n\|f-f_n\|_{L^p(\mm)} =0 \Big\}.\]
The $p$-Sobolev space is then defined as the space $W^{1,p}(\X):= \{ \rmCh_p <\infty\}$  equipped with the norm $\| f\|^p_{W^{1,p}(\X)} := \|f\|_{L^p(\mm)}^p + \rmCh_p(f)$, which makes it  a Banach space. Under the assumption that $\Xdm$ is doubling, $W^{1,p}(\X)$ is reflexive as proven in \cite{ACDM15} and in particular the class $\LIP_{bs}(\X)$ is dense in $W^{1,p}(\X)$ (see also the more recent \cite{E-BS21}). Finally, exploiting the definition by relaxation given for the $p$-Cheeger energy, it can be proved (see \cite{AGS13}) that whenever $f \in W^{1,p}(\X)$, then there exists a minimal $\mm$-a.e.\ object $|Df|_p \in L^p(\mm)$ called minimal $p$-weak upper gradient so that
\[ \rmCh_p(f) := \int |Df|_p^p\, \d \mm.\]
In general, the dependence on $p$ of such object is hidden and not trivial (that is why we introduced the $p$-subscript in the object $|Df|_p$), as shown for example in the analysis \cite{DMS15}. Nevertheless, in this note, we are mainly concerned in working on a class of spaces, which will be later discussed, where such dependence is ruled out (see Remark \ref{rmk:p independence CD}). In this case, the subscript will be automatically omitted. 

We will often need to consider the case when $\Xdm$ is a weighted interval with a weight that is bounded away from zero, i.e. $\Xdm=([a,b],|.|,h \Leb 1)$, $a,b\in \R$ with $a<b$ where $h \in L^1([a,b])$ and for every $\eps>0$ there exists $c_{\eps}>0$ so that $h \ge c_\eps$ $\Leb 1$-a.e.\ in $[a+\eps,b-\eps].$ In this case, we denote by $W^{1,p}([a,b],|.|,h \Leb 1)$ the $p$-Sobolev space over the weighted interval according to the metric definition relying on the Cheeger energy, while simply write $W^{1,p}(a,b)$ for the classical definition via integration by parts. It can be shown that (for example using  \cite[Remark 4.10]{AGS13})
\begin{equation}\label{eq:sobolev ac}
    f \in W^{1,p}([a,b],|.|,h \Leb 1) \quad \iff \quad \begin{array}{l} f \in W_{loc}^{1,1}(a,b)\text{ with }  f,f'\in L^p(h\Leb 1),
    \end{array}
\end{equation}
in which case $ |Df|_p=|f'|$ $\Leb 1$-a.e..

%We list now some key calculus tools concerning the minimal weak upper gradient:
%\begin{itemize}
%   \item[] \textsc{Locality} $|Df|=|Dg|$ $\mm$-a.e. on $\{f=g\}$.;
%    \item[]\textsc{Chain rule} $\phi\circ f \in W^{1,p}(\X)$ and $|D(\phi\circ f)| \le \Lip(\phi)|Df|$ $\mm$-a.e.;
%    \item[] \textsc{Leibniz} $h f \in W^{1,p}(\X)$ and $|D(h f)| \le \Lip(h) f + h |Df|$ $\mm$-a.e..
%\end{itemize}
%for every $f,g \in W^{1,p}(\X)$, $\phi \in \LIP(\R)$ and $h \in \LIP_b(\X)$. 

For every $\Omega \subset \X$ we   define also the local Sobolev space $W^{1,p}_0(\Omega)\subset L^p(\Omega)$ as
\[
W^{1,p}_0(\Omega)\coloneqq \overline{ \LIP_c(\Omega)}^{W^{1,p}(\X)}. 
\]
From the previous discussion, if $\X$ is locally compact and  locally doubling, then $W^{1,p}_0(\X)=W^{1,p}(\X).$

Next, according to the definition given in \cite{Gigli12}, we say that $\Xdm$ is  \emph{infinitesimally Hilbertian} if $W^{1,2}(\X)$ is a Hilbert space. This property reflects that the underlying geometry looks Riemannian at small scales and can equivalently be characterized via the validity of the following parallelogram identity
\begin{equation}
 |D(f+g)|_2^2 + |D(f-g)|_2^2 = 2|Df|_2^2+ |Dg|_2^2 \qquad \mm\text{-a.e.}, \quad \forall \, f,g \in W^{1,2}(\X).\label{eq:parallelogram}
\end{equation}
This allows to give a notion of scalar product between gradients of  Sobolev functions
\begin{equation}\label{eq:scalare}
    L^1(\mm) \ni \la \nabla f, \nabla g\ra \coloneqq \lim_{\eps \to 0}\frac{|D(f+\eps g)|_2^2-|Df|_2^2}{\eps},\qquad \forall f,g \in W^{1,2}(\X),
\end{equation}
where the limit exists  and is bilinear on its entries, as it can be directly checked using \eqref{eq:parallelogram}. Notice that the symbol $\la \nabla f, \nabla g\ra $ is purely formal. Nevertheless, by introducing the right framework to discuss gradients $\nabla f$, it can be made rigorous (see  \cite{Gigli14}), but we will never need this fact. 

In the infinitesimal Hilbertian class, we can give a notion of a \emph{measure-valued Laplacian} via integration by parts. Since it will be enough for our purposes, we will only consider the compact case.
\begin{definition}[Measure-valued Laplacian, \cite{Gigli12}]
Let $\Xdm$ be a compact infinitesimally Hilbertian metric measure space. We say that $f \in W^{1,2}(\X)$ has a measure-valued Laplacian, and we write $f \in D(\DDelta)$, provided there exists a Radon measure $\mu$ such that
\[ \int g \, \d \mu = - \int \la \nabla f,\nabla g\ra \, \d \mm, \qquad \forall g \in \LIP(\X).\]
In this case the we will denote (the unique) $\mu$ by $\DDelta f$.
\end{definition}
From  the bilinearity of the pointwise inner product we see that $D(\DDelta)$ is a vector space and the assignment $f\mapsto \DDelta f$ is linear.

%We recall from \cite[Proposition 4.11]{Gigli12} a useful calculus rule associated to this object:
%\begin{itemize}
 %   \item[] \textsc{Chain rule} $\phi\circ f \in D(\DDelta)$ and $\DDelta (\phi\circ f) = \phi '\circ f \DDelta f + \phi ''\circ f |Df|^2 \mm.$
%\end{itemize}
%for every $f \in D(\DDelta)\cap \LIP(\X)$ and $\phi \colon f(\X) \to \R$ that is $C^2$. We did not report the weakest assumption on $f,\phi$ and $\X$. Nevertheless, we notice that being $f$ continuous, the expression $\phi '\circ f \DDelta f $ makes sense as a locally finite measure and similarly the second term as $\phi ''\circ f |Df|^2$ is bounded.

\subsubsection{Functions of bounded variations and sets of finite perimeter}
We  introduce the space of functions of  bounded variation and sets finite perimeter following  \cite{ADM14,Miranda03}.
\begin{definition}[BV-functions]\label{def:BV}
A function $f \in L^1(\mm)$ is of bounded variation, and we write $f \in BV(\X)$, provided there exists a sequence of locally Lipschitz functions $f_n \to f$ in $L^1(\mm)$ such that
\[ \limsup_{n\to \infty} \int \lip \ f_n \, \d \mm <\infty.\]
\end{definition}
By localizing this definition, we can define accordingly
\[ |\dD f|(A):= \inf \Big\{ \liminf_{n\to \infty} \int_A \lip \ f_n\, \d \mm \colon f_n \subset \LIP_{loc}(A), f_n \to f \text{ in } L^1(A) \Big\},\]
for every open $A\subset \X$. It turns out (see \cite{ADM14} and also \cite{Miranda03} for locally compact spaces) that the map $A\mapsto |\dD f|(A)$ is the restriction to open sets of a non-negative finite Borel measure called  the \emph{total variation} of $f$, which we will still denote by $|\dD f|$.

For every $f \in \LIP_{bs}(\X)$ we clearly have that $|\dD f|\le \lip \ f\mm$ and in particular that $|\dD f|\ll \mm$. In this case we call $|Df|_1$ the density of $|\dD f|$ with respect to $\mm$.

If we suitably modify Definition \ref{def:BV} for functions in $L^1_{loc}(\mm)$ we can choose $f=\nchi_E$ for any $E\subset \X$ Borel and define:
\begin{definition}[Perimeter and finite perimeter sets]
Let $E$ be Borel and $A$ open subset of $\X$. The perimeter of $E$ in $A$, written ${\rm Per}(E,A)$ is defined as
\[  {\rm Per}(E,A):= \inf \Big\{ \liminf_{n\to \infty} \int_A \lip \ u_n\, \d \mm \colon u_n \subset \LIP_{loc}(A), u_n \to \nchi_E \text{ in } L^1_{loc}(A) \Big\}.\]
Moreover, we say that $E$ is a set of finite perimeter if ${\rm Per}(E,\X)<\infty$.
\end{definition}
Again, (see, e.g. \cite{Ambrosio02,ADM14, Miranda03}), when $E$ has finite perimeter, it holds that $A\mapsto {\rm Per}(E,A)$ is the restriction of a non-negative finite Borel measure to open sets, which we denote by ${\rm Per}(E,\cdot)$. Moreover, as a common convention, when $A=\X$ we simply write ${\rm Per}(E)$ instead of ${\rm Per}(E,\X)$.

For a Borel set $E\subset \X$ of finite measure we also define its Minkowski content as:
\[
\mm^+(E)=\liminf_{\delta \to 0^+} \frac{\mm(E^\delta)-\mm(E)}{\delta},
\]
where $E^\delta\coloneqq\{x \in \X  \ : \ \sfd(x,E)<\delta\}$.  In general we only have $\Per{E}\le \mm^+(E)$.

We recall that the following \emph{coarea formula} is valid after \cite[Proposition 4.2]{Miranda03}.
\begin{theorem}[Coarea formula]
	Let $(\X,\sfd,\mm)$ be a locally compact metric measure space and $f \in BV(\X)$. Then the set $\{f>t\}$ is of finite perimeter for a.e.\ $t\in \R$ and given any Borel function $g:\X \to [0,\infty)$, it holds that
	\begin{equation}\label{eq:coarea}
		\int_{\{s\le u<t\}} g \, \d |\dD f| =\int_s^t \int g\, \d \Per{\{f>t\},\cdot }\, \d t, \qquad \forall s,t \in [0,\infty), \ s<t.
	\end{equation}
\end{theorem}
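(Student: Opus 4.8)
The plan is to reduce everything to the single measure identity $|\dD f| = \int_{\R}\Per{\{f>r\},\cdot}\,\d r$ and then to read off the stated formula, following the classical route of \cite{Miranda03}. Concretely, I would first prove that for every open $A\subset\X$ one has $|\dD f|(A)=\int_{\R}\Per{\{f>r\},A}\,\d r$; since $f\in BV(\X)$ this quantity is finite, so $r\mapsto\Per{\{f>r\},\cdot}$ is an a.e.-defined measurable family of finite Borel measures whose superposition $\int_{\R}\Per{\{f>r\},\cdot}\,\d r$ is a finite Borel measure, which agrees with $|\dD f|$ on open sets and hence on all Borel sets; in particular $\{f>r\}$ has finite perimeter for a.e.\ $r$. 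The version with a general nonnegative Borel $g$ and with the level band $\{s\le f<t\}$ would then follow by applying the identity to the truncation $f_{s,t}:=(f\vee s)\wedge t$ (using $\{f_{s,t}>r\}=\{f>r\}$ for $r\in(s,t)$ and $|\dD f_{s,t}|\res\{s<f<t\}=|\dD f|\res\{s<f<t\}$), then approximating $g$ by simple functions and invoking monotone convergence.

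For the inequality $|\dD f|(A)\ge\int_{\R}\Per{\{f>r\},A}\,\d r$ I would argue by lower semicontinuity. Take locally Lipschitz $f_n\to f$ in $L^1(A,\mm)$ realizing $|\dD f|(A)=\lim_n\int_A\lip f_n\,\d\mm$. For a locally Lipschitz $u$ one has the elementary ``smooth'' coarea inequality $\int_A\lip u\,\d\mm\ge\int_{\R}\Per{\{u>r\},A}\,\d r$, obtained by testing $\Per{\{u>r\},A}$ against $\min(1,\max(0,(u-r)/\eps))$, whose local Lipschitz constant is at most $\eps^{-1}\nchi_{\{r<u<r+\eps\}}\lip u$, then integrating in $r$ and letting $\eps\to0^+$ with Fubini. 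Since $\int_A|f_n-f|\,\d\mm=\int_{\R}\mm\big(A\cap(\{f_n>r\}\triangle\{f>r\})\big)\,\d r\to0$, up to a subsequence $\nchi_{\{f_n>r\}}\to\nchi_{\{f>r\}}$ in $L^1_{loc}(A)$ for a.e.\ $r$; combining the smooth coarea inequality with lower semicontinuity of the perimeter and Fatou's lemma gives $|\dD f|(A)=\lim_n\int_A\lip f_n\,\d\mm\ge\int_{\R}\varliminf_n\Per{\{f_n>r\},A}\,\d r\ge\int_{\R}\Per{\{f>r\},A}\,\d r$.

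The reverse inequality is the heart of the matter, and I expect it to be the main obstacle. Here I would first reduce to $0\le f\le M<\infty$ by truncation and use the layer-cake representation $f=\int_0^{M}\nchi_{\{f>r\}}\,\d r$. For each level $r$ I would select, using the definition of perimeter and a diagonal extraction, locally Lipschitz functions $g^n_r$ with $0\le g^n_r\le1$, $g^n_r\to\nchi_{\{f>r\}}$ in $L^1_{loc}(A)$ and $\int_A\lip g^n_r\,\d\mm\to\Per{\{f>r\},A}$; the delicate points are to perform this selection measurably in $r$ and with Lipschitz constants integrable in $r$, which is where the local compactness and ``good'' structure of $\X$ really enter. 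Then I would superpose, setting $f_n:=\int_0^{M}g^n_r\,\d r$, check that $f_n\to f$ in $L^1_{loc}(A)$ and that $\lip f_n(x)\le\int_0^{M}\lip g^n_r(x)\,\d r$ (from $|f_n(y)-f_n(x)|\le\int_0^M|g^n_r(y)-g^n_r(x)|\,\d r$ and dominated convergence as $y\to x$), and conclude by Fubini and dominated convergence that $|\dD f|(A)\le\varliminf_n\int_A\lip f_n\,\d\mm\le\int_0^{M}\Per{\{f>r\},A}\,\d r$.

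Putting the two inequalities together yields the measure identity, and hence the theorem via the reduction of the first paragraph. The only residual subtlety I foresee is, in that reduction, the treatment of the jump levels $r=s$ and $r=t$: these are negligible for a.e.\ choice of $s<t$, and the statement for arbitrary $s<t$ then follows from the monotonicity in $s$ and $t$ of both sides of the identity.
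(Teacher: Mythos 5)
The paper itself offers no proof of this statement: it is quoted verbatim from Miranda's work (the reference given there), so your attempt is to be measured against the classical argument rather than anything internal to the paper. Your global structure is exactly the classical one: establish the measure identity $|\dD f|=\int_\R \Per{\{f>r\},\cdot}\,\d r$, then obtain the weighted, banded version \eqref{eq:coarea} by truncation, simple-function approximation of $g$ and monotone convergence. Your proof of the inequality $|\dD f|(A)\ge\int_\R\Per{\{f>r\},A}\,\d r$ (the truncation test functions $\min(1,\max(0,(u-r)/\eps))$ for locally Lipschitz $u$, Cavalieri's formula, Fatou and lower semicontinuity of the perimeter) is correct; you should only add a word on the measurability of $r\mapsto\Per{\{f>r\},A}$, which follows from the monotone (hence, up to countably many levels, $L^1_{loc}$-continuous) dependence of $\nchi_{\{f>r\}}$ on $r$ together with lower semicontinuity of the perimeter.

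The genuine gap is in the reverse inequality. Your plan needs, for each $n$, a family $r\mapsto g^n_r$ of locally Lipschitz recovery functions chosen \emph{measurably} in $r$ and with local Lipschitz constants \emph{integrable} in $r$; you flag this but do not supply it, and nothing in the definition of perimeter provides it: that definition controls only $\int_A\lip g^n_r\,\d\mm$, not any pointwise or uniform Lipschitz bound. Without such control the superposed function $f_n:=\int_0^M g^n_r\,\d r$ need not even be locally Lipschitz (so it is not admissible in the relaxation defining $|\dD f|(A)$), and the bound $\lip f_n(x)\le\int_0^M\lip g^n_r(x)\,\d r$ requires a reverse-Fatou argument with an $r$-integrable dominant that you do not have; local compactness does not rescue either point. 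The standard way around this (and essentially the argument in the cited reference, as in the Euclidean case) avoids any selection in $r$: assume $0\le f\le M$ and $\int_0^M\Per{\{f>r\},A}\,\d r<\infty$, discretize the layer-cake formula by setting $f_N:=\sum_i(t_{i+1}-t_i)\nchi_{\{f>t_i\}}$ for partitions $0=t_0<\dots<t_N=M$; subadditivity of the total variation (immediate from the relaxation definition, since sums of locally Lipschitz functions are locally Lipschitz and $\lip$ is subadditive) gives $|\dD f_N|(A)\le\sum_i(t_{i+1}-t_i)\Per{\{f>t_i\},A}$; choosing the grid by averaging over shifted partitions makes the right-hand side converge to $\int_0^M\Per{\{f>r\},A}\,\d r$, while $f_N\to f$ uniformly, so lower semicontinuity of $|\dD\cdot|(A)$ under $L^1_{loc}$ convergence yields $|\dD f|(A)\le\int_0^M\Per{\{f>r\},A}\,\d r$. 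With that replacement for the hard direction, your outline becomes a complete proof.
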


\subsection{$\cd(K,N)$ and $\RCD(K,N)$ spaces}

\subsubsection{Main definitions and properties}\label{sec:cd rcd}
In this note, as anticipated in the introduction, we will work in the general framework of metric measure spaces $(\X,\sfd,\mm)$ satisfying synthetic Ricci curvature lower bounds. For completeness, we briefly recall the  definition and  the key  properties that we will need.

The first notion of synthetic Ricci lower bounds was given independently in the seminal papers \cite{Lott-Villani09} and \cite{Sturm06-1,Sturm06-2} where the authors introduced the celebrated \emph{curvature dimension condition}. We report here its definition only in finite dimension $N \in [1,\infty)$, given in term of convexity properties of the $N$-\emph{R\'enyi-entropy}  functional $\mathcal{U}_N: \PP_2(\X) \to [-\infty,0]$ defined by
\[ 
\mathcal{U}_N(\mu|\mm) := -\int \rho^{1-\frac{1}{N}}\, \d \mm, \qquad \text{if }\mu= \rho \mm + \mu^s,
\]
where $\mu \in \PP_2(\X)$ and $\mu^s$ is singular with respect to $\mm$. In this note, since optimal transportation plays a minor role, we shall assume the reader to be familiar with Optimal Transport and the Wasserstein Space $(\PP_2(\X),W_2)$ and we refer to \cite{Villani09} for a systematic discussion (see also \cite{guide}).

We start recalling the definition of distortion coefficients. For every $K \in \R, N \in [0,\infty), t \in [0,1]$ set
\[
\sigma^{(t)}_{K,N}(\theta) := \begin{cases} 
\ +\infty, &\text{if } K\theta^2 \ge N\pi^2,\\
\ \frac{\sin(t\theta\sqrt{K/N})}{\sin(\theta\sqrt{K/N)}}, &\text{if } 0<K\theta^2<N\pi^2,\\
\ t, &\text{if } K\theta^2<0\text{ and } N=0 \text{ or if } K\theta^2=0, \\
\ \frac{\sinh(t\theta\sqrt{-K/N})}{\sinh(\theta\sqrt{-K/N)}}, &\text{if } K\theta^2\le 0 \text{ and }N>0.\\
\end{cases}
\]
Set also, for $N>1$, $\tau^{(t)}_{K,N}(\theta) := t^{\frac{1}{N}}\sigma^{(t)}_{K,N-1}(\theta)^{1-\frac{1}{N}}$ while $\tau^{(t)}_{K,1}(\theta) =t$ if $K\le 0$ and $\tau^{(t)}_{K,1}(\theta)=\infty$ if $K>0$.

\begin{definition}[$\cd(K,N)$-spaces]\label{def:CDKN}
Let $K \in\R$ and $N\in [1,\infty)$. A metric measure space $\Xdm$ satisfies the \emph{curvature dimension condition} $\cd(K,N)$ if, for every $\mu_0,\mu_1 \in \PP_2(\X)$ absolutely continuous with bounded supports, there exists a dynamical optimal transference plan $\pi \in \PP({\rm Geo}(\X))$ between $\mu_0,\mu_1$ so that: for every $t\in [0,1]$ and $N'\ge N$, we have $\mu_t:=({\sf e}_t)_\sharp \pi =\rho_t\mm$ and
\begin{equation}
\mathcal{U}_{N'}(\mu_t|\mm) \le - \int \Big( \tau^{(1-t)}_{K,N'}(\sfd(\gamma_1,\gamma_0))\rho_0(\gamma_0)^{-\frac{1}{N}} + \tau^{(t)}_{K,N'}(\sfd(\gamma_1,\gamma_0))\rho_1(\gamma_1))^{-\frac{1}{N}}\Big) \, \d \pi(\gamma).
\label{eq:CDqKN}
\end{equation}
\end{definition}

We recall the also the notion of one-dimensional model space for the $\cd(N-1,N)$ condition:

\begin{definition}[One dimensional model space]\label{def:model space}
    For every  $N>1$ we define  $I_N\coloneqq([0,\pi],|.|,\mm_N)$, where $|.|$ is the Euclidean distance restricted on $[0,\pi]$ and
\[\mm_{N}:= \tfrac{1}{c_{N}}\sin^{N-1}\Leb 1\restr{[0,\pi]}, \]
with $c_{N}:= \int_{[0,\pi]}\sin(t)^{N-1}\, \d t $.
\end{definition}

To encode a more ``Riemannian" behavior of the space, and to rule out Finsler spaces which are allowed by the $\cd$ condition, it was introduced in \cite{AGS14} the so-called $\RCD$ condition in the infinite dimensional case (see also \cite{GMS15} for the case of $\sigma$-finite reference measure). In this note however we will only work  in finite dimensional $\RCD$-spaces introduced in \cite{Gigli12}.
\begin{definition}[$\RCD(K,N)$-spaces]\label{def:RCDKN}
Let $K \in\R$ and $N\in [1,\infty)$. A metric measure space $\Xdm$ is an $\RCD(K,N)$-space, provided it is an infinitesimal Hilbertian $\cd(K,N)$-space.
\end{definition}
\begin{remark}\label{rmk:p independence CD}
Spaces satisfying the $\cd(K,N)$ (and thus also the $\RCD(K,N)$) condition, support a $(1,1)$-local Poincar\'e inequality  (see \cite{Rajala12}) and by the Bishop-Gromov inequality below they are  locally-doubling, therefore from the results in \cite{Cheeger99} we know that the minimal weak upper gradient is independent on the exponent $p$ (see also \cite{GH14}). For this reason, to lighten the notation, in this setting we will simply write $|Df|$ for $f \in W^{1,p}(\X)$ and call it simply minimal weak upper gradient of $f$. \fr
\end{remark}

We start by recalling some useful properties about these spaces that are going to be used in the sequel. 

On $\cd(K,N)$ spaces the Bishop-Gromov inequality holds (see \cite{Sturm06-2}):
\begin{equation}
\frac{\mm(B_R(x))}{v_{K,N}(R)}\le \frac{\mm(B_r(x))}{v_{K,N}(r)}, \quad \text{for any $0<r<R\le \pi \sqrt{\frac{N-1}{K^+}}$ and any $x \in \X$},\label{eq:Bishop}
\end{equation}
where  the quantities $v_{K,N}(r)$, $N\in[1,\infty)$ $K \in \R$ are defined as
\[
v_{K,N}(r)\coloneqq\sigma_{N-1}\int_0^r |s_{K,N}(t)|^{N-1}\, \d t,
\]
and  $s_{K,N}(t)$ is defined as $\sin\Big ( t\sqrt{\frac{K}{N-1}} \Big),$ if $K>0$, $ \sinh\Big ( t\sqrt{\frac{|K|}{N-1}} \Big)$, if  $K<0$ and $t$ if $K=0$. In particular $\cd(K,N)$ spaces are uniformly locally doubling and thus proper, i.e. closed and bounded sets are also compact. We also note that in the case $K=0$ this implies that the limit 
\[
{\sf AVR(\X)}\coloneqq\lim_{r\to +\infty}\frac{\mm(B_r(x))}{\omega_Nr^N}
\]
exists finite and does not depend on the point $x \in \X$. We call the quantity ${\sf AVR(\X)}$ \emph{asymptotic volume ratio} of $\X$ and if ${\sf AVR(\X)}>0$ we say that $\X$ has \emph{Euclidean-volume growth}. A key role in the note will be played by the following quantities:
\[
\theta_{N,r}(x)\coloneqq\frac{\mm(B_r(x))}{\omega_N r^N}, \quad 
\theta_{N}(x)\coloneqq\lim_{r \to 0^+} \theta_{N,r}(x), \qquad \forall \, r>0, x \in \X.
\]
Observe that the above limit exists thanks to the Bishop-Gromov inequality and  the fact that $\lim_{r \to 0^+} \frac{\omega_N r^N}{v_{K,N}(r)}=1$ for every $K\in \R$, $N \in[1,\infty),$ which in particular grants that 
\begin{equation}\label{eq:theta equivalent}
    \theta_{N}(x)=\lim_{r\to 0} \frac{\mm(B_r(x))}{v_{K,N}(r)}=\sup_{r>0} \frac{\mm(B_r(x))}{v_{K,N}(r)}.
\end{equation}
This and the fact that $\mm(\partial B_r(x))=0$ for every $r>0$ and $x\in\X$ (which follows from the Bishop-Gromov inequality), implies that $\theta_N(x)$ is a \emph{lower-semicontinuous} function of $x.$ Therefore, when $\X$ is compact, there exists  $\min_{x \in \X}\theta_N(x)$.

Next we recall the Brunn-Minkowski inequality.
\begin{theorem}[\cite{Sturm06-2}]
	Let $\Xdm$ be a $\cd(K,N)$ space with $N\in[1,\infty)$, $K \in \R$. For any couple of Borel sets $A_0,A_1 \subset \X$ it holds that 
	\begin{equation}\label{eq:brunn}
		\mm(A_t)^\frac1N\ge\sigma_{K,N}^{(1-t)}(\theta)\mm(A_0)^\frac1N+ \sigma_{K,N}^{(t)}(\theta)\mm(A_1)^\frac1N, \qquad \forall t \in[0,1],
	\end{equation}
	where $A_t\coloneqq \{\gamma_t \ : \ \gamma \text{ geodesic such that } \gamma_0\in A_0,\, \gamma_1\in A_1 \}$ and
$$
\theta:= 
\begin{cases}
	{\inf}_{(x_{0},x_{1}) \in A_{0} \times A_{1}} \sfd(x_{0},x_{1}), & \textrm{if } K \geq 0, \\
{\sup}_{(x_{0},x_{1}) \in A_{0} \times A_{1}} \sfd(x_{0},x_{1}), & \textrm{if } K < 0,
\end{cases}
$$
\end{theorem}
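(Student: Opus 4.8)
The plan is to derive \eqref{eq:brunn} directly from the entropy‑convexity inequality \eqref{eq:CDqKN}, applied to the uniform probability measures supported on $A_0$ and $A_1$. First I would reduce to the case in which $A_0,A_1$ are bounded with $0<\mm(A_i)<\infty$: replacing $A_i$ by $A_i\cap B_k(x_0)$ and letting $k\to\infty$ works, because the set of $t$‑midpoints of geodesics joining $A_0\cap B_k$ to $A_1\cap B_k$ is contained in $A_t$, the numbers $\mm(A_i\cap B_k)$ increase to $\mm(A_i)$, and (by the monotonicity of the distortion coefficients in $\theta$ discussed below) the coefficient evaluated at $\inf$/$\sup$ over the truncated sets dominates the one at $\theta$, so the truncated inequality passes to the limit. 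The degenerate cases $\mm(A_0)\mm(A_1)=0$ are handled separately by a routine approximation, as in \cite{Sturm06-2}.

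Assume then $A_0,A_1$ bounded with positive finite measure, and set $\mu_i:=\mm(A_i)^{-1}\,\mm\res A_i\in\PP_2(\X)$, which are absolutely continuous with bounded support and densities $\rho_i=\mm(A_i)^{-1}\nchi_{A_i}$. Applying Definition~\ref{def:CDKN} with $N'=N$ produces a dynamical optimal plan $\pi\in\PP(\Geo(\X))$ with $\mu_t=(\sfe_t)_\sharp\pi=\rho_t\mm$ for all $t$. Since $\mu_0(A_0)=\mu_1(A_1)=1$, the plan $\pi$ is concentrated on $\sfe_0\inv(A_0)\cap\sfe_1\inv(A_1)\subseteq\sfe_t\inv(A_t)$; as $A_t$ is a continuous image of a Borel subset of the Polish space $\Geo(\X)$, it is analytic, hence $\mm$‑measurable, and $\mu_t$ is concentrated on it. Therefore $\rho_t=0$ $\mm$‑a.e. outside $A_t$ and $\int_{A_t}\rho_t\,\d\mm=1$, so Jensen's inequality for the concave map $s\mapsto s^{1-1/N}$ against the probability measure $\mm(A_t)^{-1}\mm\res A_t$ gives $-\mathcal U_N(\mu_t|\mm)=\int_{A_t}\rho_t^{1-1/N}\,\d\mm\le \mm(A_t)\big(\mm(A_t)^{-1}\big)^{1-1/N}=\mm(A_t)^{1/N}$. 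On the other hand, multiplying \eqref{eq:CDqKN} by $-1$ and using $\rho_0(\gamma_0)^{-1/N}=\mm(A_0)^{1/N}$, $\rho_1(\gamma_1)^{-1/N}=\mm(A_1)^{1/N}$ for $\pi$‑a.e.\ $\gamma$, one obtains
\[
\mm(A_t)^{1/N}\ \ge\ \int\Big(\tau^{(1-t)}_{K,N}\big(\sfd(\gamma_0,\gamma_1)\big)\,\mm(A_0)^{1/N}+\tau^{(t)}_{K,N}\big(\sfd(\gamma_0,\gamma_1)\big)\,\mm(A_1)^{1/N}\Big)\,\d\pi(\gamma).
\]

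To finish I would bound the integrand from below. If $K\ge 0$ then $\sfd(\gamma_0,\gamma_1)\ge\theta=\inf_{A_0\times A_1}\sfd$ with $\theta\le\diam(\X)\le\pi\sqrt{(N-1)/K}$ by Bonnet–Myers, and $s\mapsto\tau^{(t)}_{K,N}(s)$ is non‑decreasing on that range; if $K<0$ then $\sfd(\gamma_0,\gamma_1)\le\theta=\sup_{A_0\times A_1}\sfd$ and $s\mapsto\tau^{(t)}_{K,N}(s)$ is non‑increasing on $[0,\infty)$ (vanishing at $+\infty$, which makes the claim trivial when $\theta=+\infty$). In both cases $\tau^{(\cdot)}_{K,N}(\sfd(\gamma_0,\gamma_1))\ge\tau^{(\cdot)}_{K,N}(\theta)$ $\pi$‑a.e., hence $\mm(A_t)^{1/N}\ge \tau^{(1-t)}_{K,N}(\theta)\mm(A_0)^{1/N}+\tau^{(t)}_{K,N}(\theta)\mm(A_1)^{1/N}$, and \eqref{eq:brunn} follows from the elementary comparison $\tau^{(s)}_{K,N}(\theta)\ge\sigma^{(s)}_{K,N}(\theta)$ (the same inequality that yields $\cd(K,N)\Rightarrow\cd^*(K,N)$). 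The main obstacle is the measure‑theoretic bookkeeping: checking that $A_t$ is measurable and that $\mu_t$ is genuinely concentrated on it, so that $\mm(A_t)$—and not the measure of a larger set—controls $-\mathcal U_N(\mu_t|\mm)$ from above, together with choosing $\theta$ as the infimum or supremum of $\sfd$ on $A_0\times A_1$ according to the sign of $K$; the monotonicity and the $\tau\ge\sigma$ comparison are classical facts about the coefficients.
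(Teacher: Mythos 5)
The paper gives no proof of this statement: it is recalled from \cite{Sturm06-2} (with the remark that the $\sigma$-form \eqref{eq:brunn} is weaker than Sturm's $\tau$-form and already holds for $\cd^*(K,N)$ spaces). Your argument is, in substance, exactly the original one of Sturm: normalize the restrictions $\mu_i=\mm(A_i)^{-1}\mm\res A_i$, apply \eqref{eq:CDqKN} with $N'=N$, note that $\mu_t=(\sfe_t)_\sharp\pi$ is concentrated on $A_t$ (your analytic-set remark settles measurability), bound $-\mathcal{U}_N(\mu_t|\mm)\le\mm(A_t)^{1/N}$ by Jensen, use that $\rho_i^{-1/N}\equiv\mm(A_i)^{1/N}$ $\pi$-a.e., and conclude via the monotonicity of $s\mapsto\tau^{(t)}_{K,N}(s)$ (non-decreasing on the Bonnet--Myers range for $K\ge0$, non-increasing for $K<0$) together with the classical comparison $\tau^{(t)}_{K,N}\ge\sigma^{(t)}_{K,N}$. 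All of these steps, including the truncation to bounded sets, are sound, and the chosen $\theta$ (infimum for $K\ge 0$, supremum for $K<0$) is handled correctly.

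The one step that would not work as described is the degenerate case in which exactly one of the sets is $\mm$-null, which you dismiss as ``routine approximation'': enlarging a null $A_0$ to $A_0^\eps$ produces a midpoint set that is \emph{not} contained in $A_t$ (nor, in a possibly branching $\cd$ space, in a controlled neighbourhood of it), so neither your truncation scheme nor an obvious limiting argument transfers the inequality back to $A_t$; the normalization at the heart of the proof genuinely needs $\mm(A_0)\,\mm(A_1)>0$, and the null case is an MCP-type statement requiring a separate argument. This is immaterial for the present paper, which applies \eqref{eq:brunn} only to sets of positive measure (in the proof of Theorem \ref{theorem:almost euclidean isop}), but if you want the statement literally for arbitrary Borel sets you should either add the positivity hypothesis or prove that case independently.
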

We remark that \eqref{eq:brunn} is actually weaker than the statement appearing in \cite{Sturm06-2} and it holds for the (a priori) larger class of ${\rm CD}^*(K,N)$ spaces (see \cite{BS10}).

 We report the Bonnet-Myers diameter-comparison theorem for $\cd$-spaces from \cite{Sturm06-2}:
\begin{equation}
    \text{$\Xdm$  is a $\cd(K,N)$ space}, \text{ for some }K>0 \quad \Rightarrow \quad \text{diam}(\X) \le \pi\sqrt{\tfrac{N-1}K},\label{eq:BonnetMyers}
\end{equation}

The Lichnerowitz $2$-spectral gap inequality  is valid also in the $\cd$-setting. To state it we recall the notion of  first non-trivial Neumann eigenvalue of the Laplacian (or $2$-spectral gap) in metric measure spaces.
\begin{definition}
	Let $(\X,\sfd,\mm)$ be a metric measure space with finite measure. We define the first non trivial $2$-eigenvalue $ \lambda^{1,2}(\X)$ as the non-negative number given by
	\begin{equation}\label{eq:spectral gap}
		 \lambda^{1,2}(\X)\coloneqq \inf \left \{ \frac{\int |D f|_2^2\, \d\mm}{\int |f|^2\, \d\mm}  \ : \ f \in \LIP(\X)\cap L^2(\mm),\, f \neq 0,\, \int f\, \d \mm=0   \right\}.
	\end{equation}
\end{definition}
Clearly, in light of \cite{AGS13}, in the above definition one can equivalently take the infimum among all $f \in W^{1,2}(\X)$. In the sequel will use this fact without further notice. 

Then the spectral-gap inequality as proven in \cite{LV09} (see also \cite{JZ16}) says that:
\[ \lambda^{1,2}(\X) \ge N, \qquad \text{for every } \cd(N-1,N)\text{-space $\X$},\]
with $N$ ranging in $(1,\infty)$.
%We remark that in \cite{CM17}, the above has been generalized to every $p \in (1,\infty)$ taking into account also an upper diameter bound.

We conclude this part  recalling some rigidity and stability statements for $\RCD(K,N)$ spaces and to this goal we need to define the notion of spherical suspension over a metric measure space. For any $N \in [1,\infty)$ the $N$-spherical suspension over a metric measure space $(\Z,\mm_\Z,\sfd_\Z)$ is defined  to be the space
$([0,\pi]\times_{\sin}^N \Z)\coloneqq \Z\times \left[0,\pi \right]/(\Z\times \left\{0,\pi\right\})$ endowed with the following distance and measure
\begin{align*}
	&\sfd((t,z),(s,z'))\coloneqq \cos^{-1}\big(\cos(s)\cos(t)+\sin(s)\sin(t)\cos\left({\sfd}_\Z(z,z')\wedge\pi\right)\big),\\
	&\mm\coloneqq \sin^{N-1}(t) \d t\,\otimes\, \mm_\Z.
\end{align*}
It turns out that the ${\rm RCD}$ condition is stable under the action of taking spherical suspensions, more precisely it has been proven in \cite{Ketterer14} that

\begin{equation}\label{eq:ketterer}
	\begin{split}
	&[0,\pi]\times_{\sin}^N \Z, N\ge 2\text{ is a } \RCD(N-1,N) \text{ space if and only if}\\
	&\diam(\Z)\le \pi \text{ and $\Z$ is an ${\rm RCD}(N-2,N-1)$ space,}
	\end{split}
\end{equation}

We can now recall the two main rigidity statements that we will use in the note: the maximal diameter theorem and the Obata theorem for $\RCD(K,N)$ spaces:
\begin{theorem}[\cite{Ketterer15}]\label{thm:obata}
    Let $\Xdm$ be an $\RCD(N-1,N)$ space with  and $N\in[2,\infty)$ and suppose that $\diam(\X)=\pi$. Then $\Xdm$ is isomorphic to a spherical suspension, i.e. there exists an $\RCD(N-2,N-1)$ space $(\Z,\sfd_\Z,\mm_\Z)$ with $\diam(\Z)\le \pi$ satisfying $\X \simeq [0,\pi ]\times_{\sin}^N \Z$.
\end{theorem}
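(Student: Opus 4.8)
\medskip
\noindent\textbf{Proof proposal.} The plan is to follow Ketterer's argument \cite{Ketterer15}, which reduces the maximal diameter statement to the splitting theorem for nonnegatively curved spaces by passing to the Euclidean cone. Recall the cone characterization of the curvature--dimension condition: for $N\ge 2$, a space $\X$ is $\RCD(N-1,N)$ with $\diam(\X)\le\pi$ if and only if its Euclidean cone $C(\X)$ is $\RCD(0,N+1)$; moreover the cone over a spherical suspension splits off a line, $C\big([0,\pi]\times_{\sin}^N\Z\big)\simeq\R\times C(\Z)$, and (cf.\ \eqref{eq:ketterer}) $[0,\pi]\times_{\sin}^N\Z$ is $\RCD(N-1,N)$ precisely when $\Z$ is $\RCD(N-2,N-1)$ with $\diam(\Z)\le\pi$. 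Since $\X$ is recovered as the unit sphere of $C(\X)$ about its apex with the induced length metric (here $\diam(\X)\le\pi$ is used), it suffices to exhibit an apex-preserving isomorphism $C(\X)\simeq\R\times C(\Z)$ for some $\RCD(N-2,N-1)$ space $\Z$ of diameter $\le\pi$.

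First I would pass to the cone. The space $\X$ is compact (it is proper by Bishop--Gromov \eqref{eq:Bishop} and bounded by Bonnet--Myers \eqref{eq:BonnetMyers}), so the diameter is attained: pick $p,q\in\X$ with $\sfd(p,q)=\pi$. Because $\sfd(p,q)=\pi$, the cone distance between $(s,p)$ and $(t,q)$ equals $s+t$, so the two rays issuing from the apex $o\in C(\X)$ in the directions of $p$ and of $q$ concatenate into a bi-infinite geodesic line. As $C(\X)$ is $\RCD(0,N+1)$, the splitting theorem yields $C(\X)\simeq\R\times Y$ with $Y$ an $\RCD(0,N)$ space, the line going to $\R\times\{y_o\}$ and $o$ to $(0,y_o)$.

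Next I would recognize $Y$ as a cone. Here I would exploit that $C(\X)$ carries the one-parameter family of dilations $\delta_\lambda$ fixing $o$ and scaling distances by $\lambda$; each $\delta_\lambda$ preserves the line (it preserves the two generating rays), hence by the rigidity of the $\R$-splitting it factors as a map of $\R$ times a map of $Y$, the $Y$-factor being a dilation of $Y$ fixing $y_o$. Thus $Y$ is a metric cone, $Y\simeq C(\Z)$ with $\Z=\{\,\sfd_Y(\cdot,y_o)=1\,\}$ carrying the induced length metric and the disintegrated measure; and since $C(\Z)=Y$ is $\RCD(0,N)$, the cone criterion one dimension lower forces $\Z$ to be $\RCD(N-2,N-1)$ with $\diam(\Z)\le\pi$. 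Combining, $C(\X)\simeq\R\times C(\Z)$ by an apex-preserving isomorphism, whence $\X\simeq[0,\pi]\times_{\sin}^N\Z$.

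The routine part will be producing the line and performing the splitting; the hard part will be the cone characterization of $\RCD(N-1,N)$ and, above all, the step identifying the splitting factor $Y$ as a cone carrying the \emph{sharp} synthetic bound on $\Z$ --- essentially a ``volume cone to metric cone'' rigidity --- together with the underlying $\RCD(0,N+1)$ splitting theorem, a deep result with no $\cd$-analogue; all of this demands careful bookkeeping of the cone's measure disintegration. An alternative, more hands-on route to the same end analyzes the functions $w_p:=\cos\circ\,\sfd(p,\cdot)$ and $w_q:=\cos\circ\,\sfd(q,\cdot)$ through the Laplacian comparison theorem: one obtains $\DDelta w_p\le -Nw_p\,\mm$ and likewise for $w_q$, shows by integration over the compact $\X$ that $w_p+w_q\equiv 0$ (equivalently $\sfd(p,\cdot)+\sfd(q,\cdot)\equiv\pi$), and uses the resulting eigenfunction $w_p$, which satisfies $|\nabla w_p|^2=1-w_p^2$, to build the $\sin$-warped structure directly.
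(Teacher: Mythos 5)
The paper does not prove this theorem --- it is imported from Ketterer \cite{Ketterer15} and used as a black box (alongside the companion Obata theorem from \cite{Ketterer14}). So the comparison is with the cited reference. Your sketch is a faithful reconstruction of Ketterer's route: the cone criterion $\X\in\RCD(N-1,N),\ \diam\le\pi \Leftrightarrow C(\X)\in\RCD(0,N+1)$; an antipodal pair $p,q$ with $\sfd(p,q)=\pi$ giving, via the spherical law of cosines, a geodesic line through the apex (since $\sfd_{C(\X)}((s,p),(t,q))^2=s^2+t^2-2st\cos\pi=(s+t)^2$); Gigli's splitting $C(\X)\simeq\R\times Y$ with $Y\in\RCD(0,N)$; identification of $Y$ as a cone; and then the reversal $C(\X)\simeq\R\times C(\Z)\simeq C([0,\pi]\times_{\sin}^N\Z)$ with $\X$ recovered as the unit sphere about the apex.

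The one place you hand-wave --- ``by the rigidity of the $\R$-splitting it factors as a map of $\R$ times a map of $Y$'' --- is the step that needs a real argument, and as stated it could be read as invoking a uniqueness-of-splitting statement you have not established. The clean way to close it is through the Busemann function $b$ of the line used in the splitting: from $\delta_\lambda(\gamma(t))=\gamma(\lambda t)$ one computes $b\circ\delta_\lambda=\lambda b$, so $\delta_\lambda$ preserves $Y=b^{-1}(0)$, fixes the apex $(0,y_o)$, and, since the product distance restricts to the ambient distance on the slice, restricts to a measure-rescaling $\lambda$-dilation of $Y$ fixing $y_o$. That yields the cone structure of $Y$ directly, with no recourse to a de Rham-type uniqueness.

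A small sign slip in your alternative sketch: with the paper's convention for $\DDelta$ (negative semi-definite), the Laplacian comparison gives $\DDelta w_p \ge -N w_p\,\mm$, not $\le$. You then also need the pointwise inequality $w_p+w_q\le 0$ (from $\sfd(p,\cdot)+\sfd(q,\cdot)\ge\pi$ and monotonicity of $\cos$ on $[0,\pi]$), which you omit; integrating $\DDelta w_p,\DDelta w_q\ge -Nw_{p},-Nw_q$ over the compact $\X$ gives $\int(w_p+w_q)\,\d\mm\ge 0$, and combined with the pointwise bound this forces $w_p+w_q\equiv 0$. From there the warped-product reconstruction is indeed the substance of Ketterer's Obata-type theorem \cite{Ketterer14}.
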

\begin{theorem}[\cite{Ketterer14}]\label{thm:diameter rigidity}
    Let $\Xdm$ be an $\RCD(N-1,N)$ space with  and $N\in[2,\infty)$ and suppose that $\lambda^{1,2}(\X)=N$. Then $\Xdm$ is isomorphic to a spherical suspension, i.e. there exists an $\RCD(N-2,N-1)$ space $(\Z,\sfd_\Z,\mm_\Z)$ with $\diam(\Z)\le \pi$ satisfying $\X \simeq [0,\pi ]\times_{\sin}^N \Z$.
\end{theorem}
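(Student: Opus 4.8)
The plan is to transcribe Obata's classical argument to the $\RCD$ setting: produce an extremal Laplacian eigenfunction, feed it into the Bochner inequality, exploit the rigidity of the latter, and conclude via the maximal diameter theorem (Theorem~\ref{thm:obata}).

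First I would check that the infimum in \eqref{eq:spectral gap} is attained. Since $\X$ is a compact $\RCD(N-1,N)$ space it is doubling and supports a Poincar\'e inequality, so the embedding $W^{1,2}(\X)\hookrightarrow L^2(\mm)$ is compact; a minimizing sequence normalized by $\int f\,\d\mm=0$ and $\int f^2\,\d\mm=1$ is bounded in $W^{1,2}(\X)$ and, after passing to a subsequence converging weakly in $W^{1,2}$ and strongly in $L^2$, produces a minimizer $f$ with $\int|\nabla f|^2\,\d\mm=\lambda^{1,2}(\X)=N$. Its Euler--Lagrange equation reads $\DDelta f=-Nf\,\mm$, so $f\in D(\DDelta)$; since $f$ is then bounded and $|\nabla f|\in L^\infty(\mm)$, the Sobolev-to-Lipschitz property makes $f$ Lipschitz, so $f$ is a test function and the improved Bochner inequality is available for it.

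Next I would insert $f$ into the dimensional Bochner inequality on $\RCD(N-1,N)$ spaces,
\[
\tfrac12\,\DDelta|\nabla f|^2 \;\ge\; \Big(\tfrac{(\Delta f)^2}{N}+\la\nabla f,\nabla \Delta f\ra+(N-1)|\nabla f|^2\Big)\,\mm
\]
(with $\Delta f$ the density of $\DDelta f$), integrate it over the compact space $\X$ so that the left-hand side has total mass $0$, and substitute $\Delta f=-Nf$ together with $\int|\nabla f|^2\,\d\mm=N\int f^2\,\d\mm$: the right-hand side integrates to $\big(N-N^2+(N-1)N\big)\int f^2\,\d\mm=0$, which forces $\mm$-a.e.\ equality in Bochner. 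Invoking the rigidity of the improved Bochner inequality (Gigli's second-order calculus on $\RCD$ spaces, see also the work of Han and Ketterer) I would then deduce $f\in D({\rm Hess})$ and ${\rm Hess}\,f=-f\,g$ $\mm$-a.e., $g$ denoting the metric tensor; from the identity $\tfrac12\nabla|\nabla f|^2={\rm Hess}\,f(\nabla f,\cdot)=-f\,\nabla f$ it follows that $|\nabla f|^2+f^2$ is constant, which after rescaling $f$ I may take to be $1$.

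Finally I would promote this to maximal diameter. Since ${\rm Hess}\,f=-f\,g$ and the second-order Taylor expansion of $f$ along geodesics is governed by ${\rm Hess}\,f$, for every geodesic $\gamma$ the function $t\mapsto f(\gamma_t)$ solves $y''+y=0$; testing this along geodesics through a minimum point $p$ and a maximum point $q$ of the continuous function $f$ shows that $p,q$ are critical points of $f$, hence $f(p)^2=f(q)^2=1$ and, as $f(p)<0<f(q)$, $f(p)=-1$, $f(q)=1$. A unit-speed geodesic $\gamma\colon[0,L]\to\X$ from $p$ to $q$ then satisfies $f(\gamma_t)=-\cos t$ (using $y(0)=-1$ and $y'(0)=\la\nabla f(p),\dot\gamma_0\ra=0$), so $f(\gamma_L)=1$ forces $L=\pi$; combined with the Bonnet--Myers bound \eqref{eq:BonnetMyers} this gives $\diam(\X)=\pi$, and Theorem~\ref{thm:obata} produces the $\RCD(N-2,N-1)$ space $(\Z,\sfd_\Z,\mm_\Z)$ with $\X\simeq[0,\pi]\times_{\sin}^N\Z$. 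The hard part of this outline is the use of Bochner rigidity: in the smooth case the identity ${\rm Hess}\,f=-f\,g$ drops out of $|{\rm Hess}\,f|_{HS}^2\ge(\Delta f)^2/n$ together with its equality case, but in the non-smooth setting one needs the full machinery of the $L^2$-valued Hessian and the measure-valued $\Gamma_2$ operator both to formulate the equality case and to extract the pointwise identity from it; the remaining steps are a fairly direct adaptation of the Riemannian proof.
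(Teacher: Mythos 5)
The paper offers no proof of this statement: it is imported verbatim from \cite{Ketterer14} (despite the label, this is the Obata-type rigidity; Theorem \ref{thm:obata} is the maximal diameter theorem of \cite{Ketterer15}), so there is nothing internal to compare with. Your outline in fact reconstructs the strategy of the cited reference: extremal eigenfunction, equality in the dimensional Bochner inequality, ${\rm Hess}\,f=-f\,g$, constancy of $|\nabla f|^2+f^2$, $\diam(\X)=\pi$, and then the maximal diameter theorem. As a skeleton this is the right route, but the two places you treat as routine are exactly where the nonsmooth work lies. First, the extraction of ${\rm Hess}\,f=-f\,g$ cannot run through the smooth heuristic you quote, namely equality in $(\Delta f)^2/N\le |{\rm Hess}\,f|_{HS}^2$: on collapsed $\RCD(N-1,N)$ spaces this trace inequality is simply false. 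On the model space $I_N$ (which has $\lambda^{1,2}=N$ with eigenfunction $\cos t$) the essential dimension is $1$, ${\rm tr\,Hess}\,f=-f\neq \Delta f=-Nf$, and $|{\rm Hess}\,f|^2=f^2<(\Delta f)^2/N$. The equality analysis must go through the self-improved dimensional Bochner inequality carrying the residual term $(\Delta f-{\rm tr\,Hess}\,f)^2/(N-n)$, $n$ the essential dimension (Han's Ricci-tensor paper, and Ketterer's own self-improvement argument); you acknowledge that machinery is needed, but the specific mechanism you name would fail. Also, Lipschitz regularity of the eigenfunction is not ``Sobolev-to-Lipschitz applied to $|\nabla f|\in L^\infty$'' (that is circular as stated); it comes from gradient estimates for eigenfunctions (Jiang), though this is indeed standard.

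Second, the final paragraph is not a direct adaptation of the Riemannian proof. The Hessian produced by the second-order calculus is an $\mm$-a.e.\ defined tensor; it governs second derivatives along $W_2$-geodesics of absolutely continuous measures, not along individual geodesics, which are $\mm$-negligible, so ``the second-order Taylor expansion of $f$ along geodesics is governed by ${\rm Hess}\,f$'' is precisely the statement that has to be proved; in \cite{Ketterer14} this is done through the equivalence between weak Hessian bounds and convexity/concavity along geodesics, and it occupies a substantial part of that paper. Likewise, ``$p,q$ are critical points of $f$'' and ``$y'(0)=\la\nabla f(p),\dot\gamma_0\ra=0$'' presuppose pointwise gradients that do not exist here: at an extremum reached along a one-sided geodesic you only get a sign on the one-sided derivative, and a Lipschitz function on a metric space can have strictly positive slope at a maximum point (think of $-|x|$ at the origin), so $\max f=1$, $\min f=-1$ and $\sfd(p,q)\ge\pi$ require a genuine argument (for instance via the a.e.\ identity $\lip f=\sqrt{1-f^2}$ together with one-dimensional comparison along geodesics, or via the $1$-Lipschitz function $\arccos(f)$). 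So: the approach coincides with the one in the cited source and is viable, but the gap is concentrated in the steps you describe as ``a fairly direct adaptation''.
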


We end this part by recalling the definition of ``non-collapsed'' $\RCD$-spaces, which extends the notion of non-collapsed Ricci-limits introduced in \cite{Cheeger-Colding97I}.
\begin{definition}[\cite{GDP17}]\label{def:ncRCD}
 We say that $\Xdm$ is a \emph{non-collapsed} $\RCD(K,N)$ space, for some $K \in \R, N \in \N$, provided it is $\RCD(K,N)$ and $\mm = \HH^N$, where $\HH^N$ is the $N$-dimensional Hausdorff measure.
\end{definition}
This class of spaces enjoys extra regularity with respect to the general $\RCD$-class and are a suitable setting to derive the \emph{topological rigidity} results of this note. Here we just mention that   if $\theta_N$ is finite  $\mm$-a.e.\ (or equivalently if $\mm\ll \Haus{N}$), then up to a constant multiplicative factor, $\mm$ equals $\HH^N$ and the space is non-collapsed. This has been proved first in \cite{H19} for compact spaces and then in \cite{BGHZ21} in the general case solving a conjecture of \cite{GDP17} (see also \cite{Honda20Conj} for an account on further conjectures around this topic).

\subsubsection{Sobolev-Poincar\'e inequality on $\cd(K,N)$ spaces}
A well-established fact which  goes back to the seminal work \cite{HK00}, is that a $(1,p)$-Poincar\'e inequality on a doubling metric measure space, improves to a $(q,p)$-Poincar\'e inequality with $q>1.$ On $\cd(K,N)$ spaces this translates in the following result.  
\begin{theorem}[$(p^*,p)$-Poincar\'e inequality]\label{thm:improved poincaret}
	Let $\Xdm$ be a $\cd(K,N)$ space for some $N\in(1,\infty)$, $K \in \R$. Fix also $p \in(1,N)$ and $r_0>0$. Then, for every $B_r(x)\subset \X$ with $r\le r_0$ it holds
	\begin{equation}\label{eq:improved poincaret}
		\Big(\fint_{B_r(x)} |u-u_{B_r(x)}|^{p^*} \, \d \mm \Big)^\frac{1}{p^*} \le C(K,N,p,r_0)r \Big(\fint_{B_{2r}(x)} |D u|^{p}\, \d \mm \Big)^\frac{1}{p}, \qquad \forall \, u \in \LIP(\X),
	\end{equation}
	where $p^*\coloneqq pN/(N-p)$ and $u_{B_r(x)}\coloneqq\int_{B_r(x)}u\, \d \mm.$
\end{theorem}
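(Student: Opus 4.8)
The plan is to deduce \eqref{eq:improved poincaret} from the self-improvement of Poincar\'e inequalities on doubling metric measure spaces of \cite{HK00}, the one point to check being that the effective ``homogeneous dimension'' of a $\cd(K,N)$ space at small scales is exactly $N$. Fix $r_0>0$; every constant below is allowed to depend on it. First I record the two structural inputs, both with constants uniform at scales comparable to $r_0$. \emph{(i) Sharp relative volume lower bound.} From the Bishop--Gromov inequality \eqref{eq:Bishop}, the fact that $\omega_N\rho^N/v_{K,N}(\rho)\to1$ as $\rho\to0$, and — when $K>0$ — the Bonnet--Myers bound \eqref{eq:BonnetMyers}, there is $c_1=c_1(K,N,r_0)>0$ with
\[
\frac{\mm(B_\rho(z))}{\mm(B_R(z))}\ \ge\ c_1\Big(\frac{\rho}{R}\Big)^{N},\qquad 0<\rho\le R\le r_0,\ z\in\X;
\]
in particular $(\X,\sfd,\mm)$ is uniformly locally doubling, with constants controlled by $K,N,r_0$. \emph{(ii) Weak local $(1,p)$-Poincar\'e inequality.} By \cite{Rajala12}, $\cd(K,N)$ spaces support a weak local $(1,1)$-Poincar\'e inequality; H\"older's inequality on the right-hand side and the $p$-independence of $|Du|$ here (Remark \ref{rmk:p independence CD}) then give $C_0=C_0(K,N,p,r_0)$ and a dilation $\lambda\ge1$ with
\[
\fint_{B_\rho(z)}\big|u-u_{B_\rho(z)}\big|\,\d\mm\ \le\ C_0\,\rho\Big(\fint_{B_{\lambda\rho}(z)}|Du|^p\,\d\mm\Big)^{1/p},\qquad \rho\le r_0,\ u\in\LIP(\X).
\]

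Granting (i)--(ii), inequality \eqref{eq:improved poincaret} is precisely the conclusion of the cited self-improvement theorem: a weak $(1,p)$-Poincar\'e inequality on a doubling space whose measure satisfies the lower bound (i) with exponent $s$ upgrades to a $(q,p)$-Poincar\'e inequality with $q=\tfrac{sp}{s-p}$, and for $s=N$ this exponent is exactly $p^*=\tfrac{Np}{N-p}$. The argument inside that theorem proceeds in two steps. First, a telescoping sum over the dyadic balls $B_{2^{-j}r}(y)$, $j\ge0$, centred at $\mm$-a.e.\ $y\in B_r(x)$ — using (ii) at each scale, doubling to compare consecutive averages, and the Lebesgue differentiation theorem to identify $\lim_j u_{B_{2^{-j}r}(y)}=u(y)$ — bounds $|u(y)-u_{B_r(x)}|$ by $r$ times a (fractional) maximal function of $|Du|^p$ restricted to a fixed dilate of $B_r(x)$; plugging in (i) with its sharp power $N$ yields the \emph{weak}-type $(p^*,p)$-Poincar\'e inequality. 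Second, Maz'ya's truncation trick — applied, after subtracting a median $m$ of $u$ on $B_r(x)$ and splitting into $(u-m)^+$ and $(u-m)^-$, to the truncations of these at dyadic levels, and summing the resulting geometric series — upgrades the weak-type inequality to the strong-type one. As for the dilation, it can be taken equal to $2$ (indeed to any fixed constant $>1$): $\cd(K,N)$ spaces are geodesic, so a standard chaining argument removes the excess dilation produced above; alternatively one simply retains a dimensional dilation, which is irrelevant for the applications in this note.

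The substantial content is thus entirely in the $\cd(K,N)$-independent part, and within it in the weak-to-strong upgrade: the telescoping estimate by itself yields only a \emph{weak}-type Sobolev--Poincar\'e inequality, the (fractional) maximal function of an $L^1$ datum being of weak type only, and it is Maz'ya's truncation — which relies crucially on the median normalization, so that every superlevel set of the truncated functions has measure at most $\tfrac12\mm(B_r(x))$ — that restores the full $L^{p^*}$-norm on the left-hand side of \eqref{eq:improved poincaret}. The remaining care is bookkeeping: one checks that the doubling constant, the $(1,1)$-Poincar\'e constant of \cite{Rajala12}, and $c_1$ in (i) depend only on $K,N$ at scales comparable to $r_0$, so that the final constant has the form $C(K,N,p,r_0)$, and that the exponent fed into the self-improvement is the \emph{sharp} value $N$ — rather than the larger, $K$- and $r_0$-dependent doubling exponent — which is exactly what forces the left-hand exponent to equal $p^*$.
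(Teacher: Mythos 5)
Your proof is correct and takes essentially the same route as the paper: both deduce the $(p^*,p)$-Poincar\'e inequality from Rajala's local $(1,1)$-Poincar\'e inequality (upgraded to $(1,p)$ via H\"older), the Bishop--Gromov comparison giving the sharp exponent $N$ in the relative volume lower bound, and the self-improvement theorem \cite[Theorem 5.1]{HK00}. The only difference is cosmetic: the paper invokes \cite{HK00} as a black box, whereas you also unpack its internal mechanism (telescoping over dyadic balls, the weak-type estimate via a fractional maximal function, and Maz'ya's truncation with a median normalization to upgrade to strong type), which is accurate but not strictly necessary once the citation is in place.
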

\begin{proof}
	From  \cite{Rajala12} we have that $\X$ supports a strong $(1,1)$-Poincar\'e inequality, in particular it also supports a strong $(1,p)$-Poincar\'e inequality for every $p\in[1,\infty),$ by H\"older inequality. Moreover, for every $x_0 \in \X$, $r\le r_0$ and $x \in B_{r_0}(x_0)$, from the Bishop-Gromov inequality \eqref{eq:Bishop} it holds that
	\begin{equation*}\label{eq:measure-dimension}
		\frac{\mm(B_r(x))}{\mm(B_{r_0}(x_0))}\ge  C(K,N,r_0) \Big(\frac{r}{r_0}\Big)^N. 
	\end{equation*}
	Then \eqref{eq:improved poincaret} follows from \cite[Theorem 5.1]{HK00} (see also \cite[Theorem 4.21]{BB13}).
\end{proof}

We end this part recalling the sharp Sobolev-inequality on the $N$ model space $I_N$ (see Def. \ref{def:model space}) for $N \in (2,\infty)$ (see e.g. \cite{Ledoux00}):
\begin{equation}\label{eq:sobolev model}
    \|u\|_{L^{q}(\mm_N)}^2\le \frac{q-2}{N}\,  \||Du|\|_{L^2(\mm_N)}^2 +  \|u\|_{L^2(\mm_N)}^2 , \quad \forall u \in W^{1,2}([0,\pi],|.|,\mm_N),
\end{equation}
for every $ q \in (2,2^*]$, with $2^*=2N/(N-2)$. 
%\begin{remark}\label{rmk:beta 1}
%Notice that \eqref{eq:Sobolev embedding pre} implies the validity of \eqref{eq:general sobolev} for every $p \in(1,N)$ and $q \in(p,q^*)$. Indeed this follows applying the H\"older inequality on the left hand side and elevating the inequality to the $p$-the power.

%Even more, using the elementary inequality  $(a+b)^p\le C_\eps a^p+(1+\eps)b^p$, for all $a,b\ge 0$, shows that $\beta_{q,p}(\X)=\mm(\X)^{-p/N}$ (where   $\beta_{q,p}(\X)$ is as in \eqref{eq:alfabeta}).
%\end{remark}

\subsubsection{Convergence and compactness under mGH-convergence}\label{sec:mgh}
We recall here the notion of \emph{pointed-measure Gromov Hausdorff convergence} (pmGH convergence for short). Let us say that the definition we will adopt is not the classical one (see e.g. \cite{BBI01,Gromov07}), but it is equivalent in the case of a sequence of  uniformly locally doubling metric measure spaces, thanks to the results in \cite{GMS15}. It will be convenient to consider in this section the set $\bar \N := \N\cup \{\infty\}$. Recall also that a pointed metric measure space is a quadruple $(\X,\sfd,\mm,x)$ consisting of a metric measure space $\Xdm$ and a point $x \in \X$. 
\begin{definition}[Pointed measure Gromov-Hausdorff convergence]\label{def:pmGH}
	We say that the sequence  $(\X_n,\sfd_n,\mm_n,x_n)$,  $n \in \N$, of pointed metric measure spaces, \emph{pointed measure Gromov-Hausdorff}-converges (pmGH-converges in short) to    $(\X_\infty,\sfd_\infty,\mm_\infty,x_\infty)$, if 
	there exist isometric embeddings $\iota_n: \X_n \to (\Z,\sfd_\Z)$, $n \in \bar \N$,  into a common metric space $(\Z,\sfd_\Z)$ such that
	\[
	({\iota_n})_\sharp\mm_n \rightharpoonup 	({\iota_\infty})_\sharp\mm_\infty \text{ in duality with $C_{bs}(\Z)$ and } \iota_n(x_n)\to \iota_\infty(x_\infty).\]
\end{definition}
In the case of a  sequence of   uniformly locally doubling spaces (as  in the case of $\cd(K,N)$-spaces for fixed $K\in R,N<\infty$) we can also take $(\Z,\sfd_\Z)$ to be proper. Moreover, again for a class of uniformly locally doubling spaces, in \cite{GMS15} it is proven that the pmGH-convergence is metrizable with a distance which we call $\sfd_{pmGH}$. 

It will be also convenient to adopt, thanks to Definition \ref{def:pmGH}, the so-called \emph{extrinsic approach}, where the spaces $\X_n$ are identified as subsets of a common \emph{proper} metric space $(\Z,\sfd_\Z)$,  $X_n \subset \Z$,  $\supp(\mm_n)=\X_n$,  $\sfd_\Z \restr{\X_n\times \X_n} = \sfd_n$ for all $n \in \bar \N$, and $\sfd_\Z(x_n,x_\infty)\to 0,$ $\mm_n\weakto \mm_\infty$ in duality with $C_{bs}(\Z)$. Any such space $(\Z,\sfd_\Z)$ (together with an the identification of $\X_n\subset \Z$) is called \emph{realization of the convergence} and (in the case of geodesic uniformly locally doubling spaces) can be taken so that $\sfd^{\Z}_H(B^{\X_n}_R(x_n),B^{\X_\infty}_R(x_\infty))\to 0$ for every $R>0,$ where $\sfd_H^\Z$ is the Hausdorff distance in $\Z$. To avoid confusion when dealing with this identification, we shall sometimes write $B^{\X_n}_r(x)$ with $x \in \X_n$, $r>0$, to denote the set $B^\Z_r(x)\cap \X_n$.

After the works in \cite{Sturm06-1,Sturm06-2,Lott-Villani09,AGS14,GMS15} and thanks to the Gromov's precompactness theorem \cite{Gromov07} we have the following precompactness result.
\begin{theorem}\label{thm:mGHcompact}
    Let $(\X_n,\sfd_n,\mm_n,x_n)$ be a sequence of  pointed $\cd(K_n,N_n)$ (resp. $\RCD(K_n,N_n)$) spaces, $n \in \bar \N$, with $\mm(B_1(x_n))\in [v^{-1},v]$, for $v >1$ and $K_n\to K\in \R, N_n\to N\in [1,\infty)$. Then, there exists a subsequence $(n_k)$ and a pointed $\cd(K,N)$ (resp. $\RCD(K,N)$) space $(\X_\infty,\sfd_\infty,\mm_\infty,x_\infty)$ satisfying
    \[\lim_{k\to \infty}\sfd_{pmGH}\big( (\X_{n_k},\sfd_{n_k},\mm_{n_k},x_{n_k}),(\X_\infty,\sfd_\infty,\mm_\infty,x_\infty)\big) =0. \]
\end{theorem}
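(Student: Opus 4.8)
\textbf{Proof strategy for Theorem \ref{thm:mGHcompact}.}

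The plan is to reduce everything to the classical Gromov precompactness theorem for pointed metric spaces and then upgrade it to incorporate the measures, finally checking that the limit inherits the curvature-dimension condition by lower semicontinuity.

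First I would establish the metric-side precompactness. The key input is the Bishop--Gromov inequality \eqref{eq:Bishop}: since $K_n \to K$ and $N_n \to N$ with $N_n$ bounded, the spaces $(\X_n,\sfd_n)$ are \emph{uniformly} locally doubling, with a doubling constant $C(R)$ that can be chosen independent of $n$ (for $n$ large, and then adjusting finitely many terms). In particular each $B_R(x_n)$ is totally bounded with a modulus of total boundedness independent of $n$. By Gromov's precompactness theorem (see \cite{Gromov07,BBI01}) there is a subsequence and a proper pointed metric space $(\X_\infty,\sfd_\infty,x_\infty)$ such that $(\X_{n_k},\sfd_{n_k},x_{n_k}) \to (\X_\infty,\sfd_\infty,x_\infty)$ in the pointed Gromov--Hausdorff sense; equivalently, by a standard gluing construction, we may realize all $\X_{n_k}$ and $\X_\infty$ as subsets of a common proper metric space $(\Z,\sfd_\Z)$ with $\sfd_\Z(x_{n_k},x_\infty)\to 0$ and $\sfd_H^\Z(B_R^{\X_{n_k}}(x_{n_k}), B_R^{\X_\infty}(x_\infty))\to 0$ for every $R>0$.

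Next I would extract a limit measure. Working inside $(\Z,\sfd_\Z)$, the pushforwards $\mm_{n_k}$ are uniformly bounded on bounded sets: the uniform lower bound $\mm_{n_k}(B_1(x_{n_k}))\ge v^{-1}$ together with Bishop--Gromov \eqref{eq:Bishop} gives a uniform upper bound $\mm_{n_k}(B_R(x_{n_k}))\le C(R,K,N,v)$ for each fixed $R$. Hence by a diagonal argument and the weak-$*$ compactness of bounded sequences of Radon measures on the proper space $\Z$ (tested against $C_{bs}(\Z)$), we may pass to a further subsequence so that $\mm_{n_k}\weakto \mm_\infty$ in duality with $C_{bs}(\Z)$ for some boundedly finite Borel measure $\mm_\infty$ on $\Z$. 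One checks $\supp(\mm_\infty)\subset \X_\infty$ using the Hausdorff convergence of balls, and that $\mm_\infty\neq 0$ since $\mm_\infty(\overline{B_1(x_\infty)})\ge \limsup_k \mm_{n_k}(B_1(x_{n_k}))$ is controlled below via the lower bound $v^{-1}$ (more precisely, pick a slightly larger radius and use weak convergence plus the $\mm_n(\partial B_r)=0$ property, or simply the uniform lower bound on a fixed ball combined with tightness). This gives a limiting pointed metric measure space $(\X_\infty,\sfd_\infty,\mm_\infty,x_\infty)$, and by construction this subsequential convergence is precisely pmGH-convergence in the sense of Definition \ref{def:pmGH}; metrizability by $\sfd_{pmGH}$ in the uniformly locally doubling class \cite{GMS15} then gives $\sfd_{pmGH}\to 0$.

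Finally I would verify that the limit is a $\cd(K,N)$ space (resp.\ $\RCD(K,N)$). This is the stability of the curvature-dimension condition under pmGH-convergence, established in \cite{Sturm06-1,Sturm06-2,Lott-Villani09}: the defining inequality \eqref{eq:CDqKN} passes to the limit because the entropy functionals $\mathcal{U}_{N'}$ are lower semicontinuous under weak convergence of measures (with varying underlying space, via the common realization $\Z$), optimal dynamical plans can be extracted as weak limits using the uniform doubling bound, and the distortion coefficients $\tau^{(t)}_{K_n,N'}$ converge to $\tau^{(t)}_{K,N'}$ uniformly on compacta as $K_n\to K$, $N_n\to N$. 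For the $\RCD$ case one additionally invokes the stability of infinitesimal Hilbertianity under pmGH-convergence (equivalently, Mosco-convergence of the Cheeger energies \cite{AGS14,GMS15}), which shows the limit Cheeger energy is quadratic. I expect the main obstacle to be the bookkeeping in the stability step --- namely making the weak convergence of optimal plans and the lower semicontinuity of $\mathcal{U}_{N'}$ rigorous when the base spaces vary --- but since all of this is precisely the content of the cited works \cite{Sturm06-2,Lott-Villani09,GMS15}, the proof reduces to assembling these ingredients together with the uniform doubling estimate that makes the common proper realization $\Z$ available.
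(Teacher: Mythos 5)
Your proposal is correct and follows exactly the route the paper intends: the paper states this precompactness theorem without proof, attributing it to Gromov's precompactness theorem together with the stability results of Sturm, Lott--Villani and Ambrosio--Gigli--Savar\'e/Gigli--Mondino--Savar\'e, which are precisely the ingredients (uniform doubling via Bishop--Gromov, weak-$*$ extraction of the limit measure with nontriviality from the bound $\mm_n(B_1(x_n))\ge v^{-1}$, and stability of the $\cd$/$\RCD$ conditions with $K_n\to K$, $N_n\to N$) that you assemble. No gaps worth flagging.
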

We will be frequently consider  the case of compact (with uniformly bounded diameter) metric measure spaces which is the natural setting for the Sobolev embedding of this note, for which we can reduce the above convergence to the so-called \emph{measure Gromov Hausdorff convergence}, mGH-convergence for short, where we simply ignore the convergence of the base points. Also in this case, on every class of uniformly doubling metric measure spaces with uniformly bounded diameter, the mGH-convergence can be metrized by a distance that we denote by $\sfd_{mGH}.$ The extrinsic approach applies verbatim as well, with the exception that the common ambient space $\Z$ can be also taken to be compact.

We now recall some stability and convergence results of functions along pmGH-convergence. For additional details and analogous results we refer to  \cite{Honda15,GMS15, AH17}. For brevity reasons in what follows we fix a sequence of pointed ${\rm CD}(K,N)$ spaces $(\X_n,\sfd_n,\mm_n,x_n)$, for $n \in \bar {\mathbb{N}}$, so that $\X_n \overset{pmGH}{\to} \X_\infty$.

\begin{definition}
 Let $p \in (1,\infty)$, we say that
\begin{itemize}
\item[$(i)$] $f_n\in L^p(\mm_n)$ \emph{converges $L^p$-weak} to $f_\infty\in L^p(\mm_\infty)$, provided $\sup_{n \in \N}\|f_n\|_{L^p(\mm_n)}<\infty$ and $f_n\mm_n \weakto f_\infty\mm_\infty$ in $C_{bs}(\Z)$,
\item[$(ii)$] $f_n\in L^p(\mm_n)$ \emph{converges $L^p$-strong} to $f_\infty\in L^p(\mm_\infty)$, provided it converges $L^p$-weak and $\limsup_n \|f_n\|_{L^p(\mm_n)} \le  \|f_\infty\|_{L^p(\mm_\infty)}$,
\item[$(iii)$] $f_n \in W^{1,2}(\X_n)$ \emph{converges $W^{1,2}$-weak} to $f_\infty \in W^{1,2}(\X)$ provided it converges $L^2$-weak and $\sup_{n \in \N} \| |D f_n|\|_{L^2(\mm_n)}<\infty$,
    \item[$(iv)$] $f_n \in W^{1,2}(\X_n)$ \emph{converges $W^{1,2}$-strong} to $f_\infty \in W^{1,2}(\X)$ provided it converges $L^2$-strong and $\| |D f_n|\|_{L^2(\mm_n)} \to \||Df_\infty|\|_{L^2(\mm_\infty)}$.
%\item[$(v)$] We say that $f_n \in L^1(\mm_n)$ \emph{converges $L^1$-strong} to $f_\infty \in L^1(\mm_\infty)$, provided $\sigma \circ f_n \mm_n\weakto \sigma \circ f_\infty \mm_\infty$ in $C_{bs}(\Z)$ and $\lim_n \|f_n\|_{L^1(\mm_n)} = \|f_\infty \|_{L^1(\mm_\infty)}$, where $\sigma(z)=\emph{sgn}(z)\sqrt{z}$.
\end{itemize}
\end{definition}
Moreover, we say that $f_n$ is uniformly bounded in $L^p$  if $\sup_n\|f_n\|_{L^p(\mm_n)}<\infty$. In the following statement we collect a list of useful properties of $L^p$-convergence.
\begin{proposition}[Properties of $L^p$-convergence]\label{prop:lp prop}  For all $p \in (1,\infty)$, it holds
    \begin{enumerate}
        \item[$(i)$] If $f_n$ converges $L^p$-strong to $f_\infty$, then $\phi(f_n)$ converges $L^p$-strong to $\phi(f_\infty)$ for every $\phi\in\LIP(\R)$ with $\phi(0)=0,$
        \item[$(ii)$] If $f_n$ (resp. $g_n)$ converges $L^p$-strong to $f_\infty$ (resp. $g_\infty $), then $f_n+g_n$ converges $L^p$-strong to $f_\infty+g_\infty$,
        \item[$(iii)$] if $f_n$ converges $L^p$-weak to $f$, then $\|f_\infty\|_{L^p(\mm_\infty)}\le \liminf_n \|f_n\|_{L^p(\mm_n)}$,
        \item[$(iv)$] suppose that  $\sup_n \|f_n\|_{L^p(\mm_n)}<+\infty$, then up to  a subsequence $f_{n}$ converges $L^p$-weak to some $f_\infty \in L^p(\mm_\infty),$
        \item[$(v)$] If $f_n$ converges $L^p$-strong (resp. $L^p$-weak) to $f_\infty$, then $\phi f_n$ converges $L^p$-strong (resp. $L^p$-weak) to $\phi f_\infty$, for all $\phi \in C_b(\Z)$,
        \item [$(vi)$] for every $f \in L^p(\mm_\infty)$ there exists a sequence $f_n \in L^p(\mm_n)$ converging $L^p$-strong to $f$,
        \item [$(vii)$] if $f_n$ are non-negative and converge in $L^p$-strong to $f$, then for every $q\in(1,\infty),$ $f_n^{p/q}$ converge $L^q$-strong to $f^{p/q}$, 
        \item[$(viii)$] Fix $p,q \in (1,\infty]$ so that $p< q.$ If the sequence $(f_n)$ is uniformly bounded in $L^q$ and converges $L^p$-strong to $f_\infty$, then it converges also $L^r$-strong to $f_\infty$ for every $r \in[p,q),$
    \end{enumerate}
\end{proposition}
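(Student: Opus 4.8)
The plan is to prove the eight items in the order $(iii),(iv)\Rightarrow (ii),(v),(vi)\Rightarrow (i)\Rightarrow (vii),(viii)$, treating $(i)$ as the technical core. Throughout I would work in the extrinsic picture, viewing $\X_n,\X_\infty\subset\Z$ with $\Z$ proper and $\mm_n\weakto\mm_\infty$ in duality with $C_{bs}(\Z)$, and I would use repeatedly the three elementary facts: $\mm_n(B_R(x_n))$ is uniformly bounded (uniform local doubling), $\|g\|_{L^q(\mm_n)}\to\|g\|_{L^q(\mm_\infty)}$ for every $g\in C_{bs}(\Z)$ (since $|g|^q\in C_{bs}(\Z)$), and $C_{bs}(\Z)$ is dense in $L^q(\mm_\infty)$ for every $q\in(1,\infty)$. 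For $(iii)$: for $g\in C_{bs}(\Z)$ one has $\int g f_n\,\d\mm_n\to\int g f_\infty\,\d\mm_\infty$ by definition of $L^p$-weak convergence, while $|\int g f_n\,\d\mm_n|\le\|f_n\|_{L^p(\mm_n)}\|g\|_{L^q(\mm_n)}$ by H\"older ($q$ the conjugate exponent); passing to the limit and taking the supremum over $g\in C_{bs}(\Z)$ with $\|g\|_{L^q(\mm_\infty)}\le1$ yields $\|f_\infty\|_{L^p(\mm_\infty)}\le\liminf_n\|f_n\|_{L^p(\mm_n)}$. For $(iv)$: the signed Radon measures $f_n\mm_n$ have total variation uniformly bounded on every ball, so a diagonal argument with weak-$*$ compactness of bounded measures on the compact sets $\bar B_R\subset\Z$ gives a subsequence with $f_n\mm_n\weakto\mu$; the previous estimate shows $g\mapsto\int g\,\d\mu$ is bounded on $C_{bs}(\Z)$ for the $L^q(\mm_\infty)$-norm, hence by density and the Riesz representation of $(L^q(\mm_\infty))^*$ (here $q<\infty$) it is represented by some $f_\infty\in L^p(\mm_\infty)$ with $\|f_\infty\|_{L^p(\mm_\infty)}\le\liminf_n\|f_n\|_{L^p(\mm_n)}$, so $\mu=f_\infty\mm_\infty$ and $f_n\to f_\infty$ $L^p$-weak.

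The weak parts of $(ii)$ and $(v)$ are immediate: if $g\in C_{bs}(\Z)$ and $\phi\in C_b(\Z)$ then $g\phi\in C_{bs}(\Z)$, so testing against $g$ (resp. $g\phi$) gives the claim, and the uniform $L^p$-bounds are clear. For $(vi)$: given $f\in L^p(\mm_\infty)$ choose $g_k\in C_{bs}(\Z)$ with $\|g_k-f\|_{L^p(\mm_\infty)}\to0$; each restriction $g_k\restr{\X_n}$ converges $L^p$-strong to $g_k\restr{\X_\infty}$ as $n\to\infty$ (weak convergence as above, and $\|g_k\|_{L^p(\mm_n)}\to\|g_k\|_{L^p(\mm_\infty)}$ since $|g_k|^p\in C_{bs}(\Z)$), and a standard diagonal argument (the class of $f$ admitting an $L^p$-strongly convergent recovery sequence is $L^p(\mm_\infty)$-closed and contains $C_{bs}(\Z)$) produces the required $f_n\in L^p(\mm_n)$.

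The heart of the matter is $(i)$, and the difficulty is purely the identification of the weak limit: once $\phi(f_n)\to\phi(f_\infty)$ $L^p$-weak is known, the bound $\|\phi(f_n)\|_{L^p(\mm_n)}\le\Lip(\phi)\|f_n\|_{L^p(\mm_n)}$ together with $(iii)$ and $\phi(0)=0$ finishes it, so one must exploit strong (not merely weak) convergence of $f_n$. I would first record an approximation–stability lemma: if $f_n=h_n+r_n$ with $h_n\to h$ $L^p$-strong, $\|h-f\|_{L^p(\mm_\infty)}\le\eps$ and $\limsup_n\|r_n\|_{L^p(\mm_n)}\le\eps$, then $f_n\to f$ $L^p$-strong (the weak part by testing against $C_{bs}(\Z)$ and estimating as in $(iii)$, and $\limsup_n\|f_n\|_{L^p(\mm_n)}\le\|f\|_{L^p(\mm_\infty)}+2\eps$ for every $\eps$). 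Applying this with $h_n=\phi(\tilde f)\restr{\X_n}$ for $\tilde f\in C_{bs}(\Z)$ close to $f$ in $L^p(\mm_\infty)$ (so $\phi(\tilde f)\in C_{bs}(\Z)$ and $h_n$ converges $L^p$-strong) and $r_n=\phi(f_n)-\phi(\tilde f)\restr{\X_n}$, it remains only to see that $\limsup_n\|f_n-\tilde f\|_{L^p(\mm_n)}\le\|f-\tilde f\|_{L^p(\mm_\infty)}$, i.e. the \emph{norm} part of $(ii)$ for the difference of the $L^p$-strong sequences $f_n$ and $\tilde f\restr{\X_n}$. This last point is the genuine obstacle, and I would settle it (and hence the full $(ii)$) through the equivalent description of $L^p$-strong convergence as $L^p$-weak convergence together with tightness and equi-integrability of $\{|f_n|^p\mm_n\}$ — a property stable under the domination $|f_n+g_n|^p\le2^{p-1}(|f_n|^p+|g_n|^p)$ — combined with the fact that strong convergence forces weak convergence of the graphs $(\mathrm{id},f_n)_\sharp\mm_n$ in $\mathcal P(\Z\times\R)$, which lets one pass $\int g(\cdot)\phi(f_n)\,\d\mm_n$ to the limit for $\phi\in C_b$ and reduces the Lipschitz case to a truncation controlled by equi-integrability. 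These facts are standard and can alternatively be quoted from \cite{GMS15,AH17,Honda15}.

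Finally $(vii)$ and $(viii)$ follow once $(i)$ and the above are in hand. In $(vii)$ the norm convergence is automatic, since $\|f_n^{p/q}\|_{L^q(\mm_n)}^q=\int f_n^p\,\d\mm_n=\|f_n\|_{L^p(\mm_n)}^p\to\|f\|_{L^p(\mm_\infty)}^p=\|f^{p/q}\|_{L^q(\mm_\infty)}^q$, so by $(iii)$ it suffices to show $f_n^{p/q}\to f^{p/q}$ $L^q$-weak: when $p<q$ this comes directly from the approximation–stability lemma in $L^q$ via the subadditivity inequality $|a^{p/q}-b^{p/q}|\le|a-b|^{p/q}$, and for $p\ge q$ by applying $(i)$ to a Lipschitz truncation of $t\mapsto t^{p/q}$ and estimating the error by equi-integrability. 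For $(viii)$, interpolation gives $\|f_n\|_{L^r(\mm_n)}\le\|f_n\|_{L^p(\mm_n)}^{1-\lambda}\|f_n\|_{L^q(\mm_n)}^{\lambda}$ with $\tfrac1r=\tfrac{1-\lambda}p+\tfrac\lambda q$, hence $f_n\to f$ $L^r$-weak and $f\in L^q(\mm_\infty)$; the tails are uniformly negligible because $\int_{\{|f_n|>M\}}|f_n|^r\,\d\mm_n\le M^{r-q}\sup_n\|f_n\|_{L^q(\mm_n)}^q\to0$ (the case $q=\infty$ being even simpler), and on the truncated part one invokes $(i)$ to obtain $\limsup_n\|f_n\|_{L^r(\mm_n)}\le\|f\|_{L^r(\mm_\infty)}$, i.e. $L^r$-strong convergence.
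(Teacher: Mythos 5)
Your overall architecture is sound and, for most items, genuinely more self-contained than the paper, whose proof of this proposition is essentially a pointer: items $(i)$--$(v)$ are quoted from \cite{AH17}, $(vi)$ from \cite{GMS15} (see also \cite{Honda15}), $(vii)$ from the graph characterization of $L^p$-strong convergence in \cite{AH17}, and only $(viii)$ is actually argued. Your proofs of $(iii)$, $(iv)$, $(vi)$ and of the weak halves of $(ii)$, $(v)$ are correct, and your $(viii)$ is a valid alternative to the paper's: the paper reduces to the case $q=\infty$, truncates the limit function, builds via $(vi)$ and $(i)$ a recovery sequence $g_n$ uniformly bounded in $L^\infty$, and interpolates $\|f_n-g_n\|_{L^r(\mm_n)}$ between $L^p$ and $L^q$; you instead truncate $f_n$ itself and kill the tails with the uniform $L^q$ bound through $\int_{\{|f_n|>M\}}|f_n|^r\,\d\mm_n\le M^{r-q}\sup_n\|f_n\|^q_{L^q(\mm_n)}$, which avoids the diagonalization over truncations of the limit. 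Both routes work.

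The genuine gap is in the one place where you try to prove, rather than quote, the hard kernel, namely the limsup-of-norms part of $(i)$/$(ii)$: the claimed ``equivalent description of $L^p$-strong convergence as $L^p$-weak convergence together with tightness and equi-integrability of $\{|f_n|^p\mm_n\}$'' is false. Already on a fixed space ($\X_n=\X_\infty$ for all $n$) the oscillating sequence $f_n(x)=\sin(nx)$ on an interval is $L^p$-weakly null, tight and $p$-equi-integrable, yet $\|f_n\|_{L^p}$ does not tend to $0$; hence weak convergence plus equi-integrability cannot be upgraded to strong convergence, and the domination $|f_n+g_n|^p\le 2^{p-1}(|f_n|^p+|g_n|^p)$ does not close $(ii)$. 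What is true (and what \cite{AH17} states) is that $L^p$-strong convergence is equivalent to weak convergence of the graph measures $({\rm id},f_n)_\sharp\mm_n\weakto({\rm id},f_\infty)_\sharp\mm_\infty$ together with $p$-equi-integrability: the graph convergence is the surrogate for convergence in measure and is precisely what excludes oscillations. Even with this corrected characterization, additivity requires an extra (standard but not free) step: one must use that the limit measure is concentrated on a graph to self-improve weak graph convergence to a convergence-in-measure type statement before adding two sequences. Since you offer to quote these facts from \cite{GMS15,AH17,Honda15} as a fallback, your proposal then collapses, for $(i)$--$(v)$, to the same citation-based route as the paper; but as written, the sketched direct argument for the norm part of $(ii)$ --- and hence for $(i)$, for the step $\limsup_n\|f_n-\tilde f\|_{L^p(\mm_n)}\le\|f-\tilde f\|_{L^p(\mm_\infty)}$ it relies on, and for the $p<q$ case of your $(vii)$ --- does not stand on its own.
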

\begin{proof}
    For the proof of the items $(i)-(v)$ we refer to \cite[Prop. 3.3]{AH17}. $(vi)$ can instead be found in \cite{GMS15} (see also \cite{Honda15}). 
    $(vii)$ follows immediately from the characterization of $L^p$-strong convergence via convergence of graph (see e.g. \cite[Remark 3.2]{AH17}).
    For $(viii)$, the case $q=\infty$ follows immediately from item $(i)$ (see also \cite[e) of Prop. 3.3 ]{AH17}), hence we can assume $q<+\infty.$
Fix $r \in [p,q).$ Clearly from the H\"older inequality $f_n$ is uniformly bounded in $L^r$, hence by definition $f_n$ converges $L^r$-weakly to $f_\infty$.   Moreover from item $(iii)$ we known that $f_\infty \in L^r(\mm_\infty),$ therefore  by truncation and diagonalization we can suppose that $f \in L^\infty(\mm_\infty).$ From $(vi)$ then there exists a sequence $g_n \in L^r(\mm_n)$ converging to $f_\infty$ in $L^r$-strong and by item $i)$ we can also assume that $g_n$ are uniformly bounded in $L^\infty.$ Then, from $(viii)$ in the case $q=\infty$ we have that $g_n$ converge also in $L^p$-strong to $f_\infty.$  Then by $(ii)$ we have that $g_n-f_n$ converges to 0 in $L^p$-strong and in particular $\|f_n-g_n\|_{L^p(\mm_n)}\to 0.$ Finally by the H\"older inequality (since $f_n,g_n$ are both uniformly bounded in $L^q$) we have that $\|f_n-g_n\|_{L^r(\mm_n)}\to 0$. In particular $\lim_n \|f_n\|_{L^r(\mm_n)}=\lim_n \|g_n\|_{L^r(\mm_n)}=\|f_\infty\|_{L^r(\mm_\infty)}$, which concludes the proof.
\end{proof}
We now pass to some convergence and stability results related to Sobolev spaces. We start with the following generalized version of the compact embedding of $W^{1,2} \hookrightarrow L^2$ (reported here specifically for compact metric measure spaces):
\begin{proposition}[\cite{GMS15}]\label{prop:W12compact}
Suppose that $\X_n$, $n \in \bar \N$ are compact and assume that $(f_n) \in W^{1,2}(\X_n)$ are uniformly bounded in $W^{1,2}$, i.e. $\sup_n \|f_n\|_{W^{1,2}(\X_n)}<+\infty$. Then $(f_n)$ has a $L^2$-strongly convergent subsequence. 
\end{proposition}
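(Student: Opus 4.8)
This statement is taken from \cite{GMS15}; the plan is to reproduce its proof, which is the varying--spaces counterpart of the Rellich--Kondrachov compactness theorem. I would work in the extrinsic realization, identifying all the $\X_n$ (and $\X_\infty$) as closed subsets of a common compact metric space $(\Z,\sfd_\Z)$ with $\mm_n\weakto\mm_\infty$ in duality with $C_{bs}(\Z)$ and $\sfd^\Z_H(\X_n,\X_\infty)\to 0$. Since $K,N$ are fixed and the diameters and total masses are uniformly bounded, the Bishop--Gromov inequality \eqref{eq:Bishop} and the $(1,1)$-Poincar\'e inequality valid on $\cd(K,N)$ spaces (Remark \ref{rmk:p independence CD}), hence also the $(2,2)$-Poincar\'e inequality on balls obtained from Theorem \ref{thm:improved poincaret} via H\"older, all hold with constants \emph{uniform in $n$}; in particular $\mm_n(B_r(x))\ge c(r)>0$ for every $n$ and every $x\in\X_n$. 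As $\sup_n\|f_n\|_{L^2(\mm_n)}<\infty$, after passing to a (not relabeled) subsequence we may assume $f_n\to f_\infty$ in $L^2$-weak for some $f_\infty\in L^2(\mm_\infty)$, by Proposition \ref{prop:lp prop}(iv). By Proposition \ref{prop:lp prop}(iii) it then remains only to show $\limsup_n\|f_n\|_{L^2(\mm_n)}\le\|f_\infty\|_{L^2(\mm_\infty)}$. Set $M:=\sup_n\|f_n\|_{W^{1,2}(\X_n)}$.

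For every scale $r>0$ I would use a discrete mollification of $f_n$: pick a maximal $r$-separated net $\{x_{n,i}\}_i\subset\X_n$ together with a subordinate Lipschitz partition of unity $\{\phi_{n,i}\}_i$ with $\spt\phi_{n,i}\subset B_{2r}(x_{n,i})$, $\Lip(\phi_{n,i})\le C/r$, and such that at most $C(N)$ of the balls $B_{2r}(x_{n,i})$ contain any given point (such data exist on uniformly doubling spaces), and set $\hat f_n^r:=\sum_i\big(\fint_{B_{2r}(x_{n,i})}f_n\,\d\mm_n\big)\phi_{n,i}$. Two points are crucial. First, \textbf{uniform approximation}: writing $f_n-\hat f_n^r=\sum_i\big(f_n-\fint_{B_{2r}(x_{n,i})}f_n\,\d\mm_n\big)\phi_{n,i}$, estimating each summand on its own ball by the uniform $(2,2)$-Poincar\'e inequality and summing up using the bounded overlap, one gets $\|f_n-\hat f_n^r\|_{L^2(\mm_n)}\le Cr\,\||Df_n|\|_{L^2(\mm_n)}\le CrM$ with $C=C(K,N,\diam)$ independent of $n$. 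Second, for fixed $r$ the functions $\hat f_n^r$ are uniformly bounded (by H\"older and $\mm_n(B_{2r}(x))\ge c(2r)$) and uniformly $C(r)$-Lipschitz; hence by a Gromov--Hausdorff version of the Arzel\`a--Ascoli theorem (extend each $\hat f_n^r$ to $\Z$ by the McShane formula and apply the classical statement on the compact space $\Z$), after a further diagonal extraction over $r=1/k$, $k\in\N$, we may assume that $\hat f_n^{1/k}$ converges uniformly to a continuous $g^{1/k}$ on $\X_\infty$; in particular $\hat f_n^{1/k}\to g^{1/k}$ in $L^2$-strong, thanks to $\mm_n\weakto\mm_\infty$.

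It remains to combine the two. Fix $k\in\N$ and put $r=1/k$. Since $\hat f_n^r\to g^r$ in $L^2$-strong and $f_n\to f_\infty$ in $L^2$-weak, the difference $\hat f_n^r-f_n$ converges in $L^2$-weak to $g^r-f_\infty$; as $\|\hat f_n^r-f_n\|_{L^2(\mm_n)}\le CrM$, Proposition \ref{prop:lp prop}(iii) gives $\|g^r-f_\infty\|_{L^2(\mm_\infty)}\le CrM$. Combining this with $\|f_n\|_{L^2(\mm_n)}\le\|f_n-\hat f_n^r\|_{L^2(\mm_n)}+\|\hat f_n^r\|_{L^2(\mm_n)}$ and $\|\hat f_n^r\|_{L^2(\mm_n)}\to\|g^r\|_{L^2(\mm_\infty)}$ yields
\[
\limsup_n\|f_n\|_{L^2(\mm_n)}\le CrM+\|g^r\|_{L^2(\mm_\infty)}\le\|f_\infty\|_{L^2(\mm_\infty)}+2CrM .
\]
Letting $k\to\infty$ we conclude $\limsup_n\|f_n\|_{L^2(\mm_n)}\le\|f_\infty\|_{L^2(\mm_\infty)}$, hence $f_n\to f_\infty$ in $L^2$-strong. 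The main obstacle here is purely technical rather than conceptual: establishing the approximation estimate with a constant \emph{uniform in $n$} — which is exactly where uniform doubling and the uniform Poincar\'e inequality of $\cd(K,N)$ spaces are used — and formulating and proving the Arzel\`a--Ascoli compactness for functions living on the varying domains $\X_n$; granted these, the weak-to-strong upgrade is the short computation above.
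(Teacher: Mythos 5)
Your argument is correct, and it is worth pointing out that the paper itself offers no proof of this proposition: it is quoted verbatim from \cite{GMS15}, so the only meaningful comparison is with the cited source. Your route — discrete convolutions $\hat f_n^r$ built from $r$-nets and Lipschitz partitions of unity, a uniform $(2,2)$-Poincar\'e estimate giving $\|f_n-\hat f_n^r\|_{L^2(\mm_n)}\le CrM$, a Gromov--Hausdorff Arzel\`a--Ascoli step for the uniformly Lipschitz mollifications, and the final weak-to-strong upgrade via lower semicontinuity of the $L^2$-norm — is exactly the standard varying-space Rellich--Kondrachov scheme and is in the spirit of the proof in \cite{GMS15}; nothing essential is missing. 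Three small points you should make explicit to be fully rigorous: (1) the uniform lower bound $\mm_n(B_{2r}(x))\ge c(2r)>0$ requires $\mm_n(\X_n)$ to be bounded away from $0$, which in the setting of the proposition follows from $\mm_n\rightharpoonup\mm_\infty\neq 0$ on the compact ambient space $\Z$ (or from the normalization $\mm_n(\X_n)=1$ in the paper's applications), together with Bishop--Gromov and the uniform diameter bound coming from the common compact $\Z$; (2) Theorem \ref{thm:improved poincaret} is stated for Lipschitz functions, so one should invoke the density of $\LIP(\X_n)$ in $W^{1,2}(\X_n)$ to apply it to $f_n$; (3) after the McShane extension to $\Z$ one should truncate to retain the uniform sup bound before applying Arzel\`a--Ascoli. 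All three are routine and do not affect the structure of your proof.
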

We recall the $\Gamma$-convergences of the $2$-Cheeger energies  proven in \cite{GMS15}:
\begin{itemize}
    \item[$\circ$] $\Gamma$-$\liminf$: for every $f_n \in L^2(\mm_n)$ $L^2$-strong converging to $f_\infty \in L^2(\mm_\infty)$, it holds
    \begin{equation}
         \int |D f_\infty|^2\, \d \mm_\infty \le \liminf_{n\to \infty}\int|D f_n|^2\, \d \mm_n; \label{eq:Gammaliminf}
    \end{equation}
    \item[$\circ$]  $\Gamma$-$\limsup$: for every $f_\infty \in L^2(\mm_\infty)$, there exists a sequence $f_n \in L^2(\mm_n)$ converging $L^2$-strong to $f_\infty$ so that
    \begin{equation}\limsup_{n\to \infty} \int |D f_n|^2\, \d \mm_n \le \int |D f_\infty |^2\, \d \mm_\infty.\label{eq:Gammalimsup}
    \end{equation}
\end{itemize}
We will also need the $\Gamma$-$\limsup$ inequality also for the $p$-Cheeger energies as proved in \cite[Theorem 8.1]{AH17}:  for every $p\in(1,\infty)$ and every $f_\infty \in L^p(\mm_\infty)$, there exists $f_n \in L^p(\mm_n)$ converging $L^p$-strong to $f_\infty$ so that
    \[ \limsup_{n\to \infty} \int |D f_n|^p\, \d \mm_n \le \int |D f_\infty |^p\, \d \mm_\infty.\]
The above is stated in \cite{AH17} only for a sequence of $\RCD(K,\infty)$ spaces, but it easily seen that the proof works without modification also in the case of $\cd(K,\infty)$ spaces.

We end this part recalling a well known continuity result of the spectral gap (see \cite{GMS15} and \cite{AHP18}): if $\X_n$,  $n \in \bar{\N}$, are all compact it holds 
\begin{equation}
   \lambda^{1,2}(\X_\infty) =\lim_{n\to\infty}  \lambda^{1,2}(\X_n).  \label{eq:lambdalsc}
\end{equation}
%If  $\liminf_{n\to\infty}  \lambda^{1,2}(\X_n) = \infty$, the claim is trivially satisfied, so we are left to handle the case when the right hand most side is finite and suppose without loss of generality that $ \lambda^{1,2}(\X_n)$ is uniformly bounded (possibly along a not relabelled subsequence). To prove this, consider for every $n \in \N$, functions $u_n \in W^{1,2}(\X_n)$ with $\int u_n\, \d \mm_n =0$ so that $\int |D u_n|^2\, \d \mm_n \le   \lambda^{1,2}(\X_n) +\tfrac1n $.  By Proposition \ref{prop:W12compact}, there exists $u_\infty \in W^{1,2}(\X_\infty)$, $L^2$-strong limit up to a not relabelled subsequence. Thus, $\|u_\infty\|_{L^2(\mm_\infty)}=1$ and, by lower mGH-semicontinuity of the Cheeger energy, we have
%\[ \int |D u_\infty|^2\, \d \mm_\infty \le \liminf_{n\to \infty} \int |D u_n|^2\, \d \mm_n \le  \liminf_{n\to \infty} \lambda^{1,2}(\X_n).\]
%But now, from the weak convergence $u_n\mm_n\weakto u_\infty\mm_\infty$, we clearly deduce that $u_\infty$ has zero mean so that taking the infimum on the left hand most side gives the desired conclusion.
We mention that the continuity of the spectral gap was previously obtained in the setting of Ricci-limit spaces by Cheeger and Colding \cite{Cheeger-Colding97I}.

\subsection{P\'olya-Szeg\H{o} inequality}\label{sec:polya}
The P\'olya-Szeg\H{o} inequality, namely the fact that the Dirichlet energy decreases under \emph{decreasing rearrangements}, dates back to Faber and Krahn and was successively formalized in \cite{PolSzeg51}. Later, in \cite{BerMey82}, this collection of ideas was brought to the context of manifolds with Ricci lower bounds to achieve applications concerning the rigidity of the $2$-spectral gap. Concerning the topic of this manuscript, the said inequality has revealed effective in \cite{Said83} in the proof of Theorem \ref{thm:Aopt upper}.

In this part we recall the P\'olya-Szeg\H{o} inequality  for essentially nonbranching ${\rm CD}(K,N)$ spaces proven in \cite{MS20}. We will also collect some additional technical results and definitions from \cite{MS20} that will be  used in Section \ref{sec:euclidean polya} to prove a  Euclidean-variant of this inequality.

\begin{definition}[Distribution function]\label{def:distributionfunction}
	Let $\Xdm$ be a compact metric measure space, $\Omega\subseteq \X$  an open set with $\mm(\Omega)<+\infty$ and $u:\Omega\to[0,+\infty)$ a non-negative Borel function. We define  $\mu:[0,+\infty)\to[0,\mm(\Omega)]$, the distribution function of $u$, as 
	\begin{equation}\label{eq:defdistr}
		\mu(t):=\mm(\{u> t\}).
	\end{equation}
\end{definition}
For $u$ and $\mu$ as above, we let $u^{\#}$ be the generalized inverse of $\mu$, defined by
\begin{equation*}
	u^{\#}(s):=
	\begin{cases*}
		{\rm ess}\sup u & \text{if $s=0$},\\
		\inf\left\lbrace t:\mu(t)<s \right\rbrace &\text{if $s>0$}. 
	\end{cases*}
\end{equation*}
It can be checked that $u^\#$ is non-increasing and left-continuous.

Then, given $\Omega\subseteq X$ an open set and $u:\Omega\to[0,+\infty)$ a non-negative Borel function, we define the \emph{monotone rearrangement} into $I_N=([0,\pi],|.|,\mm_{N})$ (see Definition \ref{def:model space}) as follows: first, we consider $r >0$ so that $\mm(\Omega) = \mm_{N}([0,r])$ and define $\Omega^* := [0,r]$, then we define the monotone rearrangement  function $u^*_{N} : \Omega^* \to \R^+$ as
\[ u_{N}^*(x) := u^\#(\mm_{N}([0,x])),\qquad \forall x \in [0,r].\]
In the sequel, whenever $u$ and $\Omega$ are fixed, $\Omega^*$ and $u^*_{N}$ will be implicitly defined as above.

\begin{theorem}[P\'olya-Szeg\H{o} inequality, \cite{MS20}]\label{thm:KNpolya}
	Let $\Xdm$ be an essentially non braching $\cd(N-1,N)$ space for some $N \in (1,\infty)$ and $\Omega\subseteq \X$ be  open. Then, for every $p \in (1,\infty)$, the monotone rearrangement in $I_N$ maps $L^p(\Omega)$ (resp. $W^{1,p}_0(\Omega)$) into $L^p(\Omega^*)$ (resp. $W^{1,p}(\Omega^*)$) and satisfies:
	\begin{equation}\label{eq:equalLpnorm1}
		\|{u}\|_{L^p(\Omega)}=\|{u_{N}^*}\|_{L^p(\Omega^*)},\qquad \forall u \in L^p(\Omega)
	\end{equation}
	\begin{equation} \int_\Omega |D u|^p\, \d \mm \ge \int_{\Omega^*} |D u_{N}^*|^p\, \d \mm_{N}, \qquad \forall u \in W^{1,p}_0(\Omega). \label{eq:PolyaSzego}
	\end{equation}
\end{theorem}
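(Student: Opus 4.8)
The plan is to run the classical P\'olya--Szeg\H{o} argument---the coarea formula combined with a sharp isoperimetric inequality---in this synthetic framework, exploiting that on essentially non-branching $\cd(N-1,N)$ spaces the L\'evy--Gromov isoperimetric inequality holds with the one-dimensional model $I_N$ as its extremal space (this is the isoperimetric comparison of \cite{CM17}). Since $|D|u||=|Du|$ $\mm$-a.e.\ for minimal weak upper gradients and $\||u|\|_{L^p}=\|u\|_{L^p}$, I would first reduce to $u\ge 0$; then, by density of $\LIP_c(\Omega)$ in $W^{1,p}_0(\Omega)$, lower semicontinuity of the $p$-Cheeger energy on $I_N$, and the $L^p(\mm)$-to-$L^p(\mm_N)$ continuity of the monotone rearrangement (a consequence of the non-expansivity of decreasing rearrangements together with the fact that $x\mapsto\mm_N([0,x])$ is a measure isomorphism onto $[0,\mm(\Omega)]$), it suffices to establish \eqref{eq:PolyaSzego} for $u\in\LIP_c(\Omega)$; the membership $u^*_N\in W^{1,p}(\Omega^*)$ then follows in the limit from the resulting uniform energy bound and the characterization \eqref{eq:sobolev ac}. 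The equimeasurability \eqref{eq:equalLpnorm1} is immediate from Cavalieri's formula, as $u$ and $u^*_N$ share the distribution function $\mu(t):=\mm(\{u>t\})$ by construction.

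For the gradient inequality, let $s(t)\in\Omega^*$ be defined by $\mm_N([0,s(t)])=\mu(t)$, so that $\{u^*_N>t\}=[0,s(t))$ up to $\mm_N$-null sets. Applying the coarea formula \eqref{eq:coarea} first with $g=|Du|^{p-1}$ (and $|\dD u|=|Du|\,\mm$) and then with $g$ equal to $1/|Du|$ on $\{|Du|>0\}$ and $0$ elsewhere---using that $\Per{\{u>t\},\{|Du|=0\}}=0$ for a.e.\ $t$, which is again \eqref{eq:coarea} applied to $g=\nchi_{\{|Du|=0\}}$---one obtains, for a.e.\ $t$, that $-\mu'(t)=\int_{u^{-1}(t)}|Du|^{-1}\,\d\Per{\{u>t\},\cdot}$, whence H\"older's inequality with exponents $p$ and $p/(p-1)$ gives
\[ \Per{\{u>t\}}^p\le\Big(\int_{u^{-1}(t)}|Du|^{p-1}\,\d\Per{\{u>t\},\cdot}\Big)\,(-\mu'(t))^{p-1}. \]
Integrating in $t$ and using \eqref{eq:coarea} once more,
\[ \int_\Omega|Du|^p\,\d\mm \;=\; \int_0^{\infty}\!\int_{u^{-1}(t)}|Du|^{p-1}\,\d\Per{\{u>t\},\cdot}\,\d t \;\ge\; \int_0^{\infty}\frac{\Per{\{u>t\}}^p}{(-\mu'(t))^{p-1}}\,\d t. \]

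Now the isoperimetric input enters: since $[0,s(t)]$ is an isoperimetric region of $I_N$ of volume $\mu(t)$ and $I_N$ realizes the L\'evy--Gromov profile of $\X$, for a.e.\ $t$ one has $\Per{\{u>t\}}\ge\frac1{c_N}\sin^{N-1}(s(t))$, the right-hand side being exactly the perimeter of $[0,s(t)]$ in $I_N$. On the interval, the coarea formula for the monotone function $u^*_N$ reads $-\mu'(t)=\frac1{c_N}\sin^{N-1}(s(t))\big/|(u^*_N)'(s(t))|$; substituting both relations into the last display and changing variables $s=s(t)$ (so that $\d t=-|(u^*_N)'(s)|\,\d s$ and $s$ ranges over $\Omega^*$) turns the right-hand side into precisely $\int_{\Omega^*}|D u^*_N|^p\,\d\mm_N$, which is \eqref{eq:PolyaSzego}.

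The genuinely deep point---and the main obstacle---is the sharp L\'evy--Gromov isoperimetric inequality on essentially non-branching $\cd(N-1,N)$ spaces together with the extremality of $I_N$; I would invoke it from \cite{CM17} (proved there via the localization/needle-decomposition technique) rather than reprove it. The remaining points need care but are routine: the behaviour of the coarea identities on the flat part $\{|Du|=0\}$ of $u$ (handled as above, almost every level being ``good''); the absence of a Cantor part in $\mu$, which legitimizes the almost-everywhere differentiation used above; the absolute continuity, hence $W^{1,p}$-regularity, of the generalized inverse $u^*_N$---obtained first for Lipschitz $u$, where $\mu$ is well controlled, and then propagated to $W^{1,p}_0(\Omega)$ by the approximation and lower-semicontinuity argument; and keeping track of the normalization of $\mm$ against the probability measure $\mm_N$ in the isoperimetric comparison.
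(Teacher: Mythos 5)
First, a structural point: the paper does not prove Theorem \ref{thm:KNpolya} at all --- it is imported verbatim from \cite{MS20} and used as a black box (only the Euclidean variant, Theorem \ref{thm:eu_polyaszego}, is proved in the paper, by exactly the scheme you describe with the local isoperimetric inequality of Theorem \ref{theorem:almost euclidean isop} in place of L\'evy--Gromov). Your strategy is therefore the ``right'' one in the sense that it is the strategy of \cite{MS20} and of the paper's own Euclidean analogue: reduce to non-negative $u\in\LIP_c(\Omega)$, run coarea plus H\"older plus the L\'evy--Gromov comparison of \cite{CM17} (correctly invoked rather than reproved), and then pass to general $u\in W^{1,p}_0(\Omega)$ by $L^p$-continuity of the rearrangement and lower semicontinuity of the Cheeger energy, with the normalization $\mm(\X)=1$ kept in sight.

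The genuine soft spot is your treatment of the flat set $\{|Du|=0\}$ in the direct argument for a general Lipschitz $u$. The equality $-\mu'(t)=\int |Du|^{-1}\,\d\Per{\{u>t\},\cdot}$ is exactly Lemma \ref{lemma:derivativedistribution}, which \emph{assumes} $|Du|_1\neq 0$ $\mm$-a.e.\ on $\{u>0\}$; without that hypothesis $\mu$ can jump (levels of positive mass) and can even carry a singular continuous part, so your claims that ``$\mu$ has no Cantor part'' and that a.e.\ differentiation legitimizes the computation are not justified. Your observation that $\Per{\{u>t\},\{|Du|=0\}}=0$ for a.e.\ $t$ is correct, and the coarea formula does give the one-sided bound $-\mu'(t)\ge\int |Du|^{-1}\,\d\Per{\{u>t\},\cdot}$, which is enough for the lower bound $\int_\Omega|Du|^p\,\d\mm\ge\int_0^M \Per{\{u>t\}}^p(-\mu'(t))^{1-p}\,\d t$; but the final change of variables $s=s(t)$ and the conclusion $u^*_N\in W^{1,p}(\Omega^*)$ only see the absolutely continuous part of $u^\#$, so when $u^\#$ is not AC the argument neither produces the model Dirichlet energy nor the claimed Sobolev membership. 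The repair is precisely the intermediate step that \cite{MS20} (and the paper, in the Euclidean case) insert: approximate $u$ in $W^{1,p}$ by Lipschitz functions with non-vanishing gradient on their positivity set (Lemma \ref{lem:nonvanishing gradients}); for these, $\mu$ is absolutely continuous and the rearrangement is Lipschitz (the L\'evy--Gromov analogue of Proposition \ref{prop:liptolip1}), the chain you wrote is then fully rigorous, and the limit is taken by the approximation/lower-semicontinuity argument you already set up for the passage from $\LIP_c(\Omega)$ to $W^{1,p}_0(\Omega)$. So the proposal is on the correct route, but as written the core case (Lipschitz $u$ with a large flat set) is not complete without this extra approximation.
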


We will also need the  following rigidity  of the P\'olya-Szeg\H{o} inequality proven in \cite[Theorem 5.4]{MS20}. 
\begin{theorem}\label{thm:Polyarigidity}
	Let $\Xdm$ be an $\RCD(N-1,N)$ space for some $N\in[2,\infty)$ with $\mm(\X)=1$ and $p \in (1,\infty)$. Let $\Omega\subset \X$ be an open set and assume that there exists a non-negative and non-constant function $u \in W^{1,p}_0(\Omega)$ achieving equality in \eqref{eq:PolyaSzego}.
	
	Then $\Xdm$ is  isomorphic to a spherical suspension, i.e. there exists an $\RCD(N-2,N-1)$ space $(\Z,\sfd_\Z,\mm_\Z)$ with $\mm_\Z(\Z)=1$ so that $\X \simeq [0,\pi]\times^{N}_{\sin} \Z$.
\end{theorem}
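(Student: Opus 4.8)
The plan is to extract the equality conditions from the proof of the Pólya--Szegő inequality of Theorem \ref{thm:KNpolya} and then feed them into the rigidity of the Lévy--Gromov isoperimetric inequality. First I would recall the scheme of that proof. For a non-negative $u\in W^{1,p}_0(\Omega)$, set $E_t\coloneqq\{u>t\}$ and let $\mu(t)=\mm(E_t)$ be the distribution function; since $u\in BV(\X)$ and $|\dD u|=|Du|\mm$, the coarea formula \eqref{eq:coarea} gives $\int_\Omega|Du|^p\,\d\mm=\int_0^\infty\big(\int|Du|^{p-1}\,\d\Per{E_t}\big)\,\d t$. Applying Hölder's inequality to $\Per{E_t}=\int 1\,\d\Per{E_t}$ with the splitting $1=|Du|^{(p-1)/p}\,|Du|^{-(p-1)/p}$, together with the coarea bound $\int|Du|^{-1}\,\d\Per{E_t}\le-\mu'(t)$, yields $\int|Du|^{p-1}\,\d\Per{E_t}\ge\Per{E_t}^p/(-\mu'(t))^{p-1}$, and the Lévy--Gromov isoperimetric inequality for essentially non-branching $\cd(N-1,N)$ spaces with unit mass (\cite{CM17}) gives $\Per{E_t}\ge\cI_{I_N}(\mu(t))$, where $\cI_{I_N}$ is the isoperimetric profile of the model $I_N$. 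On $I_N$ all three steps are equalities, so that $\int_{\Omega^*}|Du^*_N|^p\,\d\mm_N=\int_0^\infty\cI_{I_N}(\mu(t))^p\,(-\mu'(t))^{-(p-1)}\,\d t$, and the chain above is precisely \eqref{eq:PolyaSzego}.

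Next I would use the equality hypothesis. Since the integrand of the last display is pointwise dominated by $\int|Du|^{p-1}\,\d\Per{E_t}$, equality in \eqref{eq:PolyaSzego} forces all three inequalities to saturate for a.e.\ $t$; in particular $\Per{E_t}=\cI_{I_N}(\mu(t))$ for a.e.\ $t\ge0$ (where $\mu$ is locally constant one has $\cI_{I_N}(\mu(t))=0$, i.e.\ $\mu(t)\in\{0,1\}$, so this does not interfere). Since $u\ge0$ is non-constant, for every $t$ in the non-degenerate interval $(\essinf u,\esssup u)$ both $E_t$ and $\X\setminus E_t$ have positive measure, hence $\mu(t)\in(0,1)$; therefore $\{t\ge0:\mu(t)\in(0,1)\}$ has positive Lebesgue measure, and using that $E_t$ has finite perimeter for a.e.\ $t$ I can select a level $t_0$ with $v_0\coloneqq\mu(t_0)\in(0,1)$, $\Per{E_{t_0}}<\infty$, and $\Per{E_{t_0}}=\cI_{I_N}(v_0)$.

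Then I would invoke rigidity. Combining $\Per{E_{t_0}}=\cI_{I_N}(v_0)$ with Lévy--Gromov, $\cI_\X(v_0)\ge\cI_{I_N}(v_0)$, and the trivial bound $\Per{E_{t_0}}\ge\cI_\X(v_0)$, one sees that $E_{t_0}$ is an isoperimetric region realizing equality in the Lévy--Gromov inequality at the intermediate volume $v_0\in(0,1)$. By the rigidity in the Lévy--Gromov isoperimetric inequality for $\RCD(N-1,N)$ spaces (Cavalletti--Mondino, \cite{CM17}), $\Xdm$ is then isomorphic to a spherical suspension $[0,\pi]\times^{N}_{\sin}\Z$ over some $\RCD(N-2,N-1)$ space $(\Z,\sfd_\Z,\mm_\Z)$; since $\RCD$ is invariant under rescaling of the reference measure and $\mm(\X)=1$, we may normalize $\mm_\Z(\Z)=1$, which is the desired conclusion.

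The main obstacle is the last ingredient — the rigidity of the Lévy--Gromov isoperimetric inequality — which is a deep result resting on the localization (needle-decomposition) technique and which I would quote rather than reprove. The rest is elementary, the only real care being in the degenerate portions of the coarea argument (where $\mu$ fails to be strictly decreasing), which are already handled in the proof of Theorem \ref{thm:KNpolya}.
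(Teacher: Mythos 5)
The paper does not give an internal proof of this theorem but cites it directly from \cite[Theorem 5.4]{MS20}, and your proposal is precisely the argument used there: trace the equality through the chain coarea~/~Hölder~/~Lévy--Gromov to exhibit a superlevel set of intermediate volume $v_0\in(0,1)$ realizing equality in the Lévy--Gromov isoperimetric inequality, then invoke the Cavalletti--Mondino rigidity for that inequality. So your proof is correct and takes essentially the same approach as the referenced one.
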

\begin{remark}\label{rmk:omegax}\rm 
	Observe that in Theorem \ref{thm:Polyarigidity} we did not assume that $\mm(\Omega)<1$, assumption that is actually present in Theorem 5.4 of \cite{MS20}. This is intentional, since we will  need to apply Theorem \ref{thm:Polyarigidity} precisely in the case $\Omega=\X$. This is possible since the arguments in \cite{MS20} work also in the case $\Omega=\X$  without modification. The only part where the argument does not cover explicitly the case $\Omega=\X$ is the proof of the approximation Lemma 3.6 in \cite{MS20}, which however can be easily adapted (see Lemma \ref{lem:nonvanishing gradients} below).\fr 
\end{remark}

The following technical result will be needed in Section \ref{sec:euclidean polya}. We include a sketch of the argument in the case $\Omega=\X$, to further justify the validity of Theorem \ref{thm:Polyarigidity} also in this case (see the above Remark). 
\begin{lemma}[Approximation with non-vanishing gradients]\label{lem:nonvanishing gradients}
  	Let $\Xdm$ be a $\cd(K,N)$ metric measure space with $N<+\infty$, and let $\Omega\subset \X$ be open with $\mm(\Omega)<+\infty.$ Then for any non-negative $u \in \LIP_c(\Omega)$ there exists a sequence of non-negative $u_n \in \LIP_c(\Omega)$ satisfying $|D u_n|_1\neq0 $ $\mm$-a.e.\  in $\{u_n>0\}$ and such that $u_n \to u$ in $W^{1,p}(\X).$
\end{lemma}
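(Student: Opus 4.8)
The plan is to approximate $u$ by perturbing it so as to destroy the critical points of $u$ inside $\{u>0\}$, while keeping good control on the $W^{1,p}$-norm. The first step is a reduction: since $u\in\LIP_c(\Omega)$, the support $K:=\supp(u)$ is a compact subset of $\Omega$, so there is $\eta>0$ with $K^{2\eta}\Subset\Omega$. Fix a cutoff $\chi\in\LIP_c(\Omega)$ with $0\le\chi\le1$ and $\chi\equiv1$ on $K^{\eta}$. We will build the $u_n$ of the form $u_n=\phi_n(u)+\tfrac1n\,\chi\cdot w$ for a suitable auxiliary Lipschitz function $w$ and a suitable reparametrization $\phi_n$, or (simplest) just $u_n=u+\tfrac1n\,\chi\, w_n$ for carefully chosen $w_n\in\LIP_c(\Omega)$; the $\chi$ factor guarantees $u_n\in\LIP_c(\Omega)$ and $u_n\to u$ in $W^{1,p}(\X)$ as soon as $\|w_n\|_{\LIP}$ stays bounded (indeed $\|u_n-u\|_{W^{1,p}}\le\tfrac{C}{n}(\|w_n\|_{L^p}+\|w_n\|_{L^\infty}\Lip(\chi)+\Lip(w_n))\to0$). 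Nonnegativity is then arranged separately in the last step.

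The core of the argument is producing, for each $\delta>0$, a Lipschitz function $w$ with $|D(u+\delta w)|_1\neq0$ $\mm$-a.e.\ on $\{u+\delta w>0\}$. Here is where the geometry of $\cd(K,N)$ spaces with $N<\infty$ enters: such spaces are locally doubling and support a $(1,1)$-Poincaré inequality, hence are \emph{PI spaces}, and in PI spaces minimal weak upper gradients satisfy a strong locality/non-degeneracy principle — in particular one can use a generic Lipschitz "coordinate-type" function, or more robustly the distance function $\sfd(\cdot,x_0)$ to a generic point, whose slope equals $1$ a.e. A clean route: pick a countable dense set $\{x_k\}\subset\Omega$ and consider $w=\sum_k 2^{-k}\psi_k$ where $\psi_k\in\LIP_c(\Omega)$ agrees with $c_k\,\sfd(\cdot,x_k)$ near the relevant region; since $|D\,\sfd(\cdot,x_k)|=1$ $\mm$-a.e.\ (a standard fact in PI/$\cd$ spaces) and $u$ has $|Du|_1=0$ only on the Borel set $Z:=\{u>0\}\cap\{|Du|_1=0\}$, one argues that for a.e.\ choice of small parameters the set where $|D(u+\delta w)|_1=0$ inside $\{u+\delta w>0\}$ is $\mm$-null. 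Actually the simplest honest implementation uses a single rescaled distance function plus a Sard-type / coarea argument: by the coarea formula for BV functions, for a.e.\ level $t$ the superlevel set $\{u>t\}$ has finite perimeter and $|Du|_1>0$ $\Haus{}$-a.e.\ on its reduced boundary; combining this with the fact that $\{|Du|_1=0\}\cap\{u>0\}$ is (up to $\mm$-null sets) a union of such "flat" pieces that can be perturbed away by adding an affine-like function, one removes the degeneracy. One then truncates and multiplies by $\chi$ to stay compactly supported in $\Omega$, noting that on the region $\{0<\chi<1\}$ (disjoint from $K$) the function $u$ vanishes, so $u_n>0$ there only where $\tfrac1n\chi w>0$, and on that region $|D u_n|_1$ is controlled by $|D(\chi w)|_1$, which we also arrange to be nonvanishing by the same device applied to $\chi w$.

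Finally, to recover nonnegativity without losing the non-vanishing-gradient property, replace $u_n$ by $(u_n)^+=\max(u_n,0)\in\LIP_c(\Omega)$: on $\{u_n>0\}=\{(u_n)^+>0\}$ one has $|D(u_n)^+|_1=|Du_n|_1\neq0$ $\mm$-a.e.\ by the chain rule / locality of minimal weak upper gradients, and $(u_n)^+\to u^+=u$ in $W^{1,p}(\X)$ since truncation is $1$-Lipschitz and continuous on $W^{1,p}$. This yields the claimed sequence.

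\textbf{Main obstacle.} The delicate point is the quantitative removal of critical points: showing that a \emph{single} uniformly bounded (in $n$) perturbation $w_n$ can be chosen making $|D(u+\tfrac1n w_n)|_1\neq0$ a.e.\ on $\{u+\tfrac1n w_n>0\}$, rather than only on a set of large measure. In the smooth case this is transversality/Sard; in the $\cd$ setting one must replace Sard by a combination of the coarea formula, locality of weak upper gradients in PI spaces, and the fact that $|D\,\sfd(\cdot,x_0)|=1$ $\mm$-a.e., and check that the "bad" set $\{u>0\}\cap\{|Du|_1=0\}$ truly can be made $\mm$-null after the perturbation — this is the heart of Lemma 3.6 of \cite{MS20} and the only part that needs the local structure theory of $\cd(K,N)$ spaces in an essential way.
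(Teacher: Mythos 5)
Your starting idea is the right one — perturb $u$ by a small multiple of a distance function $v(\cdot)=\sfd(x_0,\cdot)$, whose local Lipschitz constant equals $1$ $\mm$-a.e.\ on a geodesic space — but you do not close the argument, and you explicitly flag the closing step (``the delicate point'') as open. That step is in fact simpler than the transversality/coarea machinery you gesture at. The resolution in the paper (following \cite[Lemma 3.6, Cor.\ 3.7]{MS20}) rests on two elementary observations that you don't invoke. First, the local Lipschitz constant obeys the reverse triangle inequality $\lip(f+g)\ge|\lip f-\lip g|$ pointwise, so for $u_n:=u+\eps_n v$ one gets $\lip(u_n)\ge|\lip u-\eps_n|$ $\mm$-a.e. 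Second, the sets $\{\lip u=\eps\}$ can have positive $\mm$-measure for at most countably many values of $\eps$, so one can choose $\eps_n\down 0$ with $\mm(\{\lip u=\eps_n\})=0$; for such a choice $\lip(u_n)\neq 0$ $\mm$-a.e.\ on $\X$, in particular on $\{u_n>0\}$. Passing from $\lip$ to $|Du_n|_1$ then uses the PI-space comparison $|Du_n|_1\ge c\,\lip(u_n)$ from \cite{Ambrosio-Pinamonti-Speight15} (exactly as you note $\cd(K,N)$ spaces are doubling + Poincar\'e). No Sard-type argument, no countable family of cut-off distance functions, and no BV coarea step are needed; the ``for a.e.\ choice of small parameters'' and ``flat pieces that can be perturbed away'' parts of your sketch are precisely the gap.

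Two smaller comments. Your truncation-at-the-end step is unnecessary: taking $w$ to be (a nonnegative multiple of) the distance function already gives $u_n\ge0$. And note that this lemma in the paper is only being (re)proved for the boundary case $\Omega=\X$ compact with $\supp u=\X$, where no cutoff is needed; for $\Omega\ne\X$ the paper simply cites \cite{MS20}, where the perturbing function is cut off to preserve compact support in $\Omega$ — the part you devote most of your first paragraph to. Your care about compact support is therefore not wrong, just misdirected relative to the case the paper actually needs to prove.
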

\begin{proof}
  The case $\Omega \neq \X$ has been proven in \cite[Lemma 3.6 and Corollary 3.7]{MS20}. The proof presented there, as it is written, does not cover the case  $\Omega=\X$ with $\X$ compact and $\supp(u)=\X$. However, the argument can be easily adapted by considering a sequence $\eps_n\to 0$ such that $\mm(\{\lip(u_n)=\eps_n\})=0$ and taking 
	\[
	u_n\coloneqq u+\eps_nv,
	\]
	with $v(x)\coloneqq\sfd(x_0,x)$, for an arbitrary fixed point $x_0 \in \X$. Since $v\in \LIP(\X)$ and $\lip(v)=1$ $\mm$-a.e.\  in $\X$, arguing exactly as in \cite[Lemma 3.6]{MS20} we get that $u_n \to u$ in $W^{1,p}(\X)$ and $\lip (u_n)\neq0 $ $\mm$-a.e.\ in $\{u_n>0\}$. To get the claimed non-vanishing of $|D u_n|_1$, as in  \cite[Corollary 3.7]{MS20} we use the existence of a constant $c>0$ such that
	\[
	|D u|_1\ge c\,\lip (u), \quad \mm\text{-a.e.},
		\]
	for every $u \in \LIP_{loc}(\X)$, which holds from the results in \cite{Ambrosio-Pinamonti-Speight15} and the fact that $\cd(K,N)$ spaces are locally doubling and supports a local-Poincar\'e inequality.
\end{proof}
\begin{lemma}[Derivative of the distribution function, (\cite{MS20})]\label{lemma:derivativedistribution}
	Let $\Xdm$ be a metric measure space and let $\Omega\subseteq X$ be an open subset with $\mm(\Omega)<+\infty$.
	Assume that $u\in \LIP_{c}(\Omega)$ is non-negative and $|{D u}|_1(x)\neq0$ for $\mm$-a.e.\  $x\in\{u>0\}$. Then its distribution function $\mu:[0, +\infty)\to [0, \mm(\Omega)]$, defined in \eqref{eq:defdistr}, is absolutely continuous. Moreover it holds
	\begin{equation}\label{eq:derivativedistributionfunction}
		\mu'(t)=-\int\frac{1}{|{D u}|_1}\,\d \Per{\{u>t\},\cdot} \qquad \text{a.e.},
	\end{equation}
	where the quantity $1/|{D u}|_1$ is defined to be $0$ whenever $|{D u}|_1=0$.
\end{lemma}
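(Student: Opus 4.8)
The plan is to derive everything from the coarea formula \eqref{eq:coarea} applied to $u$, which belongs to $BV(\X)$ because $u\in\LIP_c(\Omega)\subset\LIP_{bs}(\X)$, and for which $|\dD u|=|Du|_1\,\mm$ by definition of $|Du|_1$. First I would fix a Borel representative of $|Du|_1$ and set $g:=|Du|_1^{-1}\nchi_{\{|Du|_1>0\}}$, a non-negative Borel function; then \eqref{eq:coarea} gives, for every $0<s<t$,
\[
\int_{\{s\le u<t\}} g\,\d|\dD u| \;=\; \int_s^t\!\!\int g\,\d\Per{\{u>\tau\},\cdot}\,\d\tau .
\]
Since $\d|\dD u|=|Du|_1\,\d\mm$, the left-hand side equals $\mm\big(\{s\le u<t\}\cap\{|Du|_1>0\}\big)$; and because $\{s\le u\}\subset\{u>0\}$ for $s>0$, the standing hypothesis $|Du|_1\neq 0$ $\mm$-a.e.\ on $\{u>0\}$ reduces it to $\mm(\{s\le u<t\})$.

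Next I would let $s\downarrow 0$. By monotone convergence the left-hand side increases to $\mm(\{0<u<t\})$ and the iterated integral increases to $F(t):=\int_0^t\!\int g\,\d\Per{\{u>\tau\},\cdot}\,\d\tau$, so $\mm(\{0<u<t\})=F(t)$ for all $t>0$. Using that $|\dD u|$ is finite (equivalently, applying \eqref{eq:coarea} with $s=0$ and $t$ large, and that $\Per{\{u>\tau\}}=0$ for $\tau\ge\max u$) one gets $F(\infty)=\mm(\{|Du|_1>0\})\le\mm(\spt u)<\infty$, hence the integrand $\tau\mapsto\int g\,\d\Per{\{u>\tau\},\cdot}$ lies in $L^1(0,\infty)$ and $F$ is absolutely continuous on $[0,\infty)$. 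On the other hand, whenever $\mm(\{u=t\})=0$, i.e.\ for a.e.\ $t$, one has $\mm(\{0<u<t\})=\mm(\{u>0\})-\mm(\{u\ge t\})=\mu(0)-\mu(t)$, so $\mu(t)=\mu(0)-F(t)$ for a.e.\ $t$.

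Finally I would upgrade this to an identity holding for every $t$: $\mu$ is non-increasing, hence admits one-sided limits everywhere, while $F$ is continuous; evaluating the a.e.\ identity along points approaching a given $t_0$ from the left and from the right forces $\mu(t_0^-)=\mu(t_0^+)=\mu(0)-F(t_0)$, and monotonicity then gives $\mu(t_0)=\mu(0)-F(t_0)$. Thus $\mu=\mu(0)-F$ is absolutely continuous, and the Lebesgue differentiation theorem yields $\mu'(t)=-F'(t)=-\int |Du|_1^{-1}\,\d\Per{\{u>t\},\cdot}$ for a.e.\ $t$, which is \eqref{eq:derivativedistributionfunction}. The only mildly delicate points I anticipate are the measurability bookkeeping for $g$ and the passage from the a.e.\ identity to the everywhere identity; invoking the coarea formula is harmless here since the lemma is applied only on $\cd(K,N)$ spaces, which are proper, and everything else is routine.
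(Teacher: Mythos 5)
Your argument is correct and complete, and the paper itself does not supply a proof of this lemma—it simply imports it from \cite{MS20}. The coarea-based derivation you give (apply \eqref{eq:coarea} with $g=|Du|_1^{-1}\nchi_{\{|Du|_1>0\}}$, use the non-vanishing hypothesis to identify the left-hand side with $\mm(\{s\le u<t\})$, pass to the limit $s\downarrow 0$, and then use monotonicity of $\mu$ and continuity of the primitive $F$ to upgrade the a.e.\ identity) is precisely the standard route taken in \cite{MS20}.
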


%################################################################

\section{Upper bound for $\alpha_p$}\label{sec:upper bound alpha}
To prove an upper bound of $\alpha_p$  we will need to derive a Sobolev inequality of the type \eqref{eq:alfa sobolev} for some explicit $A$. This will be achieved by  proving first a class of local Sobolev-inequalities (see Theorem \ref{thm:local sobolev intro}) and then ``patch'' them together (see Theorem \ref{thm:Aopt upper cd}) to obtain the desired global inequality. The local-Sobolev inequalities  will be achieved through a Euclidean P\'olya-Szeg\H{o}  symmetrization  inequality (Theorem \ref{thm:eu_polyaszego}).

\subsection{P\'olya-Szeg\H{o} inequality of Euclidean-type}\label{sec:euclidean polya}

The goal of this section is to prove a Euclidean-variant  of the P\'olya-Szeg\H{o} inequality for $\cd(K,N)$ spaces derived in \cite{MS20} (under essentially nonbranching assumption, see also Section \ref{sec:polya}). The main difference is that our inequality holds for arbitrary $K \in \R$ and assumes the a priori validity  of a Euclidean-type isoperimetric inequality, while the one in \cite{MS20} requires $K>0$ and it is based on the L\'evy-Gromov isoperimetric inequality for the ${\rm{CD}}(K,N)$ condition. As opposed to Section \ref{sec:polya}, where the symmetrization has as target the model space for the $\cd(K,N)$ condition with $K>0$, we will use a notion of symmetrization that lives in the weighted half line $([0,\infty),|.|, t^{N-1} \Leb 1)$. It should be remarked that, in general, there is not a  natural  curvature model space to symmetrize functions defined on an arbitrary $\cd(K,N)$-space with $K\le 0$. This  is because there is not a unique model-space for the L\'evy-Gromov isoperimetric inequality in the case $K\le 0$ (see \cite{Milman15}). Therefore, it is unclear in this high-generality where the rearrangements should live. For this reason we will equip the metric measure spaces under consideration with a (possibly local) isoperimetric inequality of  Euclidean-type:
\[ \Per{E} \ge C\mm(E)^{\frac{N-1}N},\]
for $N>1$ and $C$ a non-negative constant.

We start with the definition of \emph{Euclidean model space} $(I_{0,N},|.|,\mm_{0,N})$, $N \in(1,\infty)$:
\[ I_{0,N}:=[0,\infty), \qquad \mm_{0,N}:= \sigma_{N-1} t^{N-1}\Leb 1,\]
where $|.|$ is the Euclidean distance. Next, we define the \emph{Euclidean monotone rearrangement}.
\begin{definition}[Euclidean monotone rearrangement] \label{def:eucl rearrangement}
    Let $\Xdm$ be a metric measure space and $\Omega \subset \X$ be open with $\mm(\Omega)<+\infty$.
    For any Borel function $u\colon \Omega \rightarrow \R^+$, we define $\Omega^*:= [0,r]$ with $\mm_{0,N}([0,r])= \mm(\Omega)$ (i.e. $r^N=\omega_N^{-1}\mm(\Omega)$) and  the monotone rearrangement $u^*_{0,N} : \Omega^* \rightarrow \R^+$ by
    \[ u^*_{0,N}(x) := u^\#(\mm_{0,N}([0,x]))=u^\#(\omega_Nx^N ), \qquad \forall x \in \Omega^*,\]
    where $u^\#$ is the generalized inverse of the distribution function of $u$, as defined in Section \ref{sec:polya}.
\end{definition}
In the sequel, whenever we fix $\Omega$ and $u \colon \Omega \to [0,\infty)$,  the set $\Omega^*$ and the rearrangement $u^*_{0,N}$ are automatically defined as  above. 
\begin{proposition}\label{prop:equimeasurability1}
	 Let $\Xdm$ be a metric measure space and $\Omega \subset \X$ be open and bounded with $\mm(\Omega)<+\infty$. Let $u:\Omega\to[0,+\infty)$ be Borel and let $u_{0,N}^*:\Omega^*\to[0,+\infty)$ be its monotone rearrangement.
	\\ Then, $u$ and $u_{0,N}^*$ have the same distribution function. Moreover 
	\begin{equation}\label{eq:equalLpnorm2}
		\|{u}\|_{L^p(\Omega)}=\|{u_{0,N}^*}\|_{L^p(\Omega^*)}, \qquad \forall \ 1\le p<+\infty,
	\end{equation}
	and the radial decreasing rearrangement operator $L^p(\Omega)\ni u \mapsto u_{0,N}^{*} \in L^p(\Omega^*)$ is continuous.
\end{proposition}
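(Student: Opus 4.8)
The plan is to prove the three assertions in order, all by the standard equimeasurability machinery for monotone rearrangements, here transported to the weighted half-line $(I_{0,N},|.|,\mm_{0,N})$.

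\emph{Step 1: same distribution function.} The first thing I would record is the measure-preserving change of variables hidden in the definition: the map $\phi\colon[0,r]\to[0,\mm(\Omega)]$, $\phi(x)\coloneqq\mm_{0,N}([0,x])=\omega_N x^N$ (using $\sigma_{N-1}=N\omega_N$ from \eqref{eq:omegasigma}), is an increasing homeomorphism and, by the change of variables $s=\omega_N x^N$, satisfies $\phi_\sharp(\mm_{0,N}\restr{[0,r]})=\Leb 1\restr{[0,\mm(\Omega)]}$. Since by Definition \ref{def:eucl rearrangement} one has $u^*_{0,N}=u^\#\circ\phi$ on $\Omega^*=[0,r]$, the distribution function of $u^*_{0,N}$ with respect to $\mm_{0,N}$ equals that of $u^\#$ with respect to $\Leb 1$ on $[0,\mm(\Omega)]$. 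So it suffices to prove the classical equimeasurability $\Leb 1(\{s>0:\ u^\#(s)>t\})=\mu(t)$ for every $t\ge0$, with $\mu$ as in \eqref{eq:defdistr}. This I would check straight from the definition of $u^\#$: since $\mm(\Omega)<\infty$, $\mu$ is non-increasing and right-continuous (because $\{u>t\}=\bigcup_n\{u>t+1/n\}$), and from this one gets $s<\mu(t)\Rightarrow u^\#(s)>t$ (using right-continuity) and $s>\mu(t)\Rightarrow u^\#(s)\le t$ (immediate). Hence $\{s>0:\ u^\#(s)>t\}$ agrees with $(0,\mu(t))$ up to the single point $\mu(t)$, so it has Lebesgue measure $\mu(t)$; pushing this forward through $\phi$ gives that $u$ and $u^*_{0,N}$ share the distribution function $\mu$.

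\emph{Step 2: equality of the $L^p$ norms.} This is then immediate from Step 1 together with the Cavalieri (layer--cake) identity $\int f^p\,\d\nu=\int_0^\infty p t^{p-1}\nu(\{f>t\})\,\d t$ applied to $u$ on $(\Omega,\mm)$ and to $u^*_{0,N}$ on $(\Omega^*,\mm_{0,N})$: both $p$-th powers equal $\int_0^\infty p t^{p-1}\mu(t)\,\d t$, giving \eqref{eq:equalLpnorm2} as an identity in $[0,\infty]$, for every $1\le p<\infty$.

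\emph{Step 3: continuity of $u\mapsto u^*_{0,N}$.} Given $u_n\to u$ in $L^p(\Omega)$ with $u_n,u\ge0$, I would argue as follows. Since $\mm(\Omega)<\infty$, $L^p$ convergence implies convergence in $\mm$-measure, and a routine $\eps$-splitting of the superlevel sets yields $\mu_{u_n}(t)\to\mu_u(t)$ at every continuity point of the monotone function $\mu_u$, hence for all but countably many $t$. Pointwise convergence of monotone functions passes to their generalized inverses at continuity points, so $u_n^\#\to u^\#$ $\Leb 1$-a.e.\ on $(0,\mm(\Omega))$, and composing with the measure isomorphism $\phi$ of Step 1 gives $(u_n)^*_{0,N}\to u^*_{0,N}$ $\mm_{0,N}$-a.e.\ on $\Omega^*$. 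At the same time Step 2 gives $\|(u_n)^*_{0,N}\|_{L^p(\Omega^*)}=\|u_n\|_{L^p(\Omega)}\to\|u\|_{L^p(\Omega)}=\|u^*_{0,N}\|_{L^p(\Omega^*)}<\infty$. Combining a.e.\ convergence with convergence of the $L^p$ norms, the Riesz--Scheff\'e lemma (valid for $1\le p<\infty$) upgrades this to strong $L^p(\Omega^*)$ convergence, which is the claimed continuity; for signed $u$ one precomposes with the $L^p$-continuous map $u\mapsto|u|$. The computations in Steps 1--2 are routine generalized-inverse bookkeeping plus the change of variables $s=\omega_N x^N$, while the main obstacle --- though still classical --- is Step 3: one must be careful that $L^p$ convergence really forces a.e.\ convergence of the rearrangements (through convergence in measure and stability of generalized inverses under pointwise convergence of monotone functions) before invoking the Riesz--Scheff\'e principle. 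A shortcut would be to quote the non-expansivity $\|u^\#-v^\#\|_{L^p}\le\|u-v\|_{L^p}$ of the decreasing rearrangement, which gives Step 3 at once, but establishing that inequality from scratch is no shorter than the argument above.
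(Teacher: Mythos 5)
Your proof is correct. The paper itself gives no written argument for this proposition — it simply declares the result classical, pointing to Kesavan's book and to \cite{MS20} — and your Steps 1--2 are precisely that classical argument, correctly transported to the weighted half-line via the measure isomorphism $\phi(x)=\omega_N x^N$, with $\phi_\sharp(\mm_{0,N}\restr{[0,r]})=\Leb{1}\restr{[0,\mm(\Omega)]}$, together with the layer-cake formula. The only place where you deviate from the route the cited references typically take is Step 3: the classical proof of continuity usually invokes the non-expansivity $\|u^\#-v^\#\|_{L^p}\le\|u-v\|_{L^p}$ of the decreasing rearrangement (which you mention as a shortcut), whereas you derive convergence in measure, hence convergence of the distribution functions at continuity points, hence $\Leb{1}$-a.e.\ convergence of the generalized inverses, and then upgrade to $L^p$ convergence via equality of norms and the Riesz--Scheff\'e lemma. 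Both routes are sound; yours trades the contraction inequality for a slightly longer but elementary stability argument for monotone functions and their generalized inverses, and it correctly handles the passage of null sets through $\phi$ since $\phi_\sharp\mm_{0,N}$ is Lebesgue measure.
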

The proof of the above proposition is classical, following e.g. \cite{Kesavan06}, with straightforward modification for the metric measure setting (see also \cite{MS20}). Observe also that, given $u \in L^p(\Omega)$, its monotone rearrangement must be defined by fixing a Borel representative of $u$. However, this choice does not affect the outcome object $u^*_{0,N}$, as clearly the distribution function $\mu(t)$ of $u$ is independent of the representative. 

We now introduce the additional assumption that will make this section meaningful. For some open set $\Omega\subset \X$ and a number $N \in (1,\infty)$, we require the validity of the following local Euclidean-isoperimetric inequality
\begin{equation} \label{eq:isop_eu}
\Per{E} \ge {\sf C_{Isop}} \mm(E)^{\frac{N-1}N}, \qquad \forall\,   E \subset \Omega \text{ Borel.}
\end{equation}
where  ${\sf C_{Isop}}$ is a positive constant independent of $E.$

\begin{remark}\label{rmk:eucl isop} \rm
There is a rich literature about Euclidean-type isoperimetric inequalities in metric measure spaces. Inequalities as in \eqref{eq:isop_eu} have been proven to hold, at least on balls, in the general setting of locally doubling metric measure spaces satisfying a weak local $(1,1)$-Poincar\'e inequality (see, e.g., \cite{Ambrosio02,Miranda03}). In this setting we also mention the recent \cite{AntPasPoz21}, where a global Euclidean-type isoperimetric inequality for small volumes is proved.  In the context of ${\rm CD}(K,N)$ spaces, local almost-Euclidean isoperimetric inequalities have been derived in \cite{CM20}, while in the recent \cite{BK21}, a global version of \eqref{eq:isop_eu} is proven to hold  in ${\rm CD}(0,N)$ spaces with Euclidean-volume growth. For us, the validity of \eqref{eq:isop_eu} will come from Theorem \ref{theorem:almost euclidean isop}.\fr
\end{remark}

\begin{proposition}[Lipschitz to Lipschitz property of the rearrangement]\label{prop:liptolip1}
	Let $(\X,\sfd,\mm)$ be a metric measure space and let $\Omega\subset \X$ be open with $\mm(\Omega)<+\infty$. Assume furthermore that, for some $N \in (1,\infty)$ and ${\sf C_{Isop}}>0$, the isoperimetric inequality in \eqref{eq:isop_eu} holds in $\Omega$. Finally, let  $u\in \LIP_c(\Omega)$ be non-negative with Lipschitz constant $L\ge0$ and such that $|{D u}|_1(x)\neq 0$ for $\mm$-a.e.\  $x\in\{u>0\}$. Then $u_{0,N}^*\in \LIP(\Omega^*)$ with $\Lip(u_{0,N}^*)\le N\omega_N^\frac1N L /{\sf C_{Isop}}$.
\end{proposition}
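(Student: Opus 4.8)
The plan is to follow the scheme of the corresponding Lipschitz-to-Lipschitz property in \cite{MS20}, replacing the curvature model space used there with the Euclidean model $(I_{0,N},|.|,\mm_{0,N})$: I would study the distribution function $\mu(t):=\mm(\{u>t\})$ of $u$, convert the Euclidean isoperimetric inequality \eqref{eq:isop_eu} into a differential inequality for $\mu$, and then read off the Lipschitz constant of the rearrangement from it. We may assume $\mu(0)=\mm(\{u>0\})>0$, the case $\mu(0)=0$ being trivial.

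\emph{Step 1: a differential inequality for $\mu$.} Let $M:=\esssup u$, which is finite since $u\in\LIP_c(\Omega)$. By the hypothesis $|D u|_1\neq0$ $\mm$-a.e.\ on $\{u>0\}$, Lemma \ref{lemma:derivativedistribution} gives that $\mu$ is absolutely continuous. For the pointwise bound on $\mu'$ I would argue as follows: since $u$ is $L$-Lipschitz, $\{u>t\}^{\delta}\subseteq\{u>t-L\delta\}$ for every $\delta>0$, hence $\mu(t-L\delta)-\mu(t)\ge \mm(\{u>t\}^{\delta})-\mm(\{u>t\})$; dividing by $L\delta$ and letting $\delta\to0^+$ at a point of differentiability of $\mu$ yields $-\mu'(t)\ge\frac1L\mm^+(\{u>t\})\ge\frac1L\Per{\{u>t\}}$ for a.e.\ $t$ (using $\Per{E}\le\mm^+(E)$). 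Applying \eqref{eq:isop_eu} to $E=\{u>t\}\subseteq\Omega$ then gives
\[
-\mu'(t)\ \ge\ \frac{{\sf C_{Isop}}}{L}\,\mu(t)^{\frac{N-1}N}\qquad\text{for a.e.\ }t\in(0,M).
\]

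\emph{Step 2: from $\mu$ to the Lipschitz bound.} The inequality above shows $-\mu'>0$ a.e.\ on $\{\mu>0\}$, so $\mu$ is a decreasing homeomorphism of $[0,M]$ onto $[0,\mu(0)]$; therefore the generalized inverse $u^\#$ equals $\mu^{-1}$ on $(0,\mu(0)]$ and vanishes afterwards. Setting $x_0:=(\mu(0)/\omega_N)^{1/N}\le r$, the rearrangement $u^*_{0,N}(x)=u^\#(\omega_N x^N)$ is a decreasing homeomorphism of $[0,x_0]$ onto $[0,M]$ and is $\equiv0$ on $[x_0,r]$, and on $[0,x_0]$ it is inverse to $\phi(t):=\omega_N^{-1/N}\mu(t)^{1/N}$ (indeed $u^*_{0,N}(x)=t\iff\omega_N x^N=\mu(t)$). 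Since $\mu$ is absolutely continuous and monotone and $s\mapsto s^{1/N}$ is absolutely continuous on bounded intervals, $\phi$ is absolutely continuous, and for a.e.\ $t\in(0,M)$
\[
-\phi'(t)=\frac{\omega_N^{-1/N}}{N}\,\mu(t)^{\frac{1-N}N}\,\big(-\mu'(t)\big)\ \ge\ \frac{\omega_N^{-1/N}}{N}\cdot\frac{{\sf C_{Isop}}}{L}=\frac{{\sf C_{Isop}}}{N\,\omega_N^{1/N}\,L}.
\]
Integrating over any $[t_2,t_1]\subset(0,M)$ and writing $x_i=\phi(t_i)\in[0,x_0]$ gives $u^*_{0,N}(x_1)-u^*_{0,N}(x_2)\le \frac{N\omega_N^{1/N}L}{{\sf C_{Isop}}}\,(x_2-x_1)$; since $u^*_{0,N}$ is non-increasing and vanishes on $[x_0,r]$, this extends to all pairs in $[0,r]$, yielding $\Lip(u^*_{0,N})\le N\omega_N^{1/N}L/{\sf C_{Isop}}$.

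\emph{Main obstacle.} The calculus above is routine; the delicate points are the two regularity inputs. The first is the pointwise-in-$t$ estimate $-\mu'(t)\ge\frac1L\Per{\{u>t\}}$ — obtained above from the elementary inclusion $\{u>t\}^{\delta}\subseteq\{u>t-L\delta\}$ together with $\Per{E}\le\mm^+(E)$, or alternatively from the formula $\mu'(t)=-\int \frac{1}{|D u|_1}\,\d\Per{\{u>t\},\cdot}$ of Lemma \ref{lemma:derivativedistribution} and $|D u|_1\le L$ $\mm$-a.e.\ (the latter route, via the coarea formula, needs $\X$ locally compact, which is why I prefer the collar argument in the stated generality). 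The second, and more essential, is that the \emph{a.e.}\ differential inequality can actually be integrated — which requires $\mu$, hence $\phi=\omega_N^{-1/N}\mu^{1/N}$, to be absolutely continuous; this is exactly what the assumption $|D u|_1\neq0$ $\mm$-a.e.\ on $\{u>0\}$ provides through Lemma \ref{lemma:derivativedistribution}, and is the reason that hypothesis is indispensable.
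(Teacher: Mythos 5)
Your proof is correct, and it follows the same overall symmetrization scheme as the paper (which in turn follows \cite{MS20}): apply the isoperimetric inequality \eqref{eq:isop_eu} to the superlevel sets, use the $L$-Lipschitz bound to control how fast the distribution function can decay, and convert via the change of variables $s=\omega_N x^N$. The genuine difference is the tool used for the key quantitative step. The paper integrates $|Du|_1\le L$ over $\{t-h<u\le t\}$ via the coarea formula \eqref{eq:coarea} to obtain the two-point increment bound $u^\#(s)-u^\#(s+k)\le s^{-1+1/N}{\sf C_{Isop}}^{-1}kL$ directly for $u^\#$, and then differentiates and changes variables; you instead derive the pointwise inequality $-\mu'(t)\ge \Per{\{u>t\}}/L$ from the elementary collar inclusion $\{u>t\}^\delta\subseteq\{u>t-L\delta\}$ together with $\Per{E}\le\mm^+(E)$, and integrate the resulting a.e.\ differential inequality for $\phi=\omega_N^{-1/N}\mu^{1/N}$, reading the Lipschitz bound off the inverse. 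What your route buys is that the coarea formula is never invoked — a small bonus, since the paper records \eqref{eq:coarea} for locally compact spaces while the proposition is stated for arbitrary metric measure spaces — at the price of needing the absolute continuity of $\mu^{1/N}$; your justification (composition of the AC function $s\mapsto s^{1/N}$ with the \emph{monotone} AC function $\mu$) is the right one, and monotonicity is indeed the essential ingredient there. Both arguments exploit the non-vanishing gradient hypothesis in the same way, namely through Lemma \ref{lemma:derivativedistribution} to get absolute continuity (hence continuity and strict monotonicity) of $\mu$, so $u^\#$ is a genuine inverse of $\mu$ on $(0,\mu(0)]$, and the constants match. The only cosmetic point is the division by $L\delta$ in Step 1, which tacitly assumes $L>0$; the case $L=0$ is trivial since then $u\equiv 0$.
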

\begin{proof}
    	We closely follow \cite{MS20}.	Let $\mu$ be the distribution function associated to $u$ and denote by $M:=\sup{u}<+\infty$. The assumptions grant that $\mu$ is continuous and  strictly decreasing. Therefore for any $s,k\ge 0$ such that $s+k\le\mm(\supp(u))$ we can find $0\le t-h\le t\le M$ in such a way that $\mu(t-h)=s+k$ and $\mu(t)=s$. Then from the coarea formula \eqref{eq:coarea} and the $L$-Lipschitzianity of $u$ we get
	\begin{equation}\label{eq:litolip eq2}
		\int_{t-h}^t \Per{\{u>r\},\cdot}\, \d r = \int_{\{t-h< u\le t\}}|D u|_1\, \d \mm \le L (\mu(t-h)-\mu(t))=kL.
	\end{equation}
Observe that $\{u>r\}\subset \Omega$ for every $r>0$, therefore we can apply the isoperimetric inequality \eqref{eq:isop_eu} and obtain that
\[
\Per{\{u>r\}}\ge {\sf C_{Isop}}\mu(r)^{\frac{N-1}{N}},\qquad \forall r>0.
\]
Therefore from \eqref{eq:litolip eq2} and the monotonicity of $\mu$ we obtain
\[
kL\ge {\sf C_{Isop}}\int_{t-h}^t \mu(r)^{\frac{N-1}{N}}\, \d r\ge {\sf C}_{\sf Isop}h \mu(t)^{\frac{N-1}{N}},
\]
from which, observing that in this case $u^\#$ is the inverse of $\mu$, we reach 
\[
u^\#(s)-u^\#(s+k)\le s^{-1+1/N}{\sf C}^{-1}_{{\sf Isop}}kL.
\]
In particular $u^\#$ is Lipschitz in $(\eps,\supp(u)]$ (and thus in $(\eps,\mm(\Omega)]$) for every $\eps>0$ and at every one of its differentiability points $s\in (0,\mm(\Omega))$ it holds that  
$$
-\frac{\d}{\d s}u^\#(s)\le s^{1-1/N}{\sf C}^{-1}_{{\sf Isop}}L.
$$
Fix now two arbitrary and distinct points $x,y \in \Omega^*$ and assume without loss of generality that $y>x$. Recalling the definition of $u_{0,N}^*$ we have that $u_{0,N}^*(x)\ge u_{0,N}^*(y)$ and
\begin{align*}
u_{0,N}^*(x)-u_{0,N}^*(y)&=u^\#(\omega_N x^N)-u^\#(\omega_N y^N)=\int_{\omega_N x^N}^{\omega_N y^N}-\frac{\d}{\d s}u^\#(s)\, \d s\\
			&\le  \int_{\omega_N x^N}^{\omega_N y^N}\frac{s^{-1+1/N}}{{\sf C_{Isop}}}L \, \d s= \omega_N^\frac1N\frac{NL}{{\sf C_{Isop}}}|x-y|,
\end{align*}
which proves that $u_{0,N}^*: \Omega^*\to [0,\infty)$ is $N\omega_N^\frac1N L /{\sf C_{Isop}}$-Lipschitz. 
%The fact that $\supp (u_{0,N}^*)$ is compact in $\Omega^*$ follows immediately from the fact that $\mm(\supp( u))<\mm(\Omega)$ and that $u^\#(s)=0$ for every $s \in (\mm(\supp (u)),\mm(\Omega)].$
\end{proof}

The proof of the following result is exactly the same as in Lemma 3.11 of \cite{MS20}, since the only relevant fact for the proof is that $\mm_{0,N}=h_N\Leb 1$ with weight $h_N$ which is bounded away from zero out of the origin (recall also \eqref{eq:sobolev ac}).
\begin{lemma}\label{lem:identification}
Let $p\in(1,\infty).$ Let $u \in W^{1,p}([0,r],|.|,\mm_{0,N})$, with $r\in(0,\infty)$, be monotone. Then $u \in W^{1,1}_{loc}(0,r)$ and it holds that
	\[
	|D u|_1(t)=|u'|(t)=|D u|(t), \quad \text{for a.e.\  }t \in[0,r]. 
	\]
\end{lemma}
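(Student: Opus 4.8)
The plan is to reduce the statement to one-dimensional Sobolev calculus, exploiting --- exactly as in \cite[Lemma 3.11]{MS20} --- that the only relevant feature of the weight $h_N(t):=\sigma_{N-1}t^{N-1}$ (so that $\mm_{0,N}=h_N\Leb 1$) is that it is locally bounded on $[0,r]$ and bounded away from $0$ on each $[\eps,r]$, $\eps>0$. First I would apply the characterization \eqref{eq:sobolev ac} with exponent $p$: from $u\in W^{1,p}([0,r],|.|,\mm_{0,N})$ it follows that $u\in W^{1,1}_{loc}(0,r)$, that $u,u'\in L^p(\mm_{0,N})$, and that the minimal $p$-weak upper gradient is $|Du|_p=|u'|$ $\Leb 1$-a.e.\ on $[0,r]$. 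This already yields the Sobolev membership asserted in the statement, together with the equality $|u'|=|Du|$ (reading $|Du|:=|Du|_p$), so it only remains to identify the minimal $1$-weak upper gradient $|Du|_1$.

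For the inequality $|Du|_1\le|u'|$ I would use that $\mm_{0,N}$ is a finite measure on $[0,r]$: by H\"older's inequality any curve family of vanishing $p$-modulus has vanishing $1$-modulus, so every $p$-weak upper gradient is in particular a $1$-weak upper gradient; since $|u'|\in L^p(\mm_{0,N})\subset L^1(\mm_{0,N})$, this gives $|Du|_1\le|u'|$ $\mm_{0,N}$-a.e. For the reverse inequality, fix $\eps>0$ and let $g\in L^1(\mm_{0,N})$ be an arbitrary $1$-weak upper gradient of $u$. On $(\eps,r)$ one has $\sigma_{N-1}\eps^{N-1}\le h_N\le\sigma_{N-1}r^{N-1}$, so $\mm_{0,N}$ and $\Leb 1$ are there mutually absolutely continuous with densities bounded between two positive constants; hence they induce comparable $1$-moduli, and therefore the two notions of $1$-weak upper gradient of $u\restr{(\eps,r)}$ coincide. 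Since, with respect to $\Leb 1$ on an interval, any $1$-weak upper gradient of a locally absolutely continuous function dominates its a.e.\ derivative (classical one-dimensional theory), and $u$ is locally absolutely continuous on $(0,r)$ by the first part, we get $g\ge|u'|$ $\Leb 1$-a.e.\ on $(\eps,r)$. Letting $\eps\downarrow0$ and using that $\{0\}$ is $\mm_{0,N}$- and $\Leb 1$-negligible yields $g\ge|u'|$ a.e.\ on $[0,r]$; taking the essential infimum over all such $g$ gives $|Du|_1\ge|u'|$, hence equality.

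I do not anticipate any substantive obstacle. The monotonicity of $u$ plays essentially no role --- a good one-dimensional representative is already provided by \eqref{eq:sobolev ac} --- and is retained only because the lemma will be applied to the monotone rearrangement $u^*_{0,N}$. The single point requiring a little care is the degeneration $h_N(0)=0$ of the weight at the endpoint $t=0$, which is precisely why the argument is run on the subintervals $(\eps,r)$ and then passed to the limit; this passage is harmless since $\{0\}$ carries no mass for any of the measures involved.
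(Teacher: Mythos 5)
Your proof follows the same strategy that the paper indicates (the paper itself gives no argument here, only a pointer to \cite[Lemma~3.11]{MS20} and the remark that the key property is that $h_N$ is bounded away from zero off the origin, together with \eqref{eq:sobolev ac}): reduce to $(\eps,r)$, where $\mm_{0,N}$ and $\Leb 1$ have mutually bounded densities, and let $\eps\downarrow0$. One small point is worth flagging. You read $|Du|_1$ throughout as the minimal \emph{Newtonian} (modulus-based) $1$-weak upper gradient; both halves of your argument — that a $p$-weak upper gradient is automatically a $1$-weak upper gradient when the measure is finite, and that on $(\eps,r)$ any $1$-weak upper gradient dominates $|u'|$ by the classical one-dimensional theory — are phrased in that framework. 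In this paper, however, $|Du|_1$ is introduced in Section~2.2.2 as the density of the BV total-variation measure $|\dD u|$ with respect to $\mm_{0,N}$, i.e.\ the relaxed object. For a locally absolutely continuous $u$ on a weighted interval with a locally bounded-above-and-below weight the two notions do coincide and both equal $|u'|$, so nothing is actually wrong; but to land directly on the paper's $|Du|_1$ one would phrase the reverse inequality via the lower semicontinuity of the relaxed variation under $L^1_{loc}(\Leb 1)$-convergence on $(\eps,r)$ (again exploiting that there $\mm_{0,N}$ and $\Leb 1$ are mutually absolutely continuous with bounded densities) rather than via the modulus comparison. Your observation that monotonicity of $u$ is not actually used, and is carried along only because the lemma is applied to monotone rearrangements, is correct.
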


\begin{theorem}[Euclidean P\'olya-Szeg\H{o} inequality]\label{thm:eu_polyaszego}
		Let $(\X,\sfd,\mm)$ be a ${\rm CD}(K,N')$ space, $K \in \R$ $N' \in(1,\infty)$  and let $\Omega\subset \X$ be open with $\mm(\Omega)<+\infty$. Assume furthermore that, for some $N \in (1,\infty)$ and ${\sf C_{Isop}}>0$, the isoperimetric inequality in \eqref{eq:isop_eu} holds in $\Omega$. Then the Euclidean-rearrangement maps $W^{1,p}_0(\Omega)$ to $W^{1,p}(\Omega^*,|.|,\mm_{0,N})$  for any $1<p<+\infty$. Moreover for any $u \in W^{1,p}_0(\Omega)$ it holds 
	\begin{equation}\label{eq:euclpolya}
		\int_{\Omega}|{D u}|^p\d \mm\ge \Big(\tfrac{{\sf C_{Isop}}}{N\omega_N^{1/N}}\Big)^p\int_{\Omega^*}|{D u_{0,N}^*}|^p\d \mm_{0,N}.
	\end{equation}
\end{theorem}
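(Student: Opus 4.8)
The plan is to run the classical Pólya--Szeg\H{o} argument — coarea formula, then Hölder's inequality, then the isoperimetric hypothesis — on a convenient dense class of competitors, and then to reach the general case by approximation, extracting the Sobolev regularity of the rearrangement along the way.

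First I would reduce to a non-negative $u\in\LIP_c(\Omega)$ whose minimal weak upper gradient is nonzero $\mm$-a.e.\ on $\{u>0\}$. Since the rearrangement depends only on $|u|$ one may assume $u\ge 0$; then, given $u\in W^{1,p}_0(\Omega)$, I would approximate it by functions of $\LIP_c(\Omega)$, truncate these to be non-negative (their $p$-energies stay below the original ones, $|Dv^+|=|Dv|\nchi_{\{v>0\}}$ $\mm$-a.e.\ by locality, hence converge to $\int_\Omega|Du|^p\,\d\mm$ by $L^p$-lower semicontinuity of the $p$-Cheeger energy), and finally apply Lemma \ref{lem:nonvanishing gradients} together with a diagonal argument. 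This produces non-negative $u_n\in\LIP_c(\Omega)$ with nonvanishing gradient, $u_n\to u$ in $L^p(\Omega)$ and $\int_\Omega|Du_n|^p\,\d\mm\to\int_\Omega|Du|^p\,\d\mm$. By Proposition \ref{prop:equimeasurability1} the rearrangements converge, $(u_n)^*_{0,N}\to u^*_{0,N}$ in $L^p(\Omega^*)$, so once \eqref{eq:euclpolya} is established for each $u_n$, the uniform energy bound plus $L^p$-lower semicontinuity of the $p$-Cheeger energy on $(\Omega^*,|.|,\mm_{0,N})$ will give simultaneously that $u^*_{0,N}\in W^{1,p}(\Omega^*,|.|,\mm_{0,N})$ (its $L^p$ norm is finite by equimeasurability) and the inequality \eqref{eq:euclpolya} for $u$.

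Next, for a ``nice'' $u$ as above, put $\mu(t)\coloneqq\mm(\{u>t\})$ and $M\coloneqq\sup u$. By Lemma \ref{lemma:derivativedistribution}, $\mu$ is absolutely continuous with $-\mu'(t)=\int|Du|^{-1}\,\d\Per{\{u>t\},\cdot}$ a.e., and $-\mu'(t)\ge\Per{\{u>t\}}/\Lip(u)>0$ on $(0,M)$, so $\mu$ is strictly decreasing there. Using the exponent-independence of weak upper gradients on $\cd(K,N')$ spaces (Remark \ref{rmk:p independence CD}), i.e.\ $|Du|^p\mm=|Du|^{p-1}|\dD u|$, the coarea formula \eqref{eq:coarea} gives $\int_\Omega|Du|^p\,\d\mm=\int_0^M\big(\int|Du|^{p-1}\,\d\Per{\{u>t\},\cdot}\big)\,\d t$. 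For a.e.\ $t$, Hölder on the measure $\Per{\{u>t\},\cdot}$ with exponents $p$ and $p/(p-1)$ bounds $\Per{\{u>t\}}^p$ by $\big(\int|Du|^{p-1}\,\d\Per{\{u>t\},\cdot}\big)(-\mu'(t))^{p-1}$; the isoperimetric hypothesis \eqref{eq:isop_eu} for $\{u>t\}\subset\Omega$ then yields
\[
\int|Du|^{p-1}\,\d\Per{\{u>t\},\cdot}\ \ge\ \frac{\Per{\{u>t\}}^p}{(-\mu'(t))^{p-1}}\ \ge\ {\sf C_{Isop}}^{\,p}\,\frac{\mu(t)^{\frac{p(N-1)}{N}}}{(-\mu'(t))^{p-1}},
\]
and integrating in $t$ gives $\int_\Omega|Du|^p\,\d\mm\ge {\sf C_{Isop}}^{\,p}\int_0^M\mu(t)^{\frac{p(N-1)}{N}}(-\mu'(t))^{1-p}\,\d t$.

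Finally I would identify this last integral with the right-hand side of \eqref{eq:euclpolya}. By Proposition \ref{prop:liptolip1}, $u^*_{0,N}$ is Lipschitz on $\Omega^*$, hence monotone with $|Du^*_{0,N}|=|(u^*_{0,N})'|$ a.e.\ (Lemma \ref{lem:identification}), and it shares the distribution function $\mu$ with $u$ (Proposition \ref{prop:equimeasurability1}). A direct computation — change of variables $t=u^*_{0,N}(x)$, using $\mm_{0,N}([0,x])=\omega_Nx^N$, so that $\{u^*_{0,N}>t\}=[0,s_t)$ with $\sigma_{N-1}s_t^{N-1}=N\omega_N^{1/N}\mu(t)^{(N-1)/N}$ and $-\mu'(t)=\sigma_{N-1}s_t^{N-1}/|(u^*_{0,N})'(s_t)|$ — shows $\int_{\Omega^*}|Du^*_{0,N}|^p\,\d\mm_{0,N}=(N\omega_N^{1/N})^p\int_0^M\mu(t)^{\frac{p(N-1)}{N}}(-\mu'(t))^{1-p}\,\d t$; equivalently, it is the same coarea--Hölder computation run on the model, where the superlevel sets are intervals (so Hölder holds with equality) and whose isoperimetric constant is exactly $N\omega_N^{1/N}$. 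Combining with the previous step gives \eqref{eq:euclpolya} for nice $u$, and the approximation step closes the argument. I expect the first step to be the main obstacle: a priori the rearrangement of a general $u\in W^{1,p}_0(\Omega)$ need not even be Sobolev, so one has to produce both its regularity and the inequality purely from the approximants, juggling lower semicontinuity on the source space and on the target weighted interval; the coarea--Hölder--isoperimetric core and the model-space computation are classical, the only genuinely metric ingredients being Remark \ref{rmk:p independence CD} and Lemma \ref{lemma:derivativedistribution}.
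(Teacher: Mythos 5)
Your proposal is correct and follows essentially the same route as the paper's proof (which in turn follows \cite{MS20}): reduce to non-negative $u\in\LIP_c(\Omega)$ with non-vanishing gradient via Lemma \ref{lem:nonvanishing gradients}, run the coarea–H\"older–isoperimetric estimate to bound $\int_\Omega|Du|^p\,\d\mm$ by ${\sf C_{Isop}}^p\int_0^M\mu(t)^{p(N-1)/N}(-\mu'(t))^{1-p}\,\d t$, identify this with the model-space energy through Proposition \ref{prop:liptolip1}, Lemma \ref{lemma:derivativedistribution} and Lemma \ref{lem:identification}, and close by $L^p$-continuity of the rearrangement plus lower semicontinuity of the $p$-Cheeger energy. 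The only difference is that you spell out the approximation and the coarea–H\"older step, which the paper delegates to \cite{MS20}; the mathematical content coincides.
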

\begin{proof}
	The proof is  a standard  argument and we  follow  \cite{MS20} for its adaptation to the non-smooth setting. We first prove the result assuming that $u \in \LIP_c(\Omega)$ and $|{D u}|_1(x)\neq 0$ for $\mm$-a.e.\  $x\in\{u>0\}$, then the general case will follow by approximation. Set $M\coloneqq \sup u$.
% 	and define the functions $\phi, \psi: [0,M] \to \R^+$ as follows
% 	\[
% 	\phi(t)\coloneqq \int_{\{u>t\}}|D u|_1^p\, \d \mm, \quad \psi(t)\coloneqq \int_{\{u>t\}}|D u|_1\, \d \mm.
% 	\]
% 	An application of the coarea formula \eqref{eq:coarea} gives at once that $\phi,\psi$ are absolutely continuous with 
% 	\[
% 	\phi'(t)=-\int |D u|_1^{p-1} \, \d \Per{\{u>t\},\cdot}, \quad \psi'(t)=-\Per{\{u<t\}}, \quad \text{for a.e.\  $t\in [0,M]$},
% 	\]
% 	for any  Borel representative of $|D u|_1$, which we assume to be fixed from now until the end of the proof. From the H\"older inequality we have
% 	\[
% 	\psi(t-h)-\psi(t)\le (\phi(t-h)-\phi(t))^{1/p}(\mu(t-h)-\mu(t))^{(p-1)/p}, \quad 0\le t-h\le t< M,
% 	\]
% 	where $\mu$ denotes the distribution function of $u$. From Lemma \ref{lemma:derivativedistribution}, we know that also $\mu$ is absolutely continuous, in particular we have that a.e.\  $t\in[0,M]$ is  at the same time a differentiability point for $\phi,\psi$ and $\mu$.  Choosing one of such $t$'s in the above inequality, dividing by $h>0$ and passing to the limit as $h\to 0^+$ we obtain
% 	\[
% 	-\psi'(t)\le (-\phi'(t))^{1/p}(-\mu'(t))^{(p-1)/p}, \quad \text{for a.e.\  $t\in [0,M]$}.
% 	\]
% 	Moreover, by the validity of \eqref{eq:isop_eu},  we have that $\Per{\{u>t\}}\ge {\sf C_{Isop}}\mu(t)^{(N-1)/N}$. Therefore 
% 	\begin{equation*}
% 	-\phi'(t)\ge \frac{{{\sf C}^p_{\sf Isop} } \mu(t)^{\frac{(N-1)p}{N}}}{(-\mu'(t))^{p-1}}, \quad \text{for a.e.\  $t\in [0,M]$}.
% 	\end{equation*}
% and integrating we reach
From the coarea formula   \eqref{eq:coarea} and the assumed isoperimetric inequality \eqref{eq:isop_eu} we can obtain  (see e.g.\ the proof of Prop. 3.12 in \cite{MS20})
\begin{equation}\label{eq:polya part 2}
	\int_{\Omega} |D u|_1^p\, \d \mm \ge \int_0^M  \frac{{{\sf C}^p_{\sf Isop} } \mu(t)^{\frac{(N-1)p}{N}}}{(-\mu'(t))^{p-1}} \, \d t,
\end{equation}
where $\mu'(t)$ exists a.e.\ since from Lemma \ref{lemma:derivativedistribution} $\mu$ is absolutely continuous.

	Recall now from Proposition \ref{prop:equimeasurability1} that $\mu(t)=\mm(\{u_{0,N}^*>t\})$, where $u_{0,N}^*: \Omega^*\to \R^+$ is the Euclidean monotone rearrangement. Moreover, thanks to the non-vanishing assumptions on $|D u|_1$, we have from Proposition \ref{prop:liptolip1} that $u_{0,N}^*\in \LIP(\Omega^*)$. Additionally $u_{0,N}^*$ is strictly decreasing in $(0,\mm(\supp(u)))$ and in particular $\{u^*_{0,N}>t\}=[0,r_t)$ (and $\{u^*_{0,N}=t\}=\{r_t\}$) for some $r_t\in[0,\mm(\Omega)]$, for every $t \in (0,M)$.  Note that $r_t$ can be computed explicitly as $r_t=(\omega_N^{-1}\mu(t))^{1/N}$, which also shows that $t\mapsto r_t$ is a locally absolutely continuous map.
%	 since (by monotonicity) $\{u^*_{0,N} >t\}$ is an interval of type $(0,r_t)\subseteq \R^+$ for some $r_t$ we have that $\lip \ u^*_{0,N}(x) = \lip \ g(r_t) $ for every $x \in \{u^*_{0,N}=t\}$, where $g(r) := u^\#(r^N)$. 
Combining these observations with Lemma \ref{lemma:derivativedistribution} and recalling Lemma \ref{lem:identification} we have following expression for the derivative of $\mu$:
	\[
	-\mu'(t)= \int_{\{u_{0,N}^*=t\}} |D u_{0,N}^*|_1^{-1} \d \Per{\{u_{0,N}^*>t\},\cdot}=\frac{\Per{\{u_{0,N}^*>t\}}}{|(u_{0,N}^*)'|(r_t)}\, \quad \text{for a.e.\  $t\in (0,M)$},
	\]
	where $r_t$ is as above. It is clear that $\Per{[0,r)}=\sigma_{N-1}r^{N-1}$ for every $r \in(0,\infty)$ (where the perimeter is computed in the space $(I_{0,N},|.|,\mm_{0,N})$, therefore  $\Per{\{u_{0,N}^*>t\}}=N\omega_N^\frac1N\mu(t)^\frac{N-1}{N}$, from which we deduce 
	\[
	-\mu'(t)=N\omega_N^\frac1N\frac{\mu(t)^\frac{N-1}{N}}{|(u_{0,N}^*)'|(r_t)}\, \quad \text{for a.e.\  $t\in [0,M]$}.
	\]
	Plugging this identity in \eqref{eq:polya part 2} and recalling also Lemma \ref{lem:identification} we reach
	\begin{align*}
		\int_{\Omega} |D u|_1^p\, \d \mm \ge {{\sf C}^p_{\sf Isop} }(N\omega_N^{1/N})^{1-p}\int_0^M  (|(u_{0,N}^*)'|(r_t))^{p-1}\mu(t)^\frac{(N-1)}{N} \, \d t = 	\big(\tfrac{{\sf C_{Isop}}}{N\omega_N^{1/N}}\big)^p	\int_{\Omega^*}|D u_{0,N}^*|^p\, \d \mm.
	\end{align*}
Recalling that $|Du|_1\le \lip \ u$ $\mm$-a.e., $u \in \LIP_{bs}(\X)$, we obtain \eqref{eq:euclpolya}. For general $u \in W^{1,p}_0(\Omega)$ the result follows via approximation via Lemma \ref{lem:nonvanishing gradients} exactly as in the proof of \cite[Theorem 1.4]{MS20}.
\end{proof}

\begin{remark}
\rm
	It follows from its proof, that Theorem \ref{thm:eu_polyaszego} holds with the weaker assumption that $\Xdm$ is uniformly locally doubling and supports a weak local $(1,1)$-Poincar\'e inequality. Recall also from Remark \ref{rmk:eucl isop} that under these assumptions an isoperimetric inequality as in \eqref{eq:isop_eu} is available. \fr
\end{remark}

\subsection{Local Sobolev inequality}
The main goal of this section is to prove  the following local Sobolev inequality of Euclidean-type.
\begin{theorem}[Local Euclidean-Sobolev inequality]\label{thm:local sobolev intro}
For every $\eps>0$, $N \in(1,\infty)$ and $D>0$ there exists $\delta=\delta(\eps,D,N)>0$ such that the following holds.
	Let $\Xdm$ be a $\cd(K,N)$ space, $K \in \R$. Let $r,R\in (0,\frac{1}{2}\sqrt{N/K^-})$ and $x \in \X$ be such that $r<\delta R$, $R<\delta \sqrt{N/K^-}$ (with $\sqrt{N/K^-}\coloneqq +\infty$ if $K\ge0$) and $\frac{\mm(B_r(x))}{\mm(B_R(x))}\le D(r/R)^N$. Then
	\begin{equation}\label{eq:local sobolev intro}
		\|u\|_{L^{p^*}(\mm)}\le (1+\eps)\eucl(N,p)\left(\frac{\mm(B_R(x))}{R^N\omega_N}\right)^{-\frac{1}{N}}\||D u|\|_{L^p(\mm)}, \quad \forall u \in \LIP_c(B_{r}(x)).
	\end{equation}
\end{theorem}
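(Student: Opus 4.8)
The plan is to derive \eqref{eq:local sobolev intro} by symmetrization, combining three ingredients: an almost sharp Euclidean isoperimetric inequality on the small ball $B_r(x)$ at well-separated scales (Theorem~\ref{theorem:almost euclidean isop}), the Euclidean P\'olya--Szeg\H{o} inequality of Theorem~\ref{thm:eu_polyaszego}, and the sharp \emph{radial} Euclidean Sobolev inequality transplanted onto the model weighted half line $(I_{0,N},|.|,\mm_{0,N})$. Throughout I set $\theta:=\mm(B_R(x))/(\omega_N R^N)$, so that the target estimate reads $\|u\|_{L^{p^*}(\mm)}\le(1+\eps)\,\eucl(N,p)\,\theta^{-1/N}\,\||D u|\|_{L^p(\mm)}$.

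First I would invoke Theorem~\ref{theorem:almost euclidean isop} to obtain $\delta=\delta(\eps,D,N)>0$ such that, under the stated hypotheses on $r,R,x$, the isoperimetric inequality~\eqref{eq:isop_eu} holds on $\Omega:=B_r(x)$ with ${\sf C_{Isop}}=(1-\eta)N\omega_N^{1/N}\theta^{1/N}$, where $\eta=\eta(\delta)\to 0$ as $\delta\to0$ and, crucially, $\eta$ is independent of $p$. (By Bishop--Gromov and the smallness of $R$ relative to $\sqrt{N/K^-}$ one could equally phrase the isoperimetric constant through $\mm(B_r(x))/(\omega_N r^N)$ at the cost of one more multiplicative error tending to $1$, so the scale at which the density is read off is immaterial.) Then, for any $u\in\LIP_c(B_r(x))\subset W^{1,p}_0(B_r(x))$, Theorem~\ref{thm:eu_polyaszego} applies ($\X$ being $\cd(K,N)$ and \eqref{eq:isop_eu} holding on $\Omega$) and yields the Euclidean rearrangement $u^*_{0,N}\in W^{1,p}(\Omega^*,|.|,\mm_{0,N})$ with
\[
\int_{B_r(x)}|D u|^p\,\d\mm\ \ge\ \Big(\tfrac{{\sf C_{Isop}}}{N\omega_N^{1/N}}\Big)^{p}\int_{\Omega^*}|D u^*_{0,N}|^p\,\d\mm_{0,N}\ =\ (1-\eta)^p\,\theta^{p/N}\int_{\Omega^*}|D u^*_{0,N}|^p\,\d\mm_{0,N},
\]
while Proposition~\ref{prop:equimeasurability1} gives $\|u\|_{L^{p^*}(\mm)}=\|u^*_{0,N}\|_{L^{p^*}(\mm_{0,N})}$. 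Since $\supp u$ is a compact subset of the \emph{open} ball $B_r(x)$ and $\supp\mm=\X$, one has $\mm(\{u>0\})<\mm(B_r(x))$, so $u^*_{0,N}$ vanishes at the right endpoint of $\Omega^*$; extended by $0$ it is a compactly supported competitor on $I_{0,N}$, and by Lemma~\ref{lem:identification} its weak gradient on $(I_{0,N},|.|,\mm_{0,N})$ equals $|(u^*_{0,N})'|$.

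The final ingredient is that the Aubin--Talenti extremals of~\eqref{eq:EuclSob} are radially symmetric, so that the sharp Euclidean Sobolev inequality restricted to radial functions, in polar coordinates, reads $\|w\|_{L^{p^*}(\mm_{0,N})}\le\eucl(N,p)\,\||w'|\|_{L^{p}(\mm_{0,N})}$ for every compactly supported $w\in W^{1,p}(I_{0,N})$, \emph{with the very same constant} $\eucl(N,p)$. Applying this with $w=u^*_{0,N}$ and chaining the previous displays,
\[
\|u\|_{L^{p^*}(\mm)}=\|u^*_{0,N}\|_{L^{p^*}(\mm_{0,N})}\ \le\ \eucl(N,p)\,\||D u^*_{0,N}|\|_{L^{p}(\mm_{0,N})}\ \le\ \frac{\eucl(N,p)}{(1-\eta)\,\theta^{1/N}}\,\||D u|\|_{L^{p}(\mm)}.
\]
Choosing $\delta$ small enough that $(1-\eta)^{-1}\le1+\eps$ (possible since $\eta(\delta)\to0$) and recalling $\theta^{-1/N}=(\mm(B_R(x))/(\omega_N R^N))^{-1/N}$ gives precisely~\eqref{eq:local sobolev intro}.

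The only genuinely hard step is the first one: extracting from the purely metric hypotheses --- a one-sided volume pinching at two small, well-separated scales together with $R$ small compared with $\sqrt{N/K^-}$ --- the almost sharp Euclidean isoperimetric inequality on $B_r(x)$ with constant $(1-\eta)N\omega_N^{1/N}\theta^{1/N}$, $\eta\to0$; this is exactly Theorem~\ref{theorem:almost euclidean isop}, proved separately, and once it is available the P\'olya--Szeg\H{o} and one-dimensional Sobolev steps are only a careful bookkeeping of the constants. Minor points to double-check are that $\delta$ can indeed be taken independent of $p$ (it can, since the only error $\eta$ comes from the isoperimetric step) and that the approximation argument embedded in Theorem~\ref{thm:eu_polyaszego} covers all $u\in\LIP_c(B_r(x))$.
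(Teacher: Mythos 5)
Your proposal is correct and follows essentially the same route as the paper's proof: first Theorem~\ref{theorem:almost euclidean isop} furnishes the almost-sharp Euclidean isoperimetric inequality on $B_r(x)$, then Theorem~\ref{thm:eu_polyaszego} transfers the Dirichlet energy to the weighted half-line, and finally the radial sharp Sobolev inequality (which the paper invokes as the Bliss inequality, Lemma~\ref{lem:bliss}) closes the chain, with the loss $(1-\eta)^{-1}$ absorbed into $1+\eps$ by choosing $\delta$ small. The only cosmetic difference is that you phrase the one-dimensional input as the radial restriction of the sharp Euclidean Sobolev inequality via the radiality of the Aubin--Talenti extremals, whereas the paper cites the Bliss inequality directly --- these are the same statement.
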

We mention that local ``almost-Euclidean" Sobolev inequalities as in the above result are well known on Riemannian manifolds, however they usually depend on double sided bounds on the sectional curvature or on Ricci lower bounds coupled with a lower bound on the injectivity radius (see e.g. \cite[Lemma 2.24]{Aubin82} and \cite[Lemma 7.1, Sec. 7.1]{Hebey99}). Instead  in our case we only need a lower bound on the Ricci curvature and bounds on the measure of small balls, for this reason Theorem \ref{thm:local sobolev intro} appears interesting also in the smooth setting.

We face now  a necessary step for the proof of Theorem \ref{thm:local sobolev intro} starting with the following local isoperimetric inequality of Euclidean type to be used in conjunction with P\'olya-Szeg\H{o} inequality developed in the previous section. The proof relies on the Brunn-Minkowski inequality and it is mainly inspired by \cite{BK21}, where sharp global isoperimetric inequalities for $\cd(0,N)$ spaces have been proved (see also \cite{ABFP21} for a refinement and the  previous  \cite{Brendle20} and \cite{FMhull} for   the smooth case). It is worth mentioning that a class of ``almost-Euclidean'' isoperimetric inequalities in essentially nonbranching $\cd$-spaces, similar to the following ones,  were proved in \cite{CM20} via localization-technique.  However, the results in \cite{CM20} present a set of assumptions that are not suitable for our purposes. Moreover our arguments are  different and do not assume the space to be essentially non-branching.

\begin{theorem}[Almost-Euclidean isoperimetric inequality]\label{theorem:almost euclidean isop}
	Let $\Xdm$ be a $\cd(K,N)$ space for some $N\in(1,\infty), K \in \R$.  Then for every $0<r<R<\frac{1}{2}\sqrt{N/K^-}$ (where $\sqrt{N/K^-}=+\infty$ for $K\ge 0$) and $x \in \X$ we have  
	\begin{equation}\label{eq:local euclidean isop}
	\Per E \ge  \mm(E)^\frac{N-1}{N}  N\omega_N^{\frac1N}\theta_{N,R}^{\frac1N}(x)(1-(2C_{r,R}^{1/N}+1)\delta-\eta), \quad \forall \, E \subset B_r(x),
	\end{equation}
	where $\delta\coloneqq \frac{r}{R}$, $\eta\coloneqq  1-\frac{2R\sqrt{K^-/N}}{\sinh (2R\sqrt{K^-/N})}$ (taken to be zero when $K\ge0$) and $C_{r,R}\coloneqq\theta_{N,r}(x)/\theta_{N,R}(x)$.
\end{theorem}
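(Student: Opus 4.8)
The plan is to deduce the almost-Euclidean isoperimetric inequality \eqref{eq:local euclidean isop} from the Brunn--Minkowski inequality \eqref{eq:brunn}, by comparing a set $E\subset B_r(x)$ with a small ball $B_\eps(y)$ centered at some point $y$ that we are free to choose and then letting $\eps\to 0$. First I would fix $E\subset B_r(x)$ with $\mm(E)>0$ and, for a small parameter $\eps>0$, set $A_0:=E$, $A_1:=B_\eps(y)$ for a well-chosen point $y$ (optimally a point where the density $\theta_N$ is close to $\theta_{N,R}(x)$, but since we only want a bound in terms of $\theta_{N,R}(x)$ it suffices to use Bishop--Gromov as below). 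Then $A_t\subset B_{r+\eps+ \text{(something)}}(x)\subset B_R(x)$ for $t$ small, provided $\eps$ is small and $r<\delta R$ with $\delta$ small; more precisely the midpoint-type set $A_t$ is contained in a ball of radius roughly $(1-t)r+t\eps + $ a correction, which stays inside $B_R(x)$. Applying \eqref{eq:brunn} gives, for every $t\in[0,1]$,
\[
\mm(A_t)^{1/N}\ge \sigma_{K,N}^{(1-t)}(\theta)\,\mm(E)^{1/N}+\sigma_{K,N}^{(t)}(\theta)\,\mm(B_\eps(y))^{1/N},
\]
where $\theta$ is the sup (if $K<0$) or inf (if $K\ge0$) of distances between $A_0$ and $A_1$, which is at most $2R$ in either relevant case since everything sits in $B_R(x)$.

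Next I would take $t\to 0^+$. Writing the distortion coefficients as $\sigma_{K,N}^{(1-t)}(\theta)=1-O(t)$ and $\sigma_{K,N}^{(t)}(\theta)= t\cdot\frac{\theta\sqrt{K^-/N}}{\sinh(\theta\sqrt{K^-/N})}+o(t)$ for $K<0$ (and $=t+o(t)$ for $K\ge0$), and using that $\mm(A_t)\le \mm(E^{c t\eps'})$ for an appropriate dilation parameter (so that $\frac{d}{dt}\big|_{0^+}\mm(A_t)$ is controlled by a Minkowski-type outer content of $E$, hence by $\Per{E}$ up to the dilation factor — this is exactly the step where one must be careful about which enlargement of $E$ appears), I would differentiate the Brunn--Minkowski inequality at $t=0$ to obtain
\[
\frac{1}{N}\mm(E)^{1/N-1}\cdot(\text{outer content of }E)\ \ge\ -(\text{const})\,\mm(E)^{1/N}+\frac{\theta\sqrt{K^-/N}}{\sinh(\theta\sqrt{K^-/N})}\,\mm(B_\eps(y))^{1/N},
\]
and then send $\eps\to0$, using $\mm(B_\eps(y))\ge \omega_N\eps^N\theta_N(y)\ge \omega_N\eps^N\theta_{N,R}(x)$ after normalizing by $\eps$ correctly (the ball $B_\eps(y)$ contributes $\eps\,(\omega_N\theta_{N,R}(x))^{1/N}$ in the limit). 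Collecting the constants: the factor $\frac{\theta\sqrt{K^-/N}}{\sinh(\theta\sqrt{K^-/N})}\ge \frac{2R\sqrt{K^-/N}}{\sinh(2R\sqrt{K^-/N})}=1-\eta$ produces the $\eta$ term; the $O(t)$ error from $\sigma^{(1-t)}_{K,N}(\theta)\mm(E)^{1/N}$ being subtracted produces a term of the form $(\text{const}\cdot R\sqrt{K^-})\mm(E)^{1/N}$ which one bounds by $\mm(E)^{(N-1)/N}\cdot\mm(E)^{1/N}/\mm(B_r(x))^{1/N}\cdot(\dots)$ and controls using $\mm(E)\le\mm(B_r(x))=\omega_N r^N\theta_{N,r}(x)$, giving the $C_{r,R}^{1/N}\delta$ contribution; and the dilation factor relating the outer content of $E$ to $\Per{E}$ contributes the remaining $\delta$-type error.

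The main obstacle I anticipate is making the differentiation-at-$t=0$ step rigorous: one needs a clean inequality of the form $\mm(A_t)\le \mm\big((E)^{\rho(t)}\big)$ for an explicit $\rho(t)$ with $\rho(0)=0$ and $\rho'(0^+)$ computable, so that $\liminf_{t\to0^+}\frac{\mm(A_t)-\mm(E)}{t}\le \rho'(0^+)\,\mm^+(E)$, and then to pass from Minkowski content $\mm^+(E)$ to perimeter $\Per{E}$ — but since we only claim a lower bound on $\Per{E}$ and $\Per{E}\le\mm^+(E)$ always, and the isoperimetric-type statement we want has $\Per{E}$ on the larger side, one must instead run the argument so that $\Per{E}$ (not $\mm^+(E)$) appears; this is standard (approximate $E$ from outside by open sets, or use the coarea/relaxation definition) but requires care. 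I would handle it by first proving the inequality with $\mm^+(E)$ in place of $\Per{E}$ for $E$ with smooth-enough boundary is not available, so instead I would use the BV relaxation: take $u_n\to \nchi_E$ in $L^1$ with $\int\lip u_n\,\d\mm\to\Per{E}$, apply the Brunn--Minkowski-based estimate to superlevel sets of $u_n$ via the coarea formula \eqref{eq:coarea}, and integrate. The rest — bookkeeping the three small error terms $2C_{r,R}^{1/N}\delta$, $\delta$, and $\eta$ into the stated form $(1-(2C_{r,R}^{1/N}+1)\delta-\eta)$ — is routine once the geometry ($A_t\subset B_R(x)$, valid for $\delta$ small depending only on $N$) and the limit $\eps\to0$ are in place.
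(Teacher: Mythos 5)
There is a genuine gap, and it is structural: your choice of $A_1=B_\eps(y)$ with $\eps\to 0$ cannot produce the stated constant. In the $\cd$ Brunn--Minkowski inequality \eqref{eq:brunn} the coefficient in front of $\mm(A_0)^{1/N}$ is $\sigma^{(1-t)}_{K,N}(\theta)\le 1-t+o(t)$ (for $K=0$ it is exactly $1-t$), so linearizing at $t=0$ always produces a \emph{loss} of order $t\,\mm(E)^{1/N}$, while the gain from the second set is of order $t\,\mm(B_\eps(y))^{1/N}\approx t\,\eps\,\omega_N^{1/N}\theta^{1/N}$. On the other side, the containment you must use is $A_t\subset E^{\rho(t)}$ with $\rho(t)\approx t(2r+\eps)$ (the distance/diameter scale between $E$ and $B_\eps(y)$), \emph{not} $t\eps$; there is no way to make the enlargement rate comparable to $\eps$ unless $E$ itself has diameter $O(\eps)$. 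Hence after dividing by $t$, letting $t\to0$ and then $\eps\to0$, your inequality degenerates to something of the form $(2r)\,\mm^+(E)\ \ge\ -c\,N\,\mm(E)$, which is trivially true and carries no isoperimetric information; no choice of $y$ repairs this. (This is exactly the difference between the Euclidean proof via Minkowski \emph{sums} $E+B_\eps$, where the ball can be taken small because there are no convex-combination weights, and the midpoint formulation available in $\cd$ spaces, where the $(1-t)$-weight forces the comparison set to be large.) The paper's proof takes $A_1=B_R(x)$, the \emph{large} ball: then the gain is $t\,\mm(B_R(x))^{1/N}=t\,R\,\omega_N^{1/N}\theta_{N,R}(x)^{1/N}$, the loss is at most $t\,\mm(B_r(x))^{1/N}=t\,r\,\omega_N^{1/N}\theta_{N,r}(x)^{1/N}$, i.e. smaller by the factor $\delta\,C_{r,R}^{1/N}$, and dividing by the enlargement rate $r+R$ yields the prefactor $\tfrac{1}{1+\delta}$ and, via the $\sinh/\cosh$ expansions of the distortion coefficients when $K<0$, the $\eta$ correction. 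A secondary inaccuracy: your bound $\mm(B_\eps(y))\ge \omega_N\eps^N\theta_N(y)$ is backwards with respect to Bishop--Gromov monotonicity (the density is monotone the other way); the comparison with $\theta_{N,R}(x)$ would instead have to go through $\mm(B_\eps(y))\ge v_{K,N}(\eps)\,\mm(B_R(x))/v_{K,N}(R+r)$, but this does not rescue the main problem above.

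Your concern about perimeter versus Minkowski content is legitimate (a lower bound on $\mm^+(E)$ does not directly lower-bound $\Per E$ since $\Per E\le\mm^+(E)$), but the paper resolves it more directly than your coarea/BV sketch: one first proves the inequality with $\mm^+$ in place of $\Per{\cdot}$, then uses the approximation of Proposition \ref{prop:relax minkowski} --- closed sets $E_n\subset B_{r'}(x)$, $r'\in(r,R)$, with $\nchi_{E_n}\to\nchi_E$ in $L^1(\mm)$ and $\mm^+(E_n)\to\Per E$ --- to get the perimeter inequality at radius $r'$, and finally lets $r'\downarrow r$ using the continuity $\theta_{N,r'}(x)\to\theta_{N,r}(x)$. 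If you rebuild the argument around the large-ball comparison, this approximation step slots in at the end exactly as in the paper.
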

\begin{proof}
	It is sufficient to prove \eqref{eq:local euclidean isop} with the Minkowski content $\mm(E)^+$ instead of the perimeter. Indeed we could then apply the approximation result in Proposition \ref{prop:relax minkowski} below to deduce that for every $r'\in(r,R)$, \eqref{eq:local euclidean isop} holds with $r=r'$ (this time  with $\Per E$). Noticing that $\theta_{N,r'}(x)\to \theta_{N,r}(x)$ as $r'\downarrow r$, sending $r'\to r$ would give the conclusion.

	Let $r,R\in \R^+$ with $r<R$ and fix $E\subset B_r(x_0)$ with $\mm(E)>0.$ We aim to apply the Brunn-Minkowski inequality to the sets $A_0\coloneqq E$, $A_1\coloneqq B_R(x_0)$. The triangle inequality easily yields that $A_t\subset E^{t(r+R)}$ for every $t \in (0,1)$ (recall that $E^\eps$ is the $\eps$-enlargement of the set $E$, while $A_t$ is the set of $t$-midpoint between $A_0,A_1$). We consider first the case $K\ge 0$. From the Brunn-Minkowski applied with $K=0$ we obtain
	\begin{align*}
		\mm^+(E)&=\liminf_{\eps\to 0^+} \frac{\mm(E^{\eps})-\mm(E)}{\eps}= \liminf_{t\to 0^+} \frac{\mm(E^{t(r+R)})-\mm(E)}{t(r+R)}\\
		&\overset{\eqref{eq:brunn}}{\ge} \lim_{t\to 0^+} \frac{\left(t\mm(B_R(x_0))^{1/N}+(1-t)\mm(E)^{1/N}\right)^{N}-\mm(E)}{t(r+R)}\\
		&=N\mm(E)^\frac{N-1}{N}\frac{\mm(B_R(x_0))^{1/N}-\mm(E)^{1/N}}{r+R}\\
		&\ge N\mm(E)^\frac{N-1}{N}\frac{\mm(B_R(x_0))^{1/N}-\mm(B_r(x_0))^{1/N}}{r+R},
	\end{align*}
	where we have used that $E \subset B_r(x_0).$  If instead $K<0$, arguing analogously we obtain
	\begin{align*}
		\mm^+(E)\ge \frac{N\mm(E)^\frac{N-1}{N}}{r+R}\Big(\frac{\theta\sqrt{-K/N}}{\sinh(\theta\sqrt{-K/N})}\mm(B_R(x_0))^{\frac 1N}-\frac{\theta\sqrt{-K/N}\cosh(\theta\sqrt{-K/N})}{\sinh(\theta\sqrt{-K/N})}\mm(B_r(x_0))^{\frac 1N}\Big),
	\end{align*}
	where  $\theta$ is the maximal length of geodesics from $A_{0}$ to $A_{1}$. It is clear that $\theta\le r+R.$
Note also that $\frac{t}{\sinh(t)}$ is decreasing and less or equal than one for $t \ge 0$, moreover  for $t\le 1$ we have   $\cosh(t)\le 1+t$. In particular if $R\le\frac12 \sqrt{-N/K}$ we obtain that
	\begin{align*}
		\mm^+(E)\ge \frac{N\mm(E)^\frac{N-1}{N}}{r+R}\left(\frac{\sqrt{-K/N}(r+R)\mm(B_R(x_0))^{1/N}}{\sinh (\sqrt{-K/N}(r+R))}-(1+\sqrt{-K/N}(r+R))\mm(B_r(x_0))^{1/N}\right).
	\end{align*}
	Going back to the case of a general $K \in \R$, combining the above estimates and rearranging the terms we reach
	\[
	\mm^+(E)\ge \frac{ \mm(E)^\frac{N-1}{N} N\omega_N^{\frac1N}\theta_{N,R}(x)^{\frac1N} }{1+r/R}  \Big(\frac{\sqrt{K^-/N}(r+R)}{\sinh (\sqrt{K^-/N}(r+R))}-(1+\sqrt{\frac{K^-}{N}}(r+R))\frac rR\Big(\frac{\theta_{N,r}(x)}{\theta_{N,R}(x)}\Big)^\frac1N\Big),
	\]
	provided $R\le\frac12 \sqrt{N/K^-}$ and taking $\frac{t}{\sinh(t)}=1$ for $t=0$.
	Setting $\delta\coloneqq \frac{r}{R}$,$\eta\coloneqq 1-\frac{2R\sqrt{K^-/N}}{\sinh (2R\sqrt{K^-/N})}$ and $C\coloneqq\theta_{N,r}(x)/\theta_{N,R}(x)$, the above gives (recalling that $\frac{t}{\sinh(t)}$ is decreasing for $t \ge 0$)
	\[
	\mm^+(E)\ge  \mm(E)^\frac{N-1}{N} N\omega_N^{\frac1N}\theta_{N,R}(x)^{\frac1N} \frac{1}{1+\delta}  \big(1-\eta-2\delta C^{\frac 1N}\big),
	\]
	that easily implies the conclusion.
\end{proof}
In the above proof was used the following approximation result.
\begin{proposition}[\cite{AGDM17}]\label{prop:relax minkowski}
     Let $\Xdm$ be a  metric measure space and let $E\subset B_r(x)$ be Borel with finite perimeter and $\mm(E)<+\infty$. Then for every $r'>r$ there exists a sequence $E_n \subset B_{r'}(x)$ of closed sets such that $\nchi_{E_n}\to \nchi_E$ in $L^1(\mm)$ and
     \[
     \Per E=\lim_{n\to \infty} \mm^+(E_n).
     \]
\end{proposition}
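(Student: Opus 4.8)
The plan is to derive the statement from the identification of the perimeter with the relaxed Minkowski content proved in \cite{AGDM17}, being careful that the approximating sets lie inside $B_{r'}(x)$.

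First I would dispose of one inequality, which holds automatically. If $(E_n)$ is any sequence of Borel sets with $\nchi_{E_n}\to\nchi_E$ in $L^1(\mm)$, then $\Per{E}\le\liminf_n\Per{E_n}$ by the $L^1$-lower semicontinuity of the perimeter, while $\Per{E_n}\le\mm^+(E_n)$ for every $n$ directly from the definitions of perimeter and of Minkowski content; hence $\Per{E}\le\liminf_n\mm^+(E_n)$ whatever the sequence $(E_n)$. It therefore remains only to exhibit \emph{one} sequence of closed sets $E_n\subseteq B_{r'}(x)$ with $\nchi_{E_n}\to\nchi_E$ in $L^1(\mm)$ and $\limsup_n\mm^+(E_n)\le\Per{E}$.

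For this I would invoke the main theorem of \cite{AGDM17}: for a set of finite perimeter and finite measure the perimeter equals the relaxation, with respect to $L^1(\mm)$-convergence, of the Minkowski content functional, and a recovery sequence may be chosen among closed sets. I would apply it in the open set $\Omega:=B_{r'}(x)$, regarded as a metric measure space with the induced distance and the restricted measure. Since $E\subseteq B_r(x)$ and $r<r'$, we have $\nchi_E=0$ $\mm$-a.e.\ on the open set $\X\setminus\overline{B_r(x)}$, whence $\Per{E,\X\setminus\overline{B_r(x)}}=0$; as $\overline{B_r(x)}\subseteq\Omega$ this yields $\Per{E,\Omega}=\Per{E}$, and $E$ also has finite perimeter and finite measure in $\Omega$. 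The recovery sequence obtained this way consists of (relatively) closed sets $E_n$ with $E_n\subseteq\Omega=B_{r'}(x)$, $\nchi_{E_n}\to\nchi_E$ in $L^1(\mm)$, and $\mm^+(E_n)\to\Per{E,\Omega}=\Per{E}$, which is exactly what we need.

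The only substantial ingredient is the relaxation theorem of \cite{AGDM17}; the rest is bookkeeping, and I expect the one point requiring care to be precisely the localisation. One should \emph{not} try to produce the $E_n$ by intersecting a globally defined recovery sequence with a metric ball, since cutting a set along a sphere can create spurious Minkowski content there; instead one runs the whole construction inside $\Omega=B_{r'}(x)$, which is legitimate because the perimeter of $E$ is concentrated in $\overline{B_r(x)}$, a closed set contained well inside $\Omega$.
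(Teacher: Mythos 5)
Your reduction to a single inequality is fine, and your instinct not to intersect a global recovery sequence with a ball is well founded. However, the alternative you propose---running the relaxation theorem of \cite{AGDM17} inside the metric measure subspace $\Omega=B_{r'}(x)$---has a gap that your caveat about the perimeter of $E$ being concentrated in $\overline{B_r(x)}$ does not cover. The Minkowski content is not a local quantity in the way the perimeter is: if you work inside $(\Omega,\sfd|_\Omega,\mm|_\Omega)$, the $\delta$-enlargement of $E_n\subset\Omega$ is $\{y\in\Omega:\sfd(y,E_n)<\delta\}=E_n^\delta\cap\Omega$, whereas the quantity $\mm^+(E_n)$ you must control involves $\mm(E_n^\delta)$ in the ambient space $\X$. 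Since $E_n^\delta\supset E_n^\delta\cap\Omega$ one only has $\mm^+(E_n)\ge\mm^+_\Omega(E_n)$, so the recovery sequence furnished by \cite{AGDM17} in $\Omega$ gives $\mm^+_\Omega(E_n)\to\Per E$ but says nothing about $\limsup_n\mm^+(E_n)$. Concentration of $\Per{E}$ in $\overline{B_r(x)}$ does not force the approximating sets $E_n$ to stay uniformly away from $\partial\Omega$, and whenever they approach it, the ambient enlargement picks up extra mass outside $\Omega$ that the intrinsic one cannot see. A secondary issue is that the $E_n$ produced this way are only relatively closed in $\Omega$, not closed in $\X$, which is what the statement requires.

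The paper's proof sidesteps both problems by never leaving the ambient space. It takes $g_n\in\LIP_{loc}(\X)$ from the very definition of $\Per{E}$ with $\int\lip\,g_n\,\d\mm\to\Per E$, multiplies by a cut-off $\eta$ with $\eta\equiv1$ on $B_r(x)$, $\supp(\eta)\subset B_{r'}(x)$, $\Lip(\eta)\le 2/(r'-r)$, and checks that the error term $\tfrac{2}{r'-r}\int_{E^c}g_n\,\d\mm$ vanishes because $g_n\to\nchi_E$ in $L^1$. The resulting $f_n=g_n\eta$ are Lipschitz with $\supp(f_n)\subset B_{r'}(x)$ and $\int\lip\,f_n\,\d\mm\to\Per E$; one then passes to superlevel sets of the $f_n$ as at the end of the proof of \cite[Theorem 3.6]{AGDM17}. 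Those superlevel sets are genuinely closed in $\X$, lie in $B_{r'}(x)$, and their ambient Minkowski contents are controlled via the coarea inequality, which is exactly what the statement asks for. If you want to salvage your approach you would need to ensure the recovery sets stay at a uniform positive distance from $\partial\Omega$ (for instance by working in a slightly smaller ball and then arguing that closures do not spoil the $L^1$-convergence or the Minkowski content), but this is not automatic and the cutoff construction of the paper is the cleaner way to enforce it.
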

\begin{proof}
    The result is contained in \cite{AGDM17}, however since it does not appear in this exact form we provide some details. The result follows observing that there exists a sequence $f_n \in \LIP(\X)$ with $\supp (f_n)\subset B_{r'}(x)$ so that $f_n \to \nchi_E$ in $L^1(\mm)$ and $\Per E=\lim_n \int \lip f_n \, \d \mm.$ Indeed from this fact,  the conclusion follows arguing as in the end of the proof of \cite[Theorem 3.6]{AGDM17}.
    
    To construct the sequence $(f_n)$ we known that from the definition of perimeter there exist $g_n \in \LIP_{loc}(\X)$ so that $g_n \to \nchi_E$ in $L^1(\mm)$  and $\Per E=\lim_n \int \lip \ g_n \, \d \mm$. Moreover we can build a cut-off function $\eta \in \LIP(\X)$ such that $\eta=1$ in $B_r(x)$, $0\le \eta\le 1$, $\supp (\eta)\subset B_{r'}(\X)$ and $\Lip(\eta)\le 2(r'-r)^{-1}.$ Then we simply take $f_n\coloneqq g_n\eta.$ Clearly  $f_n \to \nchi_E$. Moreover
    \[
    \Per E\le \liminf_{n\to \infty} \int \lip \ f_n \, \d \mm\le \liminf_{n\to \infty} \int \lip \ g_n \, \d \mm + \frac{2}{r'-r}\int_{E^c} g_n \, \d \mm=\Per E,
    \]
    that is what we wanted.
\end{proof}
Next, we recall the following classical one-dimensional inequality by Bliss \cite{Bliss30} (see also \cite{Aubin82,Talenti76}).
\begin{lemma}[Bliss inequality]\label{lem:bliss}
	Let $u\colon [0,\infty)\to \R$ be locally absolutely continuous. Then for any $1<p<N$ it holds 
	\begin{equation}\label{eq:bliss}
		\Big(\sigma_{N-1}\int_0^\infty |u|^{p^*}\, t^{N-1}\, \d t\Big)^\frac{1}{p^*}\le {\rm Eucl}(N,p)\Big(\sigma_{N-1}\int_0^\infty |u'|^{p}\, t^{N-1}\, \d t\Big)^\frac{1}{p},
	\end{equation}
	 whenever one side is finite and where $p^*\coloneqq pN/(N-p)$. Moreover the functions $v_b(r)\coloneqq (1+b r^\frac{p}{p-1})^\frac{p-N}{p}$, $b>0$, satisfy \eqref{eq:bliss} with equality.
\end{lemma}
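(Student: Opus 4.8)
The plan is to reduce \eqref{eq:bliss} to a tractable class of test functions and then to treat it as the radial form of the sharp Sobolev inequality on $\R^N$, with the functions $v_b$ emerging as solutions of the associated Euler--Lagrange equation. First I would reduce to the case in which $u$ is non-negative, non-increasing and, after a truncation-and-mollification argument, smooth with compact support (so that $u(t)\to0$ as $t\to\infty$): when the right-hand side of \eqref{eq:bliss} is finite, Hölder's inequality together with $p<N$ forces $u'\in L^1([1,\infty))$, so $u$ has a limit at infinity, which vanishes as soon as the left-hand side is finite too; and replacing $u$ by $t\mapsto\int_t^\infty|u'(s)|\,ds$ only increases $\int_0^\infty|u|^{p^*}t^{N-1}\,dt$ while leaving $\int_0^\infty|u'|^pt^{N-1}\,dt$ unchanged, so one may assume from the start that $u$ is non-negative and non-increasing (alternatively one can invoke the one-dimensional monotone rearrangement in $([0,\infty),|\cdot|,\sigma_{N-1}t^{N-1})$, which preserves the $L^{p^*}$-norm and does not increase $\int|u'|^p t^{N-1}$).

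For the core inequality I would work with the scale-invariant quotient $\mathcal Q(u):=\big(\int_0^\infty|u|^{p^*}t^{N-1}\,dt\big)^{p/p^*}\big/\int_0^\infty|u'|^pt^{N-1}\,dt$, which is unaffected by the two-parameter rescaling $u(t)\mapsto\lambda u(\mu t)$; thus \eqref{eq:bliss} amounts to showing $\sup_u\mathcal Q(u)=\eucl(N,p)^p\,\sigma_{N-1}^{p/N}$ over the reduced class. I would produce a maximizer by concentration--compactness: a maximizing sequence, already non-increasing and normalized by $\int_0^\infty|u'|^pt^{N-1}\,dt=1$, cannot vanish (its $L^{p^*}$-norm would too), and the scaling invariance, used to rule out concentration at $0$ and escape of mass to $\infty$, together with the compact Sobolev embedding on each interval $[\eps,\eps^{-1}]$, yields convergence, up to a subsequence, to a non-zero maximizer $u_0$. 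Such $u_0$ solves the Euler--Lagrange equation $-\big(|u_0'|^{p-2}u_0'\,t^{N-1}\big)'=\Lambda\,u_0^{p^*-1}\,t^{N-1}$ on $\{u_0>0\}$, i.e.\ the radial critical $p$-Laplace equation, whose positive solutions decaying at infinity are, by a direct ODE analysis, exactly the functions $v_b(t)=(1+b\,t^{p/(p-1)})^{(p-N)/p}$ up to the above scalings; substituting $v_1$ and evaluating the resulting Beta-type integrals then produces the value $\eucl(N,p)$ of \eqref{eq:euclnp} and shows that the $v_b$ saturate \eqref{eq:bliss}.

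The hard part is the pair of global facts buried in this scheme: the \emph{existence} of the extremal --- equivalently, the compactness of maximizing sequences, which on the half-line is precisely the toy model for the loss of compactness in the critical Sobolev embedding --- and the \emph{classification} of the positive decaying radial solutions of the critical $p$-Laplacian. Both can be carried out as sketched, but if one prefers to bypass them entirely one may instead deduce \eqref{eq:bliss} from the optimal-transportation proof of the sharp Euclidean Sobolev inequality of Cordero-Erausquin--Nazaret--Villani \cite{C-ENV04} applied to radial functions on $\R^N$, which delivers the sharp constant and the extremal profiles in one stroke; the only remaining task is then the routine Gamma-function bookkeeping identifying that constant with $\eucl(N,p)$ as in \eqref{eq:euclnp}.
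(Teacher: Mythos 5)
The paper does not prove this lemma; it is stated as classical and referred to Bliss (1930) and the textbook treatments in Aubin and Talenti. So there is no internal proof to compare against; the question is only whether your sketch supplies a valid one for the generality the paper needs.

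Your first route (monotone reduction, concentration--compactness on the weighted half-line, Euler--Lagrange, classification of the radial $p$-Laplace ODE) is the standard variational proof and, crucially, is dimension-agnostic: every step lives on $([0,\infty),|\cdot|,\sigma_{N-1}t^{N-1}\,\d t)$ and therefore makes sense for any real $N>1$. The two black boxes you flag — compactness of normalized maximizing sequences, and uniqueness of positive decaying solutions of $-\bigl(|u'|^{p-2}u'\,t^{N-1}\bigr)'=\Lambda\,u^{p^*-1}t^{N-1}$ — are genuinely the hard content, but they are classical and your acknowledgment of them is honest. As a proof outline this is fine, if not shorter than what Bliss or Talenti already do by direct one-dimensional calculus of variations.

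The proposed bypass via Cordero-Erausquin--Nazaret--Villani, however, does \emph{not} cover what the lemma is used for. The paper invokes Lemma~\ref{lem:bliss} with $N$ a possibly \emph{non-integer} real in $(1,\infty)$ — this is exactly why $\omega_N$, $\sigma_{N-1}$ in \eqref{eq:omegasigma} and $\eucl(N,p)$ in \eqref{eq:euclnp} are defined through Gamma functions, and it is essential in the proof of Theorem~\ref{thm:local sobolev intro}, which is stated on $\cd(K,N)$ spaces with arbitrary $N\in(1,\infty)$. The step "apply the sharp Sobolev inequality on $\R^N$ to radial functions" presupposes an integer $N$ and simply has no meaning otherwise. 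One could adapt the one-dimensional mass-transport argument directly to the weighted half-line, but that is a separate computation, not a corollary of the Euclidean theorem. So the shortcut, as written, proves a strictly weaker statement; to obtain the lemma in the needed generality you must fall back on the dimension-agnostic first route (or on the direct one-dimensional rearrangement argument in the cited references).
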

With the above local isoperimetric inequality and the Euclidean P\'olya-Szeg\H{o} inequality, the strategy is now to symmetrize functions on the space and exploit the Bliss inequality to deduce the desired local-Sobolev inequalities.

\begin{proof}[Proof of Theorem \ref{thm:local sobolev intro}]
We start observing that it is enough to prove \eqref{eq:local sobolev intro} for non-negative functions. 
	Fix $u \in \LIP_c(B_r(x))$ non-negative and consider $u^*_{0,N}:B_r(x)^*\to [0,\infty)$ be the Euclidean-rearrangement of $u$ as in Definition \ref{def:eucl rearrangement}, where $B_r(x)^*=[0,t]$ for some $t>0.$ The local Euclidean-isoperimetric inequality given by Theorem \ref{theorem:almost euclidean isop} implies that the hypothesis of Proposition \ref{prop:liptolip1} and Theorem \ref{thm:eu_polyaszego} are fulfilled with $\Omega=B_{r}(x)$ and ${\sf C_{Isop}}= (1-(2D^{1/N}+1)\delta'-2\eta)N\omega_N^{\frac 1N}\theta_{N,R}(x)^{\frac 1N},$ with $\delta'\coloneqq \frac{r}{R}$, $\eta\coloneqq  1-\frac{2R\sqrt{K^-/N}}{\sinh (2R\sqrt{K^-/N})}$ ($=0$ if $K\ge0$) and $D\coloneqq\theta_{N,r}(x)/\theta_{N,R}(x)$. In particular it holds that $u^*_{0,N}\in W^{1,p}([0,t],|.|,\mm_{0,N})$, which implies \sloppy (recall \eqref{eq:sobolev ac}) that $u^*_{0,N} \in W^{1,1}_{loc}(0,t)$  with $(u^*_{0,N})' \in L^p(\mm_{0,N})$  and $|Du^*_{0,N}|=|(u^*_{0,N})'|$ a.e.. Moreover, since $\mm_{0,N}$ is bounded away from 0 far from the origin, $u^*_{0,N} \in W^{1,1}(\eps,t]$ for every $\eps>0$ and  by definition $u^*_{0,N}(t)=0$. Therefore $u^*_{0,N}$ (extended by 0 in $(t,\infty)$) satisfies the assumptions for the Bliss inequality.  Recall also from Proposition \ref{prop:equimeasurability1} that $\|u^*_{0,N}\|_{L^p(\mm_{0,N})}=\|u\|_{L^p(\mm)}$ for every $p \in [1,\infty).$ Therefore we are in position to apply the Euclidean P\'olya-Szeg\H{o} inequality given by \eqref{eq:euclpolya}, that combined with the Bliss-inequality \eqref{eq:bliss} gives
	\begin{align*}
		\|u\|_{L^{p^*}(\mm)}&=\|u^*_{0,N}\|_{L^{p^*}(\mm_{0,N})}\overset{\eqref{eq:bliss}}{\le } {\rm Eucl}(N,p) \||D u^*_{0,N} |\|_{L^p(\mm_{0,N})} \\
		&\overset{\eqref{eq:euclpolya}}{\le } \frac{\eucl(N,p) \theta_{N,R}(x)^{-\frac1N} }{(1-(2D^{1/N}+1)\delta'-2\eta)}\||D u |\|_{L^p(\mm)}.
	\end{align*}
	Finally from  the above and observing that  $\frac{\mm(B_r(x))}{\mm(B_R(x))}= D(r/R)^N,$ we immediately see that there exists  $\delta:=\delta(\eps,D,N)$ so that, provided $\delta',\eta<\delta$, \eqref{eq:local sobolev intro} holds.
\end{proof}

We end this section with another simpler variant of local Sobolev inequality. It will be needed to deal with cases where $\theta_N(x)=+\infty,$ where  Theorem \ref{thm:local sobolev intro} does not give the right information.
\begin{proposition}[Local Sobolev embedding]\label{prop:local sobolev embedding}
	Let $\Xdm$ be a $\cd(K,N)$ space for some $N\in(1,\infty)$, $K \in \R$. Then, for every $p \in(1,N)$ and every $B_r(x)\subset \X$ with $r\le 1$, it holds
	\begin{equation}\label{eq:local sobolev}
		\Big(\int_{B_r(x)}|u|^{p^*} \, \d \mm \Big)^\frac{p}{p^*} \le \Big(\frac{C  \mm(B_r(x)) }{r^N}\Big)^{-\frac pN} \int_{B_{2r}(x)} |D u|^p\, \d \mm+2^p\mm(B_r(x))^{-\frac pN}\int_{B_r(x)}|u|^{p} \, \d \mm ,
	\end{equation}
	for every $u \in \LIP(\X)$, where $p^*=pN/(N-p)$ and $C=C(K,N,p).$
\end{proposition}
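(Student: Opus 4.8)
The plan is to derive \eqref{eq:local sobolev} from the $(p^*,p)$-Poincar\'e inequality of Theorem \ref{thm:improved poincaret}, applied with $r_0=1$ so that its constant is $C(K,N,p)$, by the standard device of separating $u$ from its mean value. Set $u_{B_r(x)}:=\fint_{B_r(x)}u\,\d\mm$. By the triangle inequality in $L^{p^*}(B_r(x))$,
\[
\|u\|_{L^{p^*}(B_r(x))}\le \|u-u_{B_r(x)}\|_{L^{p^*}(B_r(x))}+\|u_{B_r(x)}\|_{L^{p^*}(B_r(x))},
\]
and I will bound the two summands separately and then raise to the power $p$.

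For the first summand, Theorem \ref{thm:improved poincaret} together with the inclusion $B_r(x)\subset B_{2r}(x)$ (which gives $\mm(B_{2r}(x))\ge\mm(B_r(x))$) yields
\[
\|u-u_{B_r(x)}\|_{L^{p^*}(B_r(x))}=\mm(B_r(x))^{1/p^*}\Big(\fint_{B_r(x)}|u-u_{B_r(x)}|^{p^*}\,\d\mm\Big)^{1/p^*}\le C\,r\,\mm(B_r(x))^{\frac1{p^*}-\frac1p}\Big(\int_{B_{2r}(x)}|Du|^p\,\d\mm\Big)^{1/p}.
\]
Since $\tfrac1{p^*}-\tfrac1p=-\tfrac1N$, the prefactor equals $C\,r\,\mm(B_r(x))^{-1/N}=\big(C^{-N}\mm(B_r(x))/r^N\big)^{-1/N}$, which is precisely the exponent appearing in \eqref{eq:local sobolev} after relabeling the constant. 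For the second summand, Jensen's inequality gives $|u_{B_r(x)}|\le\big(\fint_{B_r(x)}|u|^p\,\d\mm\big)^{1/p}$, whence
\[
\|u_{B_r(x)}\|_{L^{p^*}(B_r(x))}=|u_{B_r(x)}|\,\mm(B_r(x))^{1/p^*}\le\mm(B_r(x))^{\frac1{p^*}-\frac1p}\Big(\int_{B_r(x)}|u|^p\,\d\mm\Big)^{1/p}=\mm(B_r(x))^{-1/N}\Big(\int_{B_r(x)}|u|^p\,\d\mm\Big)^{1/p}.
\]

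Summing the two estimates, raising to the power $p$ via $(a+b)^p\le 2^p(a^p+b^p)$, and absorbing the resulting $2^p$ in front of the gradient term into the constant (renaming it $C=C(K,N,p)$, which is legitimate since $r\le1$), one obtains exactly \eqref{eq:local sobolev}. There is no genuine difficulty here: the only point worth isolating is the identity $1/p^*-1/p=-1/N$, which is what makes the powers of $\mm(B_r(x))$ match the claimed form, together with the elementary observation that $\mm(B_{2r}(x))\ge\mm(B_r(x))$, which lets us replace $B_{2r}(x)$ in the Poincar\'e denominator by $B_r(x)$ without invoking a doubling constant.
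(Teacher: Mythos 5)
Your proof is correct and follows essentially the same route as the paper: split $u$ into $u-u_{B_r(x)}$ plus its mean, apply the $(p^*,p)$-Poincar\'e inequality (with $r_0=1$, whence $C=C(K,N,p)$) to the oscillation, Jensen to the mean, use $\mm(B_{2r}(x))\ge\mm(B_r(x))$ to align the powers of the measure, and raise to the power $p$. Your observation that this monotonicity step needs no doubling (the paper invokes Bishop--Gromov, which is not actually required for that direction) is a small but fair simplification; the only slip is that the remark ``which is legitimate since $r\le 1$'' belongs to the application of Theorem \ref{thm:improved poincaret}, not to absorbing the factor $2^p$, which is a purely algebraic renaming $C\mapsto (2C)^{-N}$.
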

\begin{proof}
	Applying \eqref{eq:improved poincaret} and the Bishop-Gromov inequality
	\begin{align*}
		\Big(\int_{B_r(x)}|u|^{p^*} \, \d \mm \Big)^\frac{1}{p^*} &\le C_1r\frac{\mm(B_r(x))^{1/p^*}}{\mm(B_{2r}(x))^{1/p}} \Big(\int_{B_{2r}(x)} |D u|^p\Big)^\frac{1}{p}+\mm(B_r(x))^{1/p^*} |u_{B_r(x)}|\\
		&\le C_2r\frac{\mm(B_r(x))^{1/p^*}}{\mm(B_{r}(x))^{1/p}} \Big(\int_{B_{2r}(x)} |D u|^p\Big)^\frac{1}{p}+\mm(B_r(x))^{\frac 1{p^*} - \frac 1p}\Big(\int_{B_r(x)}|u|^{p} \, \d \mm \Big)^\frac{1}{p},
        \end{align*}
	for suitable positive constants $C_1,C_2$ depending only on $K,N,p$. The desired conclusion follows raising to the $p$ in the above inequality.
\end{proof}

\subsection{Proof of the upper bound}
The strategy of the proof of the following result is by-now classical and combines local-Sobolev inequalities with a partition of unity argument (see \cite{Aubin76-2},\cite[Chp. 2 Sec. 7]{Aubin82}, \cite[Theorem 4.5]{Hebey99} and also \cite[Prop. 3.3]{ACM13}).
\begin{theorem}[Upper bound on $\alpha_p$]\label{thm:Amin}
	Let $\Xdm$ be a compact  $\cd(K,N)$ space, for some $N\in(1,\infty)$, $K \in \R$. Then, for every $\eps>0$ and every $p \in(1,N)$, there exists a constant $B=B(\eps,p,\X)>0$ such that 
	\begin{equation}\label{eq:thm4.5}
		\|u\|_{L^{p^*}(\mm)}^p\le \Big(\frac{\eucl(N,p)^p}{\min_{\X} \theta_N(x)^{p/N}}+\eps\Big) \||D u|\|_{L^p(\mm)}^p+B\|u\|_{L^p(\mm)}^p, \qquad \forall u \in \LIP(\X).
	\end{equation}
\end{theorem}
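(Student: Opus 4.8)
The plan is to carry out the classical Aubin‑type covering and partition‑of‑unity scheme, fed by two local Sobolev inequalities: the sharp local Euclidean inequality of Theorem~\ref{thm:local sobolev intro} near points of finite Bishop--Gromov density, and the cruder embedding of Proposition~\ref{prop:local sobolev embedding} near points of infinite density. Throughout write $\theta_\ast:=\min_{x\in\X}\theta_N(x)\in(0,+\infty]$ (the minimum is attained by lower semicontinuity of $\theta_N$ and compactness) and $A_0:=\eucl(N,p)^p\,\theta_\ast^{-p/N}$, with the convention $A_0=0$ if $\theta_\ast=+\infty$; the goal is then
\[
\|u\|_{L^{p^*}(\mm)}^p\le(A_0+\eps)\,\||Du|\|_{L^p(\mm)}^p+B\,\|u\|_{L^p(\mm)}^p,\qquad\forall u\in\LIP(\X).
\]

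First I would produce, for a sufficiently small auxiliary parameter $\eps'=\eps'(\eps,N,p)>0$ and for each $x\in\X$, a radius $r_x>0$ and a constant $\Lambda_x\ge0$ such that
\[
\|v\|_{L^{p^*}(\mm)}^p\le\Big(A_0+\tfrac\eps2\Big)\||Dv|\|_{L^p(\mm)}^p+\Lambda_x\|v\|_{L^p(\mm)}^p,\qquad\forall v\in\LIP_c(B_{r_x}(x)).
\]
If $\theta_N(x)<+\infty$ (whence $\theta_\ast<+\infty$ and $A_0>0$): since $\theta_{N,\rho}(x)\to\theta_N(x)\ge\theta_\ast$ as $\rho\to0^+$, while by \eqref{eq:theta equivalent} $\theta_{N,\rho}(x)\le\theta_N(x)\,v_{K,N}(\rho)/(\omega_N\rho^N)$ with the last factor tending to $1$, I can fix $R_x$ so small (and below the dimensional thresholds of Theorem~\ref{thm:local sobolev intro}) that $\theta_{N,R_x}(x)\ge\theta_\ast(1-\eps')$ and $\theta_{N,\rho}(x)/\theta_{N,R_x}(x)\le 2$ for all $\rho\le\delta R_x$, where $\delta=\delta(\eps',2,N)$ is the constant of Theorem~\ref{thm:local sobolev intro} with $D=2$; by the Bishop--Gromov inequality the last bound is exactly $\mm(B_\rho(x))\le 2(\rho/R_x)^N\mm(B_{R_x}(x))$, so Theorem~\ref{thm:local sobolev intro} applies on $B_{r_x}(x)$ with $r_x:=\delta R_x/2$ and gives the displayed inequality with $\Lambda_x=0$, once $\eps'$ is small enough that $(1+\eps')^p(1-\eps')^{-p/N}A_0\le A_0+\eps/2$. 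If $\theta_N(x)=+\infty$: then $\mm(B_\rho(x))/\rho^N=\omega_N\theta_{N,\rho}(x)\to+\infty$, so for $r_x$ small the gradient coefficient $(C\mm(B_{r_x}(x))/r_x^N)^{-p/N}$ in Proposition~\ref{prop:local sobolev embedding} is $\le\eps/2\le A_0+\eps/2$, and the displayed inequality holds with $\Lambda_x=2^p\mm(B_{r_x}(x))^{-p/N}$.

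Next, by compactness I would extract a finite subcover $\{B_{r_i/4}(x_i)\}_{i=1}^m$ of $\X$, set $r_i:=r_{x_i}$, and build a Lipschitz partition of unity adapted to $p$-th powers: with $\varphi_i(y):=(r_i/2-\sfd(y,x_i))^+$ one has $\supp\varphi_i=\overline{B_{r_i/2}(x_i)}\Subset B_{r_i}(x_i)$ and $\varphi_i>r_i/4$ on $B_{r_i/4}(x_i)$, hence $\Psi:=\sum_{i}\varphi_i^p$ is Lipschitz and bounded below by a positive constant on the compact $\X$, so $\gamma_i:=\varphi_i/\Psi^{1/p}\in\LIP_c(B_{r_i}(x_i))$, $\gamma_i\ge0$, and $\sum_{i=1}^m\gamma_i^p\equiv1$. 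The key observation is that, writing $\|u\|_{L^{p^*}}^p=\big\||u|^p\big\|_{L^{p^*/p}}$, using $|u|^p=\sum_i\gamma_i^p|u|^p$ and the triangle inequality in $L^{p^*/p}(\mm)$ (valid since $p^*/p>1$), one gets
\[
\|u\|_{L^{p^*}}^p=\Big\|\sum_i\gamma_i^p|u|^p\Big\|_{L^{p^*/p}}\le\sum_i\big\|\gamma_i^p|u|^p\big\|_{L^{p^*/p}}=\sum_i\|\gamma_i u\|_{L^{p^*}}^p,
\]
\emph{with no loss coming from overlapping supports} — this is exactly why one must work with $p$-th powers of norms and with the normalization $\sum_i\gamma_i^p=1$. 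Applying the local inequality of the previous step to each $v=\gamma_i u\in\LIP_c(B_{r_i}(x_i))$, bounding $|D(\gamma_i u)|\le\gamma_i|Du|+\Lip(\gamma_i)\,|u|\,\mathbf{1}_{B_{r_i}(x_i)}$ $\mm$-a.e.\ and using $(a+b)^p\le(1+\sigma)a^p+C_{\sigma,p}\,b^p$, then summing over $i$ and exploiting the exact identity $\sum_i\int\gamma_i^p|Du|^p\,\d\mm=\||Du|\|_{L^p}^p$, I arrive at
\[
\|u\|_{L^{p^*}}^p\le(1+\sigma)\Big(A_0+\tfrac\eps2\Big)\||Du|\|_{L^p}^p+\Big[\sum_{i=1}^m\big(C_{\sigma,p}\big(A_0+\tfrac\eps2\big)\Lip(\gamma_i)^p+\Lambda_i\big)\Big]\|u\|_{L^p}^p.
\]
Choosing $\sigma=\sigma(\eps,p,\X)>0$ small enough that $(1+\sigma)(A_0+\eps/2)\le A_0+\eps$ and letting $B$ be the bracketed constant (which depends only on $\eps$, $p$ and $\X$) yields \eqref{eq:thm4.5}.

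The main obstacles I anticipate are two. The first is making Step~1 uniform in $x$: the bound on the local Sobolev constant must be $\le A_0+\eps/2$ at \emph{every} point, which forces the separate treatment of the (possibly many) points where $\theta_N=+\infty$ via Proposition~\ref{prop:local sobolev embedding}, together with a careful verification that the Bishop--Gromov ratio hypothesis of Theorem~\ref{thm:local sobolev intro} can be met at finite‑density points, before one passes to a finite subcover. The second, and more structural, is performing the gluing without the usual multiplicative loss on the leading term: this is resolved by the normalization $\sum_i\gamma_i^p\equiv1$ with Lipschitz $\gamma_i$ and by the reformulation $\|u\|_{L^{p^*}}^p=\||u|^p\|_{L^{p^*/p}}$, so that the pieces are recombined using only the triangle inequality rather than a power of a finite sum.
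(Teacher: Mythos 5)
Your proposal is correct and follows essentially the same route as the paper's proof: a local dichotomy between Theorem~\ref{thm:local sobolev intro} (finite density) and Proposition~\ref{prop:local sobolev embedding} (infinite density), a finite subcover by compactness, a partition of unity normalized so that $\sum_i\gamma_i^p\equiv1$ with $\gamma_i$ Lipschitz, the identity $\|u\|_{L^{p^*}}^p=\|\sum_i\gamma_i^p|u|^p\|_{L^{p^*/p}}\le\sum_i\|\gamma_i u\|_{L^{p^*}}^p$ to glue without loss on the leading constant, and a Young/Leibniz absorption into the $L^p$ term. The only cosmetic differences are the explicit construction of the $\gamma_i$ (the paper uses $\phi_i=\psi_i^\beta/\sum_j\psi_j^\beta$ with $\beta>p$ so that $\phi_i^{1/p}$ is Lipschitz, whereas you build $\gamma_i$ directly from distance functions and a positive lower bound on $\Psi$) and the form of the elementary inequality used to split $(a+b)^p$.
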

\begin{proof}
We start claiming that the following local version of \eqref{eq:thm4.5} holds: for any $x \in\X$ and every $\eps>0$ there exists $r=r(\eps,x)>0$ and $C=C(\eps,p,x)<+\infty$ such that 
	\begin{equation}\label{eq:uniform local}
		\|u\|^p_{L^{p^*}(\mm)}\le \Big(\frac{\eucl(N,p)^p}{\min_{y\in\X} \theta_N(y)^{p/N}}+\eps\Big) \||D u|\|^p_{L^p(\mm)}+C\|u\|_{L^p(\mm)}^p, \qquad \forall u \in \LIP_c(B_{r}(x)).
	\end{equation}
	To show the above we observe first that in the case that $\theta_{N}(x)=+\infty,$  \eqref{eq:uniform local}  follows immediately from \eqref{eq:local sobolev} for $r$ small enough. We are left with the case $0< \theta_N(x)<+\infty$. We start by fixing $\eps\in(0,1/2)$. From the definition of $\theta_N(x)$, there exists $r'=r'(x,\eps)$ so that for every $r\in (0,r')$ it holds $\theta_{N,r}(x)\in ((1-\eps)\theta_N(x),(1+\eps)\theta_N(x)).$  In particular we have that $\frac{\theta_{N,r}(x)}{\theta_{N,R}(x)}\le 4$ for every $r,R \in (0,r').$ We are therefore in position to apply Theorem \ref{thm:local sobolev intro} and deduce that there exists $\delta=\delta(\eps,N)$ so that for every $r,R \in (0,r'\wedge \delta \sqrt{N/K^-})$, with $r<\delta R$,  the following inequality holds for every $ u \in \LIP_c(B_{r}(x))$
	\[
		\|u\|_{L^{p^*}(\mm)}^p\overset{\eqref{eq:local sobolev intro}}{\le} (1+\eps)^p\frac{\eucl(N,p)^p}{\theta_{N,R}(x)^{p/N}}\||D u|\|^p_{L^p(\mm)}\le  \frac{(1+\eps)^p}{(1-\eps)^{p/N}}\frac{\eucl(N,p)^p}{\min_{\X} \theta_N(x)^{p/N}}\||D u|\|^p_{L^p(\mm)},
	\]
 where in the second inequality we have used $\theta_{N,R}(x)\ge (1-\eps)\theta_{N}(x)$. Therefore \eqref{eq:uniform local} (with $C=0$) follows from the above provided we choose $\eps$ small enough.

	Since $\X$ is compact we can extract a finite covering of balls $\{B_i\}_{i=1}^M$ from  the covering $\cup_{x \in\X}B_{r(\eps,x)/2}(x)$. We also set $C\coloneqq\max_i C_i$ and
	$$A\coloneqq \frac{\eucl(N,p)^p}{\min_{\X} \theta_N(x)^{p/N}}+\eps.$$
	We claim that there exists a partition of unity made of functions $\{\phi_i\}_{i=1}^M$ such that $\phi_i \in \LIP_c(2B_i)$, $0\le \phi_i\le 1$ and  $\phi_i^{1/p}\in \LIP_c(2B_i)$ for all $i$, having denoted $2B_i$, the ball of twice the radius. To build such partition of unity we can argue as follows: start considering functions $\psi_i\in \LIP_c(2B_i)$, such that $0\le \psi_i\le 1$ and $\psi_i\ge1$ in $B_i$. Then we fix $\beta>p$ and take
	\[
	\phi_i\coloneqq\frac{\psi_i^\beta}{\sum_{j=1}^M \psi_j^\beta}.
	\]
	Since by construction $\sum_{j=1}^M\psi_j^\beta\ge 1$ everywhere on $\X$, we have that $\phi^{1/p}_i \in \LIP_c(2B_i)$. Finally  it is clear that $\sum_{i=1}^M  \phi_i=1.$ 
	
	We are now ready to prove \eqref{eq:thm4.5}. Fix $u \in \LIP(\X)$ and observe that
	\begin{equation}\label{eq:first estimate}
		\|u\|_{L^{p^*}(\mm)}^{p}=\Big\|\sum_i \phi_i|u|^p \Big \|_{L^{p^*/p}(\mm)}\le \sum_i\left \|\phi_i|u|^p \right \|_{L^{p^*/p}(\mm)}=
		\sum_i\big \|\phi_i^{1/p}|u| \big \|_{L^{p^*}(\mm)}^p.
	\end{equation}
	Since $\phi_i^{1/p}|u| \in \LIP_c(2B_i)$ we can apply \eqref{eq:uniform local} to obtain
	\begin{align*}
		\|u\|_{L^{p^*}(\mm)}^{p}&\le \sum_{i=1}^M A \int \left(| D \phi_i^{1/p}||u|+|D u|\phi_i^{1/p} \right)^p \, \d \mm + C \int \phi_i |u|^p\, \d \mm\\
		& \le\sum_{i=1}^M A \int \phi_i|D u|^p+c_1|D u|^{p-1}\phi_i^{\frac{p-1}{p}}|D \phi_i^{1/p}||u|+c_2|D \phi_i^{1/p}|^{p}|u|^{p} \, \d \mm + C \int \phi_i |u|^p\, \d \mm,
	\end{align*}
	where $c_1,c_2\ge 0$ are such that $(1+t)^p\le 1+c_1t+c_2t^p$ for all $t\ge 0.$ Recalling that the functions $0\le \phi_i^{1/p}\le1$ are Lipschitz we obtain
	\begin{align*}
		\|u\|_{L^{p^*}(\mm)}^{p}&\le A \int |D u|^p\, \d \mm+\tilde C\int |D u|^{p-1}|u|\, \d \mm+\tilde C \int |u|^p\, \d \mm,
	\end{align*}
	where $\tilde C=\tilde C(p,M,L)$, $L$ begin the maximum of the Lipschitz constants of the functions $\phi_i^{1/p}.$ Finally from the Young inequality we have for every $\delta>0$
	\[
	\int |D u|^{p-1}|u|\, \d \mm\le  \frac{p\delta^{\frac{p}{p-1}}}{p-1} \int |Du|^p \, \d \mm+  
	\frac{1}{p\delta^p} \int |u|^p \, \d \mm, \quad \forall \delta>0
	\]
	and plugging this estimate above, choosing $\delta$ small enough (but independent of $u$), we obtain that
	\[
	\|u\|_{L^{p^*}(\mm)}^{p}\le (A+\eps) \int |D u|^p\, \d \mm+C' \int |u|^p\, \d \mm,
	\]
	for some $C'=C'(\eps,L,M,p)$. Since $\eps>0$ and $u \in\LIP(\X)$ were arbitrary,  this concludes the proof.
\end{proof}

\section{Lower bound on $\alpha_p$}\label{sec:lower bound alfa}
The rough idea of the lower bound on $\alpha_p$ is that, when $\theta_N(x)<+\infty$ the space near $x$  has a conical structure, hence the constant in the Sobolev inequality cannot be better than the one of the tangent structures of the underlying  space. This will be formalized with a blow-up argument combined with a stability result for the Sobolev constants.

\subsection{Blow-up analysis of Sobolev constants}
For convenience, we  introduce the following notation: whenever in a metric measure space $\Xdm$ it holds that
\[
	\|u\|_{L^{q}(\mm)}^p\le A  \||D u|_p\|_{L^p(\mm)}^p + B  \|u\|_{L^p(\mm)}^p , \qquad \forall u \in W^{1,p}(\X).
\]
for some constants $A,B>0$ and exponents $1<p<q$, we will say that \textbf{$\X$ supports a $(q,p)$-Sobolev inequality with constants $A,B$}. This convention will be used often here, and sometimes in the subsequent sections, without further notice.

We make precise the scaling enjoyed by the Sobolev inequalities under consideration. It is immediate to check that if a space $\Xdm$ supports a $(p^*,p)$-Sobolev for $p \in (1,N)$ and $p^*\coloneqq \frac{pN}{N-p}$  with constants $A,B$, then for every $r>0$ we have
\begin{equation}
    (\X,\sfd/r,\mm/r^N)\text{ supports a $(p^*,p)$-Sobolev with constants } A,Br^p.\label{eq:scaling}
\end{equation}

We pass to the stability of Sobolev embeddings under pmGH-convergence (see also \cite[Thm. 3.1]{Honda18} for a similar result for Ricci-limits).
\begin{lemma}[pmGH-Stability of Sobolev constants]\label{lem:sobolev stability}
	Let $(\X_n,\sfd_n,\mm_n,x_n)$, $n \in \bar \N$, be a sequence of $\cd(K,N)$ spaces for some $K \in \mathbb{R}$, $N \in (1,\infty)$ with $\X_n \overset{pmGH}{\to} X_\infty$. Suppose $\X_n$ support a $(q,p)$-Sobolev inequality for $1<p<q$ with constants $A,B$. Then also $\X_\infty$ supports a $(q,p)$-Sobolev inequality with the same constants $A,B$.
\end{lemma}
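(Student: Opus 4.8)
The plan is to exploit the $\Gamma$-$\limsup$ inequality for the $p$-Cheeger energies together with the $L^p$-strong convergence machinery recalled in Proposition \ref{prop:lp prop}. Fix $f_\infty \in W^{1,p}(\X_\infty)$; since $\LIP_{bs}$ is dense in $W^{1,p}$ on each (locally doubling) $\cd(K,N)$ space and the Sobolev inequality is stable under $W^{1,p}$-convergence of the argument, it suffices to treat $f_\infty \in L^p(\mm_\infty) \cap W^{1,p}(\X_\infty)$. By the $\Gamma$-$\limsup$ inequality for $p$-Cheeger energies (the variant of \eqref{eq:Gammalimsup} for general $p$, quoted in the excerpt from \cite[Theorem 8.1]{AH17}), there exists a recovery sequence $f_n \in L^p(\mm_n)$ converging $L^p$-strong to $f_\infty$ with
\[
\limsup_{n\to\infty}\int |D f_n|^p\,\d\mm_n \le \int |D f_\infty|^p\,\d\mm_\infty .
\]
In particular each $f_n$ lies in $W^{1,p}(\X_n)$ for $n$ large, so the Sobolev inequality with constants $A,B$ applies to $f_n$.

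Next I would pass the Sobolev inequality to the limit term by term. The right-hand side terms are handled by the recovery property: $\limsup_n \|\,|Df_n|\,\|_{L^p(\mm_n)}^p \le \|\,|Df_\infty|\,\|_{L^p(\mm_\infty)}^p$ directly from the display above, and $\lim_n \|f_n\|_{L^p(\mm_n)}^p = \|f_\infty\|_{L^p(\mm_\infty)}^p$ since $L^p$-strong convergence preserves the $L^p$-norm. For the left-hand side I need $\|f_\infty\|_{L^q(\mm_\infty)} \le \liminf_n \|f_n\|_{L^q(\mm_n)}$. If $q<\infty$ this follows from lower semicontinuity of the $L^q$-norm under $L^q$-weak convergence (item $(iii)$ of Proposition \ref{prop:lp prop}), provided I first know $f_n$ converges $L^q$-weak to $f_\infty$: but the $f_n$ are uniformly bounded in $L^q$ precisely because the Sobolev inequality gives $\|f_n\|_{L^q(\mm_n)}^p \le A\|\,|Df_n|\,\|_{L^p(\mm_n)}^p + B\|f_n\|_{L^p(\mm_n)}^p$, whose right side is bounded; hence up to a subsequence $f_n$ converges $L^q$-weak, and since it already converges $L^p$-strong (hence in duality with $C_{bs}$) the $L^q$-weak limit must be $f_\infty$. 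Combining,
\[
\|f_\infty\|_{L^q(\mm_\infty)}^p \le \liminf_n \|f_n\|_{L^q(\mm_n)}^p \le \limsup_n\Big(A\|\,|Df_n|\,\|_{L^p(\mm_n)}^p + B\|f_n\|_{L^p(\mm_n)}^p\Big) \le A\|\,|Df_\infty|\,\|_{L^p(\mm_\infty)}^p + B\|f_\infty\|_{L^p(\mm_\infty)}^p,
\]
which is the desired inequality on $\X_\infty$. Since $f_\infty \in W^{1,p}(\X_\infty)$ was arbitrary, $\X_\infty$ supports the $(q,p)$-Sobolev inequality with the same constants $A,B$.

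The only genuinely delicate point is the identification of the $L^q$-weak limit of $f_n$ with $f_\infty$ when $q$ is large: a priori $L^p$-strong convergence and uniform $L^q$-bounds only give, along a subsequence, some $L^q$-weak limit $g$, and one must check $g=f_\infty$. This is immediate because $L^p$-strong convergence entails $f_n\mm_n \weakto f_\infty\mm_\infty$ in duality with $C_{bs}(\Z)$, while $L^q$-weak convergence entails $f_n\mm_n \weakto g\mm_\infty$ in the same duality, so $g=f_\infty$ $\mm_\infty$-a.e.; a subsequence argument then upgrades the $\liminf$ to hold along the full sequence. (If one also wants the borderline case $q=\infty$ it follows from item $(i)$/$(viii)$ of Proposition \ref{prop:lp prop}, but for the present purposes $q=p^*<\infty$.) Everything else is a routine passage to the limit, so I expect no substantive obstacle beyond bookkeeping with subsequences.
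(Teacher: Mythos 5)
Your argument is correct and follows essentially the same route as the paper: take a $\Gamma$-$\limsup$ recovery sequence for the $p$-Cheeger energy, deduce a uniform $L^q$ bound from the Sobolev inequality, pass to an $L^q$-weak limit, and close by lower semicontinuity of the $L^q$-norm. The only cosmetic difference is that the paper works directly with $u \in \LIP_c(\X_\infty)$ rather than invoking density in $W^{1,p}$ as a separate reduction, which lets it state the recovery and weak-limit identification a bit more tersely.
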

\begin{proof}
		Fix $u \in \LIP_c(\X_\infty)$, from the $\Gamma$-$\limsup$ inequality of the ${\rm Ch}_p$ energy, there exists a sequence $u_n \in W^{1,p}(\X_\infty)$ such that $u_n$ converges in $L^p$-strong to $u$ and $\limsup_{n}\int|D u|^p\, \d \mm_n\le \int |Du|^p\, \d \mm_\infty$. In particular
	\begin{align*}
		\limsup_n \|u_n\|_{L^q(\mm_n)}^p
		&\le \limsup_{n\to \infty} A \||D u_n|\|_{L^p(\mm_n)}^p+B  \|u_n\|_{L^p(\mm_n)}^p\\
		&\le A \||D u|\|_{L^p(\mm_\infty)}^p+ B  \|u\|_{L^p(\mm_\infty)}^p<+\infty.
	\end{align*}
	Therefore $u_n$ converge also $L^q$-weak to $u$. From the  lower semicontinuity of the $L^q$-norm with respect to $L^q$-weak convergence  and the arbitrariness of $u \in \LIP_c(\X_\infty)$ the conclusion follows.
\end{proof}

The following result is a consequence of the existence of the disintegration  and can be found for example in \cite[Corollary 3.8]{DPG15}.
\begin{lemma}\label{lem:volume cone}
     Let $\Xdm$ be a $\cd(0,N)$ space with  $N \in [1,\infty)$. Suppose that for some $x_0 \in \X$ it holds that $\frac{\mm(B_r(x_0))}{\omega_Nr^N}=1$ for every $r\in(0,\infty)$, then 
     \[
     \int \phi(\sfd(x_0,x)) \, \d \mm=\sigma_{N-1}\int_0^\infty \phi(r)r^{N-1}\, \d r, \qquad \forall \phi \in C_c([0,\infty]).
     \]
\end{lemma}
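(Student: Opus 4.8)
The plan is to reduce the identity to the computation of the pushforward of $\mm$ under the function ``distance from $x_0$'', using that a locally finite Borel measure on $[0,\infty)$ is uniquely determined by its values on the intervals $[0,r)$, $r>0$. Concretely, I would set $f\colon\X\to[0,\infty)$, $f(x)\coloneqq\sfd(x_0,x)$, which is $1$-Lipschitz, and consider the pushforward $\nu\coloneqq (f)_\sharp\mm$. Since $\cd(0,N)$ spaces are proper (Bishop--Gromov), $f^{-1}([0,r])$ is compact for every $r>0$, so $\nu$ is a locally finite — hence $\sigma$-finite — Borel measure on $[0,\infty)$, and directly from the definitions $\nu([0,r))=\mm(\{\sfd(x_0,\cdot)<r\})=\mm(B_r(x_0))$, which by hypothesis equals $\omega_N r^N$ for every $r>0$.

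Next I would compare $\nu$ with $\mu\coloneqq\sigma_{N-1}\,t^{N-1}\Leb1$ on $[0,\infty)$: using $\sigma_{N-1}=N\omega_N$ from \eqref{eq:omegasigma} one gets $\mu([0,r))=\sigma_{N-1}\int_0^r t^{N-1}\,\d t=\omega_N r^N=\nu([0,r))$ for all $r>0$. Thus $\mu$ and $\nu$ are both $\sigma$-finite and agree on the $\pi$-system $\{[0,r):r>0\}$, which generates the Borel $\sigma$-algebra of $[0,\infty)$, so $\mu=\nu$. The change-of-variables formula for pushforwards then yields, for every $\phi\in C_c([0,\infty])$,
\[
\int_\X \phi\big(\sfd(x_0,x)\big)\,\d\mm(x)=\int_{[0,\infty)}\phi\,\d\nu=\int_{[0,\infty)}\phi\,\d\mu=\sigma_{N-1}\int_0^\infty\phi(r)\,r^{N-1}\,\d r,
\]
which is exactly the claimed identity.

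I do not expect any serious obstacle here. The argument is elementary once local finiteness of $\nu$ is noted, and the only step needing a line of justification is the passage from equality of the distribution functions of $\nu$ and $\mu$ to equality of the measures themselves, i.e. the standard uniqueness-of-measures (Dynkin $\pi$--$\lambda$) statement. This is also the reason the assertion can simply be quoted, as in the excerpt, from the disintegration-based analysis of \cite[Corollary 3.8]{DPG15}, which gives the same conclusion along the rays issuing from $x_0$.
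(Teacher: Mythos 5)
Your proof is correct and complete, but it follows a genuinely different route from the paper: the paper gives no argument at all and simply quotes \cite[Corollary 3.8]{DPG15}, a statement obtained there via the disintegration of $\mm$ along the rays issuing from the cone vertex, within the volume-cone-to-metric-cone analysis. You instead observe that the left-hand side only sees the pushforward $\nu=(\sfd(x_0,\cdot))_\sharp\mm$, that $\nu([0,r))=\mm(B_r(x_0))=\omega_Nr^N=\sigma_{N-1}\int_0^rt^{N-1}\,\d t$ for all $r>0$, and that two Borel measures on $[0,\infty)$ which are finite on each $[0,n)$ and agree on the $\pi$-system $\{[0,r):r>0\}$ coincide; the change-of-variables formula then gives the identity. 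This is an entirely elementary argument which in particular shows that the $\cd(0,N)$ hypothesis plays no role for this specific identity — only the exact volume condition $\mm(B_r(x_0))=\omega_Nr^N$ is used (it is kept in the statement simply because that is the setting in which the lemma is applied). What the citation-based route buys is brevity and coherence with the blow-up machinery of \cite{DPG15} already invoked elsewhere; what your route buys is self-containedness and greater generality. Two cosmetic remarks: properness of the space is not needed to see that $\nu$ is finite on bounded sets, since $\nu([0,r))=\omega_Nr^N<\infty$ directly by hypothesis; and the class $C_c([0,\infty])$ in the statement should be read as compactly supported (or at least nonnegative) test functions, for which your argument applies verbatim.
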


\begin{lemma}\label{lem:extremal sequence cone}
     Let $\Xdm$ be a $\cd(0,N)$ space, $N\in(1,\infty)$,  $p \in (1,N)$ and set $p^*\coloneqq\frac{pN}{N-p}$. Suppose that for some $x_0 \in \X$ it holds that $\frac{\mm(B_r(x_0))}{\omega_N r^N}=1$ for every $r\in(0,\infty)$. Then 
     there exists a sequence of non-constant functions $u_n \in \LIP_c(\X)$ satisfying
     \[
     \lim_n \frac{\|u_n\|_{L^{p^*}(\mm)}}{\||D u_n|\|_{L^p(\mm)}}\ge  \eucl(N,p).
     \]
\end{lemma}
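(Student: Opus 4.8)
The plan is to exploit Lemma \ref{lem:volume cone}, which says that on such a space all radial integrals coincide with those on the Euclidean model weighted by $\sigma_{N-1}t^{N-1}$, and then transplant the Bliss extremizers. First I would recall from Lemma \ref{lem:bliss} that the functions $v_b(r) = (1+br^{p/(p-1)})^{(p-N)/p}$ realize equality in the Bliss inequality \eqref{eq:bliss}; in particular they satisfy $\sigma_{N-1}\int_0^\infty |v_b|^{p^*} t^{N-1}\,\d t = \eucl(N,p)^{p^*}\big(\sigma_{N-1}\int_0^\infty |v_b'|^p t^{N-1}\,\d t\big)^{p^*/p}$, and both integrals are finite for every $b>0$ (this is the classical fact that $v_b \in \dot W^{1,p}(\R^N)$ when $p<N$). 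Then I would define $w_b\colon \X\to\R$ by $w_b(x) := v_b(\sfd(x_0,x))$. This is locally Lipschitz (composition of the Lipschitz distance function with the locally Lipschitz $v_b$) and has $\lip w_b(x)\le |v_b'(\sfd(x_0,x))|$, hence $|Dw_b|_p\le |v_b'(\sfd(x_0,x))|$ $\mm$-a.e. Applying Lemma \ref{lem:volume cone} with $\phi = |v_b|^{p^*}$ and with $\phi = |v_b'|^p$ (after a truncation/approximation argument to bring these into $C_c$, using monotone convergence since both radial integrals converge) gives
\[
\|w_b\|_{L^{p^*}(\mm)} = \Big(\sigma_{N-1}\int_0^\infty |v_b|^{p^*} t^{N-1}\,\d t\Big)^{1/p^*}, \qquad \||Dw_b|\|_{L^p(\mm)} \le \Big(\sigma_{N-1}\int_0^\infty |v_b'|^{p} t^{N-1}\,\d t\Big)^{1/p}.
\]
Combining with the equality case of Bliss yields $\|w_b\|_{L^{p^*}(\mm)}/\||Dw_b|\|_{L^p(\mm)} \ge \eucl(N,p)$ for every $b>0$, and each $w_b$ is non-constant.

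The remaining issue is that the $w_b$ are not compactly supported, whereas the statement asks for a sequence in $\LIP_c(\X)$. To fix this I would introduce radial cutoffs: fix $b$ (say $b=1$) and for $R>0$ set $u_R := \eta_R(\sfd(x_0,\cdot))\, w_1$, where $\eta_R$ is a fixed Lipschitz function equal to $1$ on $[0,R]$, $0$ on $[2R,\infty)$, with $|\eta_R'|\le 1/R$. Then $u_R \in \LIP_c(\X)$, and I claim that $\|u_R\|_{L^{p^*}(\mm)}\to \|w_1\|_{L^{p^*}(\mm)}$ and $\||Du_R|\|_{L^p(\mm)}\to \||Dw_1|\|_{L^p(\mm)}$ as $R\to\infty$. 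The convergence of the $L^{p^*}$ norms is dominated convergence. For the gradient term, $|Du_R|_p \le \eta_R(\sfd(x_0,\cdot))|v_1'(\sfd(x_0,\cdot))| + \tfrac1R \nchi_{\{R\le \sfd(x_0,\cdot)\le 2R\}} |v_1(\sfd(x_0,\cdot))|$; using Lemma \ref{lem:volume cone} the first piece converges to the full radial integral, and the second piece is bounded (up to constants) by $R^{-p}\sigma_{N-1}\int_R^{2R} v_1^p t^{N-1}\,\d t$, which one checks tends to $0$ by the explicit decay $v_1(r)\sim r^{(p-N)/(p-1)}$ as $r\to\infty$ — the relevant exponent makes $R^{-p}\cdot R^{N}\cdot R^{p(p-N)/(p-1)} = R^{(N-p)(p-N)/((p-1)) \cdots}$ negative, so it vanishes. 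Passing to a countable sequence $R=R_n\to\infty$ (relabelled $u_n := u_{R_n}$), we get $\lim_n \|u_n\|_{L^{p^*}(\mm)}/\||Du_n|\|_{L^p(\mm)} = \|w_1\|_{L^{p^*}(\mm)}/\||Dw_1|\|_{L^p(\mm)}\ge \eucl(N,p)$, as desired.

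The main obstacle is the careful justification of the limiting/cutoff argument: one must check that the tail contribution of the cutoff to the $p$-energy genuinely vanishes, which relies on the precise algebraic form of the Bliss extremizers and the constraint $p<N$ that guarantees $v_b$ has finite Dirichlet energy at infinity. Everything else — applying Lemma \ref{lem:volume cone}, using $\lip(v_b\circ \sfd(x_0,\cdot))\le |v_b'|\circ\sfd(x_0,\cdot)$, and the equality case of Bliss — is routine. One small technical point worth handling explicitly is extending Lemma \ref{lem:volume cone} from $\phi\in C_c([0,\infty))$ to the (continuous, non-compactly-supported, but integrable against $t^{N-1}\,\d t$) integrands $|v_b|^{p^*}$ and $|v_b'|^p$; this is a standard monotone-convergence approximation by $\phi_k = \phi\cdot\chi_k$ with $\chi_k$ a cutoff, and I would state it as such without belaboring it.
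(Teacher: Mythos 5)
Your proposal is correct and follows the same overall architecture as the paper's proof: transplant the Bliss extremizers $v_b$ onto $\X$ via $w_b=v_b(\sfd(x_0,\cdot))$, use Lemma~\ref{lem:volume cone} to identify the radial integrals with the model ones, exploit $|Dw_b|\le |v_b'|\circ\sfd(x_0,\cdot)$ coming from the $1$-Lipschitzianity of the distance and the chain rule, and then approximate by elements of $\LIP_c(\X)$. Where you diverge is the approximation step. You truncate in the \emph{radial variable}, setting $u_R:=\eta_R(\sfd(x_0,\cdot))w_1$ with a cutoff at scale $R$; this works, but it obliges you to show that the extra term $\tfrac1R\nchi_{\{R\le\sfd(x_0,\cdot)\le 2R\}}|v_1|$ has vanishing $L^p$-norm, which you do by plugging in the explicit decay $v_1(r)\sim r^{(p-N)/(p-1)}$ and computing the exponent. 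The computation works out --- $R^{-p}\int_R^{2R}v_1^p\,t^{N-1}\,\d t\sim R^{-(N-p)/(p-1)}\to 0$ --- though your displayed exponent $(N-p)(p-N)/(p-1)$ is a minor algebra slip (the correct value is $-(N-p)/(p-1)$); the sign and hence the conclusion are right. The paper instead truncates in the \emph{value}: it takes $v_n:=\phi_n(v)$ where $\phi_n$ is a uniformly $2$-Lipschitz nonnegative function vanishing on $[0,1/n]$ and equal to the identity on $[2/n,\infty)$. Since $v$ is decreasing and vanishes at infinity, $v_n$ is automatically compactly supported, and since $|v_n'|\le 2|v'|$ pointwise the $L^p$-convergence of the derivatives follows from dominated convergence with no asymptotic analysis at all. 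Both routes are valid; the paper's is marginally more robust because it never needs the explicit form of the Bliss extremizer, only monotonicity and decay to $0$, while yours makes the error estimate more concrete at the cost of an exponent computation.
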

\begin{proof}
Let $v:[0,\infty)\to [0,\infty)$, $v \in C^\infty(0,\infty)$, be an extremal function for the Bliss inequality \eqref{eq:bliss} as given by Lemma \ref{lem:bliss}. It can be easily shown that we can approximate $v$ with functions $v_n \in \LIP_c([0,\infty))$ so that $\|v_n\|_{L^{p^*}(h_N\Leb 1)}\to \|v\|_{L^{p^*}(h_Nn\Leb 1)}$ and $\|v_n'\|_{L^{p}(h_N\Leb 1)}\to \|v'\|_{L^{p}(h_N\Leb 1)}$, where $h_N\, \Leb 1=\sigma_{N-1}t^{N-1}\, \Leb 1.$ For example we can take $v_n\coloneqq \phi_n(u_b)$ with $\phi_n \in\LIP[0,\infty)$, $ \phi_n\ge 0$, $\phi_n(t)\le |t|$, $\Lip (\phi_n)\le 2$,  $\phi_n(t)=t$ in $[2/n,\infty)$ and $\supp(\phi_n)\subset[1/n,\infty).$ The claimed approximation of the norms then follows immediately from the fact that $v$ is decreasing and vanishing at infinity. Therefore we have
\begin{equation}\label{eq:extremal sequence line}
    \lim_n \frac{\|v_n\|_{L^{p^*}(h_n\Leb 1)}}{\|v_n'\|_{L^{p}(h_n\Leb 1)}}= \eucl(N,p).
\end{equation}
We can now define $u_n\coloneqq v_n\circ \sfd_{x_0}$, where $\sfd_{x_0}(\cdot)\coloneqq\sfd(x_0,\cdot)$. We clearly have that $u_n \in \LIP_c(\X)$ and from the chain rule also that $|D u_n|=|v_n'|\circ \sfd_{x_0} |D\sfd_{x_0}|\le |v_n'|\circ \sfd_{x_0}$ $\mm$-a.e., since $\sfd_{x_0}$ is 1-Lipschitz. Hence applying Lemma \ref{lem:volume cone} we obtain $\|u_n\|_{L^{p^*}(\mm)}=\|v_n\|_{L^{p^*}(h_N\Leb 1)}$ and $\||D u_n|\|_{L^p(\mm)}\le \|v_n'\|_{L^{p}(h_N\Leb 1)}$. This combined with \eqref{eq:extremal sequence line} (up to passing to a subsequence) gives the conclusion.
\end{proof}

\begin{theorem}[Lower bound on the Sobolev constant]\label{thm:A lower bound}
	Let $\Xdm$ be a $\cd(K,N)$ space, $K \in \mathbb{R},\, N\in (1,\infty)$ that supports a $(p^*,p)$-Sobolev inequality for $p\in(1,N)$  with constants $A,B,$ where $p^*=pN/(N-p).$  Then 
	\begin{equation}\label{eq:A lower bound}
		A\ge\frac{{\rm Eucl}(N,p)^p}{\theta_N(x)^{\frac pN}}, \qquad \forall \, x \in \X.
	\end{equation}
\end{theorem}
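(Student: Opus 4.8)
The plan is to establish \eqref{eq:A lower bound} by blowing up the space at $x$. If $\theta_N(x)=+\infty$ the right-hand side is $0$ and there is nothing to prove, so assume $\theta_N(x)\in(0,\infty)$ (recall $\theta_N(x)>0$ always, by \eqref{eq:theta equivalent}). For $r\in(0,1]$ I introduce the rescaled pointed space
\[
\X_r:=\Big(\X,\ \tfrac1r\sfd,\ \tfrac{1}{\theta_N(x)\,r^N}\mm,\ x\Big),
\]
which is a $\cd(Kr^2,N)$ space, in particular $\cd(-|K|,N)$, and which satisfies
\[
\mm_r\big(B_\rho(x)\big)=\omega_N\rho^N\,\frac{\theta_{N,\rho r}(x)}{\theta_N(x)}\ \longrightarrow\ \omega_N\rho^N\qquad\text{as }r\to0^+,\ \text{for every fixed }\rho>0.
\]
In particular $\mm_r(B_1(x))\to\omega_N$, so by Theorem \ref{thm:mGHcompact} there is a sequence $r_k\downarrow0$ with $\X_{r_k}\overset{pmGH}{\to}(\X_\infty,\sfd_\infty,\mm_\infty,x_\infty)$, a pointed $\cd(0,N)$ space. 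Using the weak convergence of the rescaled measures together with $\mm_\infty(\partial B_\rho(x_\infty))=0$ (Bishop--Gromov on $\X_\infty$), one gets $\mm_{r_k}(B_\rho(x))\to\mm_\infty(B_\rho(x_\infty))$ for every $\rho>0$, hence $\mm_\infty(B_\rho(x_\infty))=\omega_N\rho^N$ for all $\rho>0$.

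Next I track the Sobolev constants along this rescaling. Multiplying the reference measure by a constant $c>0$ turns a $(p^*,p)$-Sobolev inequality with constants $A,B$ into one with constants $Ac^{-p/N},Bc^{-p/N}$ (because $\|u\|_{L^q(c\mm)}=c^{1/q}\|u\|_{L^q(\mm)}$ and $1-p/p^*=p/N$), while the metric--measure rescaling in \eqref{eq:scaling} leaves the first constant unchanged and multiplies the second by $r^p$. Consequently each $\X_{r_k}$ supports a $(p^*,p)$-Sobolev inequality with constants
\[
A':=A\,\theta_N(x)^{p/N},\qquad B'_k:=B\,\theta_N(x)^{p/N}\,r_k^p,
\]
and $B'_k\to0$. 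Fix $\eps>0$: for $k$ large $B'_k\le\eps$, so $\X_{r_k}$ supports a $(p^*,p)$-Sobolev inequality with constants $A',\eps$, and Lemma \ref{lem:sobolev stability} transfers this to $\X_\infty$. Thus for every $\eps>0$ and every $u\in\LIP_c(\X_\infty)$
\[
\|u\|_{L^{p^*}(\mm_\infty)}^p\le A'\,\||D u|\|_{L^p(\mm_\infty)}^p+\eps\,\|u\|_{L^p(\mm_\infty)}^p,
\]
and letting $\eps\to0$ (legitimate since $\|u\|_{L^p(\mm_\infty)}<\infty$ for $u\in\LIP_c(\X_\infty)$) I obtain the clean Sobolev inequality $\|u\|_{L^{p^*}(\mm_\infty)}\le (A')^{1/p}\,\||D u|\|_{L^p(\mm_\infty)}$ on $\X_\infty$.

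To finish, recall that $\X_\infty$ is a $\cd(0,N)$ space with $\mm_\infty(B_\rho(x_\infty))=\omega_N\rho^N$ for all $\rho>0$, so Lemma \ref{lem:extremal sequence cone} produces non-constant $u_n\in\LIP_c(\X_\infty)$ with $\liminf_n\|u_n\|_{L^{p^*}(\mm_\infty)}/\||D u_n|\|_{L^p(\mm_\infty)}\ge{\rm Eucl}(N,p)$. Inserting the $u_n$ into the last displayed inequality yields ${\rm Eucl}(N,p)\le(A')^{1/p}=\big(A\,\theta_N(x)^{p/N}\big)^{1/p}$, i.e.\ $A\ge{\rm Eucl}(N,p)^p/\theta_N(x)^{p/N}$, which is \eqref{eq:A lower bound}. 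I expect the delicate point to be the volume-convergence step that pins down the density of $\mm_\infty$ at $x_\infty$, namely $\mm_{r_k}(B_\rho(x))\to\mm_\infty(B_\rho(x_\infty))$ along the (extrinsic) pmGH-convergence; once that and the bookkeeping of the scaling relations are settled, the conclusion follows by combining Lemmas \ref{lem:sobolev stability} and \ref{lem:extremal sequence cone}.
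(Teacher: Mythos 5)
Your proof is correct and takes essentially the same blow-up route as the paper: rescale the pointed space at $x$, use precompactness (Theorem \ref{thm:mGHcompact}) to extract a $\cd(0,N)$ tangent, kill the $B$-constant via the scaling relation \eqref{eq:scaling} together with Lemma \ref{lem:sobolev stability}, and invoke Lemma \ref{lem:extremal sequence cone}. The only cosmetic differences are that you normalize the measure by $\theta_N(x)$ so that the tangent has unit density (the paper keeps density $\theta_N(x)$ and rescales at the end), and you handle the non-constancy of the Sobolev constants $(A,B r_k^p)$ along the sequence a bit more carefully via the $\eps$-threshold before appealing to Lemma \ref{lem:sobolev stability}.
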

\begin{proof} If $\theta_N(x) =\infty$, there is nothing to prove. Hence we can assume that $\theta_N(x)<+\infty$. From the compactness and stability of the $\cd(K,N)$ condition, there exists a sequence $r_i \to 0$ such that $\X_i\coloneqq(\X,\sfd/r_i,\mm/{r_i}^N,x)$ pmGH-converge to a $\cd(0,N)$ space $(\Y,\sfd_\Y,\mm_\Y,\oo_\Y)$. Moreover, from \eqref{eq:scaling} we have that $\X_i$ supports a $(p^*,p)$-Sobolev inequality with constants $A, r_i^pB$. This combined with Lemma \ref{lem:sobolev stability} shows that $(\Y,\sfd_\Y,\mm_\Y)$ supports a $(p^*,p)$-Sobolev inequality with constants $A,0$. However we clearly have that $\mm_\Y$ satisfies $\frac{\mm_\Y(B_r(\soo_\Y))}{\omega_Nr^N}=\theta_N(x)$ for every $r>0.$ Therefore Lemma \ref{lem:extremal sequence cone}, after a rescaling, ensures that 
$A\ge \frac{{\rm Eucl}(N,p)^p}{\theta_N(x)^{\frac pN}},$ which is what we wanted.
\end{proof}

The above,  together with Theorem \ref{thm:Amin}, proves our main result Theorem \ref{thm:alfa} concerning $\alpha_p(\X)$.

Using Theorem \ref{thm:A lower bound} we can also prove the topological rigidity of the Sobolev inequality on non-collapsed $\RCD$ spaces. More precisely combining the volume rigidity for non-collapsed $\RCD$ spaces  (\cite[Theorem 1.6]{GDP17}) and the Cheeger-Colding's metric Reifenberg's theorem (\cite[Theorem A.1.2]{Cheeger-Colding97I})  (see also \cite{KapMon21}) we can obtain the following result.
\begin{corollary}[Manifold-regularity from almost Euclidean-Sobolev inequality]\label{cor:top compact}
    For every $K \in \R$, $N \in \N$, $p \in(1,N)$, $\alpha \in(0,1)$, $\eps>0$ there exists $\delta=\delta(K,N,\eps,\alpha)$ such that the following holds. Suppose that $(\X,\sfd,\Haus{N})$ is a compact $\RCD(K,N)$ space satisfying the following Sobolev inequality
    \begin{equation}\label{eq:top rigidity compact}
            \|u\|_{L^{p^*}(\Haus{N})}^p\le (\eucl(N,p)^p+\delta)\||Du|\|_{L^p(\Haus{N})}^p+B \|u\|_{L^{p}(\Haus{N})}^p, \quad \forall \, u \in W^{1,p}(\X),
    \end{equation}
    for some constant $B>0$, where $p^*\coloneqq pN/(N-p)$.

    Then, there exists a smooth $N$-dimensional Riemannian manifold $M$ and an $\alpha$-biH\"older homeomorphism $F: M \to \X.$
\end{corollary}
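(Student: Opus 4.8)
The plan is to combine the lower bound on the Sobolev constant in Theorem~\ref{thm:A lower bound} with the volume rigidity for non-collapsed $\RCD$ spaces and the metric Reifenberg theorem. First, observe that \eqref{eq:top rigidity compact} says exactly that $\X$ supports a $(p^*,p)$-Sobolev inequality with constants $A=\eucl(N,p)^p+\delta$ and $B$; hence Theorem~\ref{thm:A lower bound} gives $\theta_N(x)\ge(\eucl(N,p)^p/A)^{N/p}=:1-\omega(\delta)$ for every $x\in\X$, where $\omega(\delta)\to0^+$ as $\delta\to0^+$. Since $\mm=\Haus N$ the space is non-collapsed and therefore $\theta_N(x)\le1$ for every $x$ by \cite{GDP17}. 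Thus the Bishop--Gromov density is pinched, $1-\omega(\delta)\le\theta_N(x)\le1$ on all of $\X$.

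The core step is to promote this infinitesimal pinching to an $\eta$-Reifenberg condition on small balls, for a parameter $\eta>0$ to be fixed later. I would first pick $r_1=r_1(K,N,\eta)$ so that $v_{K,N}(r)/(\omega_N r^N)$ is as close to $1$ as desired for $r\le r_1$ and $r_1$ lies below the scale in the quantitative volume rigidity theorem. Given $x_0\in\X$, the monotonicity in \eqref{eq:Bishop}--\eqref{eq:theta equivalent} of $r\mapsto\Haus N(B_r(x_0))/v_{K,N}(r)$, whose limit as $r\to0^+$ is $\theta_N(x_0)\ge1-\omega(\delta)$, yields $R(x_0)\in(0,r_1]$ with $\Haus N(B_r(x_0))/v_{K,N}(r)\ge1-2\omega(\delta)$ for all $r\le R(x_0)$; since $\Haus N(\partial B_s)=0$, the map $x\mapsto\Haus N(B_{R(x_0)}(x))$ is continuous, so there is $\rho(x_0)<R(x_0)$ with $\Haus N(B_{R(x_0)}(y))/v_{K,N}(R(x_0))\ge1-3\omega(\delta)$ for $y\in B_{\rho(x_0)}(x_0)$; applying the monotonicity once more together with $\theta_N\le1$ then yields $\theta_{N,r}(y)$ pinched around $1$ for all $y\in B_{\rho(x_0)}(x_0)$ and $r\le\rho(x_0)$, with error tending to $0$ as $\delta\to0$ and $R(x_0)\to0$. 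Taking $\delta$ small enough, the quantitative volume rigidity theorem (\cite[Theorem~1.6]{GDP17}; see also \cite{KapMon21}) then gives $\sfd_{GH}(B_r(y),B_r(0^N))\le\eta r$ for every $y\in B_{\rho(x_0)}(x_0)$ and $r\le\rho(x_0)$. I emphasize that $\rho(x_0)$ cannot be taken uniform over $\X$ (a thin flat torus already shows this: it is $\RCD(0,N)$ with $\theta_N\equiv1$, yet it is almost Euclidean only below an arbitrarily small scale), so this $\eta$-Reifenberg condition is available only locally.

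Finally, I would fix $\eta=\eta(N,\alpha)$ small enough for the metric Reifenberg theorem of Cheeger--Colding (\cite[Theorem~A.1.2]{Cheeger-Colding97I}; see also \cite[Theorem~3.5]{KapMon21}) to apply on each $B_{\rho(x_0)}(x_0)$, producing an $\alpha$-bi-Hölder homeomorphism of $B_{\rho(x_0)/2}(x_0)$ onto an open subset of a smooth $N$-manifold; the remaining parameter $\eps$ enters only to impose the extra smallness of $\delta$ that controls the bi-Hölder (and Gromov--Hausdorff) constants of these charts. Covering the compact space $\X$ by finitely many such balls, the transition maps are bi-Hölder homeomorphisms between open subsets of $\R^N$, and gluing the model pieces along them after the standard mollification (as in \cite[Appendix~A]{Cheeger-Colding97I}, see also \cite{KapMon21}) produces a smooth $N$-dimensional Riemannian manifold $M$ together with an $\alpha$-bi-Hölder homeomorphism $F\colon M\to\X$. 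The main obstacle is precisely the middle step: converting the purely infinitesimal density information coming from the Sobolev inequality into a statement usable at positive scales where volume rigidity bites, and dealing with the fact that the relevant scale is genuinely local, which is what forces the chart-and-glue argument rather than a single global application of Reifenberg's theorem.
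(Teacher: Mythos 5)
Your overall strategy matches the paper's: use Theorem~\ref{thm:A lower bound} to get $\theta_N\ge 1-o_\delta(1)$ everywhere, combine with $\theta_N\le 1$ from non-collapsedness, upgrade the infinitesimal density bound to a volume pinching at positive scales via Bishop--Gromov and continuity of $x\mapsto\Haus{N}(B_r(x))$, and feed the result into \cite[Theorem 1.6]{GDP17} together with the intrinsic metric Reifenberg theorem. The positive-scale pinching argument itself is sound.

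The flaw is the claim that "$\rho(x_0)$ cannot be taken uniform over $\X$", and the consequent insistence on a chart-and-glue argument. For a \emph{fixed} compact $\X$, compactness gives exactly this uniformity: extract a finite subcover $\{B_{\rho(x_i)}(x_i)\}_{i=1}^m$ from the cover you constructed, and set $R:=\min_i\rho(x_i)>0$; Bishop--Gromov monotonicity then yields $\Haus{N}(B_s(y))\ge(1-\eps)\omega_N s^N$ for \emph{every} $y\in\X$ and $s\in(0,R)$, which is precisely what the paper does. The thin flat torus only shows that $R$ cannot be chosen independently of the space, which is not needed; for each fixed torus a uniform scale exists. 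Once the uniform $R$ is in hand, a single application of the global intrinsic Reifenberg theorem of \cite[Theorem A.1.2]{Cheeger-Colding97I} yields the manifold $M$ and the global $\alpha$-biH\"older homeomorphism directly, and there is no gluing to perform. Your chart-and-glue step is therefore not only unnecessary but also unjustified as written: bi-H\"older transition maps between Euclidean charts do not automatically produce a smooth structure, and the "standard mollification" you invoke is the content of the proof of the Reifenberg theorem, not a separate tool you get to appeal to. Once the erroneous remark about non-uniformity is removed, the proposal collapses to the paper's argument.
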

%Note that to deduce the above result we do not need the full Theorem \ref{thm:alfa}, but is actually sufficient to have a lower bound on $\alpha_p(\X)$ (see Section \ref{sec:lower bound alfa}).

\begin{proof}
The argument is analogous to \cite[Theorem 3.1]{KapMon21}, however for completeness we include the details.

We start fixing $\eps>0$, $N\in \N$, $K \in \R$, $p \in(1,N)$ and two numbers $\bar \delta=\bar \delta(K,N,p,\eps)>0$ $\bar r=\bar r(K,N,p,\eps)$ small enough to be chosen later.

    Suppose that $(\X,\sfd,\Haus{N})$ is a compact $\RCD(K,N)$ space that supports a $(p^*,p)$-Sobolev inequality with constant $\eucl(N,p)^p+\delta,B$, for some $\delta\le \bar \delta$ and $B>0$ (i.e. such that \eqref{eq:top rigidity compact} holds). Then from \eqref{eq:A lower bound}, if $\bar \delta\le \eucl(N,p)^p/4,$ we have that 
    \[
    \theta_N(x)\ge 1-2\delta, \quad \forall x \in \X.
    \]
    Therefore for every $x \in \X$ there exists $r_x\in(0,\bar r)$ such that $\Haus{N}(B_{r_x}(x))\ge (1-3\delta)r_x^N\omega_N.$ Moreover from the Bishop-Gromov inequality,  for every $y \in B_{\delta r_x}(x)$ and every $s \in (0,r_x)$  it holds that
    \begin{equation}\label{eq:bghaus}
        \frac{\Haus{N}(B_{s}(y))}{v_{K,N}(s)} \ge    \frac{\Haus{N}(B_{(1+\delta)r_x}(y))}{v_{K,N}((1+\delta)r_x)} \ge \frac{\Haus{N}(B_{r_x}(x))}{v_{K,N}((1+\delta)r_x)} \ge \frac{(1-3\delta)r_x^N\omega_N}{v_{K,N}((1+\delta)r_x)}.
    \end{equation}
    Recalling that $\lim_{r \to 0^+} \frac{\omega_N r^N}{v_{K,N}(r)}=1$, from  \eqref{eq:bghaus} we deduce that if both $\bar r$ and $\bar \delta$ are small  enough, with respect to $K,N,p,\eps$, then 
    \[
     \Haus{N}(B_{s}(y)) \ge (1-\eps)s^N\omega_N, \quad \forall y \in B_{r_x}(x), \, s \in(0,r_x).
    \]
    Finally from the compactness of $\X$ there exists a finite number of points $x_i$, $i=1,...,m$ such that $\X \subset \cup_i B_{r_{x_i}}(x_i)$. Taking $R\coloneqq \min_i r_{x_i}<\bar r$ we then have 
     \[
     \Haus{N}(B_{s}(y)) \ge (1-\eps)s^N\omega_N, \quad \forall y \in \X, \, s \in(0,R).
    \]
    From this the conclusion  follows combining the volume rigidity theorem for non-collapsed $\RCD$ spaces (\cite[Theorem 1.6]{GDP17}) and the intrinsic metric-Reifenberg's theorem (\cite[Theorem A.1.2]{Cheeger-Colding97I}).
\end{proof}

\subsection{Sharp  and rigid Sobolev inequalities under Euclidean volume growth}\label{sec:sharp sobolev}
Here we prove the sharp Sobolev inequalities on $\cd(0,N)$ spaces contained Theorem \ref{thm:sharpSobgrowth}. The validity of the inequality \eqref{eq:sharp sobolev growth} will be derived as a  consequence of the local-Sobolev inequalities in Theorem \ref{thm:local sobolev intro}. The sharpness instead follows from a well known principle for which the validity of a Euclidean-Sobolev inequality implies certain growth on the measure of balls. In particular we have the following result:
\begin{theorem}\label{thm:sobolev implies growth}
	Let $\Xdm$ be an $\cd(0,N)$, $N \in(1,\infty)$ such that for some $p\in(1,N)$ and $A>0$
	\begin{equation}\label{eq:euclidean sobolevA}
	   	\|u\|_{L^{p^*}(\mm)}\le A \||Du|\|_{L^p(\mm)}, \qquad \forall u \in \LIP_c(\X),
	\end{equation}
	where $p^*\coloneqq\frac{pN}{N-p}$. Then  $\X$ has Euclidean volume-growth and 
	\begin{equation}\label{eq:A lower bound avr}
		{\sf AVR}(\X)\ge \Big(\frac{\eucl(N,p)}{A}\Big)^N.
	\end{equation}
\end{theorem}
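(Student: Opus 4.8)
The plan is to test the Sobolev inequality \eqref{eq:euclidean sobolevA} against a family of cutoff functions concentrated on balls and then let the radius tend to infinity. Fix a basepoint $x_0\in\X$ and, for $R>0$ and a small parameter $\tau\in(0,1)$, consider the Lipschitz function $u_R(x)\coloneqq \big(1-\tfrac{1}{\tau R}\dist(x, B_R(x_0))\big)^+$, which equals $1$ on $B_R(x_0)$, vanishes outside $B_{(1+\tau)R}(x_0)$, and has $|D u_R|\le \lip\ u_R\le \tfrac{1}{\tau R}\nchi_{B_{(1+\tau)R}(x_0)\setminus B_R(x_0)}$ $\mm$-a.e. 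Plugging $u_R$ into \eqref{eq:euclidean sobolevA} gives
\[
\mm(B_R(x_0))^{1/p^*}\le \|u_R\|_{L^{p^*}(\mm)}\le A\||Du_R|\|_{L^p(\mm)}\le \frac{A}{\tau R}\big(\mm(B_{(1+\tau)R}(x_0))-\mm(B_R(x_0))\big)^{1/p}.
\]

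The second step is to convert this into a differential inequality for the volume function $V(R)\coloneqq\mm(B_R(x_0))$. Rearranging the displayed inequality yields $V((1+\tau)R)-V(R)\ge (\tau R)^p A^{-p} V(R)^{p/p^*}$, and since $\tfrac1{p^*}=\tfrac1p-\tfrac1N$ we have $\tfrac{p}{p^*}=1-\tfrac pN$, so the exponent on the right is $1-p/N$. Letting $\tau\to 0$ (using that $V$ is monotone, hence differentiable a.e., and that $R\mapsto V(R)$ has at most countably many jumps, or more cleanly working with the monotone right-derivative) gives the a.e.\ inequality $R\,V'(R)\ge p\,A^{-p}\,R^p\,V(R)^{1-p/N}$, i.e.
\[
\frac{d}{dR}\Big(V(R)^{p/N}\Big)=\frac pN V(R)^{p/N-1}V'(R)\ge \frac{p}{N}\cdot\frac{p}{A^p}\,R^{p-1}.
\]
Integrating from a small $r_0>0$ to $R$ and discarding the boundary term $V(r_0)^{p/N}\ge0$ produces $V(R)^{p/N}\ge \tfrac{p}{N A^p}\cdot\tfrac{R^p}{p}=\tfrac{R^p}{NA^p}$, hence $V(R)\ge (NA^p)^{-N/p}R^N$ for all $R$. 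Dividing by $\omega_N R^N$ and letting $R\to\infty$ gives ${\sf AVR}(\X)\ge (\omega_N)^{-1}(NA^p)^{-N/p}$; one then checks using the explicit value \eqref{eq:euclnp} of $\eucl(N,p)$ that this constant equals $\big(\eucl(N,p)/A\big)^N$, which is \eqref{eq:A lower bound avr}. In particular ${\sf AVR}(\X)>0$, so $\X$ has Euclidean volume growth (recall that the limit defining ${\sf AVR}$ exists by Bishop–Gromov since $K=0$).

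The main obstacle is the passage from the discrete increment inequality to the differential inequality, and in particular pinning down the sharp constant: a crude estimate loses a factor and only gives ${\sf AVR}(\X)\ge c(N,p)A^{-N}$ with a non-optimal $c(N,p)$. To get the sharp constant $\eucl(N,p)^N$ one must be careful that the $\tau\to 0$ limit is lossless — this is where one uses that $V$ is non-decreasing so that $\limsup_{\tau\to0}\tfrac{V((1+\tau)R)-V(R)}{\tau R}$ bounds $R\,V'_+(R)$ from above at a.e.\ $R$, together with the fact that a monotone function is the integral of its a.e.-derivative only up to the (non-negative) jump part, which works in our favour since we are proving a lower bound on $V$. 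An alternative, perhaps cleaner, route that avoids differentiating altogether is to iterate the increment inequality along the geometric sequence of radii $R_k=(1+\tau)^k r_0$, obtaining $V(R_{k+1})\ge V(R_k)+(\tau R_k)^pA^{-p}V(R_k)^{1-p/N}$, show by induction that $V(R_k)\ge c_\tau R_k^N$ for a constant $c_\tau$ converging to the sharp one as $\tau\to0$, and then optimize over $\tau$; I would present whichever of the two is shorter, but both hinge on the same elementary ODE comparison $y'\gtrsim R^{p-1}y^{1-p/N}$.
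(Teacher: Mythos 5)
Your preliminary step (cut-off functions plus the discrete volume increment inequality) does prove that ${\sf AVR}(\X)>0$, and this part is in the spirit of the first lines of the paper's proof. But the passage to the sharp constant in \eqref{eq:A lower bound avr} contains a genuine gap. Dividing your increment inequality $V((1+\tau)R)-V(R)\ge (\tau R)^p A^{-p} V(R)^{1-p/N}$ by $\tau R$ leaves a factor $\tau^{p-1}$ on the right-hand side; since $p>1$ this factor tends to $0$ as $\tau\to 0$, so the limit only yields $V'(R)\ge 0$, not the differential inequality $R\,V'(R)\ge p\,A^{-p}R^p V(R)^{1-p/N}$ you claim. The same defect appears in your iteration alternative: the induction $V(R_k)\ge c_\tau R_k^N$ closes only if $c_\tau^{p/N}\le \tau^p/\bigl(A^p((1+\tau)^N-1)\bigr)$, and this bound tends to $0$ as $\tau\to 0$, so $c_\tau$ does not converge to the sharp value; optimizing over a fixed $\tau$ gives some non-sharp $c(N,p)A^{-N}$. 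The obstruction is structural, not technical: truncated-cone profiles are not extremal for the Sobolev inequality, whose extremals are the Bliss/Talenti functions $v_b(r)=(1+br^{p/(p-1)})^{(p-N)/p}$ of Lemma \ref{lem:bliss}, so no amount of care in the $\tau$-limit can recover $\eucl(N,p)$ from cut-offs alone. (Your own final check also does not close: $\omega_N^{-1}(NA^p)^{-N/p}\neq (\eucl(N,p)/A)^N$ in general.) The known ODE-comparison proofs of this statement (Ledoux, Xia, Krist\'aly) indeed plug the full family of extremal profiles composed with the distance function into \eqref{eq:euclidean sobolevA} and run a substantially more delicate comparison; that is the route you would have to follow to make your strategy work.

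The paper instead avoids the ODE comparison entirely by a blow-down argument: after establishing ${\sf AVR}(\X)>0$ with cheap test functions and Bishop--Gromov, one rescales $(\X,\sfd/R_i,\mm/R_i^N,x_0)$ with $R_i\to\infty$ and passes to a pmGH-limit $(\Y,\sfd_\Y,\mm_\Y,\oo_\Y)$, which is a $\cd(0,N)$ space with the exact volume identity $\mm_\Y(B_r(\oo_\Y))={\sf AVR}(\X)\,\omega_N r^N$ for all $r>0$. By the scaling \eqref{eq:scaling} and the stability Lemma \ref{lem:sobolev stability}, $\Y$ inherits the $(p^*,p)$-Sobolev inequality with constants $A,0$, and then Lemma \ref{lem:extremal sequence cone} (which builds an almost-extremal sequence on $\Y$ from the Bliss profiles composed with $\sfd(\oo_\Y,\cdot)$, using Lemma \ref{lem:volume cone}) forces $A\ge \eucl(N,p)\,{\sf AVR}(\X)^{-1/N}$, i.e.\ \eqref{eq:A lower bound avr}. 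So the sharp information enters through the Bliss extremals on the limit cone, not through a volume ODE on $\X$ itself.
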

On the general setting of $\cd$ spaces Theorem \ref{thm:sobolev implies growth} is proved in \cite{Kristaly16} (see also \cite{KO13} for the case $p=2$), extending to non-smooth setting the same results for Riemannian manifolds due to  Ledoux \cite{Ledoux99} and improved by Xia \cite{Xia01}.
We mention also \cite{DCX04} and \cite{Xia05} for analogous statements related to different class of inequalities.  In all the cited works the arguments depend on rather intricate ODE-comparison (originated in \cite{Ledoux99} and inspired by the previous \cite{BL96})  and heavily rely on the explicit knowledge of the extremal functions for the inequalities. However, using the results in Section \ref{sec:lower bound alfa} we are able to give a short proof of Theorem \ref{thm:sobolev implies growth}, which uses  a more direct blow-down procedure, that we believe being interesting on its own. The main advantage of this approach is that we will never need, as opposed to the ODE-comparison approach, the explicit expression of extremals functions in the Euclidean Sobolev inequality \eqref{eq:EuclSob}.
\begin{proof}[Proof of Theorem \ref{thm:sobolev implies growth}]
	The fact that $\mm(\X)=+\infty$ can be immediately seen by plugging in the Sobolev inequality functions  $u_R \in\LIP_c(\X)$ so that $u_R=1$ in $B_R(x_0)$ $\supp(u_R)\subset B_{2R}(x_0)$ and $\Lip(u_R)\le 1/R$ and sending $R\to +\infty.$ The fact that $\X$ has Euclidean volume growth follows by considering instead functions $u_R(\cdot)\coloneqq (R-\sfd_{x_0}(\cdot))^+$ as $R\to +\infty$ with fixed $x_0 \in \X$ and using the Bishop-Gromov inequality.
	
	\sloppy It remains to prove \eqref{eq:A lower bound avr}. We argue via blow-down. Let $R_i \to +\infty$. From the Euclidean volume-growth property, up to passing to a non relabeled subsequence, the rescaled spaces $(\X,\sfd/R_i,\mm/R_i^N,x_0)$, $x_0 \in \X$, pmGH-converge to an $\cd(0,N)$ space $(\Y,\sfd_\Y,\mm_\Y,\oo_\Y)$ satisfying $\frac{\mm_\Y(B_R(\soo_\Y))}{\omega_N r^N}={\rm AVR}(\X).$  Moreover combining \eqref{eq:euclidean sobolevA} with Lemma \ref{lem:sobolev stability} proves that $Y$ satisfy a $(p^*,p)$-Sobolev inequality with constants $A,0.$ Then \eqref{eq:A lower bound avr} follows from Lemma \ref{lem:extremal sequence cone}.
\end{proof}

We can now move to the proof of the sharp Sobolev inequalities under the Euclidean volume growth assumption. 
\begin{proof}[Proof of Theorem \ref{thm:sharpSobgrowth}]
Fix $x \in \X$. From the definition of ${\sf AVR}(\X)$, for every $r$ big enough $\theta_{N,r}(x)\le 2{\sf AVR}(\X).$ Fix one of such $r>0.$ From the Bishop-Gromov inequality we also have that $\theta_{N,R}(x)\ge {\sf AVR}(\X)$ for every $R>0$. In particular $\theta_{N,r}(x)/\theta_{N,R}(x)\le 2$ for every $R>0.$ Hence by  Theorem \ref{thm:local sobolev intro} (for $K=0$) we have that for every $\eps>0$, there exists $\delta=\delta(\eps)>0$ so that  for every $ R>r/\delta$ the following local Euclidean Sobolev inequality holds:
\[
		\|u\|_{L^{p^*}(\mm)}\le (1+\eps)\eucl(N,p) \theta_{N,R}(x)^{-\frac{1}{N}}\\||D u|\|_{L^p(\mm)}, \qquad \forall u \in \LIP_c(B_{r}(x)).
\]
Taking $R\to \infty$ we achieve 
\[
		\|u\|_{L^{p^*}(\mm)}\le (1+\eps)\eucl(N,p) {\sf AVR}(\X)^{-\frac{1}{N}}\\||D u|\|_{L^p(\mm)}, \qquad \forall u \in \LIP_c(B_{r}(x)).
\]
Since $\eps$ was chosen arbitrarily and independent of $r>0$, we can first send $\eps \to 0^+$ and then $r\to +\infty$ to achieve the first part of the statement. 

The sharpness of \eqref{eq:sharp sobolev growth} instead follows immediately from Theorem \ref{thm:sobolev implies growth}.
\end{proof}

\section{The constant $\aopt_q$ in metric measure spaces}\label{sec:Aopt}
In this section we will prove some upper and lower bounds on $\aopt_q$ in the case of metric measure spaces. Some of the results contained here (more precisely, Section \ref{Secdiameterbounds}) are actually not used  in other parts of the note, however we chose to include them here for completeness and to give a more clear picture around the value of $\aopt_q$. Let us also remark that the results of this part are valid for a general lower bound $K\in \R$.

We start recalling the definition of $\aopt_q$. In this section we assume that $\Xdm$ is a metric measure space with $\mm(\X)=1$. For every $q \in(2,+\infty)$ we define $\aopt_q(\X) \in[0,+\infty]$ as the minimal constant satisfying
\begin{equation}\label{eq:critical sobolev body}
	\|u\|_{L^{q}(\mm)}^2\le \aopt_q(\X)\,   \||D u|_2\|_{L^2(\mm)}^2 +  \|u\|_{L^2(\mm)}^2 , \quad \forall u \in W^{1,2}(\X),
\end{equation}
with the convention that $A\coloneqq +\infty$ if no such $A$ exists. Note that, since $\mm(\X)=1$, this is the same definition given right after \eqref{eq:critical sobolev}.  In the following sections we will prove three type of bounds on $\aopt_q(\X)$: an upper bound in the case of synthetic Ricci curvature and dimension bounds; a lower bound in terms of the first non-trivial eigenvalue; a lower bound related to the diameter. 

\subsection{Upper bound on $\aopt_q$ in terms of Ricci  bounds}\label{sec:upper bound A}
Here we prove a generalization to the non-smooth setting of a well known estimate on $\aopt_q$ valid on manifolds (recall \eqref{eq:upper bound curvature manifold}). The two key ingredients for the proof are the Sobolev-Poincar\'e inequality and an inequality due to Bakry:

\begin{proposition}\label{prop:sobolev embedding general}
For every $K \in \R$, $N\in(2,\infty)$ and $D>0$ there exists a constant $A=A(K,N,D)>0$ such that the following holds.
    Let $(\X,\sfd,\mm)$ be a compact  $\cd(K,N)$ space with $N \in(1,\infty)$, $K\in \R$, $\mm(\X)=1$ and $\diam(\X)\le D$. Then for every $q\in(2,2^*]$ we have
	\begin{equation}\label{eq:tight sobolev}
	\|u\|_{L^q(\mm)}^2\le A\||D u|\|_{L^2(\mm)}^2+\|u\|_{L^2(\mm)}^2, \qquad \forall u \in W^{1,2}(\X)
	\end{equation}
	and in particular $\aopt_q(\X)\le A(K,N,D).$
\end{proposition}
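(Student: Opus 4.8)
The plan is to combine a \emph{global} Sobolev--Poincar\'e inequality --- which, unlike \eqref{eq:tight sobolev}, carries a mean-value correction --- with a tightening argument in the spirit of Bakry \cite{Bakry94}, the point of the latter being to produce the coefficient exactly $1$ in front of $\|u\|_{L^2(\mm)}^2$. Throughout set $\mathcal E(u)\coloneqq \||Du|\|_{L^2(\mm)}^2$, write $\bar u\coloneqq\int_\X u\,\d\mm$ for the mean of $u$ (recall $\mm(\X)=1$), and let $2^*\coloneqq 2N/(N-2)$ (so necessarily $N>2$).

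The first step is the global Sobolev--Poincar\'e inequality
\[
\|u-\bar u\|_{L^{2^*}(\mm)}\le C_1\,\mathcal E(u)^{1/2},\qquad \forall u\in W^{1,2}(\X),
\]
with $C_1=C_1(K,N,D)$. This follows by applying Theorem \ref{thm:improved poincaret} with $p=2$ on the ball $B_{2D}(x)=\X$ (taking $r=r_0=2D$): the mean-value term becomes $\bar u$ because $\mm(\X)=1$, the averages over $B_{2D}(x)$ and $B_{4D}(x)$ are integrals over $\X$, and one passes from $\LIP(\X)$ to $W^{1,2}(\X)$ by density. By H\"older's inequality and $\mm(\X)=1$ the same estimate holds with $L^{2^*}$ replaced by $L^q$ for every $q\in(2,2^*]$, and with $q=2$ it gives the Poincar\'e inequality $\|g\|_{L^2(\mm)}^2\le C_1^2\,\mathcal E(g)$ for every $g\in W^{1,2}(\X)$ with $\bar g=0$.

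The second step is the tightening. Fix $q\in(2,2^*]$ and $u\in W^{1,2}(\X)$; by $2$-homogeneity of the desired inequality and, when $\bar u\ne0$, replacing $u$ by $u/\bar u$, it suffices to treat the cases $\bar u=0$ (which follows directly from Step 1) and $\bar u=1$. In the latter case write $u=1+g$ with $\bar g=0$, so that $\mathcal E(u)=\mathcal E(g)$ and $\|u\|_{L^2(\mm)}^2=1+\|g\|_{L^2(\mm)}^2$. If $\|g\|_{L^q(\mm)}\ge\tfrac12$, then Step 1 forces $\mathcal E(g)\ge(4C_1^2)^{-1}$, and the crude bound $\|u\|_{L^q}\le 1+\|g\|_{L^q}\le 1+C_1\mathcal E(g)^{1/2}\le 3C_1\mathcal E(g)^{1/2}$ gives $\|u\|_{L^q(\mm)}^2\le 9C_1^2\,\mathcal E(u)$. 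If instead $\|g\|_{L^q(\mm)}<\tfrac12$, one uses the elementary pointwise inequality $|1+t|^{q}\le 1+qt+C_q(t^2+|t|^{q})$, valid for all $t\in\R$ with $C_q$ bounded on $q\in(2,2^*]$ (depending only on $N$); integrating against $\mm$ and using $\bar g=0$ yields $\|u\|_{L^q}^{q}\le 1+C_q(\|g\|_{L^2}^2+\|g\|_{L^q}^{q})$, and the elementary inequality $(1+a)^{2/q}\le 1+\tfrac2q a$ then gives
\[
\|u\|_{L^q(\mm)}^2\le \|u\|_{L^2(\mm)}^2+\Big(\tfrac{2C_q}{q}-1\Big)^{+}\|g\|_{L^2(\mm)}^2+\tfrac{2C_q}{q}\,\|g\|_{L^q(\mm)}^{q}.
\]
The last two terms are absorbed into $\mathcal E(u)=\mathcal E(g)$ using, respectively, the Poincar\'e inequality of Step 1 and the estimate $\|g\|_{L^q}^{q}\le\|g\|_{L^q}^{2}\le C_1^2\,\mathcal E(g)$ (valid since $q>2$ and $\|g\|_{L^q}<\tfrac12$). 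In all cases one obtains $\|u\|_{L^q(\mm)}^2\le A\,\mathcal E(u)+\|u\|_{L^2(\mm)}^2$ with $A=A(K,N,D)$ explicit, and a direct inspection shows $A$ may be chosen uniformly in $q\in(2,2^*]$. Undoing the rescaling proves \eqref{eq:tight sobolev} for all $u\in W^{1,2}(\X)$, and the bound $\aopt_q(\X)\le A$ is then immediate from the definition of $\aopt_q$.

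The only genuinely delicate point is the tightening step: the naive estimate $\|u\|_{L^q}\le\|u-\bar u\|_{L^q}+|\bar u|\le C_1\mathcal E(u)^{1/2}+\|u\|_{L^2(\mm)}$ squares to an inequality with a constant strictly larger than $1$ multiplying $\|u\|_{L^2(\mm)}^2$, and it is exactly this loss that the Bakry-type Taylor expansion around the constant function removes. A secondary concern is keeping all constants dependent only on $K,N,D$ and not on $\X$; this is why one routes the argument through the explicitly controlled Poincar\'e constant of Theorem \ref{thm:improved poincaret} rather than through, say, the Sobolev constant furnished by Theorem \ref{thm:Amin}, whose $B$ depends on $\X$.
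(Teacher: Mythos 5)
Your proof is correct, but it takes a different route from the paper's for the ``tightening'' step. The paper's argument is one line: it invokes the sharp inequality of Bakry (\cite{Bakry94}, or \cite[Prop.~6.2.2]{BGL14}), namely
\[
\|u\|_{L^q(\mm)}^2 \le (u_\X)^2 + (q-1)\,\|u-u_\X\|_{L^q(\mm)}^2, \qquad u_\X := \int u\,\d\mm,
\]
then bounds $\|u-u_\X\|_{L^q}^2\le C_1^2\,\mathcal E(u)$ via the Sobolev--Poincar\'e inequality (Theorem \ref{thm:improved poincaret}, exactly as in your Step~1) and $(u_\X)^2\le\|u\|_{L^2}^2$ via Jensen. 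That delivers \eqref{eq:tight sobolev} with $A=(q-1)C_1^2$ immediately. You instead avoid citing Bakry's inequality and reprove (a non-sharp version of) it from scratch: you normalize $\bar u=1$, set $g=u-1$, split into the regimes $\|g\|_{L^q}\ge\tfrac12$ (where the Poincar\'e inequality alone already dominates the constant term) and $\|g\|_{L^q}<\tfrac12$ (where a second-order Taylor expansion of $|1+t|^q$ around $t=0$, together with $\int g\,\d\mm=0$, produces the coefficient exactly $1$ in front of $\|u\|_{L^2}^2$). This is the same mechanism that underlies Bakry's inequality, and indeed it is essentially identical to the expansion the paper itself performs in Lemma~\ref{lem:technical linearization} for the linearization of $\cQ_q$. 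What the paper's route buys is brevity and a cleaner constant $(q-1)C_1^2$; what your route buys is self-containedness, at the cost of a worse constant and the extra bookkeeping of the case split. Your concluding remarks — that the naive triangle-inequality bound loses a factor in front of $\|u\|_{L^2}^2$, and that one must route through Theorem~\ref{thm:improved poincaret} rather than Theorem~\ref{thm:Amin} to keep $A$ depending only on $(K,N,D)$ — are both accurate and correctly identify the two delicate points.
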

\begin{proof}
   The proof is based on the following inequality: for every $q\in (2,\infty)$ 
   \begin{equation}\label{eq:bakry}
       \Big(\int |u|^q\, \d \mm\Big)^{2/q}\le (u_\X)^2+(q-1)\Big(\int |u-u_{\X}|^q\, \d \mm\Big)^{2/q} \qquad \forall u \in L^{q}(\mm),
   \end{equation}
   where $u_{\X}=\int u\, \d \mm.$ See (\cite{Bakry94} or \cite[Prop. 6.2.2]{BGL14} ) for a proof of this fact. Then \eqref{eq:tight sobolev} follows combining \eqref{eq:bakry} with \eqref{eq:improved poincaret} and the Jensen inequality.
\end{proof}
Recall that for $K>0$ an explicit and sharp upper bound on $\aopt_q$ exists and has been proven in \cite{CM17} (see Theorem \ref{thm:Aopt upper cd}). The argument in \cite{CM17} relies on the powerful \emph{localization  technique}. However, it is worth to point out that Theorem \ref{thm:Aopt upper cd} can also be deduced from the P\'olya-Szeg\H{o} inequality proved in \cite{MS20} (see  Theorem \ref{thm:KNpolya}) and the Sobolev inequality on the model space \eqref{eq:sobolev model}.

\subsection{Lower bound on $\aopt_q$ in terms of the first eigenvalue}\label{sec:lower bound aopt}
It is well known that a ``tight-Sobolev inequality" as in \eqref{eq:critical sobolev body} (i.e. with a constant 1 in front of $\|u\|_{L^2}$ when $\X$ is normalized with unit volume) implies a Poincar\'e-inequality (see e.g. \cite[Prop. 6.2.2]{BGL14}). This can be rephrased as a lower bound on $\aopt_q$ in terms of the first non-trivial  eigenvalue:

\begin{proposition}\label{prop:aopt2gapbound}
Let $\Xdm$ be a metric measure space with $\mm(\X)=1$.  Then for every $q \in(2,+\infty)$ it holds 
\begin{equation}
 \aopt_q(\X)\ge \frac{q-2}{\lambda^{1,2}(\X)},\label{eq:Aopt2gap}\end{equation}
 (meaning that if $\lambda^{1,2}(\X)=0,$ then $ \aopt_q(\X)=+\infty).$
\end{proposition}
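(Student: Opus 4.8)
The plan is to exploit the fact, recalled in the proof outline of this note and classical since \cite{Bakry94} (see also \cite[Prop.\ 6.2.2]{BGL14}), that the \emph{linearization} of the tight Sobolev inequality \eqref{eq:critical sobolev body} around the constant function $1$ is precisely the Poincar\'e inequality governed by $\lambda^{1,2}(\X)$. One may clearly assume $\aopt_q(\X)<+\infty$, since otherwise \eqref{eq:Aopt2gap} is trivial. As a preliminary reduction I would observe that in \eqref{eq:spectral gap} it is enough to take the infimum over $v\in\LIP(\X)\cap L^\infty(\mm)$ with $\int v\,\d\mm=0$: given an arbitrary admissible $f\in\LIP(\X)\cap L^2(\mm)$ with zero mean, one truncates it at levels $\pm M$ and re-centers, using locality of the minimal weak upper gradient (which does not increase under truncation) together with dominated convergence (valid as $\mm(\X)=1<\infty$ and $f\in L^2(\mm)\subset L^1(\mm)$) to recover $\|f\|_{L^2(\mm)}$ and $\int f\,\d\mm$ as $M\to\infty$.

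Next I would fix such a bounded, zero-mean $v\neq 0$, set $L\coloneqq\|v\|_{L^\infty(\mm)}$, and test \eqref{eq:critical sobolev body} with $u_\eps\coloneqq 1+\eps v$ for small $\eps\in(0,\tfrac{1}{2L})$. Since $\mm(\X)=1<\infty$ and $\lip v$ is bounded, $u_\eps\in W^{1,2}(\X)$, and from invariance of the minimal $2$-weak upper gradient under adding constants together with its $1$-homogeneity one has $\||D u_\eps|_2\|_{L^2(\mm)}^2=\eps^2\||D v|_2\|_{L^2(\mm)}^2$, while $\|u_\eps\|_{L^2(\mm)}^2=1+\eps^2\|v\|_{L^2(\mm)}^2$ exactly, using $\int v\,\d\mm=0$. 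The only computation requiring a little care is the expansion of the left-hand side: since $|\eps v|\le\tfrac12$ $\mm$-a.e., Taylor's formula for $t\mapsto (1+t)^q$ furnishes a pointwise remainder bounded by $C_q L\,\eps^3 v^2$, so that $\int|u_\eps|^q\,\d\mm=1+\tfrac{q(q-1)}{2}\eps^2\|v\|_{L^2(\mm)}^2+O(\eps^3)$ and hence, expanding $s\mapsto s^{2/q}$ at $s=1$, $\|u_\eps\|_{L^q(\mm)}^2=1+(q-1)\eps^2\|v\|_{L^2(\mm)}^2+O(\eps^3)$. This is the single point where boundedness of $v$—hence the reduction above—is used.

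Finally, inserting these three expansions into \eqref{eq:critical sobolev body}, subtracting $1$, dividing by $\eps^2$ and letting $\eps\to 0^+$ will yield $(q-2)\|v\|_{L^2(\mm)}^2\le \aopt_q(\X)\,\||D v|_2\|_{L^2(\mm)}^2$. In particular $\||D v|_2\|_{L^2(\mm)}>0$ (as $q>2$ and $v\neq 0$), whence $\aopt_q(\X)\ge (q-2)\,\|v\|_{L^2(\mm)}^2/\||D v|_2\|_{L^2(\mm)}^2$; taking the supremum over all admissible $v$ and using the reduced form of \eqref{eq:spectral gap} gives $\aopt_q(\X)\ge (q-2)/\lambda^{1,2}(\X)$. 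Reading the same inequality contrapositively covers the degenerate case stated in the proposition: if $\lambda^{1,2}(\X)=0$ then necessarily $\aopt_q(\X)=+\infty$. The main—indeed essentially the only—obstacle is the uniform control of the Taylor remainder in the expansion of $\int|u_\eps|^q\,\d\mm$, which the truncation step makes routine.
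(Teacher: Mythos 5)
Your argument is correct and it is essentially the paper's own proof: both test the tight inequality \eqref{eq:critical sobolev body} with $1+\eps v$ for a zero-mean competitor $v$, expand to second order in $\eps$, and read off the Poincar\'e inequality governed by $\lambda^{1,2}(\X)$, with the contrapositive handling the degenerate case $\lambda^{1,2}(\X)=0$. The only difference is how the Taylor remainder is tamed: the paper proves Lemma \ref{lem:technical linearization}, whose global inequality \eqref{eq:taylor} controls the error for an arbitrary zero-mean $f\in L^2\cap L^q(\mm)$ (membership in $L^q$ coming for free from $\aopt_q(\X)<\infty$ and the Sobolev inequality itself), whereas you first truncate to bounded Lipschitz competitors, where the pointwise remainder bound $C_qL\eps^3v^2$ is immediate, and then remove the truncation in the Rayleigh quotient via locality of the minimal weak upper gradient and dominated convergence. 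Your route is slightly more elementary and self-contained for this proposition; the paper's formulation of the linearization lemma pays off later, since it is reused verbatim in the quantitative version (Lemma \ref{lem:expansion}) needed for the rigidity and almost-rigidity arguments, where truncation would not suffice.
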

We will give a detailed proof of this result, which amounts to a linearization procedure. Indeed   a refinement of the same argument will also play a key role on the rigidity and almost-rigidity results in the sequel (see Section \ref{sec:linearization}). 

We start with an elementary linearization-Lemma.
\begin{lemma}\label{lem:technical linearization}
    Let $\Xdm$ be a metric measure space with $\mm(\X)=1$ and fix $q\in(2,\infty)$. Let $f \in L^2\cap L^q(\mm)$ with $\int f\d \mm=0$. Then
    \begin{equation}\label{eq:technical linearization}
    \begin{split}
         \Big |	\Big( \int |1+f|^q\, \d \mm \Big)^{2/q} -&\int (1+f)^2\, \d \mm -(q-2)  \int |f|^2\, \d \mm \Big|\\
         &\le C_q \Big(\int |f|^{3\wedge q}+|f|^q\, \d \mm  +\Big( \int |f|^q\, \d \mm\Big)^2+\Big(\int |f|^2\, \d \mm\Big)^2\Big),
    \end{split}
    \end{equation}
where $C_q$ is a constant depending only on $q$.
\end{lemma}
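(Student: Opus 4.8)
The plan is to prove \eqref{eq:technical linearization} by a pointwise Taylor-expansion argument, splitting the domain into the region where $|f|$ is small and the region where $|f|$ is large, and then integrating. First I would set $A:=\int |f|^2\,\d\mm$ and $B:=\int|f|^q\,\d\mm$, and recall that by Jensen/H\"older (since $\mm(\X)=1$ and $q>2$) one has $A\le B^{2/q}$, so $A$ is controlled by $B$; this will be used freely. The key elementary estimate is the following pointwise bound for $t\in\R$: writing $\phi(t):=|1+t|^q$, a second-order Taylor expansion at $t=0$ gives $\phi(t)=1+qt+\tfrac{q(q-1)}{2}t^2+R(t)$, where the remainder satisfies $|R(t)|\le C_q|t|^{3\wedge q}$ for $|t|\le \tfrac12$ and $|R(t)|\le C_q(|t|^q+1)\le C_q'|t|^q$ for $|t|\ge\tfrac12$ (using $q>2$). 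Hence, uniformly in $t$,
\[
\Big||1+t|^q - 1 - qt - \tfrac{q(q-1)}{2}t^2\Big|\le C_q\big(|t|^{3\wedge q}+|t|^q\big).
\]

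Next I would integrate this in $t=f$ over $\X$. Since $\int f\,\d\mm=0$, the linear term drops out and we get
\[
\Big|\int|1+f|^q\,\d\mm - 1 - \tfrac{q(q-1)}{2}A\Big|\le C_q\int\big(|f|^{3\wedge q}+|f|^q\big)\,\d\mm.
\]
Call the left-hand integral $I$, so $I = 1+\tfrac{q(q-1)}{2}A + E_1$ with $|E_1|\le C_q\int(|f|^{3\wedge q}+|f|^q)\,\d\mm$. Now I need to pass from $I$ to $I^{2/q}$. Writing $I=1+s$ with $s=\tfrac{q(q-1)}{2}A+E_1$, I would use the expansion $(1+s)^{2/q}=1+\tfrac{2}{q}s+O(s^2)$, valid with a $q$-dependent constant for $|s|\le\tfrac12$, together with the crude bound $(1+s)^{2/q}\le 1 + \tfrac2q s$ for all $s\ge 0$ (concavity of $x\mapsto x^{2/q}$) and a matching lower bound, so that in all cases
\[
\big|I^{2/q} - 1 - \tfrac2q s\big|\le C_q\, s^2\le C_q'\big(A^2+E_1^2\big)\le C_q''\Big(A^2 + \big(\textstyle\int(|f|^{3\wedge q}+|f|^q)\,\d\mm\big)^2\Big).
\]
Here one must be a little careful when $s$ is not small; but since $A\le B^{2/q}$ and $\int|f|^{3\wedge q}\,\d\mm\le B^{(3\wedge q)/q}$ (again by H\"older with $\mm(\X)=1$), the quantity $s$ is bounded by a $q$-dependent power of $B$, and the estimate $|(1+s)^{2/q}-1-\tfrac2q s|\le C_q \min(s^2, s)$ together with $s\le C_q(A+\int(|f|^{3\wedge q}+|f|^q))$ and the fact that when $s$ is large then $s\le s^2$ up to constants absorbs everything into the right-hand side of \eqref{eq:technical linearization}. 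Plugging $s=\tfrac{q(q-1)}{2}A+E_1$ and using $\tfrac2q\cdot\tfrac{q(q-1)}{2}=q-1$ gives
\[
I^{2/q}=1+(q-1)A+\tfrac2q E_1 + E_2,
\]
with $E_2$ bounded by the squared terms above.

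Finally I would handle the remaining piece $\int(1+f)^2\,\d\mm = 1 + 2\int f\,\d\mm + \int f^2\,\d\mm = 1+A$ (using $\int f=0$), so that
\[
I^{2/q} - \int(1+f)^2\,\d\mm - (q-2)A = (q-1)A - A - (q-2)A + \tfrac2q E_1 + E_2 = \tfrac2q E_1 + E_2,
\]
and the absolute value of the right-hand side is bounded by $C_q\big(\int(|f|^{3\wedge q}+|f|^q)\,\d\mm + A^2 + B^2 + (\int(|f|^{3\wedge q}+|f|^q)\,\d\mm)^2\big)$; using once more $\int|f|^{3\wedge q}\,\d\mm\le 1+\int|f|^q\,\d\mm$ type bounds and $A^2\le B^{4/q}\le 1+B^2$ to reorganize, this is exactly the right-hand side of \eqref{eq:technical linearization} (absorbing lower powers into the stated terms, possibly at the cost of enlarging $C_q$). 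The main obstacle is the bookkeeping in the regime where $f$ is large: there the clean quadratic Taylor remainder is unavailable, and one must instead rely on the concavity bound $(1+s)^{2/q}\le 1+\tfrac2q s$ and on H\"older's inequality (exploiting $\mm(\X)=1$) to show that all ``error'' contributions are dominated by $\int|f|^q\,\d\mm$, $(\int|f|^q\,\d\mm)^2$, and $(\int|f|^2\,\d\mm)^2$; once the pointwise inequalities are set up correctly, the rest is routine integration.
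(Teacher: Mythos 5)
Your proposal is correct and follows essentially the same route as the paper's proof: the pointwise Taylor bound $\bigl||1+t|^q-1-qt-\tfrac{q(q-1)}{2}t^2\bigr|\le C_q\bigl(|t|^{3\wedge q}+|t|^q\bigr)$, integration against $\mm$ using $\int f\,\d\mm=0$, and a linearization of the outer power $s\mapsto (1+s)^{2/q}$ with quadratic error (note that $s=\int|1+f|^q\,\d\mm-1\ge 0$ by Jensen, so your concavity argument for the non-small regime indeed applies, exactly as in the paper's inequality \eqref{eq:expansion 1}). One caveat on your final bookkeeping: the suggested absorptions of the form $\int|f|^{3\wedge q}\,\d\mm\le 1+\int|f|^q\,\d\mm$ and $A^2\le 1+B^2$ must not be used, because the right-hand side of \eqref{eq:technical linearization} contains no constant term and cannot dominate a ``$+1$'' as $f\to 0$; fortunately they are also unnecessary, since $\int|f|^{3\wedge q}\,\d\mm$ and $A^2=\bigl(\int|f|^2\,\d\mm\bigr)^2$ already appear verbatim on the right-hand side, and the only genuinely new term produced by squaring your error $E_1$, namely $\bigl(\int|f|^{3\wedge q}\,\d\mm\bigr)^2$, is handled by the pointwise bound $|f|^{3\wedge q}\le |f|^2+|f|^q$ (valid for all $q>2$), which gives $\bigl(\int|f|^{3\wedge q}\,\d\mm\bigr)^2\le 2\bigl(\int|f|^2\,\d\mm\bigr)^2+2\bigl(\int|f|^q\,\d\mm\bigr)^2$. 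The paper sidesteps this point by first bounding the crude quantity $|I|\le \tilde c_q\int(|f|^2+|f|^q)\,\d\mm$ (its \eqref{eq:expansion 3}) and squaring that, rather than squaring the refined expansion; your version works equally well once the reorganization is done as indicated.
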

\begin{proof}
     We start defining $I\coloneqq\int |1+f|^q\, \d \mm  -1$ and observe that 
\begin{equation}\label{eq:expansion 1}
		\Big |\Big( \int |1+f|^q\, \d \mm \Big)^{2/q}-1-\frac{2}{q}I\Big |\le c_q |I|^2,
\end{equation}
which follows from the inequality $||1+t|^{2/q}-1-2t/q|\le c_qt^2$, $t \ge 0.$
It remains to investigate the behavior of $I.$ Exploiting the inequality $||1+t|^q-1-qt|\le \tilde c_q(|t|^2+|t|^q)$, $t \ge 0$, and the fact that $f$ has zero mean we have the following simple bound
\begin{equation}\label{eq:expansion 3}
	|I|\le \tilde c_q \int |f|^2+|f|^q\, \d \mm.
\end{equation}
We will also need a more precise estimate of $I$, which will follow from the following inequality
\begin{equation}\label{eq:taylor}
	\Big||1+t|^q-1-qt-\frac{q(q-1)}{2}t^2 \Big|\le C_q(|t|^{3\wedge q}+|t|^q), \quad \forall t\in \R,
\end{equation}
that can be seen using Taylor expansion when $|t|\le 1/2$ and elementary estimates in the case $|t|\ge1/2$. Using \eqref{eq:taylor} we obtain that
\begin{equation*}
	\Big |I-\int qf+\frac{q(q-1)}{2}|f|^2\, \d \mm  \Big |\le  C_q\int |f|^{3\wedge q}+|f|^q\, \d \mm
\end{equation*}
and since we are assuming that $f$ has zero mean, we deduce 
\begin{equation}\label{eq:expansion 2}
	\Big |I-\frac{q(q-1)}{2} \int |f|^2\, \d \mm  \Big |\le  C_q\int |f|^{3\wedge q}+|f|^q\, \d \mm.
\end{equation}
Combining \eqref{eq:expansion 1}, \eqref{eq:expansion 3} and \eqref{eq:expansion 2}, noting that $\int(1+f)^2\, \d\mm=1+\int f^2\, \d \mm,$ we deduce   \eqref{eq:technical linearization}.
\end{proof}
Exploiting the above linearization, we can now prove the  lower bound on $\aopt_q$ in terms of the first eigenvalue.
\begin{proof}[Proof of Proposition \ref{prop:aopt2gapbound}]
If $\aopt_q(\X)=+\infty$ there is nothing to  prove, hence we assume that $\aopt_q(\X)<+\infty.$ Let $f \in \LIP(\X)\cap L^2(\mm)$ with $\int f\, \d \mm=0$ and $\|f\|_{L^2(\mm)}=1$. Observe also that, since  $\aopt_q(\X)<+\infty$, $f \in L^q(\X)$. Therefore applying \eqref{eq:technical linearization}  we obtain
\[
	\Big( \int |1+\eps f|^q\, \d \mm \Big)^{2/q} -\int (1+\eps f)^2\,\d \mm -(q-2)  \int |\eps f|^2\, \d \mm =o(\eps^2),
\]
which combined with \eqref{eq:critical sobolev body} gives
\[
\aopt_q(\X) \eps^2 \int |D f|_2^2\, \d \mm -(q-2)  \int |\eps f|^2\, \d \mm \ge o(\eps^2).
\]
Dividing by $\eps^2$ and sending $\eps \to 0$ gives  that $\lambda^{1,2}(\X)\ge \frac{q-2}{\aopt_q(\X)}$, which  concludes the proof.
\end{proof}

\subsection{Lower bound on $\aopt_q$ in terms of the diameter}\label{Secdiameterbounds}
We start recalling the following result, which was proved in \cite{BL96} in the context of Markov-triple and which proof works with straightforward modifications  also in the setting of metric measure spaces (see also \cite{Hebey99} for an exposition of the argument on Riemannian manifolds). For this reason we shall omit its proof. We stress that, since this result and its consequences are used only on this section, the exposition of the rest of the note remains self-contained.
\begin{theorem}\label{thm:5.5}
Let $q\in(2,\infty)$ and define $N(q) \coloneqq \frac{2q}{q-2}$. Let $\Xdm$ be a compact metric measure with $\diam(\X)= \pi$, $\mm(\X)=1$ and suppose that
\begin{equation}\label{eq:sobolev Ledoux}
    \|u\|_{L^q(\mm)}\le \frac{q-2}{N(q)}\||D u|\|_{L^2}^2+\|u\|_{L^2(\mm)}^2, \quad \forall u \in W^{1,2}(\X).
\end{equation}
Then there exists a non-constant function $f \in \LIP(\X)$ realizing equality in \eqref{eq:sobolev Ledoux}.
\end{theorem}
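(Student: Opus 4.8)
The statement asserts existence of a non-constant extremal for the Sobolev inequality \eqref{eq:sobolev Ledoux} on a space $\Xdm$ whose diameter equals the maximal value $\pi$. The plan is to exploit the extremal diameter hypothesis to build an explicit test function. Since $\diam(\X)=\pi$, pick points $x_0, x_1 \in \X$ with $\sfd(x_0, x_1)=\pi$ and consider $f \coloneqq \cos(\sfd(x_0,\cdot))$, or more precisely a function of the form $f = \varphi \circ \sfd_{x_0}$ where $\sfd_{x_0}(\cdot) \coloneqq \sfd(x_0, \cdot)$ takes values in $[0,\pi]$. The natural candidate is $\varphi(t) = \cos(t)$: one checks $f \in \LIP(\X)$ with $\lip f \le |\varphi'|\circ\sfd_{x_0} = |\sin|\circ \sfd_{x_0}$, so $|Df|^2 \le \sin^2(\sfd_{x_0})= 1-f^2$ pointwise. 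The idea is that this function pushes the Sobolev inequality to equality because the corresponding one-dimensional rearranged problem on the model space $I_{N(q)}$ is itself saturated by $\cos$, and the extremal diameter forces the measure to "look like" the model in the relevant direction.

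First I would recall (following \cite{BL96}, whose argument the paper says carries over) that \eqref{eq:sobolev Ledoux} is equivalent to the family of inequalities $\int |u|^q \d\mm \le \big(\tfrac{q-2}{N(q)}\int|Du|^2\d\mm + \int u^2\d\mm\big)^{q/2}$, and that the quantity $N(q) = \frac{2q}{q-2}$ is precisely the dimension parameter for which $\cos$ is extremal: namely, on $([0,\pi], |.|, c_{N(q)}^{-1}\sin^{N(q)-1}\Leb 1\restr{[0,\pi]})$ one has equality in \eqref{eq:sobolev model} with $N=N(q)$ and $2^*(N(q)) = q$ achieved by $v(t) = \cos t$ (this is the classical Beckner/Bliss-type extremal computation). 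The heart of the argument is then: plug $f = \cos\circ\sfd_{x_0}$ into \eqref{eq:sobolev Ledoux}, use $|Df|\le \sin\circ\sfd_{x_0}$ to bound the right-hand side from above, and use a comparison principle to show the resulting quantity equals the left-hand side $\|f\|_{L^q(\mm)}$. The comparison must go through the distribution function of $\sfd_{x_0}$ and the Bishop–Gromov / Brunn–Minkowski structure — but the cleaner route, given the tools in the excerpt, is to feed $f$ into the argument of \cite{BL96} directly: that argument produces, for any space saturating \eqref{eq:sobolev Ledoux}, an explicit extremal, and one simply needs to verify $f$ is it and that $f$ is non-constant (which holds since $f(x_0)=1 \ne -1 = f(x_1)$).

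A cleaner self-contained alternative I would try: argue by contradiction. Suppose no non-constant extremal exists. Using the compact embedding $W^{1,2}(\X) \hookrightarrow L^2(\mm)$ (valid since $\cd$ spaces are compact and doubling here), a minimizing sequence for the quotient defining the optimal constant in \eqref{eq:sobolev Ledoux} — which is subcritical because $q < 2^*$ when the space has "effective dimension" $> N(q)$, hence the embedding $W^{1,2}\hookrightarrow L^q$ is compact — converges strongly to a minimizer; if the optimal constant in \eqref{eq:sobolev Ledoux} is exactly $\tfrac{q-2}{N(q)}$ (which is the content of the hypothesis that \eqref{eq:sobolev Ledoux} holds, combined with a matching lower bound), the minimizer is a non-constant extremal unless it is constant, and one rules out the constant by the linearization argument of Lemma \ref{lem:technical linearization}: a constant extremal would force, via \eqref{eq:Aopt2gap} applied in the limiting case, $\lambda^{1,2}(\X) \le N(q)$, while the diameter bound $\diam(\X)=\pi$ together with a Lichnerowicz-type estimate gives $\lambda^{1,2}(\X) \ge N(q)$ with equality iff $\X$ is a spherical suspension — and then Theorem \ref{thm:obata} provides the explicit extremal anyway.

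\textbf{Main obstacle.} The delicate point is establishing that the optimal constant in \eqref{eq:sobolev Ledoux} is actually \emph{attained} and that it equals $\tfrac{q-2}{N(q)}$ rather than something smaller, together with controlling the dichotomy "minimizer constant vs.\ non-constant." Since the paper explicitly defers the proof to \cite{BL96}, the intended argument is the ODE-comparison / heat-semigroup interpolation method there, and the real work is checking that the three structural inputs that method needs — a Bochner-type inequality with parameter $N(q)$, the diameter normalization, and a Li–Yau / gradient-estimate ingredient — are all available in the metric-measure setting at the level of generality of a compact space satisfying \eqref{eq:sobolev Ledoux}. I expect reproducing that comparison scheme faithfully, especially the boundary analysis at $t=0,\pi$ where the weight $\sin^{N(q)-1}$ degenerates, to be the technically heaviest step; the construction of the explicit extremal $\cos\circ\sfd_{x_0}$ and the verification that it is non-constant are comparatively routine.
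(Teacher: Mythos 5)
Your first route rests on a false premise: $\cos\circ\sfd_{x_0}$ is not an extremal, not even on the model space. On $I_{N(q)}$ the function $\cos t$ saturates the \emph{linearized} inequality (it is the first Neumann eigenfunction, $\lambda^{1,2}(I_N)=N$), not the Sobolev inequality \eqref{eq:sobolev model}: the non-constant extremals there are of Aubin type, $(\lambda-\cos t)^{-(N-2)/2}$ with $\lambda>1$, together with constants. Concretely, for $q=4$ one has $N(q)=4$, and for $u=\cos t$ on $I_4$ a direct computation gives $\|u\|_{L^2}^2=\tfrac15$, $\|u'\|_{L^2}^2=\tfrac45$, $\|u\|_{L^4}^2=\sqrt{3/35}\approx 0.29$, so the left-hand side of \eqref{eq:sobolev Ledoux} is about $0.29$ while the right-hand side is $\tfrac12\cdot\tfrac45+\tfrac15=0.6$: nowhere near equality. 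Moreover, even with the correct candidate family, a ``plug in and verify'' strategy cannot work at the stated level of generality: the theorem assumes \emph{no} curvature condition, so Bishop--Gromov, Brunn--Minkowski, or any claim that ``the measure looks like the model along $\sfd_{x_0}$'' are unavailable; from $|D\sfd_{x_0}|\le 1$ you only get an upper bound on the gradient term, while $\|\phi(\sfd_{x_0})\|_{L^2}$ and $\|\phi(\sfd_{x_0})\|_{L^q}$ depend on the completely unknown distribution of $\sfd_{x_0}$ under $\mm$.

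The ``self-contained alternative'' does not close either. Since $q=2N(q)/(N(q)-2)$, the exponent is exactly critical for the inequality at hand; there is no ``effective dimension'' hypothesis, no compact embedding $W^{1,2}\hookrightarrow L^q$, and a minimizing sequence may concentrate --- this loss of compactness is precisely the difficulty, and the paper's concentration-compactness machinery is developed for $\RCD$ spaces, not for a bare compact metric measure space. The tools you invoke to exclude constant minimizers (Lichnerowicz, the maximal-diameter and Obata rigidity theorems, i.e.\ Theorems \ref{thm:obata} and \ref{thm:diameter rigidity}) all require $\RCD(N-1,N)$, which is not assumed, and ``$\diam(\X)=\pi$ implies $\lambda^{1,2}(\X)\ge N(q)$'' is simply false without curvature; also the hypothesis contains no ``matching lower bound'': that the constant $\tfrac{q-2}{N(q)}$ cannot be improved when $\diam(\X)=\pi$ is part of what must be proved. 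The paper itself gives no proof and defers entirely to Bakry--Ledoux \cite{BL96}: that argument works purely from the assumed Sobolev inequality, applied to a one-parameter family of Aubin-type profiles of a $1$-Lipschitz function (essentially $(\lambda-\cos\circ\sfd_{x_0})^{-(N-2)/2}$), and runs an ODE-type comparison in the parameter; this yields $\diam(\X)\le\pi$ in general, and when $\diam(\X)=\pi$ the comparison degenerates to equality and produces a non-constant Lipschitz extremal in the limit. In particular, contrary to your closing paragraph, no Bochner inequality with parameter $N(q)$, heat-semigroup interpolation, or Li--Yau estimate can be required --- the statement concerns arbitrary compact metric measure spaces, so no such input is available --- hence the checklist you propose to verify is not the right one, and as it stands your proposal does not amount to a proof.
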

Note that  $q=2N(q)/(N(q)-2)$, so that in a sense $``q=2^*(N(q))"$. With Theorem \ref{thm:5.5} we can now prove the following lower bound on $\aopt_q(\X)$. The proof uses a scaling argument due to Hebey \cite[Proposition 5.11]{Hebey99}.
\begin{proposition}\label{prop:lowbound}
Let $\Xdm$ be a compact metric measure space with $\mm(\X)=1$ and  \emph{diam}$(\X) \le \pi$. Then for every $q \in(2,\infty)$ it holds
\begin{equation} A^{\rm opt}_{q}(\X) \ge \left (\frac{\emph{diam}(\X)}{\pi}\right)^2 \frac{q-2}{N(q)}\label{eq:lowerAoptdiam},\end{equation}
where $N(q)= \frac{2q}{q-2}$.
\end{proposition}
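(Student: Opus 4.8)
The plan is to reduce, via a scaling argument in the style of Hebey, to the case $\diam(\X)=\pi$, and then to combine Theorem~\ref{thm:5.5} with the mere definition of $\aopt_q$.

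First I would record how $\aopt_q$ rescales. For $r>0$ consider $\X_r:=(\X,r\sfd,\mm)$, again a compact metric measure space with $\mm(\X_r)=1$ and $\diam(\X_r)=r\,\diam(\X)$; since $\lip_{r\sfd}u=r^{-1}\lip_\sfd u$ for Lipschitz $u$, the $2$-Cheeger energy computed on $\X_r$ is $r^{-2}$ times the one computed on $\X$ (and $W^{1,2}$ is unchanged as a set). Plugging this into the defining inequality \eqref{eq:critical sobolev body} gives the covariance identity $\aopt_q(\X_r)=r^{2}\,\aopt_q(\X)$. One may assume $\aopt_q(\X)<\infty$ and $\diam(\X)>0$ — if $\diam(\X)=0$ then $\X$ is a single point (recall $\supp\mm=\X$) and both sides of \eqref{eq:lowerAoptdiam} vanish. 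Choosing $r:=\pi/\diam(\X)\ge1$ produces $\X_r$ with $\diam(\X_r)=\pi$ and $\aopt_q(\X)=(\diam(\X)/\pi)^2\,\aopt_q(\X_r)$, so it will be enough to prove that any compact $\Y$ with $\mm(\Y)=1$ and $\diam(\Y)=\pi$ satisfies $\aopt_q(\Y)\ge\frac{q-2}{N(q)}$.

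For the latter I would argue by contradiction, assuming $\aopt_q(\Y)<\frac{q-2}{N(q)}$. Since then $\aopt_q(\Y)<\infty$, inequality \eqref{eq:critical sobolev body} holds on $\Y$ with $A=\aopt_q(\Y)$, hence a fortiori the tight Sobolev inequality \eqref{eq:sobolev Ledoux} with constant $\frac{q-2}{N(q)}$ holds. Theorem~\ref{thm:5.5} would then furnish a non-constant $f\in\LIP(\Y)$ realizing equality in \eqref{eq:sobolev Ledoux}; comparing this with $\|f\|_{L^q(\mm)}^2\le \aopt_q(\Y)\||Df|\|_{L^2(\mm)}^2+\|f\|_{L^2(\mm)}^2$ would force $\big(\tfrac{q-2}{N(q)}-\aopt_q(\Y)\big)\||Df|\|_{L^2(\mm)}^2\le 0$, i.e.\ $\||Df|\|_{L^2(\mm)}=0$.

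The main obstacle is to exclude this degenerate situation. If $\||Df|\|_{L^2(\mm)}=0$, then \eqref{eq:sobolev Ledoux} applied to $f$ reads $\|f\|_{L^q(\mm)}^2\le\|f\|_{L^2(\mm)}^2$; as $\mm$ is a probability measure and $q>2$, Jensen's inequality for the strictly convex function $t\mapsto t^{q/2}$ forces equality, so $|f|$ equals $\mm$-a.e.\ a constant $c\ge0$, hence identically $c$ on $\Y=\supp\mm$ by continuity. If $c=0$ then $f\equiv0$, against non-constancy. If $c>0$ then $\Y$ is the disjoint union of the closed (hence also open) sets $\Omega_\pm:=\{f=\pm c\}$; testing \eqref{eq:sobolev Ledoux} with $\nchi_{\Omega_+}$ (locally constant, so with vanishing $2$-Cheeger energy) gives $\mm(\Omega_+)^{2/q}\le\mm(\Omega_+)$, whence $\mm(\Omega_+)\in\{0,1\}$; by symmetry one of $\Omega_\pm$ is $\mm$-null, thus empty since it is open and $\supp\mm=\Y$, so $f$ is constant — a contradiction. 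This completes the contradiction argument, yielding $\aopt_q(\Y)\ge\frac{q-2}{N(q)}$, and together with the scaling reduction the proof is done. Everything except the exclusion of the case $\||Df|\|_{L^2}=0$ is a straightforward combination of Theorem~\ref{thm:5.5} with the definition of $\aopt_q$ and the elementary rescaling bookkeeping.
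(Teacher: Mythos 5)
Your proof is correct and takes essentially the same route as the paper: rescale the metric by $\pi/\diam(\X)$ (via the covariance $\aopt_q(\X,r\sfd,\mm)=r^{2}\aopt_q(\X,\sfd,\mm)$) and apply Theorem~\ref{thm:5.5} on the rescaled space to contradict the assumed strict inequality. Your extra step excluding a non-constant extremal with $\||Df|\|_{L^2(\mm)}=0$ addresses a point the paper's proof leaves implicit, and is a welcome refinement rather than a different approach.
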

\begin{proof}
Set $D:=\diam(\X)$ and, by contradiction, suppose that $A^{\rm opt}_{q}(\X)< (\tfrac{D}{\pi})^2 \tfrac{q-2}{N(q)}$.  Define the scaled metric measure space 
\[ (\X',\sfd',\mm') := (\X,\tfrac{1}{D/\pi}\sfd, \mm ).\]
It can be directly checked that $\X'$ satisfies the hypotheses of Theorem \ref{thm:5.5}. Hence there exists a non-constant function $u \in \LIP(\X)$ satisfying \eqref{eq:sobolev Ledoux} with equality (in the space $\X'$), which rewritten on the the original space $\X$ reads as
 \[ \|u\|_{L^q(\mm)}= \left (\frac{D}{\pi}\right)^2\frac{q-2}{N(q)}\||D u|\|_{L^2(\mm)}^2+\|u\|_{L^2(\mm)}^2,\]
 which however contradicts the assumption $A^{\rm opt}_{2^*}(\X)< (\tfrac{D}{\pi})^2 \tfrac{q-2}{N(q)}$.
\end{proof}

\begin{remark} \rm
Arguing exactly as in \cite{BL96}, it is possible to prove that under the assumptions of Theorem \ref{thm:5.5} and assuming $\X$ to be also infinitesimal Hilbertian, there exists a function satisfying $\Delta u=N(q)u$. From this, it directly follows that equality in \eqref{eq:lowerAoptdiam} (in the case of an Infinitesimally Hilbertian space) implies the existence of a function satisfying:
\[
\Delta u= \left(\frac{\pi}{\diam(\X)}\right)^2N(q)u.
\]
Since this is not relevant in the present note, we will not  provide the details of such result.
\fr 
\end{remark}

%#####################################################################################################

\section{Rigidity of $\aopt_q$}\label{sec:rigidity}

\subsection{Concentration Compactness}\label{sec:CC}

In this section we assume that $(\X_n,\sfd_n,\mm_n)$ is a sequence of compact ${\rm RCD}(K,N)$ spaces, for some fixed $K\in \R,N \in (2,\infty)$, which converges in mGH-topology to a compact  ${\rm RCD}(K,N)$ space $(\X_\infty,\sfd_\infty,\mm_\infty)$. We will also adopt the extrinsic approach \cite{GMS15} identifying $\X_n,\X_\infty$ as subset of a common compact metric space $(\Z,\sfd_\Z)$, with $\supp(\mm_n)=\X_n,$ $\supp(\mm_\infty)=\X_\infty$, $\mm_n\rightharpoonup \mm_\infty$ in duality with $C_b(\Z)$ and $X_n \to \X_\infty$ in the Hausdorff topology of $\Z$. To lighten the discussion, we shall not recall in the following statements these facts and assume $(\X_n,\sfd_n,\mm_n)$, $n \in \bar \N =\N \cup \{\infty\}$  and $(\Z,\sfd)$ to be fixed as just explained. Also, we will set $2^* \coloneqq 2N/(N-2)$ without recalling its expression in the statements.

Our  main goal then is to prove the following dichotomy for the behavior of extremizing sequence for the Sobolev inequalities, on varying metric measure spaces.
\begin{theorem}[Concentration-compactness for Sobolev-extremals]\label{thm:CC_Sob}
	Suppose that $\mm_n(\X_n)$, $\mm_\infty(\X_\infty)=1$ and that $\X_n$ supports a $(2^*,2)$-Sobolev inequality
	\[ \|u\|^2_{L^{2^*}(\mm_n)} \le A\| |D u|\|^2_{L^2(\mm_n)} + B\|  u \|^2_{L^2(\mm_n)},\qquad \forall u \in W^{1,2}(\X_n), \]
	for some constants $A,B>0.$
	Suppose that $u_n \in W^{1,2}(\X_n)$ is a sequence of non-zero functions satisfying 
	\[ \|u_n\|^2_{L^{2^*}(\mm_n)} \ge A_n\| |D u_n|\|^2_{L^2(\mm_n)} + B_n\|  u_n \|^2_{L^2(\mm_n)},\]
	for some sequences $A_n \to A$, $B_n\to B$.

	Then, setting $\tilde u_n\coloneqq u_n\|u_n\|_{L^{2^*}(\mm_n)}^{-1}$, there exists a non relabeled subsequence such that only one of the following holds:
	\begin{itemize}
		\item[\textsc{I)}]\label{item:comp} $\tilde u_{n}$ converges $L^{2^*}$-strong to a function $u_\infty \in W^{1,2}(\X_\infty)$;
		\item[\textsc{II)}]\label{item:conc} $\|\tilde u_{n}\|_{L^2(\mm_n)}\to 0$ and there exists $x_0 \in \X_\infty$ so that $|u_n|^{2^*}\mm_n\weakto \delta_{x_0}$ in duality with $C_b(\Z)$.
	\end{itemize}
\end{theorem}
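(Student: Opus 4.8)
The plan is to run the classical Lions concentration--compactness dichotomy for the concentration function $Q_n(r):=\sup_{x\in\X_n}\int_{B_r(x)}|\tilde u_n|^{2^*}\,\d\mm_n$, but in the varying-space framework, using the extrinsic realization in $(\Z,\sfd_\Z)$ to make statements like ``$|\tilde u_n|^{2^*}\mm_n$ converges weakly as measures'' meaningful. Since $\mm_n(\X_n)=1$ and $\|\tilde u_n\|_{L^{2^*}(\mm_n)}=1$, the measures $\nu_n:=|\tilde u_n|^{2^*}\mm_n$ are probability measures on the compact space $\Z$; after passing to a subsequence they converge weak$^*$ to some probability measure $\nu_\infty$ on $\Z$ (supported on $\X_\infty$, by Hausdorff convergence of the $\X_n$). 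The first dichotomy is then governed by whether $\nu_\infty$ has an atom of full mass, an atom of intermediate mass, or no dominating atom; I expect to show that the Sobolev inequality together with the reverse inequality on $\X_n$ forces the ``vanishing'' and ``dichotomy into two pieces'' alternatives of Lions to be impossible, leaving exactly Cases I and II.

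\textbf{Key steps.} First I would extract, via Proposition \ref{prop:lp prop}(iv) and the uniform bound $\sup_n\||D\tilde u_n|\|_{L^2(\mm_n)}<\infty$ (which follows from the reverse Sobolev inequality, since $\|\tilde u_n\|_{L^{2^*}}=1$, $A_n\to A>0$, and $\|\tilde u_n\|_{L^2}\le \mm_n(\X_n)^{1/2-1/2^*}=1$ bounds the lower-order term), a weak$^*$ limit $\nu_\infty=\lim_n|\tilde u_n|^{2^*}\mm_n$ and a $W^{1,2}$-weak subsequential limit: since $\X_n$ are compact with uniform $W^{1,2}$-bounds, Proposition \ref{prop:W12compact} gives a function $u_\infty\in W^{1,2}(\X_\infty)$ with $\tilde u_n\to u_\infty$ in $L^2$-strong. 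Second, I would quantify concentration through a \emph{localized} Sobolev inequality: for any ball $B_r(x)$ and cutoff $\phi$ supported there, applying the global inequality on $\X_n$ to $\phi\tilde u_n$ and using a Hölder/interpolation argument yields an estimate of the form $\big(\int_{B_{r/2}}|\tilde u_n|^{2^*}\big)^{2/2^*}\le A_n Q_n(r)^{2/N}\int|D\tilde u_n|^2+(\text{small})$, which localizes the best constant. Third, I would distinguish cases according to $\mu_0:=\nu_\infty(\{x_0\})$ where $x_0$ realizes $\max_{x\in\X_\infty}\nu_\infty(\{x\})$ (finitely many atoms by finiteness of $\|D\tilde u_n\|_{L^2}$ together with the localized inequality, which forces a quantized lower bound on atom masses). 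If $\mu_0=1$ then $\tilde u_n\to 0$ in $L^2$ by the $L^2$-strong convergence to $u_\infty$ and $\|u_\infty\|_{L^{2^*}}\le \nu_\infty(\X_\infty\setminus\{x_0\})^{1/2^*}=0$, giving Case II after checking $\nu_\infty=\delta_{x_0}$. If $\mu_0=0$ (the ``vanishing'' alternative) I would derive a contradiction: localized Sobolev plus $Q_n(r)\to 0$ forces $\int|\tilde u_n|^{2^*}\to 0$, contradicting $\|\tilde u_n\|_{L^{2^*}}=1$. The remaining case $0<\mu_0<1$ (true dichotomy) is the heart of the argument: here one splits $\tilde u_n=v_n+w_n$ with $v_n$ concentrated near the atoms and $w_n$ the rest, and uses the near-optimality in the Sobolev inequality (the reverse inequality with $A_n\to A$) together with the strict superadditivity failure $(\|v_n\|^{2^*}+\|w_n\|^{2^*})^{2/2^*}<\|v_n\|^2+\|w_n\|^2$ for the $L^{2^*}$-quantities to rule it out; when it cannot be ruled out in general, the conclusion is that $\mu_0=1$, i.e.\ we are in Case II. Conversely, if $\nu_\infty$ has no atoms at all (purely non-atomic), the localized inequality upgrades $L^{2^*}$-weak convergence of $\tilde u_n$ to $L^{2^*}$-strong convergence (a Brezis--Lieb / tightness argument: no mass escapes to a point and no mass escapes to the ``boundary'' since everything lives in the compact $\Z$), landing in Case I.

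\textbf{Main obstacle.} The delicate point is making the Lions splitting rigorous on varying spaces: one needs cutoff functions adapted simultaneously to all $\X_n$ (available via the extrinsic realization and Lipschitz functions on $\Z$ restricted to $\X_n$), and one needs the $\Gamma$-$\limsup$/$\liminf$ machinery of Section \ref{sec:mgh} to pass energies to the limit and to control cross terms $\langle\nabla v_n,\nabla w_n\rangle$. I expect the hardest estimate to be the localized Sobolev inequality with the \emph{sharp} dependence on $Q_n(r)^{2/N}$ — this is what forces atoms to carry quantized mass and simultaneously kills the vanishing and strict-dichotomy alternatives — and verifying that in the end the only surviving concentration profile is a single Dirac mass (rather than finitely many), which uses that a sum of $k\ge 2$ fully-concentrated bubbles would be strictly suboptimal for the $(L^{2^*})^2$ versus $(L^2)+(\text{energy})$ balance. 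Once the localized inequality is in hand, the trichotomy collapses to the stated I/II dichotomy by the elementary concavity inequality $(a+b)^{2/2^*}\le a^{2/2^*}+b^{2/2^*}$ applied to $L^{2^*}$-masses, combined with subadditivity of the Dirichlet energy along the splitting.
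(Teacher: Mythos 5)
Your outline has the right global shape (extract $L^2$-strong and weak limits, show the $L^{2^*}$ defect is purely atomic with $\nu_j^{2/2^*}\le A\mu_j$, then use near-optimality $A_n\to A$, $B_n\to B$ together with strict concavity of $t\mapsto t^{2/2^*}$ to force all the mass into a single term), and this is indeed the mechanism the paper uses. But two steps are not sound as written. First, the ``quantized lower bound on atom masses'' you invoke to conclude that there are finitely many atoms is not available here: such a lower bound is a feature of Palais--Smale sequences for the critical \emph{equation}, not of general almost-extremizing sequences for the inequality. The Lions reverse-H\"older lemma (Lemma \ref{lem:concompI}) only gives $\nu_j^{2/2^*}\le (\limsup_n A_n)\,\mu_j$, so the atoms may be countably many and arbitrarily small. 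Luckily finiteness is never needed — $\sum_j\nu_j^{2/2^*}<\infty$ suffices for the concavity argument — but your justification of that step fails. (Incidentally, ``vanishing'' is excluded for free: the diameters are uniformly bounded, so $Q_n(r)=1$ for $r$ large; no localized Sobolev inequality is needed for that.)

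The genuine gap is the decisive case $0<\mu_0<1$, i.e.\ coexistence of a nontrivial diffuse part with one or more atoms. The sentence ``when it cannot be ruled out in general, the conclusion is that $\mu_0=1$'' is not an argument, and the splitting $\tilde u_n=v_n+w_n$ cannot be carried out as stated on varying spaces: you cannot form $|\tilde u_n-u_\infty|$, since $u_\infty$ lives on $\X_\infty$ while $\tilde u_n$ is integrated against $\mm_n$. What is needed (and what the paper supplies) is: a recovery sequence $\tilde u_n^{rec}\to u_\infty$ strong in $W^{1,2}$ and $L^{2^*}$ (Lemma \ref{lem:mixrecov}), a Brezis--Lieb lemma with varying measures applied to $u_n-\tilde u_n^{rec}$ (Lemma \ref{lem:BreLieb}) so that the Lions lemma can be applied to the difference and yields $\nu=|u_\infty|^{2^*}\mm_\infty+\sum_j\nu_j\delta_{x_j}$, $\mu\ge|Du_\infty|^2\mm_\infty+\sum_j\mu_j\delta_{x_j}$; control of the gradient cross terms via strong $L^2$ convergence of $|D\tilde u_n^{rec}|$; and, crucially, the fact that $\X_\infty$ inherits the $(2^*,2)$-Sobolev inequality with the \emph{same} constants $A,B$ (Lemma \ref{lem:sobolev stability}, via the $\Gamma$-$\limsup$ of the Cheeger energies). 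This last point is what converts $A\int|Du_\infty|^2+B\int|u_\infty|^2$ into $\big(\int|u_\infty|^{2^*}\big)^{2/2^*}$ in the final accounting; then the single chain
\begin{equation*}
1\ \ge\ A\,\mu(\X_\infty)+B\!\int\!|u_\infty|^2\,\d\mm_\infty\ \ge\ \Big(\int|u_\infty|^{2^*}\d\mm_\infty\Big)^{2/2^*}+\sum_j\nu_j^{2/2^*}\ \ge\ \Big(\int|u_\infty|^{2^*}\d\mm_\infty+\sum_j\nu_j\Big)^{2/2^*}=1
\end{equation*}
forces equality everywhere and, by strict concavity, excludes at once the coexistence case and the multi-atom case, yielding exactly alternatives I) and II). Your localized Sobolev inequality with the $Q_n(r)^{2/N}$ factor is the standard H\"older trick and is fine as far as it goes, but it does not substitute for these ingredients, which are the actual content of the proof in the varying-space setting.
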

The principle behind the concentration compactness technique is very general and was originated in \cite{Lions85,Lions84}. In our case, since we will work in  a compact setting, the lack of compactness is formally due to dilations or rescalings (and not to translations) and the fact that we deal with the critical exponent in the Sobolev embedding. The main idea behind the principle is  first to prove that in general the failure of compactness can only be realized by concentration on a countable number of points. The second step is then to exploit a strict sub-additivity property of the minimization problem to show that either we have full concentration  at a single point or we do not have concentration at all and thus compactness. 

We start by proving necessary results towards the proof of Theorem \ref{thm:CC_Sob}.

A variant of the following appears also in \cite[Prop. 3.27]{Honda15}. For the sake of completeness, we provide here a complete proof.
\begin{proposition}\label{prop:coupling}
	 Let $p,q \in(1,\infty)$ with $\frac1p+\frac1q=1.$ Suppose that $u_n$  converges $L^q$-strong to $u_\infty$ and that $v_n$ converges $L^{p}$-weak to $v_\infty$, then
	\[
	\lim_{n\to \infty }\int u_nv_n\, \d \mm_n = \int u_\infty v_\infty\, \d\mm_\infty.
	\]
\end{proposition}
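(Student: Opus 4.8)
The plan is to transfer the whole computation onto the common ambient space $\Z$ (which is compact, with $\mm_n\rightharpoonup\mm_\infty$ in duality with $C_b(\Z)$) and then reduce, in two steps, to a pairing that the convergence properties collected in Proposition \ref{prop:lp prop} handle directly. I would first record that $C\coloneqq\sup_n\|v_n\|_{L^p(\mm_n)}<\infty$ by the very definition of $L^p$-weak convergence, and that $\|v_\infty\|_{L^p(\mm_\infty)}\le C$ by item $(iii)$. Apart from Proposition \ref{prop:lp prop}, the only ingredients needed are Hölder's inequality on each $\X_n$, density of $C_b(\Z)$ in $L^q(\mm_\infty)$, dominated convergence, and the elementary superadditivity $a^q\ge(a-b)^q+b^q$ for $a\ge b\ge0$, $q\ge1$.

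First I would reduce to uniformly bounded $u_n$. For $M>0$ let $T_M$ be truncation at level $M$, a $1$-Lipschitz map with $T_M(0)=0$; then item $(i)$ gives that $T_M(u_n)$ converges $L^q$-strong to $T_M(u_\infty)$, and in particular $\|T_M(u_n)\|_{L^q(\mm_n)}\to\|T_M(u_\infty)\|_{L^q(\mm_\infty)}$. The key point is the pointwise bound $|u_n-T_M(u_n)|^q\le|u_n|^q-|T_M(u_n)|^q$ (immediate from the superadditivity above), which yields
\[
\limsup_{n}\|u_n-T_M(u_n)\|_{L^q(\mm_n)}^q\le\|u_\infty\|_{L^q(\mm_\infty)}^q-\|T_M(u_\infty)\|_{L^q(\mm_\infty)}^q,
\]
and the right-hand side tends to $0$ as $M\to\infty$ by dominated convergence. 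By Hölder, $\bigl|\int u_n v_n\,\d\mm_n-\int T_M(u_n)v_n\,\d\mm_n\bigr|\le C\,\|u_n-T_M(u_n)\|_{L^q(\mm_n)}$, and similarly on $\X_\infty$ with $\|v_\infty\|_{L^p(\mm_\infty)}$ in place of $C$; hence it suffices to prove the statement for $T_M(u_n),T_M(u_\infty)$, i.e.\ I may assume $\sup_n\|u_n\|_{L^\infty(\mm_n)}\le M$ and $\|u_\infty\|_{L^\infty(\mm_\infty)}\le M$.

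Next, in this bounded setting, I would fix $\varepsilon>0$ and pick $g\in C_b(\Z)$ with $\|g-u_\infty\|_{L^q(\mm_\infty)}<\varepsilon$, possible by density. The restriction $g|_{\X_n}$ converges $L^q$-strong to $g$ (its $L^q(\mm_n)$-norm converges because $|g|^q\in C_b(\Z)$ and $\mm_n\rightharpoonup\mm_\infty$, while the weak convergence is immediate), so by item $(ii)$ the sequence $u_n-g|_{\X_n}$ converges $L^q$-strong to $u_\infty-g$, whence $\|u_n-g|_{\X_n}\|_{L^q(\mm_n)}\to\|u_\infty-g\|_{L^q(\mm_\infty)}<\varepsilon$. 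Then, splitting
\[
\int u_n v_n\,\d\mm_n=\int\bigl(u_n-g|_{\X_n}\bigr)v_n\,\d\mm_n+\int g\,v_n\,\d\mm_n ,
\]
the first term is $\le C\,\|u_n-g|_{\X_n}\|_{L^q(\mm_n)}$ in absolute value, the second converges to $\int g\,v_\infty\,\d\mm_\infty$ because $g\in C_{bs}(\Z)$ (as $\Z$ is compact) and $v_n\mm_n\rightharpoonup v_\infty\mm_\infty$ in duality with $C_{bs}(\Z)$, and finally $\bigl|\int g\,v_\infty\,\d\mm_\infty-\int u_\infty v_\infty\,\d\mm_\infty\bigr|\le\varepsilon\,\|v_\infty\|_{L^p(\mm_\infty)}$. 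Collecting these estimates gives $\limsup_n\bigl|\int u_n v_n\,\d\mm_n-\int u_\infty v_\infty\,\d\mm_\infty\bigr|\le 2\varepsilon C$, and letting $\varepsilon\to0$ concludes.

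The only genuinely delicate step — and the one place where $L^q$-\emph{strong} convergence of $u_n$ is used rather than merely weak convergence — is the uniform-in-$n$ control of the truncation tail $\|u_n-T_M(u_n)\|_{L^q(\mm_n)}$ as $M\to\infty$; this is exactly what the convergence of the $L^q$-norms of $u_n$ and of $T_M(u_n)$ provides. Everything else is the routine "move everything onto the ambient space $\Z$" manoeuvre forced by the fact that the spaces vary, so one cannot test a weak limit directly against a fixed function.
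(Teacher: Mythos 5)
Your proof is correct, but it takes a genuinely different route from the paper's. The paper's argument is a perturbative/variational one: after reducing to $u_n\ge 0$, it forms $u_n^{q/p}+t v_n$, notes (via item $(vii)$) that $u_n^{q/p}\to u_\infty^{q/p}$ $L^p$-strongly so the perturbed sequence converges $L^p$-weakly, applies lower semicontinuity of the $L^p$-norm, and then extracts the pairing by expanding $|a+tb|^p$ to first order in $t$ with a Taylor-type inequality, letting $t\downarrow 0$ and $t\uparrow 0$. You instead use the standard density-plus-Hölder scheme: approximate $u_\infty$ by $g\in C_b(\Z)$ in $L^q(\mm_\infty)$, observe that $g|_{\X_n}\to g$ $L^q$-strongly (a direct consequence of $\mm_n\rightharpoonup\mm_\infty$), so $u_n-g|_{\X_n}\to u_\infty-g$ $L^q$-strongly with small limiting norm, and split the pairing so that the $g$-term converges by the very definition of $L^p$-weak convergence while the remainder is small by Hölder. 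Your route avoids both the reduction to $u_n\ge 0$ and the pointwise Taylor estimate, and is arguably more transparent.

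One point worth flagging: the truncation reduction in your first paragraph is in fact superfluous. Nothing in the second paragraph uses that $u_n$ is uniformly bounded; you only need $u_\infty\in L^q(\mm_\infty)$ (to choose $g$ with $\|g-u_\infty\|_{L^q(\mm_\infty)}<\varepsilon$) and items $(ii)$--$(iii)$ to get $\lim_n\|u_n-g|_{\X_n}\|_{L^q(\mm_n)}=\|u_\infty-g\|_{L^q(\mm_\infty)}<\varepsilon$. This also means your closing remark misidentifies the ``only place'' where $L^q$-\emph{strong} convergence enters: it is used essentially in the second paragraph, through item $(ii)$, to make the error term uniformly small; the tail control in the first paragraph is a second (and, as it stands, redundant) use of it.
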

\begin{proof}
	It is sufficient to consider the case $u_n\ge 0 ,u_\infty \ge 0$, then the conclusion will follow recalling that  $u_n^+\to u_\infty^+$, $u_n^-\to u_\infty^-$ strongly in $L^q$.
	
	The argument is  similar to the one for the case $p=2$ (see, e.g., in \cite{AH17}), except that we need to consider the functions $u_n^{q/p}+tv_n$, $t \in\R.$ Observe first that $u_n^{p/q} \to u_\infty^{q/p}$ strongly in $L^p$ (by $(vii)$ of Prop. \ref{prop:lp prop}). In particular $u_n^{q/p}+tv_n$ converges to $u_\infty^{q/p}+tv_\infty$ weakly in $L^p$ and in particular from $iii)$ of Prop. \ref{prop:lp prop} we have
	\begin{equation}\label{eq:liminf norm}
	\|u_\infty^{q/p}+tv_\infty\|_{L^p(\mm_\infty) }\le \liminf_{n}\|u_n^{q/p}+tv_n\|_{L^p(\mm_n)}.
	\end{equation}
    The second ingredient is the following inequality
    \begin{equation}\label{eq:taylor2}
    \left ||a+|b||^p-|b|^p-pa|b|^{p-1} \right |\le C_p(|a|^{p\wedge 2} |b|^{p-p\wedge 2}+|a|^p), \quad \forall a,b \in \R,
    \end{equation}
    which is easily derived from $ \big||1+t|^p-1-pt\big|\le C_p(|t|^{p\wedge 2} +|t|^p),$ $\forall t \in \R$. Combining \eqref{eq:taylor2} and \eqref{eq:liminf norm} we have
    \begin{align*}
    	&\int |u_\infty|^q\, \d \mm_\infty+pt\int u_\infty v_\infty \, \d \mm_\infty-C_p t^{p\wedge 2} \int |v_\infty|^{p\wedge 2} |u_\infty^{q/p}|^{p-p\wedge 2}\, \d \mm_\infty-C_p t^{p} \int |v_\infty|^{p}\, \d \mm_\infty\\
    	&\le  \|u_\infty^{q/p}+tv_\infty \|_{L^p(\mm_\infty)}^p\le 	\liminf_{n}\|u_n^{q/p}+tv_n\|_{L^p(\mm_n)}\\
    	&\le \liminf_{n} \int |u_n|^q\, \d \mm_n+pt\int u_nv_n\, \d \mm_n+C_p t^{p\wedge 2} \int |v_n|^{p\wedge 2} |u_n^{q/p}|^{p-p\wedge 2}\, \d \mm_n+C_p t^{p} \int |v_n|^{p}\, \d \mm_n
    \end{align*}
Observe that in the case $p<2$ we have
\[
 \limsup_n \int |v_n|^{p\wedge 2} |u_n^{q/p}|^{p-p\wedge 2}\, =\limsup_n \int |v_n|^p \, \d \mm_n <+\infty,
\]
while for $p\ge 2$ using the H\"older inequality 
\[
 \limsup_n \int |v_n|^{p\wedge 2} |u_n^{q/p}|^{p-p\wedge 2}\, \le \limsup_n  \|v_n\|^2_{L^p(\mm_n)}\|u_n\|^{q(p-2)/p}_{L^q(\mm_n)}<+\infty.
\]
In particular, recalling that $\int |u_n|^q \d \mm_n \to \int |u_\infty|^q\d \mm_\infty$ and choosing first $t\downarrow 0$  and then $t\uparrow 0$ above we obtain the desired conclusion.
\end{proof}

The following is a version for varying-measure of the famous Brezis-Lieb Lemma \cite{BrezisLieb83}. The key difference with the classical version of this result, is that  in our setting  it does not makes sense to write ``$|u_\infty-u_n|$'', since $u_\infty$ and $u_n$ will be integrated with respect to different measures. Hence we need to replace this term in \eqref{eq:brezislieb} with $|v_n-u_n|$, where $v_n$ is sequence approximating $u_\infty$ in a strong sense.
\begin{lemma}[Brezis-Lieb type Lemma] \label{lem:BreLieb} 
	Suppose that $\mm_n(\X_n),\mm_\infty(\X_\infty)=1$, let $q\in[2,\infty)$ and $q'\in(1,q).$ Suppose that $u_n \in L^q(\mm_n)$ satisfy $\sup_n \|u_n\|_{L^q(\mm_n)}<+\infty$ and that $u_n$ converges to $u_\infty$ strongly in $L^{q'}$ to some $u_\infty\in L^{q'}\cap L^{q}(\mm_\infty)$. Then for any sequence $v_n \in L^q(\mm_n)$ such that $v_n \to u_\infty$ strongly both in $L^{q'}$ and $L^{q}$, it holds 
	\begin{equation}\label{eq:brezislieb}
	    \lim_{n\to \infty }\int |u_n|^q\, \d \mm_n-\int |u_n-v_n|^q\, \d \mm_n=\int |u_\infty|^q\,\d \mm_\infty.
	\end{equation}
\end{lemma}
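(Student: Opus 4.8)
The plan is to reduce the varying-measure statement to the classical Brezis--Lieb lemma by a suitable localization/approximation argument, exploiting the strong convergence $v_n \to u_\infty$ and the extrinsic realization of the mGH-convergence in a common compact space $(\Z,\sfd_\Z)$. First I would record the pointwise inequality underlying the classical proof: for every $q\in[2,\infty)$ and every $\eps>0$ there is a constant $C_{\eps,q}$ such that
\[
\bigl| \, |a+b|^q - |a|^q - |b|^q \, \bigr| \le \eps |a|^q + C_{\eps,q} |b|^q, \qquad \forall a,b\in\R .
\]
Applying this with $a = u_n - v_n$ and $b = v_n$ gives, for every $n$,
\[
\Bigl| \, |u_n|^q - |u_n - v_n|^q - |v_n|^q \, \Bigr| \le \eps\, |u_n-v_n|^q + C_{\eps,q}\, |v_n|^q .
\]
Integrating against $\mm_n$, the term $\int |u_n-v_n|^q\,\d\mm_n$ is uniformly bounded (since $\sup_n\|u_n\|_{L^q(\mm_n)}<\infty$ and $v_n\to u_\infty$ in $L^q$-strong implies $\sup_n\|v_n\|_{L^q(\mm_n)}<\infty$), while $\int|v_n|^q\,\d\mm_n\to\int|u_\infty|^q\,\d\mm_\infty$. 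The point is to upgrade this to genuine control of the error; the standard device is to write $W_{n,\eps}\coloneqq \bigl(|\,|u_n|^q - |u_n-v_n|^q - |v_n|^q\,| - \eps|u_n-v_n|^q\bigr)^+$, so that $0\le W_{n,\eps}\le C_{\eps,q}|v_n|^q$ pointwise, and $|\,|u_n|^q - |u_n-v_n|^q - |v_n|^q\,| \le W_{n,\eps} + \eps|u_n-v_n|^q$.

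The heart of the argument, and the step I expect to be the main obstacle, is to show $\int W_{n,\eps}\,\d\mm_n \to 0$ as $n\to\infty$ for each fixed $\eps$. In the classical (fixed-measure) proof this follows from dominated convergence because $W_{n,\eps}\to 0$ a.e.; here there is no single measure space and no pointwise sense in which $W_{n,\eps}\to 0$. My plan is to replace pointwise convergence by the following: since $v_n\to u_\infty$ in $L^q$-strong, a subsequence can be described through convergence of graphs / via the characterization of $L^q$-strong convergence (cf. the discussion around Proposition \ref{prop:lp prop} and \cite[Remark 3.2]{AH17}), and one can choose representatives so that after composing with truncations $v_n$ is dominated by an $L^q$-strongly convergent sequence of bounded functions. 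Concretely, fix $M>0$, and split $v_n = v_n^M + (v_n - v_n^M)$ where $v_n^M$ is the truncation at level $M$. On the region where $|v_n|\le M$, the bound $W_{n,\eps}\le C_{\eps,q}|v_n|^q$ together with the strong convergence of $u_n$ to $u_\infty$ in $L^{q'}$ and the uniform $L^q$-bound will force $\int_{\{|v_n|\le M\}} W_{n,\eps}\,\d\mm_n$ to be small using equi-integrability (which holds because $|v_n|^q\mm_n$ is a tight, convergent sequence of measures, hence uniformly integrable); on the region $\{|v_n|>M\}$ one uses $\int_{\{|v_n|>M\}}|v_n|^q\,\d\mm_n \to \int_{\{|u_\infty|>M\}}|u_\infty|^q\,\d\mm_\infty$ which is small for $M$ large. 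This is essentially a ``varying-measure dominated convergence'' packaged through the uniform integrability of $|v_n|^q\mm_n$; establishing that uniform integrability rigorously is the delicate point, but it follows from $v_n\to u_\infty$ in $L^q$-strong together with $\int|v_n|^q\,\d\mm_n\to\int|u_\infty|^q\,\d\mm_\infty$ by a standard Vitali-type argument adapted to mGH-convergence.

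Granting $\int W_{n,\eps}\,\d\mm_n\to 0$, I would conclude as follows. From the splitting above,
\[
\limsup_{n\to\infty}\Bigl|\int |u_n|^q\,\d\mm_n - \int|u_n-v_n|^q\,\d\mm_n - \int|u_\infty|^q\,\d\mm_\infty\Bigr| \le \eps \,\sup_n \|u_n - v_n\|_{L^q(\mm_n)}^q ,
\]
using also $\int|v_n|^q\,\d\mm_n\to\int|u_\infty|^q\,\d\mm_\infty$. Since the right-hand side is a fixed finite quantity times $\eps$ and $\eps>0$ was arbitrary, the left-hand side is zero, which is exactly \eqref{eq:brezislieb}. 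Finally, since the conclusion \eqref{eq:brezislieb} is a statement about convergence of a sequence of real numbers and the above argument applies to every subsequence, the full sequence converges, so no subsequence extraction is needed in the statement. The only inputs used are: the elementary inequality for $|a+b|^q$; the uniform $L^q$-bound on $u_n$; the strong $L^{q'}$ and $L^q$ convergence of $v_n$; and the uniform integrability of $|v_n|^q\mm_n$, all of which are available here.
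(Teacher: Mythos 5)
Your general framework (Brezis--Lieb pointwise inequality with an $\eps$-split, truncation of $v_n$ at level $M$) is the classical one, and you correctly identify the obstacle that the varying-measure setting kills the a.e.\ pointwise convergence that makes dominated convergence work. But the resolution you propose has a genuine gap: on the region $\{|v_n|\le M\}$ the bound $W_{n,\eps}\le C_{\eps,q}|v_n|^q$ only gives $W_{n,\eps}\le C_{\eps,q}M^q$, which is bounded, not small, and uniform integrability of the measures $|v_n|^q\mm_n$ does nothing here --- uniform integrability is precisely what kills the \emph{tail} $\{|v_n|>M\}$, not the low-level region. To force $\int_{\{|v_n|\le M\}}W_{n,\eps}\,\d\mm_n\to 0$ you would need some quantitative input that $|u_n-v_n|$ is typically small on this set, and your sketch never supplies it; ``strong $L^{q'}$-convergence plus equi-integrability'' is not a reason.

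The paper sidesteps the pointwise-convergence problem entirely by using the \emph{bilinear} form of the Brezis--Lieb inequality,
\[
\big||a+b|^q-|a|^q-|b|^q\big|\le C_q\big(|a|\,|b|^{q-1}+|a|^{q-1}|b|\big),
\]
which after setting $a=u_n-v_n$, $b=v_n$ reduces the error to integrals of products of $|u_n-v_n|$-type and $|v_n|$-type factors. Each product is then handled by the strong--weak pairing result (Proposition~\ref{prop:coupling}): one shows $|v_n|\to|u_\infty|$ and $|v_n|^{q-1}\to|u_\infty|^{q-1}$ strongly (in $L^q$ and $L^{q/(q-1)}$ respectively), while $|u_n-v_n|$ and $|u_n-v_n|^{q-1}$ converge \emph{weakly to zero} in the dual spaces --- the latter requiring exactly the $L^{q'}$-strong convergence hypothesis on $u_n$ and $v_n$ plus a short interpolation/H\"older computation. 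This is structurally different from what you wrote: no $\eps$-regularization, no truncation, no appeal to dominated or Vitali convergence, just a bilinear estimate fed into the strong--weak machinery already developed in the paper. If you want to salvage your approach, the place to inject the bilinear estimate is precisely the region $\{|v_n|\le M\}$ --- but once you do that, you are essentially running the paper's argument and the $\eps$-regularization becomes dead weight.
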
 
\begin{proof}
	The proof  is based on the following inequality:
	\begin{equation}\label{eq:taylor3}
		\big||a+b|^q-|b|^q-|a|^q \big|\le C_p(|a||b|^{q-1}+|a|^{q-1}|b|), \quad \forall a,b \in \R.
	\end{equation}
Indeed, if $a=v_n-u_n$ and $b=v_n$, we get from the above
\begin{equation}\label{eq:brezis book}
\int \big| |u_n|^q-|v_n-u_n|^q-|v_n|^q \big|\, \d \mm_n \le C_q  \int |v_n-u_n||v_n|^{q-1}+|v_n-u_n|^{q-1}|v_n|\, \d \mm_n.
\end{equation}
Since  $\int |v_n|^q\, \d \mm_n \to \int |u_\infty|^q\, \d \mm_\infty$, to conclude it is sufficient to show that the right hand side of \eqref{eq:brezis book} vanishes as $n \to +\infty.$ We wish to apply Proposition \ref{prop:coupling}. It follows from our assumptions that $|v_n|\to |u_\infty|$ strongly in $L^q$ and $|v_n|^{q-1}\to |u_\infty |^{q-1}$ strongly in $L^p$, with $p\coloneqq q/(q-1)$ (recall Prop. \ref{prop:lp prop}). Hence it remains only to show that $|v_n-u_n| ,|v_n-u_n|^{q-1}$ converges to 0  weakly in $L^q$ and  weakly in $L^p$ respectively. We have that $\sup_n \|u_n-v_n\|_{L^q(\mm_n)}<+\infty,$ hence by $iv)$ in Prop. \ref{prop:lp prop} up to a subsequence $|u_n-v_n|$ converge weakly in $L^q$ to a function $w \in L^q(\mm)$. However by assumption the sequences $(v_n),(u_n)$ both converge  strongly in $L^{q'}$ to $u$, hence  $v_n-u_n\to 0$ strongly in $L^{q'}$ (recall  $ii)$ in Prop. \ref{prop:lp prop}) and in particular by from $i)$ of Prop. \ref{prop:lp prop} we have that $|v_n-u_n|\to 0$ strongly in $L^{q'}$, which  implies that $w=0.$ Analogously we also get that up to a subsequence  $|u_n-v_n|^{q-1}$ converge weakly in $L^p$ to a non-negative function $w'\in L^p(\mm)$. Suppose first that $q'\le q-1$. taking $t \in[0,1]$ such that $q-1=tq'+(1-t)q$ we have 
$$\int w'\, \d \mm_\infty=\lim_n \int |u_n-v_n|^{q-1}\, \d \mm_n \le \|v_n-u_n\|_{L^{q'}(\mm_n)}^{tq'}\|v_n-u_n\|_{L^{q}(\mm_n)}^{(1-t)q}\to0,$$
where we have used again that  $u_n-v_n\to 0$ strongly in $L^{q'}$ and that $u_n-v_n$ is uniformly bounded in $L^q.$ If instead $q'\ge q-1$ by H\"older inequality we have 
$$\int w'\, \d \mm_\infty=\lim_n \int|u_n-v_n|^{q-1}\, \d \mm_n\le \left(\int|u_n-v_n|^{q'}\, \d \mm_n\right)^{(q-1)/q'}\to0.$$
In both cases we deduce that $w'=0$, which concludes the proof.
\end{proof}

\begin{lemma}\label{lem:mixrecov}
    Let $q \in [2,\infty)$ and let $u_\infty \in W^{1,2}(\X_\infty)\cap L^{q}(\mm_\infty)$. Then, there exists a  sequence $u_n \in W^{1,2}(\X_n)\cap L^q(\X_n)$ that converges both $L^{q}$-strong and $W^{1,2}$-strong to $u_\infty$.
\end{lemma}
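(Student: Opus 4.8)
The statement to prove (Lemma \ref{lem:mixrecov}) asks: given $u_\infty \in W^{1,2}(\X_\infty)\cap L^q(\mm_\infty)$ with $q\in[2,\infty)$, produce $u_n\in W^{1,2}(\X_n)\cap L^q(\X_n)$ converging both $L^q$-strong and $W^{1,2}$-strong to $u_\infty$. The plan is to combine the known $\Gamma$-$\limsup$ recovery sequence for the $2$-Cheeger energy with a truncation/diagonalization argument to simultaneously control the $L^q$-norm, exploiting the fact that on a fixed compact $\RCD(K,N)$ space with $N>2$ the full Sobolev inequality \eqref{eq:critical sobolev body}/\eqref{eq:tight sobolev} holds uniformly in $n$ (Proposition \ref{prop:sobolev embedding general}).

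First I would reduce to the bounded case. For $M>0$ let $T_M(t)\coloneqq \max(-M,\min(M,t))$, a $1$-Lipschitz function with $T_M(0)=0$. Then $T_M(u_\infty)\to u_\infty$ in $L^q(\mm_\infty)$ as $M\to\infty$ by dominated convergence, and $T_M(u_\infty)\in W^{1,2}(\X_\infty)$ with $\||D T_M(u_\infty)|\|_{L^2(\mm_\infty)}\le \||Du_\infty|\|_{L^2(\mm_\infty)}$; moreover $T_M(u_\infty)\to u_\infty$ in $W^{1,2}(\X_\infty)$ (by the locality of the weak upper gradient and dominated convergence for $|D T_M(u_\infty)|=|Du_\infty|\mathbb 1_{\{|u_\infty|<M\}}$). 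So by a diagonal argument it suffices to construct, for each fixed bounded $u_\infty\in W^{1,2}(\X_\infty)\cap L^\infty(\mm_\infty)$, a sequence $u_n$ converging $L^q$-strong and $W^{1,2}$-strong to it.

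Now fix such a bounded $u_\infty$. Apply the $\Gamma$-$\limsup$ inequality \eqref{eq:Gammalimsup} for the $2$-Cheeger energies to obtain $w_n\in W^{1,2}(\X_n)$ with $w_n\to u_\infty$ $L^2$-strong and $\limsup_n\int|Dw_n|^2\,\d\mm_n\le \int|Du_\infty|^2\,\d\mm_\infty$; combined with the $\Gamma$-$\liminf$ \eqref{eq:Gammaliminf} this gives $W^{1,2}$-strong convergence. Next set $u_n\coloneqq T_{M}(w_n)$ with $M\coloneqq \|u_\infty\|_{L^\infty(\mm_\infty)}+1$. Since $T_M$ is $1$-Lipschitz with $T_M(0)=0$, by item $(i)$ of Proposition \ref{prop:lp prop} $u_n\to T_M(u_\infty)=u_\infty$ $L^2$-strong, and $\int|Du_n|^2\,\d\mm_n\le\int|Dw_n|^2\,\d\mm_n$ so $u_n$ is uniformly bounded in $W^{1,2}$; by $\Gamma$-$\liminf$ and the chain rule $\||Du_n|\|_{L^2(\mm_n)}\to\||Du_\infty|\|_{L^2(\mm_\infty)}$, hence $u_n\to u_\infty$ $W^{1,2}$-strong. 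Finally, $\|u_n\|_{L^\infty(\mm_n)}\le M$ uniformly, so $(u_n)$ is uniformly bounded in $L^r$ for every $r$; since it converges $L^2$-strong, item $(viii)$ of Proposition \ref{prop:lp prop} (with $p=2$, $q=\infty$) upgrades this to $L^q$-strong convergence for every finite $q\ge 2$. This proves the claim for bounded $u_\infty$, and the diagonal argument from the first paragraph finishes the general case.

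The main obstacle — though a minor one — is making the two diagonalizations interact cleanly: one must pick, for each truncation level $M_k$, an index $n(k)$ large enough that the bounded recovery sequence for $T_{M_k}(u_\infty)$ is within $1/k$ of it in both $L^q$ and $W^{1,2}$, and then verify the resulting diagonal sequence still converges $L^q$-strong (not merely $L^q$-weak) to $u_\infty$; this requires checking that the $L^q$-norms converge, which follows since $\|T_{M_k}(u_\infty)\|_{L^q(\mm_\infty)}\to\|u_\infty\|_{L^q(\mm_\infty)}$. One should also note that the uniform bound $\|u_n\|_{L^\infty(\mm_n)}\le M$ used in the last step replaces any appeal to the uniform Sobolev inequality, so Proposition \ref{prop:sobolev embedding general} is in fact not strictly needed here, though it gives an alternative route (bounding $\|w_n\|_{L^q(\mm_n)}$ directly via \eqref{eq:tight sobolev} and then using weak lower semicontinuity plus a Brezis–Lieb-type argument as in Lemma \ref{lem:BreLieb}).
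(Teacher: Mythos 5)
Your proof is correct and follows essentially the same route as the paper: truncate to reduce to bounded $u_\infty$, take a $\Gamma$-$\limsup$ recovery sequence, truncate it at level $\|u_\infty\|_{L^\infty}$ (the paper writes $(v_n\wedge C)\vee -C$, which is your $T_C$), and use items $(i)$ and $(viii)$ of Proposition \ref{prop:lp prop} to get $L^2$-strong and then $L^q$-strong convergence. Your closing remark is accurate: Proposition \ref{prop:sobolev embedding general} is not needed here, and indeed the paper does not invoke it.
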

\begin{proof}
	By truncation and a diagonal argument we can assume that $u_\infty\in L^\infty(\mm_\infty).$ 
%    We define, for every $m \in \N$, the function $\phi_M \in \LIP(\R)$ via $\phi(t):= (t\wedge m) \vee -m $, observe that is $1$-Lipschitz and that $\phi_m(0) =0$. Define the truncated function $f_{\infty,m} := \phi_m\circ f_\infty$ which clearly satisfies $f_{\infty,m} \to f_\infty$ in $L^{p^*}(\mm_\infty)$ and in $W^{1,p}(\X_\infty)$. 
    By the $\Gamma$-$\limsup$ inequality of the $\rmCh_2$ energy there exists a sequence $v_{n} \in W^{1,2}(\X_n)$  converging strongly in $W^{1,2}$ to $u_{\infty}$. Defining $u_{n} :=  (v_{n}\wedge C) \vee -C $, with $C\ge \|u_\infty\|_{L^\infty(\mm_\infty)}$, we have by $(i)$ of Proposition \ref{prop:lp prop} that $u_{n}$ converges in $L^2$-strong to $u_{\infty}$. Moreover $|Du_n|\le |D v _n|$ $\mm_n$-a.e., therefore $\limsup_{n}\int |D u_n|^2\, \d \mm_n\le \limsup_{n}\int |D v_n|^2=\int |D u_\infty|^2\, \d \mm_\infty$, which ensures that $u_n$ converges also $W^{1,2}$-strongly to $u_\infty.$ 
    Finally, the sequence $u_{n}$ is uniformly bounded in $L^\infty$ and converges to $u_\infty$  in $L^2$-strong, hence  by $(viii)$ of Proposition \ref{prop:lp prop}. we have that that $u_{n}$ is also $L^{q}$-strongly convergent to $u_{\infty}$. 
\end{proof}

The  following statement is the analogous in metric measure spaces of  \cite[Lemma I.1]{Lions85}. We shall omit its proof since the arguments presented there in $\R^n$ extend to this setting with obvious modifications (see also Remark I.5 in \cite{Lions85}).
\begin{lemma}\label{lem:concompI}
Let $\Xdm$ be a metric measure space and $\mu,\nu \in \MM^+_b(\X)$. Suppose that
\[ \left(\int |\varphi|^{q}\,\d\nu\right)^{1/q}\le C\left(\int|\varphi|^p\, \d \mu\right)^{1/p}, \qquad \forall \varphi \in \LIP_b(\X), \]
for some $1\le p<q<+\infty$ and $C\ge 0.$ Then there exists a countable set of indices $J$, points $(x_j)_{j\in J}\subset \X$ and positive weights $(\nu_j)_{j\in J}\subset\R^+$ so that
        \begin{equation} \nu =\sum_{j\in J}\nu_j\delta_{x_j}, \quad \mu \ge C^{-p}\sum_{j\in J}\nu_j^{p/q}\delta_{x_j}. \label{eq:revHold}\end{equation}
\end{lemma}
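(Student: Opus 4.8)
The statement to prove is Lemma~\ref{lem:concompI}, which is the non-smooth analogue of \cite[Lemma~I.1]{Lions85}. Let me sketch how I would prove it.

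\medskip

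\textbf{Plan of proof.} The hypothesis is a reverse-H\"older-type inequality: $\|\varphi\|_{L^q(\nu)} \le C \|\varphi\|_{L^p(\mu)}$ for all $\varphi \in \LIP_b(\X)$, with $1 \le p < q < \infty$. The conclusion is that $\nu$ is purely atomic, $\nu = \sum_{j \in J} \nu_j \delta_{x_j}$, and that at each atom $\mu$ carries at least $C^{-p}\nu_j^{p/q}$ of mass. The first step I would take is to extend the hypothesis inequality from $\LIP_b(\X)$ to all bounded Borel functions, in particular to indicators $\nchi_E$ of Borel sets $E \subset \X$, by a standard approximation argument (using that $\LIP_b$ is dense in $L^p(\mu)$ and a truncation/dominated convergence argument to pass to the limit in $L^q(\nu)$, after extracting a pointwise a.e.\ convergent subsequence). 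This gives $\nu(E)^{1/q} \le C\, \mu(E)^{1/p}$ for every Borel $E$, i.e.\ $\nu(E) \le C^q \mu(E)^{q/p}$; equivalently, $\nu \ll \mu$ with the quantitative control $\nu(E) \le C^q \mu(E)^{q/p}$.

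\medskip

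\textbf{Atomicity of $\nu$.} Since $\nu \ll \mu$, write $\nu = g\,\mu$ with $g \in L^1(\mu)$, $g \ge 0$. I would show that $g$ can only be supported (as far as positive-measure level sets) on $\mu$-atoms. Concretely: fix $t > 0$ and let $A_t := \{g > t\}$. For any Borel $E \subset A_t$ one has $t\,\mu(E) \le \nu(E) \le C^q \mu(E)^{q/p}$, hence $\mu(E) \ge (t/C^q)^{p/(q-p)}$ whenever $\mu(E) > 0$. This uniform lower bound on the $\mu$-measure of non-null subsets of $A_t$ forces $A_t$ to consist of finitely many $\mu$-atoms (a standard exhaustion argument: $A_t$ cannot contain a non-atomic part, since a non-atomic part could be split into arbitrarily small positive-measure pieces, contradicting the lower bound; and there can be only finitely many atoms since $\mu(A_t) \le \mu(\X) < \infty$). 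Letting $t \downarrow 0$ along a sequence, $\{g > 0\} = \bigcup_t A_t$ is a countable union of $\mu$-atoms; call them $(x_j)_{j \in J}$ with $J$ countable. Then $\nu$ is concentrated on $\{x_j\}_{j\in J}$, so $\nu = \sum_{j \in J} \nu_j \delta_{x_j}$ with $\nu_j := \nu(\{x_j\}) > 0$.

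\medskip

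\textbf{The lower bound on $\mu$.} It remains to prove $\mu \ge C^{-p}\sum_j \nu_j^{p/q}\delta_{x_j}$, i.e.\ $\mu(\{x_j\}) \ge C^{-p}\nu_j^{p/q}$ for each $j$. This is immediate from the extended inequality applied to $E = \{x_j\}$: $\nu_j = \nu(\{x_j\}) \le C^q \mu(\{x_j\})^{q/p}$, which rearranges to $\mu(\{x_j\}) \ge C^{-p}\nu_j^{p/q}$. Since the atoms are distinct points, summing the point masses gives $\mu \ge \sum_j \mu(\{x_j\})\delta_{x_j} \ge C^{-p}\sum_j \nu_j^{p/q}\delta_{x_j}$, which is \eqref{eq:revHold}.

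\medskip

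\textbf{Main obstacle.} The only genuinely delicate point is the first step, namely extending the functional inequality from $\LIP_b(\X)$ to Borel indicator functions: one must approximate $\nchi_E$ in $L^p(\mu)$ by Lipschitz functions $\varphi_k$ with $0 \le \varphi_k \le 1$ (possible since $\mu$ is a finite Borel measure on a metric space, so Lipschitz functions are dense and one can truncate), pass to a subsequence converging $\mu$-a.e., and then — since $\nu \ll \mu$ is not yet known — one instead argues by also extracting $\nu$-a.e.\ convergence along a further subsequence, or more robustly one first proves the inequality for open sets $U$ (where $\nchi_U$ is an increasing limit of Lipschitz functions $\dist(\cdot,\X\setminus U)\wedge 1$ rescaled, monotone convergence handles both sides), then for closed sets by complementation within a slightly larger open set, and finally for arbitrary Borel sets by outer/inner regularity of the finite Borel measures $\mu,\nu$ on the metric space $\X$. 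Everything else is soft measure theory; no metric-measure-space structure beyond finiteness and regularity of $\mu,\nu$ is used, which is why the proof is ``the same as in \cite{Lions85}''.
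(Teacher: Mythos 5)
Your proof is correct. The paper itself gives no argument here — it defers to Lions' Lemma I.1 — and Lions' route shares your first and third steps (extending the hypothesis to $\nu(E)^{1/q}\le C\,\mu(E)^{1/p}$ for every Borel set $E$, then applying this to singletons), but it obtains atomicity differently: in $\R^n$ one writes $\nu=f\mu$ and invokes the Besicovitch differentiation theorem, so that $f(x)=\lim_{r}\nu(B_r(x))/\mu(B_r(x))\le C^q\limsup_r\mu(B_r(x))^{(q-p)/p}$, which forces $f$ to vanish $\mu$-a.e.\ off the atoms of $\mu$. Your replacement of that step by the level-set argument for the Radon--Nikodym density (every non-null Borel subset of $A_t=\{g>t\}$ has $\mu$-measure at least $(t/C^q)^{p/(q-p)}$, so $A_t$ is, up to a $\mu$-null and hence $\nu$-null set, a finite union of atoms) is purely measure-theoretic and uses no covering or differentiation theorem; this is precisely the point where the ``obvious modifications'' of Lions' proof are least obvious on a general (possibly non-doubling) metric measure space, so your route is in fact the more robust one, and it buys the statement in full generality. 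Two minor remarks: the extension to Borel sets only needs monotone convergence along $\varphi_k=\min\{k\,\dist(\cdot,\X\setminus U),1\}\in\LIP_b(\X)$ for open $U$ plus open-set outer regularity of $\mu$ (automatic for finite Borel measures on metric spaces), exactly as in your ``more robust'' variant — the first $L^p(\mu)$-approximation variant you mention is indeed the one to discard; and identifying measure-theoretic atoms of $\mu$ with point masses $\delta_{x_j}$ uses separability of $\X$, which is part of the paper's standing definition of a metric measure space, so it is worth one line but costs nothing.
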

%\begin{proof}
%We point out that, if $(ii)$ holds, then necessarily $\sum_{j\in J} \nu_j^{2/2^*} <\infty$. The proof is classical if $\X = \R^N$ and goes back to . In Remark I.5 it is there remarked that the proof works in an arbitrary measure space, as soon as one relax suitably the notion of \emph{atom}. Since metric measure spaces are complete metric spaces, atoms must be points whence the proof of \cite{Lions} yields the statement.
%\end{proof}
Next, we present a generalized Concentration-Compactness principle, with underlying varying ambient space.  For the sake of generality and for an application to  the Yamabe equation in Section \ref{sec:scalarPDE}, we will be working with a slightly more general Sobolev inequality containing an arbitrary $L^q$-norm (apart from Section \ref{sec:scalarPDE}, we will use this statement only with $q=2$). 
\begin{lemma}[Concentration-Compactness Lemma]\label{lem:conccompII}
  	Suppose that $\mm_n(\X_n),\mm_\infty(\X_\infty)=1$ and that for some fixed $q \in (1,\infty)$ the spaces $\X_n$ satisfy the following Sobolev-type inequalities 
	\begin{equation}\label{eq:disturbed sobolev}
		 \|u\|^2_{L^{2^*}(\mm_n)} \le A_n \| |Du|\|_{L^2(\mm_n)}^2 + B_n \| u \|^2_{L^q(\mm_n)}, \qquad \forall u \in W^{1,2}(\X_n),
	 \end{equation}
	with uniformly bounded positive constants $A_n,B_n$. Let also $u_n \in W^{1,2}(\X_n)$ be $W^{1,2}$-weak and both $L^2$-strong and $L^q$-strong converging to $u_\infty \in W^{1,2}(\X_\infty)$ and suppose that $|Du_n|^2 \mm_n \weakto \mu,$ $|u_n|^{2^*}\mm_n\weakto \nu$ in duality with $C_b(\Z)$ for two given measures $\mu,\nu \in \MM^+_b(\Z)$. 
   
    Then,
    \begin{itemize}
        \item[$(i)$] there exists a countable set of indices $J$, points $(x_j)_{j\in J}\subset \X_\infty$ and positive weights $(\nu_j)_{j\in J}\subset\R^+$ so that
        \[ \nu =|u_\infty|^{2^*}\mm_\infty +\sum_{j\in J}\nu_j\delta_{x_j};\]
        \item[$(ii)$] there exist $(\mu_j)_{j\in J} \subset \R^+$ satisfying $\nu_j^{2/2^*} \le (\limsup_{n}A_n)\mu_j$ and such that
        \[\mu \ge |D u_\infty|^2\mm_\infty +\sum_{j\in J}\mu_j\delta_{x_j}.\]
        In particular, we have $\sum_j \nu_j^{2/2^*}<\infty$.
    \end{itemize}
\end{lemma}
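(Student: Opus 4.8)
The statement is the standard concentration-compactness dichotomy à la Lions, transplanted to mGH-converging spaces; the plan is to mimic Lions' original argument using the convergence tools recalled in the preliminaries. First I would reduce to the case $u_\infty = 0$ by a Brezis--Lieb argument. Let $v_n \in W^{1,2}(\X_n)\cap L^{2^*}(\mm_n)$ be a sequence converging $W^{1,2}$-strong, $L^2$-strong and $L^{2^*}$-strong to $u_\infty$ (such a sequence exists by Lemma \ref{lem:mixrecov}, applied with $q=2^*$; one must first check $u_\infty \in L^{2^*}(\mm_\infty)$, which follows from item $(iii)$ of Proposition \ref{prop:lp prop} and the uniform $L^{2^*}$ bound $\sup_n\|u_n\|_{L^{2^*}(\mm_n)}<\infty$ coming from \eqref{eq:disturbed sobolev}). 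Set $w_n \coloneqq u_n - v_n$. Then $w_n \to 0$ in $L^2$-strong and $L^q$-strong, $\sup_n\|w_n\|_{W^{1,2}(\X_n)}<\infty$, and by the Brezis--Lieb Lemma \ref{lem:BreLieb} (with $q \rightsquigarrow 2^*$, $q' \rightsquigarrow 2$) we get $\|w_n\|_{L^{2^*}(\mm_n)}^{2^*} - \|u_n\|_{L^{2^*}(\mm_n)}^{2^*} \to -\|u_\infty\|_{L^{2^*}(\mm_\infty)}^{2^*}$. Hence $|w_n|^{2^*}\mm_n \weakto \tilde\nu \coloneqq \nu - |u_\infty|^{2^*}\mm_\infty$ (after passing to a subsequence so the weak limit exists; the mass identity from Brezis--Lieb pins down the total mass, and testing against bump functions shows the limit measure is exactly $\nu$ minus the absolutely continuous part). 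Similarly, using $|Dw_n|^2 \le (|Du_n| + |Dv_n|)^2$, an expansion of the square together with Proposition \ref{prop:coupling} (to handle the cross term $\langle \nabla u_n,\nabla v_n\rangle$, using that $|Dv_n| \to |Du_\infty|$ strongly in $L^2$ and $|Du_n| \rightharpoonup$ weakly in $L^2$) gives $|Dw_n|^2\mm_n \weakto \tilde\mu \coloneqq \mu - |Du_\infty|^2\mm_\infty - (\text{something} \geq 0)$, so in particular $\tilde\mu \le \mu - |Du_\infty|^2\mm_\infty$ as measures; this is the key point that will feed item $(ii)$.

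Second, I would apply the reverse-Hölder structure lemma. By the $\Gamma$-$\liminf$ inequality \eqref{eq:Gammaliminf} applied to $w_n$, for every $\varphi \in \LIP_b(\Z)$ with $\varphi \ge 0$ we have $\int |D(\varphi w_n)|^2\, \d\mm_n \to$ has liminf dominated appropriately; more precisely, expanding $|D(\varphi w_n)| \le \varphi|Dw_n| + |w_n||D\varphi|$, squaring, and using $w_n \to 0$ in $L^2$-strong together with $|D\varphi|$ bounded, one gets $\limsup_n \int |D(\varphi w_n)|^2\, \d\mm_n \le \int \varphi^2\, \d\tilde\mu$. On the other hand $\varphi w_n \to 0$ in $L^q$-strong (since $w_n \to 0$ in $L^q$-strong and $\varphi$ bounded, by item $(v)$ of Proposition \ref{prop:lp prop}), so the Sobolev inequality \eqref{eq:disturbed sobolev} applied to $\varphi w_n$ yields, in the limit,
\[
\left(\int |\varphi|^{2^*}\, \d\tilde\nu\right)^{2/2^*} \le \left(\limsup_n A_n\right) \int \varphi^2\, \d\tilde\mu .
\]
This is precisely the hypothesis of Lemma \ref{lem:concompI} with $p=2$, $q=2^*$, $\mu \rightsquigarrow (\limsup_n A_n)\tilde\mu$, $\nu \rightsquigarrow \tilde\nu$ and $C = (\limsup_n A_n)^{1/2}$. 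Therefore $\tilde\nu = \sum_{j\in J}\nu_j\delta_{x_j}$ for a countable set $J$, points $x_j \in \Z$, and $\tilde\mu \ge (\limsup_n A_n)^{-1}\sum_j \nu_j^{2/2^*}\delta_{x_j}$. Since $\tilde\nu$ is concentrated on $\supp(\mm_\infty) = \X_\infty$ (each $w_n$ lives on $\X_n \to \X_\infty$ in Hausdorff sense, and the weak limit is supported on the limiting set), we have $x_j \in \X_\infty$. Setting $\nu = |u_\infty|^{2^*}\mm_\infty + \tilde\nu$ gives $(i)$. For $(ii)$, set $\mu_j \coloneqq (\limsup_n A_n)^{-1}\nu_j^{2/2^*}$, so that $\nu_j^{2/2^*} = (\limsup_n A_n)\mu_j$ and $\mu \ge |Du_\infty|^2\mm_\infty + \tilde\mu \ge |Du_\infty|^2\mm_\infty + \sum_j \mu_j\delta_{x_j}$, using $\tilde\mu \le \mu - |Du_\infty|^2\mm_\infty$ from the first step and $\tilde\mu \ge \sum_j \mu_j\delta_{x_j}$ from Lemma \ref{lem:concompI}. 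Finally $\sum_j \nu_j^{2/2^*} = (\limsup_n A_n)\sum_j\mu_j \le (\limsup_n A_n)\,\mu(\Z) < \infty$ since $\mu$ is a finite measure (the $w_n$, hence the $u_n$, are uniformly bounded in $W^{1,2}$).

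\textbf{Main obstacle.} The delicate point is the bookkeeping of the measures in the reduction step: one must be careful that $|w_n|^{2^*}\mm_n$ and $|Dw_n|^2\mm_n$ genuinely converge (along a subsequence) to $\nu - |u_\infty|^{2^*}\mm_\infty$ and to something $\le \mu - |Du_\infty|^2\mm_\infty$ respectively, and not merely that the total masses match. For the $L^{2^*}$ part this is a local Brezis--Lieb statement: one applies Lemma \ref{lem:BreLieb} with the measures restricted by an arbitrary $\varphi \in C_b(\Z)$, or equivalently replaces $w_n$ by $\varphi^{1/2^*}w_n$ and $v_n$ by $\varphi^{1/2^*}v_n$ inside the argument, to identify the weak-$*$ limit locally; since $\varphi$ is arbitrary this pins down the measure. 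For the gradient part the cross term $2\int\varphi\langle\nabla u_n,\nabla v_n\rangle\,\d\mm_n$ must be shown to converge to $2\int\varphi\,|Du_\infty|^2\,\d\mm_\infty$; this uses that $\nabla v_n \to \nabla u_\infty$ "strongly" (via $\||Dv_n|\|_{L^2(\mm_n)} \to \||Du_\infty|\|_{L^2(\mm_\infty)}$ from $W^{1,2}$-strong convergence) paired with the weak $L^2$ convergence of $|Du_n|$, an argument of Proposition \ref{prop:coupling} type but phrased for the bilinear form $\langle\nabla\cdot,\nabla\cdot\rangle$; alternatively one can simply use the elementary inequality $|Dw_n|^2 \ge (1-\eps)|Du_n|^2 - \tfrac{1}{\eps}|Dv_n|^2$ to get, for $\varphi \ge 0$, $\int\varphi\,\d\tilde\mu \ge (1-\eps)\int\varphi\,\d\mu - \tfrac{1}{\eps}\int\varphi|Du_\infty|^2\,\d\mm_\infty$ and then let $\eps$ do its job near the atoms while the reverse direction $\tilde\mu \le \mu - |Du_\infty|^2\mm_\infty$ comes from $|Dw_n|^2 \ge |Du_n|^2 - 2|Du_n||Dv_n|$ and Cauchy--Schwarz; only the upper bound $\tilde\mu \le \mu - |Du_\infty|^2\mm_\infty$ is actually needed for $(ii)$, which makes this slightly less fragile than it first appears.
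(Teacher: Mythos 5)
Your overall architecture matches the paper's: reduce to the case $u_\infty=0$ via a recovery sequence $\tilde u_n$ from Lemma~\ref{lem:mixrecov}, apply the Lions structure Lemma~\ref{lem:concompI} to the shifted sequence $w_n=u_n-\tilde u_n$, transfer the $L^{2^*}$ structure back using the Brezis--Lieb Lemma~\ref{lem:BreLieb}, and identify the atoms as lying in $\X_\infty$ via Hausdorff convergence of the $\X_n$. The $L^{2^*}$ part of the reduction and item $(i)$ are fine as you outline them (the paper does exactly the localized Brezis--Lieb computation you describe, tested against $|\varphi|^{2^*}$ with $\varphi\in C_b(\Z)$). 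The issue is in the gradient part of the decomposition, and the ``Main obstacle'' paragraph correctly identifies the danger but proposes a fix that does not work.

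You want the global inequality $\tilde\mu\le\mu-|Du_\infty|^2\mm_\infty$, where $\tilde\mu$ is the weak-$*$ limit of $|Dw_n|^2\mm_n$. The alternative you offer at the end---``the reverse direction $\tilde\mu\le\mu-|Du_\infty|^2\mm_\infty$ comes from $|Dw_n|^2\ge|Du_n|^2-2|Du_n||D\tilde u_n|$ and Cauchy--Schwarz''---has the sign backwards: the pointwise inequality $|Dw_n|^2\ge|Du_n|^2-2|Du_n||D\tilde u_n|$ gives in the limit a \emph{lower} bound on $\tilde\mu$, not the upper bound you need. Your main-text route via exact cross-term convergence $\int\varphi\langle\nabla u_n,\nabla \tilde u_n\rangle\,\d\mm_n\to\int\varphi|Du_\infty|^2\,\d\mm_\infty$ would yield the wanted equality $\tilde\mu=\mu-|Du_\infty|^2\mm_\infty$, but Proposition~\ref{prop:coupling} as stated couples scalar products $u_nv_n$, not the bilinear form $\langle\nabla\cdot,\nabla\cdot\rangle$ under weak $W^{1,2}$-convergence on varying spaces; extending it needs polarization and the Mosco-type behaviour of the Cheeger quadratic forms, which you gesture at but do not carry out.

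The paper sidesteps the global identification of $\tilde\mu$ entirely, and that is the cleaner route. It never claims (nor needs) $\tilde\mu\le\mu-|Du_\infty|^2\mm_\infty$; it only shows $\mu(\{x_j\})\ge\bar\mu(\{x_j\})\ge\mu_j$ at each concentration point $x_j$ (where $\bar\mu$ is the weak limit of $|Dw_n|^2\mm_n$), combines this with $\mu\ge|Du_\infty|^2\mm_\infty$ from lower semicontinuity, and concludes by mutual singularity of the two lower bounds. The atom comparison uses the crude pointwise estimate $\bigl||Du_n|-|Dw_n|\bigr|\le|D\tilde u_n|$, integrated against a bump $\chi_\eps$ at $x_j$, Cauchy--Schwarz, and the single-function fact that $|D\tilde u_n|\to|Du_\infty|$ strongly in $L^2$ (\cite[Theorem 5.7]{AH17}), which makes $\int\chi_\eps^2|D\tilde u_n|^2\,\d\mm_n$ vanish as $n\to\infty$ then $\eps\to0$. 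Thus the paper only ever invokes $L^2$-strong convergence of the \emph{recovery} sequence's gradient modulus, never a bilinear coupling of $\nabla u_n$ against $\nabla\tilde u_n$. Replacing your global-decomposition step with this local argument closes the gap and stays within the toolbox the paper actually develops.
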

\begin{proof}
We subdivide the proof in two steps.

\noindent \textsc{Step 1}. We assume that $u_\infty=0$. Let $\varphi \in \LIP_b(\Z)$ and consider the sequence $(\varphi u_n)\in W^{1,2}(\X_n)$ which plugged in the Sobolev inequality for each $\X_n$ gives
\[ \left( \int |\varphi|^{2^*}|u_n|^{2^*}\, \d \mm_n \right)^{1/{2^*}} \le \left(A_n\int |D(\varphi u_n)|^2\, \d \mm_n +B_n \Big(\int |\varphi|^qu_n^q\,\d \mm_n \Big)^{2/q}\right)^{1/2} , \qquad \forall n\in \N.\]
It is clear that, by weak convergence, the left hand side of the inequality tends to $(\int |\varphi|^{2^*}\,\d \nu )^{1/2^*}$. While for the right hand side we discuss the two terms separately. First, by $L^q$-strong convergence, we have $\int \varphi^qu_n^q\,\d \mm_n \to 0$, while an an application of the Leibniz rule gives $\int |D(\varphi u_n)|\, \d \mm_n \le \int |D \varphi||u_n| + |\phi||D u_n|\, \d \mm_n$. Moreover again by strong convergence $\int |D \varphi|^2|u_n|^2\d \mm_n\to0$. Combining these observations we reach
\[  \left( \int |\varphi|^{2^*} \d \nu \right)^{1/2^*} \le \big( \limsup_n A_n\big )^{1/2}\left(\int |\varphi|^2\, \d \mu\right)^{1/2}, \qquad \forall \varphi \in \LIP_{b}(\Z).\]
Thus, Lemma \ref{lem:concompI} (applied in the space $(\Z,\sfd_\Z)$) gives $(i)$-$(ii)$, for the case $u_\infty=0$, except for the fact that we currently do no know whether the points $(x_j)_{j\in J}$ are in $\X_\infty$. This last simple fact can be seen as follows. Fix $j \in J$. From the  weak convergence  $|u_n|^{2^*}\mm_n \rightharpoonup\nu$, there must be a sequence $y_n \in \supp(\mm_n)=\X_n$ such that $\sfd_\Z(y_n,x_j)\to0.$ Then the GH-convergence of $\X_n$ to $\X_\infty$ ensures that $x_j \in \X_\infty$, which is what we wanted.
% Since $X_\infty=\supp(\mm_\infty)$ is closed there exists $\eps>0$ such that $B^\Z_{\eps}(x_j)\cap X_\infty=\emptyset.$ Fix $\nchi\in \LIP_c(B^\Z_{\eps}(x_j))$ such that $\nchi(x_j)=1$. By weak convergence, $\liminf_{n}\int \nchi |u_n|^{2^*}\, \d \mm_n \ge \nu_j>0$, but evidently, for $n$ large enough, the GH-convergence of $\X_n$ to $\X_\infty$ ensures that $\int \nchi |u_n|^{2^*}\, \d \mm_n =0$ (indeed, the support of $\nchi$ will be definitely disjoint to every $\X_n$). This is a contradiction and gives finally that $(x_j)_{j\in J} \subset \X_\infty$.

\noindent \textsc{Step 2}. We now consider the case of a general $u_\infty$. Observe that from Lemma \ref{lem:sobolev stability} $\X_\infty$ supports a $(2^*,2)$-Sobolev inequality hence, $u_\infty \in L^{2^*}(\mm_\infty)$. From Lemma \ref{lem:mixrecov} there exists a sequence $\tilde u_n \in W^{1,2}(\X_n)$ such that $\tilde u_n$ converges to $u_\infty$ both strongly in $W^{1,2}$ and strongly in $L^{2^*}$. Consider now the sequence $v_n := u_n-\tilde u_n$. Clearly $v_n$ converges to zero both in $L^{2}$-strong and in $W^{1,2}$-weak. Moreover the measures $|v_n|^{2^*}\mm_n$ and $|Dv_n|^2\mm_n$ have uniformly bounded mass. Since $(Z,\sfd)$ is compact, passing to a non-relabeled subsequence we have $|v_n|^{2^*}\mm_n\rightharpoonup \bar \nu$ and $|Dv_n|^2\mm_n\rightharpoonup \bar \mu$ in duality with $C_b(\Z)$ for some $\bar \nu, \bar \mu \in \MM^+_b(\Z).$ Therefore we can apply Step 1 to the sequence $v_n$ to get $\bar \nu = \sum_{j \in J}\nu_j \delta_{x_j}$, $\bar \mu \ge \sum_{j \in J}\mu_j\delta_{x_j}$ for a suitable countable family $J$, $(x_j)\subset \X_\infty$ and weights $(\nu_j),(\mu_j)$ satisfying $\nu_j^{2/2^*} \le (\limsup_{n} A_n)\mu_j$. To carry the properties of $v_n$ to the sequence $u_n$ we invoke Lemma \ref{lem:BreLieb} (with $q'=2$ and $q=2^*$) to deduce that
\begin{equation} \lim_{n\rightarrow \infty}\int |\varphi|^{2^*}|u_n|^{2^*}\, \d \mm_n -\int|\varphi|^{2^*}|v_n|^{2^*}\, \d \mm_n =\int |\varphi|^{2^*}|u_\infty|^{2^*}\, \d \mm_\infty ,\label{eq:BrLieb}\end{equation}
and, taking into account the weak convergence, this implies that
\[ \int \varphi^{2^*} \, \d \nu -\int \varphi^{2^*}\, \d \bar \nu = \int |u_\infty|^{2^*}\varphi^{2^*}\, \d \mm_\infty, \]
for every non-negative $\varphi \in C_b(\Z)$. In particular, this is equivalent to say that $\nu = |u_\infty|^{2^*} \mm_\infty + \bar \nu =  |u_\infty|^{2^*} \mm_\infty + \sum_{j\in J}\nu_j \delta_{x_j} $, which proves $i)$. Next, we claim that $\mu \ge \sum_{j \in J}\mu_j\delta_{x_j}$ and, to do so, we consider for each $j \in J$ and $\eps >0$, $\nchi_\eps \in \LIP_b(\Z)$, $0\le \nchi_\eps \le 1, \nchi_\eps(x_j)=1$ and supported in $B_\eps(x_j)$.  The key ingredient is the following estimate
\begin{equation*}
    \begin{split}
       \Big| \int \nchi_\eps|Du_n|^2\, \d \mm_n -\int \nchi_\eps|Dv_n|^2\,\d\mm_n\Big| &\le \int\nchi_\eps  \big||Du_n|-|Dv_n|\big|\big(|Du_n|+|Dv_n|\big)\, \d \mm_n \\
        &\le \int \nchi_\eps|D\tilde u_n|\big(|Du_n|+|Dv_n|\big)\, \d \mm_n \\
       &\le\Big(\int\nchi_\eps^2|D\tilde u_n|^2\, \d \mm_n\Big)^{1/2}\Big(\| |Du_n|\|_{L^2(\mm_n)} +\| |Dv_n|\|_{L^2(\mm_n)}\Big).
    \end{split}
\end{equation*}
Observe now that from \cite[Theorem 5.7]{AH17} $|D \tilde u_n|\to |D u_\infty|$ strongly in $L^2$ and in particular $\int\nchi_\eps^2|D\tilde u_n|^2\, \d \mm_n\to \int\nchi_\eps^2|D u_\infty|^2\, \d \mm_\infty$. Moreover $\int\nchi_\eps^2|D u_\infty|^2\, \d \mm_\infty\to0$ as $\eps\to 0^+$ and $u_n,v_n$ are uniformly bounded in $W^{1,2}(\X_n)$. Therefore taking in the above inequality first $n \to +\infty$ and afterwards $\eps\to 0^+$ we  ultimately deduce that 
$$\mu(\{x_i\})=\bar \mu(\{x_i\})\ge \mu_j, \quad \forall \, j \in J.$$
In particular, since $\mu$ is non-negative, $\mu \ge \sum_{j \in J}\mu_j\delta_{x_j}$, as claimed. Finally, by the weak lower semicontinuity result in \cite[Lemma 5.8]{AH17}, we have 
 $$\int \phi |Du_\infty|^2\, \d \mm_\infty \le \liminf_n\int \phi |Du_n|^2\, \d \mm_n = \int \phi \, \d \mu$$
  for every $\phi \in C_b(\Z)$ positive. Therefore, we get $\mu\ge |D u_\infty|^2\mm_\infty$ and, by mutual singularity of the two lower bounds, we have $(ii)$ and the proof is now concluded.
\end{proof}

We are finally ready to prove the main result of this section.

\begin{proof}[Proof of Theorem \ref{thm:CC_Sob}]
Set $\tilde u_n\coloneqq u_n\|u_n\|_{L^q(\mm_n)}^{-1}$. By assumption 
    \begin{equation} 
    1 \ge A_n\| |D \tilde u_n|\|^2_{L^2(\mm_n)} + B_n\|  \tilde u_n \|^2_{L^2(\mm_n)},\qquad \forall n \in \N.\label{eq:concompge}
    \end{equation}
    Moreover again by hypothesis $A_n\to A>0$, $B_n\to B>0$, therefore the sequences $A_n,B_n$ are bounded away from zero and thus $\sup_n \| \tilde u_n\|_{W^{1,2}(\X_n)} <\infty$. Hence, up to passing to a non relabeled subsequence, Proposition \ref{prop:W12compact} grants that $\tilde u_n$ converges $L^2$-strongly to a function $u_\infty \in W^{1,2}(\X_\infty)$. Moreover, the  measures $|D \tilde u_n|^2\mm_n$, $|\tilde u_n|^{2^*}\mm_n$ have uniformly bounded mass. In particular  up to a further not relabeled subsequence, there exists $\mu,\nu \in \MM_b^+(\Z)$ so that $|D \tilde u_n|^2\mm_n\weakto \mu$ and $|\tilde u_n|^{2^*}\mm_n \weakto \nu$ in duality with $C_b(\Z)$. We are in position to apply  Lemma \ref{lem:conccompII} to get the existence of at most countably many points $(x_j)_{j \in J}$ and weights $(\nu_j)_{j \in J}$, so that $\nu =|u_\infty|^{2^*}\mm_\infty +\sum_{j\in J}\nu_j\delta_{x_j}$ and $\mu \ge |Du_\infty|^2\mm_\infty +\sum_{j\in J}\mu_j\delta_{x_j}$, with $A\mu_j \ge \nu_j^{2/2^*}$  and in particular $\sum_j \nu_j^{2/2^*}<\infty$. Finally from Lemma \ref{lem:sobolev stability} we have that $\X_\infty$ supports a $(2^*,2)$-Sobolev inequality with constants $A,B.$ Therefore we can perform the following estimates
   \[
    \begin{split}
        1 = \lim_n\|  \tilde u_n \|^2_{L^{2^*}(\mm_n)}  &\ge \lim_n   A_n\| |D \tilde u_n|\|^2_{L^2(\mm_n)} + B\|  \tilde u_n \|^2_{L^2(\mm_n)}  \\
           &= A \mu(\X_\infty) + B\int |u_\infty|^2\, \d \mm_\infty \\
         &\ge  A\int |D u_\infty|^2\,\d \mm_\infty +B \int |u_\infty|^2\, \d \mm_\infty + \sum_{j\in J} \nu_j^{2/2^*} \\
          &\ge \Big(\int |u_\infty|^{2^*}\, \d \mm_\infty\Big)^{2/2^*} + \sum_{j\in J} \nu_j^{2/2^*} \\
           &\ge \Big( \int |u_\infty|^{2^*}\, \d \mm_\infty  + \sum_{j\in J} \nu_j \Big)^{2/2^*} = \nu(\X_\infty)^{2/2^*} = 1,
    \end{split}
    \]
    where in the last inequality we have used the concavity of the function $t^{2/2^*}.$ In particular all the inequalities must be equalities and, since $t^{2/2^*}$ is strictly concave, we infer that every  term in the sum $\int |u_\infty|^{2^*}\, \d \mm_\infty  + \sum_{j\in J} \nu_j^{2/2^*}$ must vanish except for one that must be equal to 1. If $\int |u_\infty|^{2^*}\, \d \mm_\infty=1$ then {\sc I)} must hold. If instead $\nu_j=1$ for some $j \in J$, then $u_\infty=0$ and by definition of $\nu$, $|\tilde u_n|^{2^*}\mm_n\rightharpoonup \delta_{x_j}$, which is exactly {\sc II)}.
\end{proof}

\subsection{Quantitative linearization}\label{sec:linearization}
A key point in our argument for the rigidity, and especially for the almost-rigidity, of $\aopt_q$ will be a more ``quantitative'' version of the elementary linearization of the Sobolev inequality contained in Lemma \ref{lem:technical linearization}. To state our result, given $q\in (2,\infty)$ and $u \in W^{1,2}(\X)$ with $\int |Du|_2, \d \mm>0$, it is convenient to define the Sobolev ratio associated to $u$ as the quantity
\begin{equation} \cQ_q^\X (u) :=  \frac{\|u\|_{L^{q}(\mm)}^2-\|u\|_{L^2(\mm)}^2}{\||D u|_2\|_{L^2(\mm)}^2}.\label{eq:quotient}\end{equation}
Observe that, if $\lambda^{1,2}(\X)>0$, $\int |Du|_2 \d \mm>0$ as soon as $u$ is not ($\mm$-a.e.\  equal to a) constant.
\begin{lemma}[Quantitative linearization]\label{lem:expansion}
	For all numbers  $A,B\ge 0$, $q >2$  and $\lambda>0$ there exists a constant $C=C(q  ,A,B,\lambda)$ such that the following holds.
	Let $\Xdm$ be a metric measure space with $\mm(\X)=1$, $ \lambda^{1,2}(\X)\ge \lambda$ and supporting a $(q,2)$-Sobolev inequality with constants $A,B$. 
	Then, for every non-constant $f \in W^{1,2}(\X)$ satisfying $\|f\|_{L^2(\X)}\le 1/2$, it holds 
	\begin{equation}\label{eq:ratio vs quotient}
 \Big| \cQ_q^\X (1+f) - \frac{(q-2)  \int \big(f-\int f \d \mm\big)^2\, \d \mm }{\int |D f|_2^2\, \d \mm}\Big|  \le C\big(\|f\|^{3\wedge q-2}_{W^{1,2}(\X)}+\|f\|^{q-2}_{W^{1,2}(\X)}+\|f\|^{2q-2}_{W^{1,2}(\X)}\big).
	\end{equation}
\end{lemma}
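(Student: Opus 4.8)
The plan is to reduce Lemma~\ref{lem:expansion} to the pointwise/integral estimate already obtained in Lemma~\ref{lem:technical linearization}, upgrading it so that the error terms are controlled by the single quantity $\|f\|_{W^{1,2}(\X)}$ rather than by a mix of $L^2$ and $L^q$ norms. The point is that the hypothesis that $\X$ supports a $(q,2)$-Sobolev inequality with constants $A,B$ lets us absorb $\|f\|_{L^q(\mm)}$: from
\[
\|f\|_{L^q(\mm)}^2 \le A\||Df|_2\|_{L^2(\mm)}^2 + B\|f\|_{L^2(\mm)}^2 \le \max(A,B)\,\|f\|_{W^{1,2}(\X)}^2,
\]
so every occurrence of $\|f\|_{L^q(\mm)}$ in the right-hand side of \eqref{eq:technical linearization} is $\le C(A,B)\|f\|_{W^{1,2}(\X)}$. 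Similarly $\|f\|_{L^2(\mm)}\le\|f\|_{W^{1,2}(\X)}$ and, by interpolation (or directly since $2<3\wedge q\le q$), $\|f\|_{L^{3\wedge q}(\mm)}^{3\wedge q}\le \|f\|_{L^2(\mm)}^{2}\,\|f\|_{L^q(\mm)}^{(3\wedge q)-2}$ when $3\wedge q = 3 \le q$, which is again bounded by $C\|f\|_{W^{1,2}(\X)}^{3\wedge q}$; when $q\le 3$ one has $3\wedge q=q$ and the term is just $\|f\|_{L^q}^q\le C\|f\|_{W^{1,2}}^q$. Thus the right-hand side of \eqref{eq:technical linearization} applied to $g:=f-\int f\,\d\mm$ is bounded by
\[
C(q,A,B)\big(\|g\|_{W^{1,2}(\X)}^{3\wedge q}+\|g\|_{W^{1,2}(\X)}^{q}+\|g\|_{W^{1,2}(\X)}^{2q}\big),
\]
and since $\|g\|_{W^{1,2}(\X)}\le 2\|f\|_{W^{1,2}(\X)}$ (indeed $|Dg|_2=|Df|_2$ and $\|g\|_{L^2}\le\|f\|_{L^2}+|\!\int f\,\d\mm|\le 2\|f\|_{L^2}$), this is $\le C(q,A,B)(\|f\|_{W^{1,2}}^{3\wedge q}+\|f\|_{W^{1,2}}^{q}+\|f\|_{W^{1,2}}^{2q})$.

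The next step handles the normalization: Lemma~\ref{lem:technical linearization} is stated for $f$ with zero mean, but here $f$ is arbitrary with $\|f\|_{L^2(\mm)}\le 1/2$. Write $f = m + g$ with $m:=\int f\,\d\mm$ and $g$ of zero mean, and note $|m|\le\|f\|_{L^2(\mm)}\le 1/2$, so $1+m>0$ and $\|g\|_{L^2}\le 1$. Then $1+f = (1+m)(1+\tfrac{g}{1+m})$, and by homogeneity of the $L^q$ and $L^2$ norms,
\[
\|1+f\|_{L^q(\mm)}^2 = (1+m)^2\,\|1+\tilde g\|_{L^q(\mm)}^2,\qquad \|1+f\|_{L^2(\mm)}^2 = (1+m)^2\,\|1+\tilde g\|_{L^2(\mm)}^2,
\]
where $\tilde g := g/(1+m)$ has zero mean and $\|\tilde g\|_{L^2}\le 2\|g\|_{L^2}$. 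Apply Lemma~\ref{lem:technical linearization} to $\tilde g$, multiply through by $(1+m)^2$, and keep track of the fact that $(1+m)^2$ is bounded between $1/4$ and $9/4$; the correction term $(q-2)\int\tilde g^2\,\d\mm$ scales to $(q-2)(1+m)^{-2}\int g^2\,\d\mm$, and the discrepancy $|(1+m)^{-2}-1|\int g^2\,\d\mm\le C|m|\int g^2\,\d\mm\le C\|f\|_{L^2}\|g\|_{L^2}^2$ is again absorbed into an $O(\|f\|_{W^{1,2}}^3)$ error. Collecting terms yields
\[
\Big|\|1+f\|_{L^q(\mm)}^2 - \|1+f\|_{L^2(\mm)}^2 - (q-2)\int g^2\,\d\mm\Big| \le C(q,A,B)\big(\|f\|_{W^{1,2}}^{3\wedge q}+\|f\|_{W^{1,2}}^{q}+\|f\|_{W^{1,2}}^{2q}\big).
\]
Finally, divide by $\||Df|_2\|_{L^2(\mm)}^2$. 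Here is where $\lambda^{1,2}(\X)\ge\lambda>0$ enters: since $g$ has zero mean, the spectral gap gives $\||Df|_2\|_{L^2(\mm)}^2 = \||Dg|_2\|_{L^2(\mm)}^2 \ge \lambda\|g\|_{L^2(\mm)}^2$, and also $\||Df|_2\|_{L^2(\mm)}^2\ge \lambda(\lambda+1)^{-1}\|f-m\|_{W^{1,2}}^2 \ge c(\lambda)\big(\|f\|_{W^{1,2}}^2 - \|f\|_{L^2}^2\big)$; more simply, $\|f\|_{W^{1,2}}^2 \le \|f\|_{L^2}^2 + \||Df|_2\|_{L^2}^2 \le (1+\lambda^{-1})\||Df|_2\|_{L^2}^2$ (using $\|g\|_{L^2}=\|f-m\|_{L^2}\le\|f\|_{L^2}$... one must be slightly careful: $\|f\|_{L^2}^2 = m^2 + \|g\|_{L^2}^2 \le m^2 + \lambda^{-1}\||Dg|_2\|_{L^2}^2$, and $m^2\le\|f\|_{L^2}^2$ does not immediately help, so instead bound $\|f\|_{W^{1,2}}^2 = m^2+\|g\|_{L^2}^2+\||Dg|_2\|_{L^2}^2$ and observe $m^2\le 1/4$ is not small — so one restates the target with $\|f\|_{W^{1,2}}$ replaced where needed; in fact the cleanest route is to note that both sides of \eqref{eq:ratio vs quotient} are unchanged if we add a constant to $f$... no, they are not). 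The honest fix: since the estimate is only claimed for $\|f\|_{L^2}\le 1/2$, one has $\|f\|_{W^{1,2}}^2 \le 1/4 + \||Df|_2\|_{L^2}^2$; if $\||Df|_2\|_{L^2}^2\ge 1$ the whole statement is trivial because both terms inside the absolute value in \eqref{eq:ratio vs quotient} are $O(1)$ (bound $\cQ_q$ by the Sobolev inequality and the correction term by $(q-2)\lambda^{-1}$) and one chooses $C$ large; so we may assume $\||Df|_2\|_{L^2}^2\le 1$, whence $\|f\|_{W^{1,2}}\le$ const and dividing the displayed estimate by $\||Df|_2\|_{L^2}^2 \ge \lambda\|g\|_{L^2}^2$ and also $\ge c(\lambda)\|f\|_{W^{1,2}}^2$ (valid now that $\|f\|_{W^{1,2}}$ is controlled) converts each $\|f\|_{W^{1,2}}^{k}$ error, $k\in\{3\wedge q, q, 2q\}$, into $\|f\|_{W^{1,2}}^{k-2}$, giving exactly \eqref{eq:ratio vs quotient} after noting $2q-2\le$ is absorbed and the three exponents $3\wedge q-2$, $q-2$, $2q-2$ match those claimed.

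The main obstacle, and the only genuinely delicate point, is the bookkeeping in this last division step: one must split into the cases $\||Df|_2\|_{L^2}$ small versus bounded below, and in the former case carefully convert powers of $\|f\|_{W^{1,2}}$ using the spectral-gap lower bound $\||Df|_2\|_{L^2}^2\gtrsim\|f\|_{W^{1,2}}^2$ — which is only legitimate once $\|f\|_{W^{1,2}}$ itself is known to be bounded, hence the preliminary reduction to $\||Df|_2\|_{L^2}\le 1$. Everything else (homogeneity rescaling, absorbing $\|f\|_{L^q}$ via the Sobolev inequality, interpolating the $L^{3\wedge q}$ term) is routine. I would present the argument in exactly this order: (1) reduce to $\||Df|_2\|_{L^2(\mm)}\le 1$; (2) rescale to zero mean and apply Lemma~\ref{lem:technical linearization}; (3) absorb all norms into $\|f\|_{W^{1,2}}$ using the $(q,2)$-Sobolev inequality and interpolation; (4) divide by $\||Df|_2\|_{L^2}^2$, using $\lambda^{1,2}(\X)\ge\lambda$ to lower it, and collect the final exponents.
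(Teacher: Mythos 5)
Your steps (1)--(3) are fine and essentially parallel the paper's: the reduction to zero mean via $\tilde f=(f-\int f)/(1+\int f)$ is exactly what the paper does, the observation that $\cQ_q^\X$ is scale invariant so the left side is unchanged is correct, and the application of Lemma~\ref{lem:technical linearization} is where the paper starts too. The gap is entirely in step (4), and you in fact noticed the warning sign yourself mid-argument before talking yourself out of it. The inequality $\||Df|_2\|_{L^2}^2\ge c(\lambda)\|f\|_{W^{1,2}}^2$ is \emph{false}, and remains false under the restriction $\||Df|_2\|_{L^2}\le 1$: take $f=m+\eps g$ with $m$ a nonzero constant, $g$ of zero mean with $\|g\|_{L^2}=1$, and $\eps\to 0$. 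Then $\||Df|_2\|_{L^2}^2=\eps^2\||Dg|_2\|_{L^2}^2\to0$ while $\|f\|_{W^{1,2}}^2\to m^2>0$. Having $\|f\|_{W^{1,2}}$ bounded above by a constant (which the case split does give you) in no way bounds the ratio $\|f\|_{W^{1,2}}^2/\||Df|_2\|_{L^2}^2$ from above, which is what you would need to turn $\|f\|_{W^{1,2}}^{k}/\||Df|_2\|_{L^2}^2$ into $\|f\|_{W^{1,2}}^{k-2}$ after the division. The mean of $f$ contributes to $\|f\|_{W^{1,2}}$ but is invisible to $|Df|_2$, and the spectral gap only controls the zero-mean part; so the quantity you want to divide by simply does not dominate the $W^{1,2}$ norm.

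The root cause is that you absorb everything into $\|f\|_{W^{1,2}}$ \emph{before} dividing by the Cheeger energy, which destroys the structure you need. The paper divides first and absorbs second. Concretely, for $f$ with zero mean and $r\in(2,q]$, the paper's key auxiliary estimate (their \eqref{eq:strong vanish lp}) writes
\[
\int |f|^r\,\d\mm \le C_q\Big(A^{r/2}\Big(\int|Df|_2^2\,\d\mm\Big)^{r/2}+B^{r/2}\Big(\int|f|^2\,\d\mm\Big)^{r/2}\Big)
\]
directly from the $(q,2)$-Sobolev inequality and H\"older, and then, upon dividing by $\int|Df|_2^2\,\d\mm$, handles the two terms asymmetrically: the first gives $A^{r/2}(\int|Df|_2^2)^{r/2-1}\le A^{r/2}\|f\|_{W^{1,2}}^{r-2}$ with no spectral gap needed, while the second is rewritten as $B^{r/2}(\int|f|^2)^{r/2-1}\cdot\frac{\int|f|^2}{\int|Df|_2^2}$ so that the spectral gap $\int|f|^2/\int|Df|_2^2\le\lambda^{-1}$ is invoked exactly once, on the single remaining factor, leaving $(\int|f|^2)^{r/2-1}\le\|f\|_{W^{1,2}}^{r-2}$. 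At no point does one divide a power of $\|f\|_{W^{1,2}}$ by $\int|Df|_2^2$. The quadratic error terms $(\int|f|^q)^2$ and $(\int|f|^2)^2$ are handled similarly, pairing one $\int|Df|_2^2$ in the denominator with one factor via \eqref{eq:strong vanish lp} or the spectral gap, and bounding the remaining factor by a power of $\|f\|_{W^{1,2}}$ via \eqref{eq:vanish lp}. To repair your proof you should delay the absorption: apply Lemma~\ref{lem:technical linearization}, divide by $\int|Df|_2^2\,\d\mm$, and only then bound each resulting quotient using the decomposition above and the spectral gap, never passing through an intermediate bound $\int|Df|_2^2\gtrsim\|f\|_{W^{1,2}}^2$.
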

\begin{proof}
We claim that it is enough to prove the statement for  functions $f \in W^{1,2}(\X)$ with zero mean  (and arbitrary $L^2$-norm). Indeed for a generic $f \in W^{1,2}(\X)$ satisfying $\|f\|_{L^2(\X)}\le 1/2,$ we can take $\tilde f\coloneqq \frac{f-\int f \, \d \mm}{1+\int f \, \d \mm}$, which clearly has zero mean.
Then the conclusion would follow observing that the left hand side of \eqref{eq:ratio vs quotient} computed at $\tilde f$ coincides with  the left hand side of \eqref{eq:ratio vs quotient} computed at $f$  and from the fact that
 $$\|\tilde f\|_{W^{1,2}(\X)}\le \|f\|_{W^{1,2}(\X)}\Big (1+\int f\, \d \mm \Big )^{-1}\le \|f\|_{W^{1,2}(\X)}(1-\|f\|_{L^2(\X)})^{-1}\le 2 \|f\|_{W^{1,2}(\X)}.$$
	Therefore we can now fix $f \in W^{1,2}(\X)$ with $\int f\, \d \mm=0.$	We start with a basic estimate of the $L^r$ norm of $f$ for $r \in[1,q]$. Combining the H\"older and the $(q,2)$-Sobolev inequalities we have
\begin{equation}\label{eq:vanish lp}
\int |f|^r \, \d \mm \le \Big (\int |f|^q \, \d \mm \Big)^\frac{r}{q}\le (A^{r/2}+B^{r/2})\|f\|_{W^{1,2}(\X)}^{r}
\end{equation}
In the case $r \in(2,q]$ the following refined estimate  holds:
\begin{align}\label{eq:strong vanish lp}
	\frac{\int |f|^r \, \d \mm }{\int |D f|_2^2 \, \d \mm} &\le C_qA^{r/2}\Big(\int |D f|_2^2 \, \d \mm \Big)^{\frac{r}{2}-1}+ C_q B^{r/2}\Big(\int |f|^2 \, \d \mm \Big)^{\frac{r}{2}-1 }\frac{\int |f|^2\, \d\mm}{\int |D f|_2^2\, \d\mm}\nonumber \\
	&\le C_q(A^{r/2}+B^{r/2}\lambda^{-1})\|f\|_{W^{1,2}(\X)}^{r-2}.
\end{align}
We now apply \eqref{lem:technical linearization} to $f$, which we rewrite here for the convenience of the reader:
\[
\begin{split}
         \Big |	\Big( \int |1+f|^q\, \d \mm \Big)^{2/q} -&\int (1+f)^2\, \d \mm -(q-2)  \int |f|^2\, \d \mm \Big|\\
         &\le \tilde C_q \Big(\int |f|^{3\wedge q}+|f|^q\, \d \mm  +\Big( \int |f|^q\, \d \mm\Big)^2+\Big(\int |f|^2\, \d \mm\Big)^2\Big),
         \end{split}
\]
where $\tilde C_q$ is a constant depending only on $q$. Dividing by $\int |D f|_2^2\, \d \mm$ the above inequality and rearranging terms, using the definition of $ \lambda^{1,2}(\X)$ and the estimates \eqref{eq:vanish lp}, \eqref{eq:strong vanish lp} we obtain \eqref{eq:ratio vs quotient}.
\end{proof}

\subsection{Proof of the rigidity}
Here we prove Theorem \ref{thm:rigidity}. This result  will follow from the following theorem, which characterizes the behavior of extremal sequences for the Sobolev inequality and which combines the tools of concentration compactness and linearization, developed in the previous sections. This result can be summarized as: either there exist non-constant extremals, or we have information on the first eigenvalue $\lambda^{1,2}(\X)$, or we have information on the density $\theta_N.$
\begin{theorem}[The Sobolev-alternative]\label{thm:alternative}
	Let $\Xdm$ be a compact $\RCD(K,N)$ space for some $K \in \R$, $N\in (2,\infty)$ and with $\mm(\X)=1$. Let $q \in (2,2^*]$, with $2^*\coloneqq2N/(N-2)$. Then at least one of the following holds:
	\begin{enumerate}[i)]
	    \item there exists a non-constant function $u\in W^{1,2}(\X)$ satisfying
	    \begin{equation}\label{eq:optimal extremal}
	         \|u\|_{L^{q}(\mm)}^2=\aopt_q(\X)\||D u|\|_{L^2(\mm)}^2+\| u\|_{L^2(\mm)}^2,
	    \end{equation}
	    \item $\aopt_{q}(\X)=\tfrac{q-2}{\lambda^{1,2}(\X)}$,
	    \item $q=2^*$ and $\aopt_{2^*}(\X)=\alpha_2(\X)=\frac{\eucl(N,2)^2}{\min \theta_N^{2/N}}$ (see the introduction and \eqref{eq:euclnp} for the definition of $\alpha_2(\X)$ and $\eucl(N,2)$).
	\end{enumerate}
\end{theorem}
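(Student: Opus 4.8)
The strategy is to take an extremizing sequence $(u_i)\subset W^{1,2}(\X)$ for $\aopt_q(\X)$, normalized so that $\|u_i\|_{L^q(\mm)}=1$ and $\cQ_q^\X(u_i)\to \aopt_q(\X)$, and apply the concentration-compactness dichotomy of Theorem~\ref{thm:CC_Sob} to it (taking $\X_n=\X$ for all $n$, which is a trivially mGH-convergent sequence, and $A_n\to A=\aopt_q(\X)$, $B_n\to B=1$). This produces, up to subsequence, exactly one of two scenarios for $\tilde u_i:=u_i/\|u_i\|_{L^q(\mm)}=u_i$: either (I) $u_i\to u_\infty$ in $L^q$-strong with $u_\infty\in W^{1,2}(\X)$, or (II) $\|u_i\|_{L^2(\mm)}\to 0$ and $|u_i|^q\mm\weakto\delta_{x_0}$ for some $x_0\in\X$. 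I will treat these two cases, and inside Case (I) a further subcase according to whether $u_\infty$ is constant, yielding respectively alternatives $i)$, $ii)$, $iii)$.

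\textbf{Case (I), $u_\infty$ non-constant.} By $\Gamma$-$\liminf$ for $\rmCh_2$ (here with fixed space, simply lower semicontinuity of the Cheeger energy under $L^2$-convergence) and lower semicontinuity of the $L^2$-norm, together with $\|u_i\|_{L^q}\to\|u_\infty\|_{L^q}=1$, one gets
\[
\|u_\infty\|_{L^q(\mm)}^2 - \|u_\infty\|_{L^2(\mm)}^2 \ge \aopt_q(\X)\,\||Du_\infty|\|_{L^2(\mm)}^2,
\]
i.e.\ $\cQ_q^\X(u_\infty)\ge \aopt_q(\X)$; the reverse inequality is the definition of $\aopt_q(\X)$ (note $u_\infty$ is non-constant so $\||Du_\infty|\|_{L^2}>0$), so $u_\infty$ realizes equality in \eqref{eq:optimal extremal}. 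Hence alternative $i)$ holds. (One must be a little careful that the optimal constant is attained as an honest inequality valid for all $W^{1,2}$ functions, which is exactly how $\aopt_q$ is defined, so equality for one function gives $i)$ directly.)

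\textbf{Case (I), $u_\infty\equiv c$ constant.} Then $\|u_\infty\|_{L^q}=1$ forces $c=1$ (up to sign; replace $u_i$ by $|u_i|$, which does not increase the Cheeger energy and is still extremizing). Write $u_i=1+f_i$ with $f_i\to 0$ in $L^q$-strong, hence in $L^2$-strong, and $\sup_i\|f_i\|_{W^{1,2}}<\infty$; in fact $\|f_i\|_{L^2}\to 0$. If along a subsequence $f_i$ were (a.e.) constant, then it is $\equiv 0$ eventually and $u_i\equiv 1$, contradicting that $u_i$ is extremizing with $\aopt_q(\X)>0$ unless $\aopt_q(\X)$... — more carefully, if $u_i$ is eventually constant then $\cQ_q^\X$ is undefined; so we may assume each $f_i$ is non-constant. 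Apply the quantitative linearization Lemma~\ref{lem:expansion} (with $A=\aopt_q(\X)$, $B=1$, $\lambda=\lambda^{1,2}(\X)$, which is positive since $\X$ is $\RCD(K,N)$ compact — use the spectral gap together with compactness, or note that if $\lambda^{1,2}(\X)=0$ then $\aopt_q=+\infty$ and there is nothing to prove): since $\|f_i\|_{W^{1,2}}\to 0$, the error terms vanish, giving
\[
\aopt_q(\X) = \lim_{i\to\infty}\cQ_q^\X(1+f_i) = \lim_{i\to\infty}\frac{(q-2)\int\big(f_i-\int f_i\,\d\mm\big)^2\,\d\mm}{\int|Df_i|_2^2\,\d\mm} \le \frac{q-2}{\lambda^{1,2}(\X)},
\]
the last step by definition of $\lambda^{1,2}(\X)$ applied to $f_i-\int f_i\,\d\mm$ (which is a valid competitor; if $f_i$ is already zero-mean the quotient is directly bounded). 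Combined with the general lower bound $\aopt_q(\X)\ge (q-2)/\lambda^{1,2}(\X)$ of Proposition~\ref{prop:aopt2gapbound}, this yields alternative $ii)$.

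\textbf{Case (II): concentration.} Here $\|u_i\|_{L^2(\mm)}\to 0$, hence $\cQ_q^\X(u_i)\to\aopt_q(\X)$ and $\|u_i\|_{L^q}=1$ force $\liminf_i\||Du_i|\|_{L^2}^2>0$ and in fact $\aopt_q(\X)=\lim_i\|u_i\|_{L^q}^2/\||Du_i|\|_{L^2}^2$. If $q<2^*$ this case is vacuous: the Sobolev embedding $W^{1,2}(\X)\hookrightarrow L^q(\mm)$ is compact for $q<2^*$ (by Proposition~\ref{prop:W12compact} together with interpolation, since $\sup_i\|u_i\|_{W^{1,2}}<\infty$ gives $L^{2^*}$-weak, hence $L^q$-strong convergence), so $u_i\to u_\infty$ in $L^q$-strong and we are in Case (I), contradicting $\|u_\infty\|_{L^q}=1$ with $\|u_\infty\|_{L^2}=0$ unless $u_\infty\equiv 0$, which contradicts $\|u_\infty\|_{L^q}=1$. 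So Case (II) can only occur when $q=2^*$, and then the argument sketched in the introduction applies: by definition of $\alpha_2(\X)$, for every $\eps>0$ there is $B_\eps$ with $1=\|u_i\|_{L^{2^*}}^2\le(\alpha_2(\X)+\eps)\||Du_i|\|_{L^2}^2+B_\eps\|u_i\|_{L^2}^2$; dividing by $\||Du_i|\|_{L^2}^2$, using $\|u_i\|_{L^2}\to 0$ and $\liminf_i\||Du_i|\|_{L^2}^2>0$, and letting $i\to\infty$ then $\eps\to 0$ gives $\aopt_{2^*}(\X)\le\alpha_2(\X)$. For the reverse inequality, Theorem~\ref{thm:alfa} gives $\alpha_2(\X)=\eucl(N,2)^2/\min_\X\theta_N^{2/N}$; on the other hand, plugging into \eqref{eq:critical sobolev body} a sequence of cutoff functions concentrating near a point $x_0$ where $\theta_N(x_0)=\min_\X\theta_N$, rescaled so as to mimic the Euclidean extremals (as in the blow-up analysis of Section~\ref{sec:lower bound alfa}, e.g.\ via Lemma~\ref{lem:extremal sequence cone} applied to the tangent cone), shows $\aopt_{2^*}(\X)\ge\alpha_2(\X)$; alternatively, since these concentrating test functions have $\|u\|_{L^2}\to 0$, the $B\|u\|_{L^2}^2$ term is negligible and $\aopt_{2^*}(\X)\ge\limsup\|u\|_{L^{2^*}}^2/\||Du|\|_{L^2}^2=\alpha_2(\X)$. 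Combining, $\aopt_{2^*}(\X)=\alpha_2(\X)=\eucl(N,2)^2/\min_\X\theta_N^{2/N}$, which is alternative $iii)$.

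\textbf{Main obstacle.} The delicate point is ensuring the dichotomy is applied correctly and that the degenerate situations (extremizing sequences that are eventually constant, or $\lambda^{1,2}(\X)=0$, or $\aopt_q(\X)=+\infty$) are disposed of cleanly so that in each surviving branch the quotient $\cQ_q^\X$ is well-defined along the sequence; and in Case (II), pinning down $\aopt_{2^*}(\X)\ge\alpha_2(\X)$ rigorously, which requires the blow-up/tangent-cone construction of concentrating near-extremals — this is where the identification of $\alpha_2$ with the local density constant (Theorem~\ref{thm:alfa}) does the real work. Everything else is bookkeeping with lower semicontinuity and the linearization lemma.
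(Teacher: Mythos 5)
Your proposal is correct and follows essentially the same route as the paper's proof: an extremizing sequence normalized in $L^q$, compactness plus the quantitative linearization (Lemma \ref{lem:expansion} combined with Proposition \ref{prop:aopt2gapbound}) when the limit is constant, and the concentration-compactness dichotomy of Theorem \ref{thm:CC_Sob} together with the definition of $\alpha_2$ and Theorem \ref{thm:alfa} in the critical concentration case. The only real deviation is that you establish $\aopt_{2^*}(\X)\ge \alpha_2(\X)$ via a blow-up/tangent-cone construction, whereas this direction is immediate from the definitions (the tight Sobolev inequality with the pair $(\aopt_{2^*}(\X),1)$ is admissible in the infimum defining $\alpha_2(\X)$, so $\alpha_2(\X)\le \aopt_{2^*}(\X)$), which is how the paper concludes; the extra machinery is harmless but unnecessary.
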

\begin{proof}
By definition of $\aopt_q(\X)$ there exists a sequence of non-constant functions $u_n \in \LIP(\X)$ such that  $\cQ_q^\X(u_n)\to \aopt_q(\X)$ (recall \eqref{eq:quotient}). By scaling we can suppose that $\|u_n\|_{L^{2^*}(\mm)}\equiv 1.$ In particular $(u_n)$ is bounded in $W^{1,2}(\X)$. We distinguish two cases.

\noindent {\sc Subcritical:} $q<2^*.$ By compactness (see Proposition \ref{prop:W12compact}), up to passing to a subsequence, $u_n\to u$ strongly in $L^q$ to some function $u \in W^{1,2}(\X)$ such that, from the lower semicontinuity of the Cheeger energy,  $\cQ_q^\X(u)=\aopt_q(\X).$ If $u$ is non-constant $(i)$ holds and we are done, so suppose that $u$ is constant. Then from the renormalization we must have $u\equiv 1.$ Moreover, since $\|u_n\|_{L^q(\mm)},\|u_n\|_{L^2(\mm)}\to 1$ and $\cQ_q^\X(u_n)\to \aopt_q(\X)$, we deduce that $\||D u|\|_{L^2(\mm)}^2\to 0.$ Consider now the functions $f_n\coloneqq u_n-1 \in \LIP(\X)$, which  are non-constant and such that $f_n \to 0$ in $W^{1,2}(\X).$  We are therefore in position to apply Lemma \ref{lem:expansion} and deduce that
\[
\aopt_q(\X)= \lim_{n\to \infty} \cQ_q^\X (u_n)=\lim_n \frac{(q-2)  \int \left(f_n-\int f_n \d \mm\right)^2\, \d \mm }{\int |D f_n|^2\, \d \mm}\le \frac{q-2}{ \lambda^{1,2}(\X)}.
\]
Combining this with \eqref{eq:Aopt2gap}, we get that $\aopt_q(\X)=\frac{q-2}{ \lambda^{1,2}(\X)}$, i.e. $(ii)$ is true  and we conclude the proof in this case.

\noindent {\sc Critical:} $q=2^*.$ We apply the concentration-compactness result in Theorem \ref{thm:CC_Sob} and deduce that up to a subsequence: either $u_n \to u$ in $L^{2^*}(\mm)$ to some $u \in W^{1,2}(\X)$ or $\|u_n\|_{L^2(\mm)}\to 0.$ In the first case we argue exactly as above using  Lemma \ref{lem:expansion} and deduce that either $(i)$ or $(ii)$ holds. Hence we are left to deal with the case  $\|u_n\|_{L^2(\mm)}\to 0.$  From the definition of $\alpha_2(\X)$, for every $\eps$ there exits $B_{\eps}$ so that a $(2^*,2)$-Sobolev inequality with constants $\alpha_2(\X)+\eps$ and $B_{\eps}$ is valid. Hence we have
\[
\cQ_{2^*}^\X(u_n)\||D u_n|\|_{L^2(\mm)}^2+\| u_n\|_{L^2(\mm)}^2= \|u_n\|_{L^{2^*}(\mm)}\le (\alpha_{2}(\X)+\eps)\||D u_n|\|_{L^2(\mm)}^2+B_\eps\| u_n\|_{L^2(\mm)}^2,
\]
which gives
\[
\cQ_{2^*}^\X(u_n)\le (\alpha_{2}(\X)+\eps)+ B_{\eps}\| u_n\|_{L^2(\mm)}^2(\||D u_n|\|_{L^2(\mm)}^2)^{-1}.
\]
Observing that $\liminf_{n}\||D u_n|\|_{L^2(\mm)}^2>0$ (which follows from the Sobolev inequality, $\| u_n\|_{L^2(\mm)}^2\to0$ and $\|u_n\|_{L^{2^*}(\mm)}=1$) and letting $n \to +\infty$ we arrive at $\aopt_{2^*}(\X)\le (\alpha_{2}(\X)+\eps)$. From the arbitrariness $\eps$  we deduce that $\aopt_{2^*}(\X)\le \alpha_{2}(\X)$ and the proof is concluded (indeed by definition $\alpha_2(\X)\ge \aopt_{2^*}(\X)$ is always true).
\end{proof}

We can finally come to the proof of the principal result of this note.
\begin{proof}[Proof of  Theorem \ref{thm:rigidity}]

The ``if" implication is direct as any $N$-spherical suspension, $\X$ is so that $\aopt_q(\X)=\frac{q-2}{N}$. This can be seen from the lower bound in Proposition \ref{prop:aopt2gapbound} (recall also Theorem \ref{thm:obata}) and the upper bound given in Theorem \ref{thm:Aopt upper cd}.

For the  ``only if' implication, the result will follow from three different rigidity results, one for each of the alternatives in Theorem \ref{thm:alternative}. Up to scaling the reference measures, we can suppose $\mm(\X)=1$.

\noindent{\sc Case 1:} $i)$ in Theorem \ref{thm:alternative} holds. Let $u$ be the non-constant function satisfying \eqref{eq:optimal extremal}. Observe that we can assume that $u$ is non-negative. We aim to apply the P\'olya-Szeg\H{o} inequality with the model space $I_{N}$ as in Section \ref{sec:polya}. Let $u_N^*: I_{N}\to [0,\infty] $ be the monotone-rearrangement of $u$. From the P\'olya-Szeg\H{o} inequality in Theorem \ref{thm:KNpolya} we have  that $u_N^* \in W^{1,2}(I_{N},|.|,\mm_{N})$, $\|u\|_{L^p(\mm)}=\|u^*_N\|_{L^p(\mm_N)}$ for both $p\in \{q,2\}$ and that $\||D u^*_N|\|_{L^2(\mm_{N})}\le \||D u|\|_{L^2(\mm)}$. Combining this with  \eqref{eq:sobolev model} we have
\begin{align*}
 \|u\|_{L^q(\mm)}^2&=\|u_N^*\|_{L^q(\mm_{N})}^2 \le \tfrac{q-2}{N} \||D u_N^*|\|_{L^2(\mm_{N})}^2+\|u_N^*\|_{L^2(\mm_{N})}^2\\
 &\le \tfrac{q-2}{N} \||D u|\|_{L^2(\mm)}^2+\|u\|_{L^2(\mm)}^2 = \|u\|_{L^q(\mm)}^2.
\end{align*}
Therefore $\||D u_N^*|\|_{L^2(\mm_{N})}=\||D u|\|_{L^2(\mm)}$ and, since $u$ is non-constant, we are in position to apply the rigidity of the P\'olya-Szeg\H{o} inequality of Theorem \ref{thm:Polyarigidity} and conclude the proof in this case.

\noindent{\sc Case 2:} $ii)$ in Theorem \ref{thm:alternative} holds. We immediately deduce that $ \lambda^{1,2}(\X)=N$ and the conclusion follows from the Obata's rigidity (Theorem \ref{thm:obata}).

\noindent{\sc Case 3:} $iii)$ in Theorem \ref{thm:alternative} holds.  From Theorem \ref{thm:Amin} and the explicit expression for $\eucl(N,2)$ (see \eqref{eq:eucln2}) we have that 
$$\frac{2^*-2}{N}=\aopt_{2^*}(\X)=\alpha_2(\X)=\frac{\eucl(N,2)^2}{\min_{x\in \X} \theta_N(x)^{2/N}}=\frac{2^*-2}{N \sigma_{N}^{2/N}\min_{x\in \X} \theta_N(x)^{2/N}},$$
therefore $\min_{x \in \X} \theta_N= \sigma_{N}^{-1}.$ On the other hand by the Bishop-Gromov inequality  and identity \eqref{eq:theta equivalent} 
$$\frac{1}{\sigma_{N}}=\inf_{\X} \theta_N(x)\ge \frac{\mm(\X)}{v_{N-1,N}(\diam(\X))}=\frac{1}{v_{N-1,N}(\diam(\X))},$$
which, from the definition of $v_{N-1,N}$  and \eqref{eq:integral sin} forces $\diam(\X)=\pi$. The conclusion then follows by the rigidity of the maximal diameter (Theorem \ref{thm:diameter rigidity}).
\end{proof}

\begin{remark} \label{rmk:sharper bound}\rm
The rigidity result for $\aopt_q(M)$ in the subcritical range $q<2^*$ was already observed in \cite{Ledoux00} as a consequence of the following sharper estimate due to \cite{Fontenas97}: for any $n$-dimensional Riemannian manifolds $M$, $n \ge 3$, with ${\rm Ric}\ge n-1$ it holds
	\begin{equation}\label{eq:sharper bound}
		 \aopt_q(M) \le  \frac{(q-2)}{\kappa(\theta)}, \qquad \forall q \in (2,2^*),
	\end{equation}
where $\kappa(\theta) := \theta n + (1-\theta)\lambda^{1,2}(M)$,  $\lambda^{1,2}(M)$ being the first non trivial eigenvalue and $\theta=\theta(q)\in[0,1]$ is a suitable interpolation parameter. The spectral gap inequality $\lambda^{1,2}(M) \ge n$ grants that the bound \eqref{eq:sharper bound} improves the one of \eqref{eq:Aopt upper}. For every $q \in (2,2^*)$, the condition $\aopt_q(M)=\aopt_q(\S^n)(=(q-2)/n)$ forces $\kappa(\theta)=n$ which in turn implies $\lambda^{1,2}(M)=n$. By appealing to the classical Obata's Theorem, this argument covers the rigidity of Theorem \ref{thm:rigidity manifold} for $q<2^*$. Nevertheless, this does not extend to the critical exponent: more precisely $\theta(q)\to 1$ as $q \to 2^*$, hence the quantity $\kappa(\theta)$ carries no information on the spectral gap in this case. 
\fr
\end{remark}

\section{Almost rigidity of $\aopt$}\label{sec:almrigidity} 

\subsection{Behavior at concentration points}
The following technical result will be needed for the almost-rigidity result and has the role of replacing in the varying-space case, the Sobolev inequality with constants $\alpha_2(\X)+\eps,B_{\eps}$ which we used in the fixed-space case of the rigidity (see the proof of Theorem \ref{thm:alternative}). Indeed it is not clear how to control the constant $B_{\eps}$ in a sequence of mGH-converging spaces. Therefore we need a more precise local analysis that fully exploits the local Sobolev inequalities in  Theorem \ref{thm:local sobolev intro}  and Proposition \ref{prop:local sobolev embedding}.

\begin{lemma}[Behavior at concentration points]\label{lem:concpoint}
	Let $(\X_n,\sfd_n,\mm_n,x_n)$, $n \in \bar \N$, be a sequence of $\rcd {K}{N}$ spaces $K\in \R$, $N\in(1,\infty)$, so that $\X_n \overset{pmGH}\to \X_\infty$. Fix $p \in(1,N)$, set $p^*\coloneqq pN/(N-p)$ and assume that $u_n \in \LIP_c(\X_n)$ is a sequence satisfying 
	\begin{equation}\label{eq:reverse sobolev}
	\|u_n\|^p_{L^{p^*}(\mm_n)}\ge A_n \||D u_n|\|^p_{L^p(\mm_n)}-B_n\|u_n\|^p_{L^{s}(\mm_n)}, 
	\end{equation}
	for some constants $A_n,B_n\ge 0$  uniformly bounded and $s>0$ so that $s \in [p,p^*)$. Assume furthermore that $u_n\to 0$ strongly in $L^p$, $\|u_n\|_{L^{p^*}(\mm_n)}=1$ and that $|u_n|^{p^*}\mm_n\rightharpoonup \delta_{y_0}$ for some $y_0 \in \X_\infty$ in duality with $C_{bs}(\Z)$ (where  $(\Z,\sfd_\Z)$ is a proper space
	 realizing the convergence in the extrinsic approach). Then
	 \begin{equation}\label{eq:link theta A}
	 	\theta_{N}(y_0)\le \eucl(N,p)^N(\limsup_n A_n)^{-N/p},
	 \end{equation}
 meaning that if $\theta_{N}(y_0)=+\infty$, then $\limsup_n A_n =0.$ 
\end{lemma}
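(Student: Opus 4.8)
The plan is to localize the reverse Sobolev inequality \eqref{eq:reverse sobolev} near the concentration point $y_0$, rescale, pass to a pmGH-limit, and invoke the lower bound on the Sobolev constant from Theorem \ref{thm:A lower bound}. More precisely, I would argue by contradiction: suppose that $\theta_N(y_0) > \eucl(N,p)^N (\limsup_n A_n)^{-N/p}$, equivalently that there exists $A > 0$ with $A \le \limsup_n A_n$ (so, along a subsequence, $A_n \to A' \ge A$) and $\theta_N(y_0) > \eucl(N,p)^N / A'^{N/p}$, i.e.\ $A' > \eucl(N,p)^p / \theta_N(y_0)^{p/N}$. The contradiction will come from showing that the tangent cone at $y_0$ of $\X_\infty$ supports a $(p^*,p)$-Sobolev inequality with constants $A',0$, which by Theorem \ref{thm:A lower bound} forces $A' \le \eucl(N,p)^p / \theta_N(y_0)^{p/N}$ (since the tangent cone is a $\cd(0,N)$ space with density $\theta_N(y_0)$ at its vertex, using \eqref{eq:theta equivalent}).

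\textbf{Key steps.} First, I would extract a local Sobolev inequality near $y_0$ of near-optimal Euclidean type. Since $|u_n|^{p^*}\mm_n \rightharpoonup \delta_{y_0}$ and $\|u_n\|_{L^p(\mm_n)} \to 0$, for any fixed small radius $\rho$ the mass of $|u_n|^{p^*}\mm_n$ escaping $B_\rho(y_0)$ tends to zero; using a cutoff $\chi$ supported in $B_{2\rho}^{\X_n}(y_n')$ (where $y_n' \in \X_n$ approximates $y_0$) with $\chi \equiv 1$ on $B_\rho$, one checks via the Leibniz rule and $\|u_n\|_{L^p(\mm_n)}\to 0$ that $\|\chi u_n\|_{L^{p^*}(\mm_n)} \to 1$, $\||D(\chi u_n)|\|_{L^p(\mm_n)} = \||Du_n|\|_{L^p(\mm_n)} + o(1)$, and $\|\chi u_n\|_{L^s(\mm_n)}\to 0$. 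Hence from \eqref{eq:reverse sobolev} the renormalized functions $v_n := \chi u_n / \|\chi u_n\|_{L^{p^*}(\mm_n)}$ are supported in $B_{2\rho}(y_n')$ and satisfy $\|v_n\|_{L^{p^*}(\mm_n)}^p \ge (A_n - o(1)) \||Dv_n|\|_{L^p(\mm_n)}^p - o(1)$; in particular $\liminf_n \||Dv_n|\|_{L^p(\mm_n)} > 0$ and the ``tail'' term is negligible. Second — and this is the key quantitative input — I would use Theorem \ref{thm:local sobolev intro}: since $\theta_{N,r}(y_0) \to \theta_N(y_0)$ as $r \to 0$, for $\rho$ small and an auxiliary radius $R$ with $\rho \ll R \ll 1$ one has the ratio control $\theta_{N,\rho}/\theta_{N,R} \le 2$ (say), so \eqref{eq:local sobolev intro} applies on $\X_n$ for $n$ large (by mGH-convergence of the densities $\theta_{N,r}$, which follows from weak convergence of measures and $\mm(\partial B) = 0$) to give, for every $\eps>0$,
\[
\|v_n\|_{L^{p^*}(\mm_n)} \le (1+\eps)\,\eucl(N,p)\, \Big(\tfrac{\mm_n(B_R(y_n'))}{R^N \omega_N}\Big)^{-1/N} \||Dv_n|\|_{L^p(\mm_n)}.
\]
Combining this with the reverse inequality for $v_n$ and letting $n \to \infty$ yields $A' \le (1+\eps)^p \eucl(N,p)^p \theta_{N,R}(y_0)^{-p/N}$; then sending $\eps \to 0$, $R \to 0$ gives $A' \le \eucl(N,p)^p / \theta_N(y_0)^{p/N}$, the desired contradiction. (The case $\theta_N(y_0) = +\infty$ is handled the same way, using instead Proposition \ref{prop:local sobolev embedding} to see that $\liminf_n \||Dv_n|\|_{L^p} > 0$ cannot coexist with the growing denominator $\mm_n(B_\rho(y_n'))^{-p/N} \to \theta_N(y_0)^{-p/N}\rho^{-p} = 0$ after rescaling — more directly, \eqref{eq:local sobolev} with the term $\mm(B_\rho)^{-p/N}$ that degenerates forces $A' = 0$.)

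\textbf{Main obstacle.} The delicate point is the interchange of the two limiting procedures — concentration ($\rho \to 0$, which requires $n \to \infty$ first to kill the tails) versus applying the local Sobolev inequality, which needs $\rho$ genuinely small relative to the scale $R$ where $\theta_{N,R}$ is close to $\theta_N(y_0)$, and relative to $\delta(\eps, D, N)$ from Theorem \ref{thm:local sobolev intro}. One must order the quantifiers carefully: fix $\eps$; choose $R$ small so that $\theta_{N,R}(y_0)$ is within $\eps$ of $\theta_N(y_0)$ and $R < \delta\sqrt{N/K^-}$; choose $\rho < \delta(\eps,D,N) R$ with $D = 2$; then for this $\rho$ choose $n$ large enough that the cutoff argument produces $o(1)$ errors and $\mm_n(B_R(y_n')) \to \mm_\infty(B_R(y_0))$ with $\theta_{N,R}^{\X_n}(y_n') \to \theta_N^{\X_\infty}(y_0)$-type control. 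A minor technical wrinkle is ensuring the balls $B_R^{\X_n}(y_n')$ really do satisfy the hypothesis $\mm_n(B_\rho)/\mm_n(B_R) \le D(\rho/R)^N$ of Theorem \ref{thm:local sobolev intro} uniformly in $n$; this follows from the density ratio bound $\theta_{N,\rho}^{\X_n}/\theta_{N,R}^{\X_n} \le 2$ which holds for $n$ large by mGH-convergence since it holds strictly in the limit space. Everything else is routine: the cutoff estimates use only the Leibniz rule and strong $L^p$-convergence of $u_n$ to $0$, and the passage to the limit uses lower semicontinuity of the $L^{p^*}$-norm under weak convergence together with $\limsup_n A_n = A'$ along the chosen subsequence.
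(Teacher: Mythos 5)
Your proof is correct and follows essentially the same route as the paper's: cut off near the concentration point, apply the local almost-Euclidean Sobolev inequality of Theorem \ref{thm:local sobolev intro} (resp.\ Proposition \ref{prop:local sobolev embedding} when $\theta_N(y_0)=+\infty$) on small balls around points $y_n'\to y_0$ with the density-ratio hypothesis verified for large $n$, combine with \eqref{eq:reverse sobolev}, kill the error terms using $\|u_n\|_{L^p(\mm_n)}\to 0$ (hence $\|u_n\|_{L^{s}(\mm_n)}\to 0$ by interpolation) together with $\liminf_n\||Du_n|\|_{L^p(\mm_n)}>0$, and then let $n\to\infty$, $R\to 0$, $\eps\to 0$. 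The blow-up/tangent-cone reduction to Theorem \ref{thm:A lower bound} announced in your opening paragraph is never actually used (nor needed), and the contradiction framing merely repackages the direct estimate $\limsup_n A_n^{1/p}\le \eucl(N,p)\,\theta_N(y_0)^{-1/N}$ that both you and the paper in fact establish.
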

\begin{proof}
We subdivide the proof in two cases.

\noindent{ \sc Case 1:} $\theta_N(y_0)<+\infty$. 

Fix $\eps<\theta_N(y_0)/4$ arbitrary. Since $\theta_{N,r}(y_0)\to \theta_N(y_0)$ as $r \to 0^+$  there exists $\bar r=\bar r(\eps)$ such that
	\begin{equation}\label{eq:theta buona}
		|\theta_{N,r}(y_0)-\theta_N(y_0)|\le \eps, \quad \forall r<\bar r.
	\end{equation}
Let $\delta\coloneqq\delta(2\eps,D,N)$, with $D=4,$  be the constant given by  Theorem \ref{thm:local sobolev intro} and fix two radii $r,R\in (0,\bar r)$ such that $R<\delta \sqrt{N/K^-}$ and $r<\delta R.$  Consider now a sequence $y_n \in \X_n$ such that $y_n \to y_0$. From the convergence of the measures $\mm_n$ to $\mm_\infty$ we have that $\theta_{N,r}(y_n)\to \theta_{N,r}(y_0)$ and $\theta_{N,R}(y_n)\to \theta_{N,R}(y_0)$. In particular by \eqref{eq:theta buona}  there exists $\bar n=\bar n(r,R,\eps)$ such that
\begin{equation}\label{eq:theta buona n}
		|\theta_{N,R}(y_n)-\theta_N(y_0)|,|\theta_{N,r}(y_n)-\theta_N(y_0)|\le 2\eps, \qquad \forall n\ge \bar n.
\end{equation}
From the initial  choice of $\eps$  this also implies that $\theta_{N,r}(y_n)/\theta_{N,R}(y_n)\le 4$ for every $n \ge \bar n.$ We are in position to apply  Theorem \ref{thm:local sobolev intro} and get that for every $n \ge \bar n$
	\begin{equation}\label{eq:local sobolev sequence}
	\|f\|_{L^{p^*}(\mm_n)}\le \frac{(1+2\eps)\eucl(N,p)}{(\theta_{N}(y_0)-2\eps)^{\frac{1}{N}}} \||D f|\|_{L^p(\mm_n)}, \qquad \forall f \in \LIP_c(B_{r}(y_n)).
\end{equation}
Choose $\phi \in \LIP(\Z)$ such that $\phi=1$ in $B^\Z_{r/8}(y_0)$, $\supp (\phi)\subset B^\Z_{r/4}(y_0)$ and $0\le \phi\le 1$.
From the assumptions, we have that $\int \phi |u_n|^{p^*}\d \mm_n\to 1$, in particular up to increasing $\bar n$ it holds that $\int \phi |u_n|^{p^*}\d \mm_n\ge 1-\eps$ for all $n \ge \bar n$. Moreover, again up to increasing $\bar n$, we have that $\sfd_\Z(y_n,y_0)\le r/4$  for all $n \ge \bar n$, therefore  
\begin{equation}\label{eq:concentration}
	1-\eps\le \int_{B_{r/2}(y_n)}|u_n|^{p^*}\d \mm_n, \qquad \forall n \ge \bar n.
\end{equation}
For every $n$ we choose a cut-off function $\phi_n\in \LIP(\X_n)$ such that $\phi_n=1$ in $B_{r/2}(y_n)$, $0\le \phi_n\le 1$,  $ \supp(\phi_n)\subset  \LIP_c(B_{r}(y_n))$  and $\Lip (\phi_n)\le 2/r.$ Plugging the function $u_n\phi_n\in \LIP_c(B_r(y_n))$ in \eqref{eq:local sobolev sequence} and using \eqref{eq:concentration} we obtain 
\begin{equation}\label{eq:partial sobolev}
	(1-\eps)^{\frac 1{p^*}} \le \|u_n\phi_n\|_{L^{p^*}(\mm_n)}\le   \frac{(1+2\eps)\eucl(N,p)}{(\theta_{N}(y_0)-2\eps)^{\frac{1}{N}}} 
	\big(\||D u_n|\|_{L^p(\mm_n)}+\tfrac{2}{r}\|u_n\|_{L^p(\mm_n)} \big).
\end{equation}
Moreover recalling that $\|u_n\|_{L^{p^*}(\mm_n)}=1$ and the assumption \eqref{eq:reverse sobolev}, from \eqref{eq:partial sobolev}  we reach
\[
 (1-\eps)^{\frac 1{p^*}} \big( A_n^{1/p}\||D u_n|\|_{L^p(\mm_n)} -B_n\|u_n\|^p_{L^{s}(\mm_n)} \big) \le \frac{(1+2\eps)\eucl(N,p)}{(\theta_{N}(y_0)-2\eps)^{\frac{1}{N}}} 
\big(\||D u_n|\|_{L^p(\mm_n)}+\tfrac{2}{r}\|u_n\|_{L^p(\mm_n)} \big).
\]
We also observe that from the assumption $\|u_n\|_{L^p(\mm_n)}\to 0$ and the fact that $\|u_n\|_{L^{p^*}(\mm_n)}=1$, we have by $(viii)$ in Proposition \ref{prop:lp prop} that $\|u_n\|_{L^s(\mm_n)}\to 0.$ Finally by \eqref{eq:partial sobolev} and the assumption $\|u_n\|_{L^p(\mm_n)}\to 0$ it holds that  $\liminf_n\||D u_n|\|_{L^p(\mm_n)}>0.$ In particular for $n$ big enough we can divide by $\||D u_n|\|_{L^p(\mm_n)}$ the above inequality and letting $n \to +\infty$ we get
\[
\limsup_nA_n^{1/p}  \le \frac{(1+2\eps)\eucl(N,p)}{(1-\eps)^{1/{p^*}} (\theta_{N}(y_0)-2\eps)^{\frac{1}{N}}}.
\]
From the arbitrariness of $\eps$, the conclusion follows. 

\noindent{ \sc Case 2:} $\theta_N(y_0)=\infty$.

The argument is similar to Case 1, but we will use Proposition \ref{prop:local sobolev embedding} instead of  Theorem \ref{thm:local sobolev intro}. Let $M>0$ be arbitrary.  There exists $r\le 1$ such that
	$\theta_{N,r}(y_0)\ge 2M$. As above we choose a sequence $y_n \to y_0$. For $n$ big enough we have that 
\begin{equation}\label{eq:theta grande}
	\theta_{N,r}(y_n)\ge M.
\end{equation}
Applying Proposition \ref{prop:local sobolev embedding}, from \eqref{eq:theta grande} we get that for every $n$ big enough
\begin{equation}\label{eq:local sobolev sequence2}
	\|f\|^p_{L^{p^*}(B_r(y_n))}\le  \frac{C_{K,N,p}}{M^{\frac pN}}\||D f|\|^p_{L^p(B_r(y_n))}+\frac{C_{p,N}\|f\|^p_{L^p(B_r(y_n))}}{r^{p/N}M^{\frac{p}{N}}}, \qquad \forall f \in \LIP(\X_n).
\end{equation}
Observing that \eqref{eq:concentration} is still satisfied with  $\eps=1/M$ and $n$ big enough, we can repeat the above argument, using \eqref{eq:reverse sobolev} and plugging $\phi_n u_n$ in \eqref{eq:local sobolev sequence2}, where $\phi_n$ is as above. This leads us to 
\[
\limsup_n A_n^{1/p}  \le  \frac{C_{K,N,p}}{(1-1/M)^{1/p^*}M^{\frac1N}},
\]
which from the arbitrariness $M$ implies the conclusion.
\end{proof}

%#######################################################################

\subsection{Continuity of $\aopt$ under mGH-convergence}\label{sec:continuity aopt}

In Lemma \ref{lem:sobolev stability}, we proved that Sobolev embeddings are stable with respect to pmGH-convergence. A much more involved task it to prove that \emph{optimal} constants are also continuous: indeed, if $\X_n \overset{mGH}{\to} \X_\infty $, in general Lemma \ref{lem:sobolev stability} ensures only that $\aopt_q(\X_\infty) \le \liminf_n \aopt_q(\X_n)$. With the concentration compactness tools developed in Section \ref{sec:CC}, the ``quantitative-linearization'' result in Lemma \ref{lem:expansion} and the technical tool developed in the previous section we can now prove the mGH-continuity of $\aopt_q(\X_n)$ as stated in Theorem \ref{thm:mGHAopt}, that we restate here for convenience of the reader.
\begin{theorem}[Continuity of $\aopt_q$ under mGH-convergence]\label{thm:mGHAopt2}
Let $(\X_n,\sfd_n, \mm_n)$ be a sequence, $n \in \N\cup \{\infty\}$, of compact $\RCD(K,N)$-spaces with $\mm_n(\X_n)=1$ and for some $K\in \R$, $N \in(2,\infty)$ so that $\X_n\overset{mGH}{\to} \X_\infty$. Then, $A_q^{\rm opt}(\X_\infty)=\lim_n \aopt_q(\X_n)$, for every $q \in (2,2^*]$.
\end{theorem}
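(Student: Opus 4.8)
The strategy is to prove the two inequalities $\limsup_n \aopt_q(\X_n) \le \aopt_q(\X_\infty)$ and $\liminf_n \aopt_q(\X_n)\ge \aopt_q(\X_\infty)$ separately. The second one is the easy direction and follows directly from Lemma \ref{lem:sobolev stability}: for any subsequence realizing $\liminf_n \aopt_q(\X_n)$, the constants $A_n=\aopt_q(\X_n)$ (which are uniformly bounded thanks to Proposition \ref{prop:sobolev embedding general} and the uniform diameter bound coming from Bonnet--Myers, or simply because we may pass to a further subsequence along which $\aopt_q(\X_n)$ converges) give a $(q,2)$-Sobolev inequality on $\X_n$ with constants $(A_n, 1)$; passing to the limit yields a $(q,2)$-Sobolev inequality on $\X_\infty$ with constants $(\lim_n A_n, 1)$, hence $\aopt_q(\X_\infty)\le \lim_n \aopt_q(\X_n)$.

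The substantial direction is $\limsup_n \aopt_q(\X_n)\le \aopt_q(\X_\infty)$. Suppose by contradiction that, along a subsequence, $\aopt_q(\X_n)\to L > \aopt_q(\X_\infty)=:A_\infty$. For each $n$ pick a non-constant near-extremal $u_n\in\LIP(\X_n)$ with $\|u_n\|_{L^q(\mm_n)}=1$ and $\cQ_q^{\X_n}(u_n)\ge \aopt_q(\X_n)-1/n$; equivalently $u_n$ satisfies the reverse Sobolev inequality with constants $A_n\to L$, $B_n\to 1$. Since $\sup_n\|u_n\|_{W^{1,2}(\X_n)}<\infty$ (the $A_n,B_n$ are bounded away from $0$), apply the concentration-compactness dichotomy of Theorem \ref{thm:CC_Sob} (using $\frac{q-2}{N}\le$ [bounded] to get the ambient $(2^*,2)$-Sobolev inequality, and noting that if $q<2^*$ the embedding $W^{1,2}\hookrightarrow L^q$ is already compact so case \textsc{II)} cannot occur). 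In case \textsc{I)}, $u_n\to u_\infty$ in $L^q$-strong with $u_\infty\in W^{1,2}(\X_\infty)$, hence by $\Gamma$-$\liminf$ of the Cheeger energies and lower semicontinuity of norms, $\cQ_q^{\X_\infty}(u_\infty)\ge \limsup_n\cQ_q^{\X_n}(u_n)=L$; but then either $u_\infty$ is non-constant and $\aopt_q(\X_\infty)\ge L$, a contradiction, or $u_\infty$ is constant ($\equiv 1$ after normalization), in which case $\||Du_n|\|_{L^2(\mm_n)}\to 0$ and we apply the quantitative linearization Lemma \ref{lem:expansion} on $\X_n$ with $f_n:=u_n-1$: since $\lambda^{1,2}(\X_n)\to\lambda^{1,2}(\X_\infty)>0$ by \eqref{eq:lambdalsc}, we get $L=\lim_n\cQ_q^{\X_n}(u_n)\le \limsup_n \frac{q-2}{\lambda^{1,2}(\X_n)}=\frac{q-2}{\lambda^{1,2}(\X_\infty)}\le \aopt_q(\X_\infty)$ by Proposition \ref{prop:aopt2gapbound}, again a contradiction.

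It remains to rule out case \textsc{II)} when $q=2^*$: here $\|u_n\|_{L^2(\mm_n)}\to 0$ and $|u_n|^{2^*}\mm_n\rightharpoonup\delta_{x_0}$ for some $x_0\in\X_\infty$. Now invoke Lemma \ref{lem:concpoint} with $p=2$, $s=2$, which applies precisely because $u_n$ satisfies the reverse Sobolev inequality \eqref{eq:reverse sobolev} with constants $A_n^{1/1}\to$ [here read off $A_n\to L$ in the notation of that lemma] and $B_n$ bounded; it yields $\theta_N(x_0)\le \eucl(N,2)^N L^{-N/2}$. Combining with $\theta_N(x_0)\ge \mm_\infty(\X_\infty)/v_{K,N}(\diam\X_\infty)$ (Bishop--Gromov, \eqref{eq:theta equivalent}) — or more efficiently with the lower bound $\alpha_2(\X_\infty)=\eucl(N,2)^2/\min\theta_N^{2/N}$ from Theorem \ref{thm:alfa} and the elementary inequality $\aopt_{2^*}(\X_\infty)\le\alpha_2(\X_\infty)$ — gives $L\le \alpha_2(\X_\infty)\cdot[\text{const}]$; in fact the cleanest route is: $L\le \eucl(N,2)^2\theta_N(x_0)^{-2/N}\le \eucl(N,2)^2(\min_{\X_\infty}\theta_N)^{-2/N}=\alpha_2(\X_\infty)=\aopt_{2^*}(\X_\infty)$ where the last equality is NOT generally true — so instead one concludes $L\le\alpha_2(\X_\infty)$, and then separately one must still relate $\alpha_2(\X_\infty)$ to $\aopt_{2^*}(\X_\infty)$. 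The honest conclusion is just $L\le\alpha_2(\X_\infty)$; but since we also always have the reverse chain of inequalities at the level of the limit space, we in fact only need $L\le\aopt_{2^*}(\X_\infty)$, and this follows because $\alpha_2(\X_\infty)\ge\aopt_{2^*}(\X_\infty)$ would go the wrong way — hence the correct argument is to observe that concentration forces $\alpha_2(\X_\infty)\le L$ is impossible only if we already know $\alpha_2(\X_\infty)=\aopt_{2^*}(\X_\infty)$, which is exactly the content we would need. \textbf{The main obstacle} is therefore precisely this point: in the concentrating case one naturally obtains a bound involving $\alpha_2$ rather than $\aopt_{2^*}$, and closing the contradiction requires either (a) the a priori bound $\aopt_{2^*}(\X_n)\le\frac{q-2}{N}\le \eucl(N,2)^2\sigma_N^{2/N}\cdot\frac{1}{?}$ tying everything to the density via Theorem \ref{thm:Aopt upper cd} and Theorem \ref{thm:alfa}, or (b) a direct argument showing $L=\lim\aopt_q(\X_n)\le\liminf\alpha_2(\X_n)$ combined with lower semicontinuity $\alpha_2(\X_\infty)\le\liminf_n\alpha_2(\X_n)$ (which itself needs proof, via the blow-up/Theorem \ref{thm:A lower bound} machinery and continuity of $\theta_N$ under mGH at the minimizing point). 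I expect the proof in the paper resolves this by showing $\liminf_n\alpha_2(\X_n)\le \aopt_{2^*}(\X_\infty)$ is false and instead using that along the concentrating subsequence $L\le \alpha_2(\X_\infty)$ together with the \emph{separate} fact (provable by the same dichotomy applied on $\X_\infty$ itself, i.e. Theorem \ref{thm:alternative}) that forces the contradiction; in any case the delicate step is transferring the density/$\alpha_2$ information across the mGH limit and matching it with $\aopt_{2^*}$.
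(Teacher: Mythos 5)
Your plan is essentially the paper's: the easy direction $\aopt_q(\X_\infty)\le \liminf_n\aopt_q(\X_n)$ by Lemma~\ref{lem:sobolev stability}; compactness plus $\Gamma$-$\liminf$ when the near-extremizers converge to a non-constant; the quantitative linearization Lemma~\ref{lem:expansion} together with continuity of the spectral gap \eqref{eq:lambdalsc} when they converge to a constant; and the concentration-compactness dichotomy of Theorem~\ref{thm:CC_Sob} combined with Lemma~\ref{lem:concpoint} when concentration occurs in the critical case. Up to case \textsc{II)} of $q=2^*$ your reasoning matches the paper.

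The point you flag as the ``main obstacle'' is not one. From Lemma~\ref{lem:concpoint} and Theorem~\ref{thm:alfa} you correctly obtain, for the concentration point $y_0\in\X_\infty$,
\[
L:=\lim_n\aopt_{2^*}(\X_n)\ \le\ \frac{\eucl(N,2)^2}{\theta_N(y_0)^{2/N}}\ \le\ \frac{\eucl(N,2)^2}{\min_{\X_\infty}\theta_N^{2/N}}\ =\ \alpha_2(\X_\infty),
\]
using $\theta_N(y_0)\ge \min_{\X_\infty}\theta_N$. What you are missing is that the remaining inequality $\alpha_2(\X_\infty)\le \aopt_{2^*}(\X_\infty)$ is \emph{immediate from the definitions}: $\alpha_2$ infimizes $A$ over all pairs $(A,B)$ for which the $(2^*,2)$-Sobolev inequality holds, whereas $\aopt_{2^*}$ infimizes $A$ subject to the additional constraint $B=\mm(\X)^{2/2^*-1}=1$. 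Enlarging the feasible set of $B$'s can only lower the infimal $A$, so $\alpha_2(\X)\le\aopt_{2^*}(\X)$ for every space. This closes the chain $L\le\alpha_2(\X_\infty)\le\aopt_{2^*}(\X_\infty)$ and yields the contradiction; you need neither the equality $\alpha_2=\aopt_{2^*}$ (which, as you correctly observe, need not hold) nor a separate invocation of Theorem~\ref{thm:alternative}. This is exactly the route the paper takes. Incidentally, the parenthetical at the end of the proof of Theorem~\ref{thm:alternative} in the paper asserting that ``$\alpha_2(\X)\ge\aopt_{2^*}(\X)$ is always true'' has the inequality sign reversed (it should read $\alpha_2(\X)\le\aopt_{2^*}(\X)$); the inequality is used with the correct orientation in the concluding chain of the proof of Theorem~\ref{thm:mGHAopt2}.

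One small inaccuracy: the uniform diameter bound on $\X_n$ does not come from Bonnet--Myers for general $K\in\R$; it follows from the mGH-convergence to the compact space $\X_\infty$. This does not affect the argument.
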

\begin{proof}
By definition of $\aopt_q(\X_n)$, there exists  sequence of non-negative and non-constant functions $u_n \in \LIP(\X_n)$  satisfying 
    \begin{equation}\label{eq:sobolev stability fine}
    \|u_n\|_{L^{q}(\mm_n)}^2\ge A_n\||D u_n|\|_{L^2(\mm_n)}^2 +\|u_n\|_{L^2(\mm_n)}^2,\end{equation}
    having set $A_n:=\aopt_q(\X_n) -\tfrac{1}{n}$. By scaling invariance, it is not restrictive to suppose $\|u_n\|_{L^{q}(\mm_n)}=1$ for every $n \in \N$. Observe that thanks to Lemma \ref{lem:sobolev stability} we already have that $0 < \aopt_{q}(\X_{\infty}) \le \liminf_n \aopt_q(\X_n)$, hence we only need to show that $\aopt_{q}(\X) \ge \limsup_n \aopt_q(\X_n)$. To this aim, we distinguish two cases.

\noindent\textsc{Subcritical:} $q<2^*$. It is clear that $A_n$ is uniformly bounded from below whence the sequence $u_n$ has uniformly bounded $W^{1,2}$ norms. Then,  by Proposition \ref{prop:W12compact} and the $\Gamma$-$\liminf$ inequality of the $\rmCh_2$ energy, there exists a (not relabeled) subsequence $L^2$-strongly converging to some $u_\infty \in W^{1,2}(\X_\infty)$. Moreover, since $u_n$ are bounded in $L^{2^*}$, they also converge to $u_\infty $ in $L^q$-strong and in particular $\|u_\infty\|^2_{L^q(\mm_\infty)}=1$. Suppose first that the function $u_\infty$ is not constant, then we get
\[ \begin{split}
 1=\|u_\infty\|^2_{L^q(\mm_\infty)}   &\ge \limsup_{n \to \infty} A_n \||D u_n|\|_{L^2(\mm_n)}^2 +\|u_n\|_{L^2(\mm_n)}^2 \\
\eqref{eq:Gammaliminf} + \ L^2\text{-strong}\qquad & \ge \limsup_{n \to \infty} \aopt_q(\X_n) \||D u_\infty|\|_{L^2(\mm_\infty)}^2 +\|u_\infty\|_{L^2(\mm_\infty)}^2.
\end{split} \]
Since $u_\infty$ is not constant this in turn yields $ \limsup_n\aopt_q(\X_n)  \le \aopt_q(\X_\infty)$ which is what we wanted.

Suppose now that $u_\infty$ is constant. Then, necessarily $u_\infty =1$. Define now $f_n := 1-u_n$ and observe that  $\|f_n\|_{W^{1,2}(\X_n)}\to 0$, which follows from \eqref{eq:sobolev stability fine} and the fact that $\|u_n\|_{L^2(\mm_n)}\to 1.$ Moreover from \eqref{eq:lambdalsc} we have that $\lambda^{1,2}(\X_n)$ are uniformly bounded below away from zero. Therefore we can apply Lemma \ref{lem:expansion} to deduce (recall \eqref{eq:quotient} for the def. of $\cQ^{\X}_q$)
\begin{equation} \limsup_{n\to \infty} \aopt_q(\X_n) = \limsup_{n\to \infty} \cQ^{\X_n}_q(u_n) = \limsup_{n \to \infty} \frac{(q-2) \int \big|f_n -\int f_n\, \d \mm_n\big|^2\, \d \mm_n}{\int |D f_n|^2\, \d \mm_n} \le \limsup_{n\to \infty} \frac{(q-2)}{\lambda^{1,2}(\X_n)} = \frac{(q-2)}{\lambda^{1,2}(\X_\infty)}, \label{eq:inter1}
\end{equation}
having used, in the last inequality, the continuity  of the $2$-spectral gap \eqref{eq:lambdalsc}. This combined with \eqref{eq:Aopt2gap} gives that  $ \limsup_n\aopt_q(\X_n)  \le \aopt_q(\X_\infty)$.

\noindent\textsc{Critical exponent:} $q=2^*$. Observe that we are now in position to invoke Theorem \ref{thm:CC_Sob} and, up to a further not relabeled subsequence, we just need to handle  one of the two different situations \textsc{I),II)} occurring in Theorem \ref{thm:CC_Sob}. If the case \textsc{I)} occurs, we argue exactly as in the {\sc Subcritical:} $q<2^*$ case, to conclude that $ \limsup_n\aopt_q(\X_n)  \le \aopt_q(\X_\infty)$. Hence we are left with situation \textsc{II)}, where the sequence $u_n$ develops a concentration point $y_0 \in \X_\infty$. Recalling Lemma \ref{lem:concpoint}, either $\theta_N(y_0) = \infty$ and $\limsup_n \aopt_{2^*}(\X_n) =0$ or $\theta_N(y_0) < \infty$. The first situation cannot happen, since $\aopt_{2^*}(\X_\infty)>0$. In the second one rearranging in \eqref{eq:link theta A} we have
\[ \limsup_{n\to \infty} \aopt_{2^*}(\X_n)  \overset{\eqref{eq:link theta A}}{\le}\frac{\eucl(N,2)^2 }{\theta_N(y_0)^{2/N}}\overset{\eqref{eq:alfa}}{\le} \alpha_{2}(\X_\infty)\le \aopt_{2^*}(\X_\infty).\]
\end{proof}

\subsection{Proof of the almost-rigidity}
Combining the rigidity result for $\aopt_q$ with the continuity result proved in the previous part we can now prove the almost-rigidity result for $\aopt_q.$
\begin{proof}[Proof of Theorem \ref{thm:almrigidity}]
	We argue by contradiction, and suppose that there exists $\eps>0$, $q \in(2,2^*]$ and a sequence $(\X_n,\sfd_n,\mm_n)$ of $\RCD(N-1,N)$-spaces with $\mm_n(\X_n)=1$ so that 
	\begin{equation}\label{eq:distant}
		\sfd_{mGH}( (\X_n,\sfd_n,\mm_n), (\Y,\sfd_\Y,\mm_\Y))>\eps,\end{equation}
	for every spherical suspension $(\Y,\sfd_\Y,\mm_\Y)$ and
$
	  \lim_n\aopt_q(\X_n)=\frac{q-2}{N}.
$
	 Theorem \ref{thm:mGHcompact} (recall that $\mm_n(\X_n)=1$) ensures that up to passing to a non-relabeled subsequence we have $	\X_n \overset{mGH}{\to}\X_\infty,$ for some $\RCD(N-1,N)$-space $(\X_\infty,\sfd_\infty,\mm_\infty)$ with $\mm_\infty(\X_\infty)=1$. Hence \eqref{eq:distant} implies
	\begin{equation}
	\sfd_{mGH}((\X_\infty,\sfd_\infty,\mm_\infty),(\Y,\sfd_\Y,\mm_\Y))\ge\eps, \label{eq:Xinfty}
	\end{equation}
	for every spherical suspension $(\Y,\sfd_\Y,\mm_\Y)$. Finally, by Theorem \ref{thm:mGHAopt} we deduce $$\aopt_q(\X_\infty) = \lim_n \aopt_q(\X_n)=\frac{q-2}{N}.$$ Therefore, by invoking the rigidity Theorem \ref{thm:rigidity}, we get that $(\X_\infty,\sfd_\infty,\mm_\infty)$ is isomorphic to a spherical suspension. This contradicts \eqref{eq:Xinfty} and concludes the proof.
\end{proof}
\begin{remark}
\rm
The results of Theorem \ref{thm:almrigidity} (and therefore of Theorem \ref{thm:rigidity}) extend directly to the class of $\RCD(K,N)$ spaces for some $K>0$ and $N\ge 2$ with normalized volume. Consider an $\RCD(K,N)$ space $\Xdm$ and define $(\X',\sfd',\mm'):=(\X,\sqrt{\tfrac{K}{N-1}}\sfd,\mm)$ which is $\RCD(N-1,N)$. Then, since $\aopt_q(\X') = \tfrac{K}{N-1} \aopt_q(\X)$, it is straightforward to set $\delta= \delta(K,N,\eps,q):= \tfrac{N-1}{K}\delta(N,\eps,q)$ and extend the aforementioned results also for arbitrary $K>0$.\fr 
\end{remark}

\section{Application: The Yamabe equation on $\RCD(K,N)$ spaces}\label{sec:scalarPDE}
In this section we apply Theorem \ref{thm:alfa} and the concentration compactness results of Section \ref{sec:CC} to study the Yamabe equation to the $\RCD(K,N)$ setting. In particular, we prove an existence result for the Yamabe equation and continuity of the generalized Yamabe constants under mGH-convergence, extending and improving some of the results proved in \cite{Honda18} in the case of Ricci limits. For  results concerning the Yamabe problem and the Yamabe constant in non-smooth spaces see also \cite{ACM13,ACM14,Mondello17,AkuMon19,Mondello17}.

We recall that the Yamabe problem \cite{Yamabe60} asks if a compact Riemannian manifold admits a conformal metric with constant scalar curvature. This has been completely solved and shown to be true after the works of Trudinger, Aubin and Schoen \cite{Trudinger68, Aubin76-3, Schoen84}. We also refer to \cite{LP87} for an introduction to this problem and for a complete and self-contained proof of this result. 

The Yamabe problem turns out  to be linked to  the so-called Yamabe equation:
\begin{equation}\label{eq:yamabe eq}
    -\Delta u + \sca\,u =\lambda u^{2^* -1}, \qquad \lambda \in \R, \, \sca\in L^\infty(M),
\end{equation}
where $2^*=\frac{2n}{n-2}$. Indeed solving the Yamabe problem is equivalent to find a non-negative and non-zero solution to \eqref{eq:yamabe eq} for some $\lambda \in \R$ and with $\sca={\rm Scal}$, the scalar curvature of $M$. In this direction, it is relevant to see that the Yamabe equation is the Euler-Lagrange equation of the following functional:
\[Q(u):= \frac{\int |D u|^2+ \sca |u|^2 \,\d \vol}{\|u\|_{L^{2^*}}^2}, \qquad u \in W^{1,2}(M)\setminus\{0\},\]
where $\vol$ is the volume measure of $M$. One then defines the Yamabe constant as the infimum of the above functional:
\[
\lambda_\sca(M)\coloneqq \inf_{u\in W^{1,2}(M)\setminus\{0\}} Q(u).
\]
A crucial step in the solution of the Yamabe problem is:
\begin{theorem}[\cite{Trudinger68, Aubin76-3,Yamabe60}]\label{thm: yamabe 1}
    Let $M$ be a compact $n$-dimensional Riemannian manifold satisfying $\lambda_\sca(M)<\eucl(n,2)^{-2}$. Then there is a non-zero solution to \eqref{eq:yamabe eq} with $\lambda=\lambda_\sca(M).$
\end{theorem}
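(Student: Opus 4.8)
The plan is to prove Theorem \ref{thm: yamabe 1} (in the stated smooth setting, but the argument will work verbatim in the $\RCD$ setting once one has Theorem \ref{thm:alfa}) by the direct method of the calculus of variations combined with a concentration-compactness argument. First I would fix a minimizing sequence $(u_i)\subset W^{1,2}(M)\setminus\{0\}$ for the Yamabe functional $Q$, normalized so that $\|u_i\|_{L^{2^*}(M)}=1$; then $Q(u_i)=\int|Du_i|^2+\sca|u_i|^2\,\d\vol\to\lambda_{\sca}(M)$. Since $\sca\in L^\infty(M)$ and $M$ is compact, the $L^2$-norms of the $u_i$ are controlled by their $L^{2^*}$-norms (hence bounded), and therefore $(u_i)$ is bounded in $W^{1,2}(M)$. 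Up to a subsequence, $u_i\rightharpoonup u$ weakly in $W^{1,2}$, strongly in $L^2$ (Rellich), and a.e.; replacing $u_i$ by $|u_i|$ I may also assume $u_i\ge0$, so $u\ge0$.

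The heart of the matter is to rule out loss of mass in the critical Sobolev embedding $W^{1,2}\hookrightarrow L^{2^*}$. Here I would invoke the concentration-compactness dichotomy: applying the Brezis--Lieb lemma, write $|Du_i|^2\,\d\vol\rightharpoonup |Du|^2\,\d\vol+\sum_j\mu_j\delta_{x_j}$ and $|u_i|^{2^*}\,\d\vol\rightharpoonup|u|^{2^*}\,\d\vol+\sum_j\nu_j\delta_{x_j}$ with $\nu_j^{2/2^*}\le\eucl(n,2)^2\mu_j$ (this is exactly the content of Lemma \ref{lem:conccompII} in the fixed-space case). Using $\|u_i\|_{L^{2^*}}=1$ and lower semicontinuity, one gets
\[
\lambda_\sca(M)=\lim_i Q(u_i)\ge \int|Du|^2+\sca|u|^2\,\d\vol+\sum_j\mu_j\ge \lambda_\sca(M)\Big(\int|u|^{2^*}\,\d\vol\Big)^{2/2^*}+\eucl(n,2)^{-2}\sum_j\nu_j^{2/2^*},
\]
where in the last step I used the definition of $\lambda_\sca(M)$ applied to $u$ (assuming for the moment $\lambda_\sca(M)\ge0$; the case $\lambda_\sca(M)<0$ is easier since then no concentration can occur and $u\neq0$ automatically) together with $\mu_j\ge\eucl(n,2)^{-2}\nu_j^{2/2^*}$. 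Combining this with the strict concavity of $t\mapsto t^{2/2^*}$ and the mass constraint $\int|u|^{2^*}\,\d\vol+\sum_j\nu_j=1$, the strict inequality $\lambda_\sca(M)<\eucl(n,2)^{-2}$ forces every $\nu_j=0$ and $\int|u|^{2^*}\,\d\vol=1$. Hence $u_i\to u$ strongly in $L^{2^*}$, $u\not\equiv0$, and $Q(u)=\lambda_\sca(M)$: $u$ is a minimizer.

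Finally I would derive the Euler--Lagrange equation: since $u$ minimizes $Q$ among nonzero $W^{1,2}$ functions, for every $\varphi\in W^{1,2}(M)$ the function $t\mapsto Q(u+t\varphi)$ has a critical point at $t=0$, which after the normalization $\|u\|_{L^{2^*}}=1$ gives
\[
\int\langle\nabla u,\nabla\varphi\rangle+\sca\, u\varphi\,\d\vol=\lambda_\sca(M)\int u^{2^*-1}\varphi\,\d\vol,\qquad\forall\varphi\in W^{1,2}(M),
\]
i.e. $-\Delta u+\sca\,u=\lambda_\sca(M)\,u^{2^*-1}$ weakly, with $u\ge0$ and $u\not\equiv0$. (One small care point: to differentiate $t\mapsto\|u+t\varphi\|_{L^{2^*}}^2$ one uses that $2^*\ge2$ so the map is $C^1$; and $u\ge0$ because $Q(|u|)\le Q(u)$.)

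The main obstacle is the concentration-compactness step — specifically producing the inequality $\nu_j^{2/2^*}\le\eucl(n,2)^2\mu_j$ relating the atoms of the weak limits of $|Du_i|^2$ and $|u_i|^{2^*}$, and then exploiting strict subadditivity to exclude atoms. In the smooth case this is the classical Lions argument; in the paper's generality it is precisely Lemma \ref{lem:concompI}/Lemma \ref{lem:conccompII} together with the sharp constant $\alpha_2(\X)=\eucl(N,2)^2/\min\theta_N^{2/N}$ from Theorem \ref{thm:alfa}, which is why the strict gap $\lambda_\sca<\eucl(N,2)^{-2}$ (resp. $<\min\theta_N^{N/2}/\eucl(N,2)^2$ in the $\RCD$ version stated in the introduction) is exactly the quantitative threshold needed to defeat the bubbling. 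Everything else — boundedness of the minimizing sequence, Rellich compactness at the subcritical level, lower semicontinuity, and the Euler--Lagrange computation — is routine.
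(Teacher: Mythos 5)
Your proposal is correct and follows essentially the same route the paper takes in proving the $\RCD$ generalization, Theorem~\ref{thm:4.3}: a normalized minimizing sequence, the concentration--compactness decomposition (Lemma~\ref{lem:conccompII} in the fixed-space case) with the atom inequality $\nu_j^{2/2^*}\le\eucl(n,2)^2\mu_j$, strict concavity of $t\mapsto t^{2/2^*}$ under the hypothesis $\lambda_\sca<\eucl(n,2)^{-2}$ to exclude bubbling, and then the Euler--Lagrange computation as in Proposition~\ref{prop:existence min}. The one caveat worth flagging is your parenthetical claim that the argument works ``verbatim'' in the $\RCD$ setting: when $\sca$ is merely a signed measure with $\sca\ll\Cap$, the lower semicontinuity of $u\mapsto\int u^2\,\d\sca$ along the weakly $W^{1,2}$-convergent minimizing sequence is not automatic and requires the Mazur-lemma/quasi-continuous-representative step used in the paper's proof of Theorem~\ref{thm:4.3}; in your smooth setting with $\sca\in L^\infty$ this point is indeed immediate from Rellich, so your proof of the stated theorem is complete.
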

Recall that $\eucl(n,2)$ denotes the optimal constant in the sharp Euclidean Sobolev inequality \eqref{eq:EuclSob}. It has  also  been proven by Aubin \cite{Aubin76-2} (see also \cite{LP87}) that
\begin{equation}\label{eq:yamabe upper bound}
    \lambda_\sca(M)\le \eucl(n,2)^{-2}
\end{equation}
always holds. 

The relevant point for our discussion is that Theorem \ref{thm: yamabe 1} turns out to be linked to the notion of optimal Sobolev constant $\alpha_2(M)$, in particular it is actually a corollary of the fact that $\alpha_2(M)=\eucl(n,2)^2$ (recall \eqref{eq:alfa manifold}). Since we generalized this last result to setting of compact $\RCD(K,N)$-spaces (see Theorem \ref{thm:alfa}), it is natural to ask if an analogue of Theorem \ref{thm: yamabe 1} holds also in this singular framework. We will positively address this in this part of the note.

\subsection*{Capacity and quasi continuous functions}
In the next section we will use the notions of capacity and quasi continuous functions. We briefly recall here the needed definitions and properties.

Given a metric measure space $\Xdm$, the \emph{capacity} of a set $E\subset \X$ is defined as 
\begin{equation}\label{eq:def cap}
	\Cap(E)\coloneqq\inf \{\|f\|_{W^{1,2}(\X)}^2\, : f \in W^{1,2}(\X),\, \ f\ge 1\, \mm\text{-a.e.\ in a neighborhood of $E$}\}.
\end{equation}
It turns out (see, e.g., \cite[Proposition 1.7]{DGP21}) that $\Cap$ is a submodular outer measure on $\X$ and satisfies $\mm(E)\le \Cap(E)$ for every Borel set $E\subset \X$.

A function $f:\X \to \R$ is said to be \emph{quasi-continuous} if for every $\eps>0$ there exists a set $E\subset \X$ such that $\Cap(E)<\eps$ and $f\restr{\X\setminus E}$ is continuous. We denote by $\mathcal{QC}(\X)$ the set of all equivalence classes-up to $\Cap$-a.e.\ equality-of quasi-continuous functions.

In \cite{DGP21} it has been proven that, in situations where continuous functions are dense in $W^{1,2}(\X)$,  there exists a unique  map 
$${\sf QCR} : W^{1,2}(\X)\to L^0(\Cap)$$
that is linear and such that ${\sf QCR}(f)$ is  (the $\Cap$-a.e.\ equivalence class of) a function which is \emph{quasi continuous} and coincides $\mm$-a.e.\  with $f$. Recall that when $\X$ is reflexive, then  Lipschitz functions are dense in $W^{1,2}(\X)$ (see, e.g., \cite[Proposition 7.6]{ACDM15}), hence the map ${\sf QCR}$  is available.

We conclude with the following convergence result contained in \cite{DGP21}:
\begin{equation}\label{eq:w21 conv implies cap}
    f_n \to f \text{ strongly in $W^{1,2}(\X)$ } \implies \text{ up to subsequence } {\sf QCR}(f_n)\to {\sf QCR}(f) \,\, \Cap\text{-a.e..}
\end{equation}

\subsection{Existence of solutions to the Yamabe equation on compact $\RCD$ spaces}\label{sec:existence yamabe}
We start by clarifying in which sense \eqref{eq:yamabe eq} is intended and, to this aim, we fix $\Xdm$  a compact $\RCD(K,N)$ space for some $K\in \R, N\in (2,\infty)$ with $\mm(\X)=1$. We will also denote by $2^*$ the Sobolev-exponent defined as $2^*\coloneqq2N/(N-2).$ We fix a radon measure $\sca$ in $\X$ so that, for some $p>N/2$, it satisfies
\begin{equation}\label{eq:scalar}
	\sca \ge g \mm,\,\, g \in L^p(\mm)\quad \text{and}\quad \sca \ll \Cap,
\end{equation}
where $\Cap$ denotes the capacity of $\X$ as defined above.  We also denote by $|\sca|$ the total variation of $\sca$ which for instance can be characterized by the formula $\sca = \sca^++\sca^-$, being $\sca^\pm$ the Hahn's decomposition of a general signed $\sigma$-additive measure. The reason for this more general choice of $\sca$ is the fact that on $\RCD(K,N)$ spaces a ``scalar curvature'' that is bounded is not natural (recall that to solve the Yamabe problem one would like to take $\sca= {\rm Scal}$). Indeed,  requiring only a synthetic lower bound on the Ricci curvature, it is more desirable to impose only lower bounds on $\sca$. 

Recall  that every function $u \in W^{1,2}(\X)$ has a well defined and unique quasi continuous representative ${\sf QCR}(u)$ defined $\Cap$-a.e.. In particular, thanks to \eqref{eq:scalar}, the object ${\sf QCR}(u)$ is also defined $\sca$ or $|\sca|$-a.e.. To avoid heavy notation, for any $u \in W^{1,2}(\X)$, we shall denote in the sequel by $u$ its quasi-continuous representative without further notice.

The goal is then to discuss positive solutions $u \in D(\DDelta)\cap L^2(|\sca |)$ of
	\begin{equation} 
	-\DDelta u  =\lambda u^{2^* -1}\mm -u  \sca, \qquad \lambda \in \R.\label{eq:PDE}
	\end{equation} 
	Observe that if $u \in D(\DDelta)\subset W^{1,2}(\X)$, by the Sobolev embedding we have that $u \in L^{2^*}(\mm)$ and thus, the right hand side of \eqref{eq:PDE} is a well defined Radon measure on $\X.$
A solution for this equation will be deduced with a variational approach as described above. More precisely we define the functional $Q_\sca \colon W^{1,2}(\X)\setminus\{0\} \to \R$ defined as
\[ u\mapsto Q_\sca(u):= \frac{\int |D u|^2\, \d \mm + \int |u|^2 \,\d \sca}{\|u\|_{L^{2^*}(\mm)}^2}.\]
Observe that since $\sca \ge g\mm$, with $g \in L^p(\mm)$, $p>N/2$, the integral $\int |u|^2\, \d \sca$ exists, i.e. its value is well defined. We then define 
\begin{equation}\label{eq:yamabe min}
\begin{split}
\lambda_\sca(\X)&\coloneqq \inf \{ Q_\sca(u) \ : \ u\in  W^{1,2}(\X)\setminus\{0\}\}\\
            &=\inf \{ Q_\sca(u) \ : \ u\in  W^{1,2}(\X), \, \|u\|_{L^{2^*}(\mm)}=1\},
\end{split}
\end{equation}
and  claim that
\begin{equation}\label{eq:Yamabe lower bound}
\lambda_\sca(\X)\in(-\infty,+\infty).
\end{equation}
Indeed, $\lambda_\sca(\X)<+\infty$ as can be seen considering constant functions. On the other hand  for every $u\in  W^{1,2}(\X)$ with $\|u\|_{L^{2^*}(\mm)}=1$, H\"older inequality yields
$$Q_\sca(u)\ge -\|g\|_{L^p(\mm)}\|u\|_{L^{2^*}(\mm)}=-\|g\|_{L^p(\mm)}.$$
The ultimate goal of this section is to prove the following:
\begin{theorem}\label{thm:4.3}
	Let $\Xdm$ be a compact $\RCD(K,N)$ space for some $K \in \R$, $N\in (2,\infty)$ with $\mm(\X)=1$ and let $\sca$ as in \eqref{eq:scalar}. If
	\begin{equation}\label{eq:yamabe assumption}
	    \lambda_\sca(\X) < \frac{\min_\X \theta_N^{2/N} }{\eucl(N,2)^2 },
	\end{equation}
	then there exists a non-negative and non-zero $u \in D(\DDelta)\cap L^2(|\sca|)$ which is a minimum for \eqref{eq:yamabe min} and satisfies \eqref{eq:PDE}.
\end{theorem}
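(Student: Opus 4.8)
The plan is to follow the classical variational scheme for the Yamabe problem (as in \cite{LP87}), but adapted to the $\RCD$ setting, using the sharp Sobolev constant $\alpha_2(\X)=\eucl(N,2)^2/\min_\X\theta_N^{2/N}$ established in Theorem \ref{thm:alfa}. First I would take a minimizing sequence $(u_n)\subset W^{1,2}(\X)$ for the functional $Q_\sca$ with $\|u_n\|_{L^{2^*}(\mm)}=1$ and $Q_\sca(u_n)\to\lambda_\sca(\X)$; replacing $u_n$ by $|u_n|$ we may assume $u_n\ge0$ (recall $|D|u_n||=|Du_n|$ $\mm$-a.e.\ and the capacitary/quasi-continuous representative is compatible with this). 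The lower bound $\int|u|^2\,\d\sca\ge-\|g\|_{L^p(\mm)}$ via Hölder together with $Q_\sca(u_n)\to\lambda_\sca(\X)<\infty$ forces $\sup_n\||Du_n|\|_{L^2(\mm)}<\infty$, so $(u_n)$ is bounded in $W^{1,2}(\X)$. By Proposition \ref{prop:W12compact} (on a fixed compact space, or more elementarily by reflexivity and Rellich on $\RCD$ spaces) we may pass to a subsequence with $u_n\rightharpoonup u$ weakly in $W^{1,2}$, $u_n\to u$ strongly in $L^2(\mm)$ and $\mm$-a.e., hence $u\ge0$; also ${\sf QCR}(u_n)\to{\sf QCR}(u)$ $\Cap$-a.e.\ up to subsequence. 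The key point is then to show $u\ne0$, i.e.\ that no concentration occurs, which is exactly where assumption \eqref{eq:yamabe assumption} is used.

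To rule out concentration I would run a concentration-compactness argument on the \emph{fixed} space $\X$ (the single-space specialization of Lemma \ref{lem:conccompII}, which is the classical Lions lemma): writing $|Du_n|^2\mm\weakto\mu$, $|u_n|^{2^*}\mm\weakto\nu$ weakly-$*$ in $\mathcal M^+_b(\X)$, one gets $\nu=|u|^{2^*}\mm+\sum_{j\in J}\nu_j\delta_{x_j}$ and $\mu\ge|Du|^2\mm+\sum_{j\in J}\mu_j\delta_{x_j}$ with $\nu_j^{2/2^*}\le\alpha_2(\X)\mu_j=\frac{\eucl(N,2)^2}{\min_\X\theta_N^{2/N}}\mu_j$; more precisely, localizing at $x_j$ one should use the \emph{local} Sobolev inequality of Theorem \ref{thm:local sobolev intro} to obtain the sharper $\nu_j^{2/2^*}\le\frac{\eucl(N,2)^2}{\theta_N(x_j)^{2/N}}\mu_j$. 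For the potential term, $u\mapsto\int u^2\,\d\sca$ is weakly continuous along the sequence: since $\sca\ll\Cap$ and ${\sf QCR}(u_n)\to{\sf QCR}(u)$ $\Cap$-a.e., the bound $\sca\ge g\mm$ with $g\in L^p(\mm)$, $p>N/2=(2^*)'\cdot\frac{2^*}{2}$... — concretely, by the Sobolev embedding $u_n$ is bounded in $L^{2^*}$, $u_n^2$ is bounded in $L^{2^*/2}$ and $2^*/2=N/(N-2)>p'$, so up to passing to the positive and negative parts of $\sca$ and using that $|\sca|\ll\Cap$ together with dominated/Vitali convergence for the $\Cap$-a.e.\ convergent quasicontinuous representatives, $\int u_n^2\,\d\sca\to\int u^2\,\d\sca$. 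Combining, $\lambda_\sca(\X)=\lim_nQ_\sca(u_n)\ge\int|Du|^2\,\d\mm+\int u^2\,\d\sca+\sum_j\mu_j$ and $1=\nu(\X)=\int|u|^{2^*}\,\d\mm+\sum_j\nu_j$. If $u\equiv0$ then $\sum_j\nu_j=1$, and by strict concavity of $t\mapsto t^{2/2^*}$ and $\mu_j\ge\theta_N(x_j)^{2/N}\eucl(N,2)^{-2}\nu_j^{2/2^*}\ge\min_\X\theta_N^{2/N}\eucl(N,2)^{-2}\,\nu_j^{2/2^*}$ we would get $\lambda_\sca(\X)\ge\min_\X\theta_N^{2/N}\eucl(N,2)^{-2}\big(\sum_j\nu_j\big)^{2/2^*}=\min_\X\theta_N^{2/N}\eucl(N,2)^{-2}$, contradicting \eqref{eq:yamabe assumption}. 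Hence $u\ne0$.

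Next I would upgrade to $u$ being a genuine minimizer. Weak $W^{1,2}$ lower semicontinuity of the Cheeger energy, weak continuity of the potential term just established, and $\|u\|_{L^{2^*}(\mm)}\le\liminf\|u_n\|_{L^{2^*}(\mm)}=1$ give $Q_\sca(u)\le\lambda_\sca(\X)/\|u\|_{L^{2^*}(\mm)}^2$; by minimality of $\lambda_\sca(\X)$ and $u\ne0$ this forces $\|u\|_{L^{2^*}(\mm)}=1$ and $Q_\sca(u)=\lambda_\sca(\X)$, i.e.\ $u$ is a nonnegative nonzero minimizer; in particular $u_n\to u$ strongly in $W^{1,2}$ and in $L^{2^*}(\mm)$. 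Finally I would derive the Euler--Lagrange equation: for $\phi\in\LIP(\X)$ and $t\in\R$ small, differentiating $t\mapsto Q_\sca(u+t\phi)$ at $t=0$ (the map is differentiable because $u\in W^{1,2}\cap L^{2^*}(\mm)$, $\phi$ is bounded Lipschitz, and $\sca$ integrates $\phi^2$, $u\phi$ by Hölder as above) yields
\[
\int\langle\nabla u,\nabla\phi\rangle\,\d\mm+\int u\phi\,\d\sca=\lambda_\sca(\X)\int u^{2^*-1}\phi\,\d\mm,\qquad\forall\,\phi\in\LIP(\X),
\]
where I used $\|u\|_{L^{2^*}(\mm)}=1$. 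Testing with $\phi=u$ shows $\int u^2\,\d|\sca|<\infty$ hence $u\in L^2(|\sca|)$; and since $u^{2^*-1}\mm-u\,\sca$ is a Radon measure and the displayed identity says $-\int\langle\nabla u,\nabla\phi\rangle\,\d\mm=\int\phi\,\d(\lambda_\sca(\X)u^{2^*-1}\mm-u\,\sca)$ for all Lipschitz $\phi$, by definition of the measure-valued Laplacian $u\in D(\DDelta)$ with $-\DDelta u=\lambda_\sca(\X)u^{2^*-1}\mm-u\,\sca$, which is \eqref{eq:PDE} with $\lambda=\lambda_\sca(\X)$. The main obstacle is the concentration-compactness step in the borderline case $u\equiv0$: one must make sure the constant in the local Sobolev inequality at a concentration point is exactly $\eucl(N,2)^2/\theta_N(x_j)^{2/N}$ (this is where Theorem \ref{thm:local sobolev intro} enters, replacing the global $\alpha_2(\X)$), and that the potential term genuinely passes to the limit along the concentrating sequence — for the latter the hypothesis $\sca\ll\Cap$ is essential, since absolute continuity with respect to capacity is precisely what prevents the measure $\sca$ from "seeing" the concentration point, which has zero capacity.
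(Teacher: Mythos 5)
Your overall scheme is the same as the paper's (minimizing sequence normalized in $L^{2^*}$, a fixed-space concentration-compactness argument based on the sharp constant of Theorem \ref{thm:alfa}, then the Euler--Lagrange equation as in Proposition \ref{prop:existence min}), but two steps as written do not hold up.

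First, the claimed continuity of the potential term is unjustified. The convergence ${\sf QCR}(u_n)\to{\sf QCR}(u)$ $\Cap$-a.e.\ is available (see \eqref{eq:w21 conv implies cap}) only along \emph{strongly} $W^{1,2}$-convergent sequences, while at this stage you only have weak $W^{1,2}$-convergence; and even granting $\Cap$-a.e.\ convergence, there is no dominating function for the positive part $\sca^+$, which under \eqref{eq:scalar} is merely a Radon measure absolutely continuous with respect to $\Cap$ (the $L^p$ control only dominates $\sca^-\le |g|\mm$), so Vitali/dominated convergence cannot be invoked and $\int u_n^2\,\d\sca\to\int u^2\,\d\sca$ may simply fail. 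What is true, and what every inequality in your argument actually uses, is lower semicontinuity, $\int u^2\,\d\sca\le\liminf_n\int u_n^2\,\d\sca$; the paper obtains it by applying Mazur's lemma to get convex combinations converging strongly in $W^{1,2}(\X)$, hence $\Cap$-a.e.\ (so $\sca^+$-a.e.), and then Fatou for $\sca^+$ together with strong $L^2(\sca^-)$-convergence via H\"older. This is also where $u\in L^2(|\sca|)$ is established; note that you cannot get it afterwards by ``testing the Euler--Lagrange identity with $\phi=u$'', since $u$ need not be Lipschitz.

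Second, and more seriously, your dichotomy only excludes total vanishing $u\equiv 0$: the contradiction with \eqref{eq:yamabe assumption} rules out $\sum_j\nu_j=1$, but it does not rule out \emph{partial} concentration, i.e.\ $u\neq 0$ together with some $\nu_j>0$, in which case $\|u\|_{L^{2^*}(\mm)}<1$. In that regime your final step breaks down whenever $\lambda_\sca(\X)>0$: from lower semicontinuity you only get $\int|Du|^2\,\d\mm+\int u^2\,\d\sca\le\lambda_\sca(\X)$, hence $\lambda_\sca(\X)\le Q_\sca(u)\le\lambda_\sca(\X)/\|u\|_{L^{2^*}(\mm)}^2$, and these two bounds are perfectly compatible with $\|u\|_{L^{2^*}(\mm)}<1$ and $Q_\sca(u)>\lambda_\sca(\X)$, so minimality of $u$ does not follow. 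The paper closes exactly this case ($\lambda_\sca(\X)\ge0$) by the chain of inequalities combining $\int|Du|^2\,\d\mm+\int u^2\,\d\sca\ge\|u\|_{L^{2^*}(\mm)}^2\lambda_\sca(\X)$, the bound $\mu_j\ge\lambda_\eps\nu_j^{2/2^*}$ with $\lambda_\eps>\lambda_\sca(\X)$, and the strict concavity of $t\mapsto t^{2/2^*}$, which forces all inequalities to be equalities and hence either $\|u\|_{L^{2^*}(\mm)}=1$ with no atoms, or $u=0$ with a single atom, the latter being excluded by the strict inequality $\lambda_\eps>\lambda_\sca(\X)$. You need this (or an equivalent strict-subadditivity argument) to finish; incidentally, the sharper local bound $\nu_j^{2/2^*}\le\eucl(N,2)^2\theta_N(x_j)^{-2/N}\mu_j$ you invoke is both unproved as stated (it would need an argument in the spirit of Lemma \ref{lem:concpoint}) and unnecessary, since the global constant from Theorem \ref{thm:alfa} suffices.
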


We start by showing that \eqref{eq:PDE} is the Euler-Lagrange equation for the minimization problem \eqref{eq:yamabe min}.
\begin{proposition}\label{prop:existence min}
	Let $\Xdm$ be a compact ${\rm RCD}(K,N)$-space for some $K \in \R,N \in (2,\infty)$ with $\mm(\X)=1$ and let $\sca$ be as in \eqref{eq:scalar}. Suppose  $u\in W^{1,2}(\X)\cap L^2(|\sca|)$ is a minimizer for \eqref{eq:yamabe min} satisfying $\|u\|_{L^{2^*}(\mm)}=1$. Then 
	\begin{equation}  \int \la \nabla u,\nabla v \ra \, \d \mm =-\int uv \, \d \sca + \lambda_\sca(\X) \int u^{{2^*} -1}v\, \d \mm, \qquad \forall v \in \LIP(\X).\label{eq:weakPDEs}
	\end{equation}
\end{proposition}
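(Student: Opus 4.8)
The plan is to derive the Euler--Lagrange equation \eqref{eq:weakPDEs} by the standard first-variation argument, being careful only about the integrability issues caused by the measure $\sca$ (which is not absolutely continuous with respect to $\mm$) and by the critical exponent $2^*$. First I would fix a minimizer $u$ with $\|u\|_{L^{2^*}(\mm)}=1$ and an arbitrary test function $v\in\LIP(\X)$, and consider the one-parameter family $u_t\coloneqq u+tv$ for $t$ near $0$. Since $v$ is Lipschitz and bounded, $u_t\in W^{1,2}(\X)\cap L^2(|\sca|)$ for all $t$, and $u_t\neq 0$ in $L^{2^*}(\mm)$ for $|t|$ small, so $Q_\sca(u_t)$ is well defined and, by minimality, $t\mapsto Q_\sca(u_t)$ has a minimum at $t=0$. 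The task is then to show $Q_\sca(u_t)$ is differentiable at $t=0$ and to compute $\frac{\d}{\d t}\big|_{t=0}Q_\sca(u_t)=0$.

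The numerator $\NN(t)\coloneqq\int |D u_t|^2\,\d\mm+\int |u_t|^2\,\d\sca$ expands as $\int|Du|^2\,\d\mm+2t\int\langle\nabla u,\nabla v\rangle\,\d\mm+t^2\int|Dv|^2\,\d\mm$ for the Dirichlet part (using the infinitesimal Hilbertianity and bilinearity of \eqref{eq:scalare}), and as $\int u^2\,\d\sca+2t\int uv\,\d\sca+t^2\int v^2\,\d\sca$ for the potential part; here $v$ (and its quasi-continuous representative, which coincides with $v$ since $v$ is continuous) is bounded, and $u\in L^2(|\sca|)$ by hypothesis, so all three integrals against $\sca$ are finite and $\NN$ is a genuine quadratic polynomial in $t$ with $\NN'(0)=2\int\langle\nabla u,\nabla v\rangle\,\d\mm+2\int uv\,\d\sca$. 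For the denominator $\DD(t)\coloneqq\|u_t\|_{L^{2^*}(\mm)}^2=\big(\int|u+tv|^{2^*}\,\d\mm\big)^{2/2^*}$, I would note that $u\in L^{2^*}(\mm)$ by the Sobolev embedding (Theorem~\ref{thm:alfa} / Proposition~\ref{prop:sobolev embedding general}) and $v\in L^\infty\subset L^{2^*}$, so $t\mapsto\int|u+tv|^{2^*}\,\d\mm$ is differentiable at $t=0$ with derivative $2^*\int |u|^{2^*-2}u\,v\,\d\mm$; the dominated convergence needed for this is justified since the difference quotients of $|u+tv|^{2^*}$ are bounded by $C(|u|^{2^*-1}|v|+|v|^{2^*})\in L^1(\mm)$ uniformly for $|t|\le 1$, by convexity of $s\mapsto|s|^{2^*}$. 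Hence $\DD$ is differentiable at $t=0$ with $\DD(0)=1$ and $\DD'(0)=2\int |u|^{2^*-2}u\,v\,\d\mm$.

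Putting these together, the quotient rule gives $0=\frac{\d}{\d t}\big|_{t=0}\frac{\NN(t)}{\DD(t)}=\NN'(0)-\NN(0)\DD'(0)$, i.e.
\[
\int\langle\nabla u,\nabla v\rangle\,\d\mm+\int uv\,\d\sca=\NN(0)\int|u|^{2^*-2}u\,v\,\d\mm,
\]
and since $\NN(0)=Q_\sca(u)=\lambda_\sca(\X)$ this is exactly \eqref{eq:weakPDEs} for arbitrary $v\in\LIP(\X)$ once one replaces $|u|^{2^*-2}u$ by $u^{2^*-1}$ — which is legitimate here because $u$ can be taken non-negative (replacing $u$ by $|u|$ does not change $Q_\sca$, as $|Du|=|D|u||$ $\mm$-a.e.\ and $|u|^2=u^2$; this reduction should be recorded at the start). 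I would spell out in the proof that the quasi-continuous representative is used consistently so that $\int uv\,\d\sca$ and $\int u^2\,\d\sca$ make sense $\sca$-a.e., invoking \eqref{eq:scalar} and the density of Lipschitz functions in $W^{1,2}(\X)$ (reflexivity, Remark~\ref{rmk:p independence CD}).

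The main obstacle, and the only genuinely non-routine point, is the differentiability of the potential term $\int|u_t|^2\,\d\sca$ and of the $L^{2^*}$-numerator: one must be sure that the minimizer $u$ actually lies in $L^2(|\sca|)$ (which is assumed in the statement, so this is fine) and in $L^{2^*}(\mm)$ (which follows from the Sobolev inequality available on compact $\RCD(K,N)$ spaces, Proposition~\ref{prop:sobolev embedding general}), and that the cross terms $\int|u|^{2^*-1}|v|\,\d\mm$ are finite — this is Hölder's inequality with exponents $2^*/(2^*-1)$ and $2^*$, using $u^{2^*-1}\in L^{2^*/(2^*-1)}(\mm)$ and $v\in L^{2^*}(\mm)$. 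All of these are consequences of results already in the excerpt, so the proof is essentially a careful bookkeeping of the first variation; no compactness or concentration-compactness input is needed at this stage (that enters only in the subsequent existence argument for the minimizer, via \eqref{eq:yamabe assumption}).
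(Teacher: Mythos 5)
Your argument is correct and is essentially the same first-variation computation as in the paper: you write $u_t=u+tv$ and differentiate the quotient $Q_\sca(u_t)$ at $t=0$ via the quotient rule, whereas the paper first normalizes $u^\eps:=\|u+\eps v\|_{L^{2^*}}^{-1}(u+\eps v)$ and compares $Q_\sca(u^\eps)$ to $Q_\sca(u)=\lambda_\sca(\X)$, but these are two bookkeeping variants of the same Euler--Lagrange computation, with the same ingredients (bilinear expansion of the Dirichlet and $\sca$-terms, dominated convergence for the $L^{2^*}$-norm, and the integrability coming from $u\in W^{1,2}\cap L^2(|\sca|)$ and $v\in\LIP$). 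Your observation that $u$ should be taken non-negative (replace $u$ by $|u|$, which is again a minimizer with $|D|u||=|Du|$) so that $u^{2^*-1}$ is meaningful is a legitimate clarification of a point that the paper's statement leaves implicit.
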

\begin{proof}
	We consider  for every  $\eps\in(-1,1)$ and $v \in \LIP(\X)$, the function $u^\eps\coloneqq\|u+ \eps v\|_{L^{2^*}(\mm)}^{-1} (u + \eps v )$, whenever $\|u + \eps v\|_{L^{2^*}(\mm)}$ is not zero. It can be seen that  for a fixed $v$ then $u^\eps$ is well defined at least for $\eps$ close to zero. Indeed, the fact that $\int |u|^{2^*},\d \mm=1$ grants that $\|u + \eps v\|_{L^{2^*}(\mm)}\to 1$ as $\eps \to 0$ (see below) and in particular $\|u + \eps v\|_{L^{2^*}(\mm)}$ does not vanish for $|\eps|$ small enough. By minimality  we have (recall also \eqref{eq:scalare})
	\[ 0 \le \lim_{\eps \downarrow 0}\frac{Q_\sca(u^\eps) -Q_\sca(u)}{\eps} = \lim_{\eps \downarrow0} \frac{1}{\eps}\left(\frac{1}{I_\eps^2} -1\right)\lambda_\sca(\X)+ \frac{2}{I_\eps^2} \int \la \nabla u,\nabla v\ra \, \d \mm + \int u v\, \d \sca,\]
	where $I_\eps\coloneqq\|u+ \eps v\|_{L^{2^*}(\mm)}$. Furthermore, from the elementary estimate $| |a+\eps b|^q-|a|^q |\le q|\eps b|\big||a+\eps b|^{q-1}+|a|^{q-1}\big|, $ with $q=2^*,$ and the fact that $u,v\in L^{2^*}(\mm)$, we have that $\int |u+\eps v|^q\, \mm \to 1$ as $\eps \to 0.$ Thanks to  the same estimates,  the dominated convergence theorem grants that
	\[ \lim_{\eps \downarrow 0} \frac{1- I_\eps^2}{\eps} = \frac{2}{2^*}\lim_{\eps \downarrow 0} \int \frac{ |u|^{2^*} - |u+\eps v|^{2^*}}{\eps} \, \d \mm = -2\int  u^{2^*-1}v\, \d \mm. \]
	Arguing analogously considering $\eps\uparrow 0$ gives \eqref{eq:weakPDEs}.
\end{proof}

We can now prove Theorem \ref{thm:4.3} which, thanks to the previous proposition, amounts to the existence of a minimizer for \eqref{eq:yamabe min}. We will do so using the concentration-compactness tools developed in Section \ref{sec:CC}, here employed with a fixed space $\X$.
\begin{proof}[Proof of Theorem \ref{thm:4.3}]
    Let $u_n \in W^{1,2}(\X)$ be such that $Q_\sca(u_n)\to \lambda_\sca(\X)$ and $\|u_n\|_{L^{2^*}(\mm)}=1$. We claim that $u_n$ are uniformly bounded in $W^{1,2}(\X)$. Indeed, this can be seen from the estimate
    	\[
	\int |D u_n|^2 +|u_n|^2 \,\d \mm\le \int |D u_n|^2 \, \d \mm + \int |u_n|^2\, \d \sca + (1+\|g\|_{L^p(\mm)})\|u_n\|_{L^{2^*}(\mm)}= 1+Q_\sca(u_n)+\|g\|_{L^p(\mm)},
	\]
	obtained combining the H\"older inequality with \eqref{eq:scalar}. Hence, by compactness (see Proposition \ref{prop:W12compact}), up to a not relabeled subsequence, we have $u_n \to u$ in $L^2(\mm)$ for some $u \in W^{1,2}(\X).$ Observe that, since $u\in W^{1,2}(\X)$, $u$ admits a quasi-continuous representative (still denoted by $u$) and thus thanks to \eqref{eq:scalar} it makes sense to integrate $u^2$ against $|\sca|$. We  claim that  $u \in L^2(|\sca|)$ and
    \begin{equation}\label{eq:lsc scalar}
       \int u^2 \, \d \sca\le \liminf_n \int u_n^2 \, \d \sca.
    \end{equation}
    Observe first that, by \eqref{eq:scalar}, we have $\sca^-\le |g| \mm$. In particular by the H\"older inequality, denoted by $p'$ the conjugate exponent to $p$, $\int u^2 \d \sca^-\le \|g\|_{L^p(\mm)}\|u\|_{L^{2p'}}^{2}<+\infty $, since $u \in L^{2^*}(\mm)$ by the Sobolev embedding, hence $u \in L^2(\sca^-)$. Moreover, again by the H\"older inequality, since $u_n \to u$ in $L^2(\mm)$, we get that and $u_n \to u$ also in $L^2(\sca^-).$ To prove \eqref{eq:lsc scalar} it remains to prove that $\int u^2 \, \d \sca^+\le \liminf_n \int u_n^2 \, \d \sca^+.$
    Observe first that up to passing to a further non-relabeled subsequence we can assume that the right hand side is actually a limit. From Mazur's lemma there exists a sequence $(N_n)\subset \N$ and numbers $(\alpha_{n,i})_{i=n}^{N_n}\subset [0,1]$ such that $\sum_{i=n}^{N_n}\alpha_{n_i}=1$ for every $n \in N$ and $v_n \coloneqq \sum_{i=n}^{N_n}\alpha_{n_i}u_i$ converges to $u$ strongly in $W^{1,2}(\X).$ In particular from \eqref{eq:w21 conv implies cap} up to a subsequence $v_n \to u$ also $\Cap$-a.e.\ and thus, since $\sca^+ \ll \Cap$ (recall \eqref{eq:scalar}), also $\sca^+$-a.e.. Therefore, from Fatou's Lemma and the convexity of the $L^2$-norm we have
    \[
    \|u\|_{L^2(\sca^+)}\le \liminf_n \|v_n\|_{L^2(\sca^+)}\le  \sum_{i=n}^{N_n}\alpha_{n_i} \|u_i\|_{L^2(\sca)}\le \lim_n \|u_n\|_{L^2(\sca^+)},
    \]
    since we are assuming that the last limit exists. This proves the claim.
    
    We now distinguish two cases:
    
    \noindent {\sc Case} 1. $\lambda_\sca(\X)<0$. By lower semicontinuity of the Cheeger-energy and \eqref{eq:lsc scalar} we have
    \[
    0>\lambda_\sca(\X)=\lim_n Q_\sca(u_n)\ge \int |D u|^2 \, \d \mm +\int u^2\d \sca.
    \]
    In particular $u$ is not identically zero and by the lower semicontinuity of the $L^{2^*}(\mm)$-norm we have $0<\|u\|_{L^{2^*}(\mm)}\le 1$. Moreover, from the above we have that $\int |D u|^2 \, \d \mm +\int u^2\d \sca$ is negative, hence
    \[
    \lambda_\sca(\X) \ge \|u\|_{L^{2^*}(\mm)}^{-2} \left(\int |D u|^2 \, \d \mm +\int u^2\d \sca\right)=Q_\sca(\|u\|_{L^{2^*}(\mm)}^{-1}u).
    \]
    Therefore $\|u\|_{L^{2^*}(\mm)}^{-1}u$ is a minimizer for $Q_\sca(u)$.

     \noindent {\sc Case} 2. $\lambda_\sca(\X)\ge 0$.
    Recall that the sequence $(u_n)$ is uniformly bounded both in $L^{2^*}(\mm)$ and in $W^{1,2}(\X)$. Therefore since $\X$ is compact, again up to a subsequence, $|D u_n|^2 \mm \rightharpoonup \mu$ and $|u_n|^{2^*}\rightharpoonup \nu$ for some $\mu \in \mathscr{M}_b^+(\X)$ and $\nu \in \PP(\X)$ in duality with $C(\X).$ By assumption there exists $\eps>0$ such that $\lambda_\sca(\X)< \frac{\min_\X \theta_N^{2/N}}{\eucl(N,2)^2 +\eps}\eqqcolon \lambda_\epsilon$.  We fix one of such $\eps>0$ and define $A_\eps = \lambda_\eps^{-1}$. From Theorem \ref{thm:alfa} there exists a constant $B_\eps>0$ so that
    \[
    \|u\|_{L^{2^*}(\mm)}^2\le A_\eps\||Du|\|_{L^2(\mm)}^2+B_{\eps}\|u\|_{L^2(\mm)}^2,\quad \forall u \in W^{1,2}(\X).
    \]
    Hence we are in position to apply Lemma \ref{lem:conccompII} (with fixed space $\X$) to deduce that there exists a countable set of indices $J$, points $(x_j)_{j\in J}\subset \X$ and weights $(\mu_j)\subset \R^+ $, $(\nu_j)\subset \R^+ $ such that $\mu_j \ge \lambda_\eps \nu_j^{2/2^*}$ for every $j \in J$ and
    \begin{align*}
        &\nu=|u|^{2^*}\mm+\sum_{j\in J} \nu_j \delta_{x_j}, \quad \mu \ge |D u|^2\mm+\sum_{j \in J}\mu_j\delta_{x_j}.
    \end{align*}
We now observe that
\begin{equation}\label{eq:imporant observation}
    \int |D u|^2\, \d \mm+ \int u^2\, \d \sca \ge \|u\|_{L^{2^*}(\mm)}^2\lambda_\sca(\X).
\end{equation}
Indeed, this is obvious if $u=0$ $\mm$-a.e., hence we assume that $u \neq 0$  $\mm$-a.e.. In this case, \eqref{eq:imporant observation} follows noticing that $\lambda_\sca(\X)\le Q_\sca(u\|u\|_{L^{2^*}(\mm)}^{-1})=\|u\|_{L^{2^*}(\mm)}^{-2} \left(\int |D u|^2\d \mm+ \int u^2\, \d \sca \right)$. 
Therefore using again \eqref{eq:lsc scalar} we have
\begin{align*}
    \lambda_\sca (\X) = \lim_n Q_\sca (u_n) &\ge \mu(\X)+\int u^2\, \d \sca \ge \int |D u|^2\d \mm+\lambda_\eps\sum_{j \in J}\nu_j^{2/2^*} + \int  u^2\, \d \sca\\
        &\overset{\eqref{eq:imporant observation}}{\ge}\|u\|_{L^{2^*}(\mm)}^2\lambda_\sca(\X)+\lambda_\eps\sum_{j \in J}\nu_j^{2/2^*} \ge \lambda_\sca(\X)(\|u\|_{L^{2^*}(\mm)}^2+\sum_{j \in J}\nu_j^{2/2^*})\\
    &\ge \lambda_\sca(\X) \Big( \int |u|^{2^*}\,\d \mm+\sum_{j \in J}\nu_j \Big)^{2/2^*}=\lambda_\sca(\X)\nu(\X) = \lambda_\sca(\X),
\end{align*}
where in the last line,  we used the concavity of the function $t^{2/2^*}$, the fact that $\nu \in \PP(\X)$ and finally that $\lambda_\sca(\X)\ge0.$ Hence all the inequalities are equalities and in particular from the strict concavity of $t^{2/2^*}$ we deduce that either $\int |u|^{2^*}\d \mm=1$ or $u=0$ (and the numbers $\nu_j$ are all zero except one that is equal to one). In the second case, plugging $u=0$ in the above chain of inequalities, we infer that $\lambda_\eps=\lambda_\sca(\X)$  which is a contradiction. Hence, we must have $\|u\|_{L^{2^*}(\mm)}=1$ and $u_n \to u$ strongly in $L^{2^*}(\mm)$ and in particular $u$ is a minimizer for \eqref{eq:yamabe min}. This together with Proposition \ref{prop:existence min} concludes the proof.
\end{proof}

We conclude by extending the classical upper bound \eqref{eq:yamabe upper bound} to the setting of $\RCD(K,N)$ spaces. This in particular shows that \eqref{eq:yamabe assumption} is a reasonable assumption.  Unfortunately, at present, we are able to prove this comparison only by adding  integrability conditions on $\sca$. 
\begin{proposition}\label{prop:lambdaalpha}
     	Let $\Xdm$ be a compact $\RCD(K,N)$ space for some $K \in \R$, $N\in (2,\infty)$ and let $\sca \in L^p (\mm)$, with $p > \frac N2$. Then
	\[ \lambda_\sca(\X) \le \frac{\min_\X \theta_N^{2/N}}{\eucl(N,2)^2}.\]
\end{proposition}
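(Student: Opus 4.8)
The statement $\lambda_\sca(\X)\le \min_\X\theta_N^{2/N}/\eucl(N,2)^2$ is the natural $\RCD$ analogue of Aubin's upper bound, and the plan is to mimic the classical ``test-function at a point'' argument, replacing the Euclidean ball computation with the behaviour of the space near a point of minimal density. Fix a point $x_0\in\X$ with $\theta_N(x_0)=\min_\X\theta_N$ (this exists by lower semicontinuity of $\theta_N$ and compactness). The idea is to plug into the Yamabe quotient $Q_\sca$ a rescaled family of functions concentrated near $x_0$, modelled on the Euclidean Sobolev extremals $v_b(r)=(1+br^{2/(N-2)})^{(2-N)/2}$ from Lemma \ref{lem:bliss}, which realize equality in the Bliss inequality, hence in the Euclidean Sobolev inequality with constant $\eucl(N,2)$.

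\textbf{Key steps.} First I would blow up at $x_0$: by Theorem \ref{thm:mGHcompact} and the stability/compactness of the $\cd(K,N)$ condition, along a suitable sequence $r_i\to 0^+$ the rescaled spaces $(\X,\sfd/r_i,\mm/r_i^N,x_0)$ converge pmGH to a $\cd(0,N)$ metric measure cone $(\Y,\sfd_\Y,\mm_\Y,\oo_\Y)$ with $\mm_\Y(B_r(\oo_\Y))=\theta_N(x_0)\omega_N r^N$ for all $r>0$. On this cone, Lemma \ref{lem:extremal sequence cone} (after the obvious rescaling of the measure by the constant $\theta_N(x_0)$, tracking how the Sobolev ratio scales) produces compactly supported Lipschitz functions $w_k$ with
\[
\lim_k\frac{\|w_k\|_{L^{2^*}(\mm_\Y)}}{\||Dw_k|\|_{L^2(\mm_\Y)}}\ge \theta_N(x_0)^{1/N}\eucl(N,2)^{-1}\cdot(\text{normalization}),
\]
more precisely $\||Dw_k|\|_{L^2(\mm_\Y)}^2/\|w_k\|_{L^{2^*}(\mm_\Y)}^2\to \theta_N(x_0)^{2/N}\eucl(N,2)^{-2}$ when the functions are taken to be the radial profiles $v_b\circ\sfd_{\oo_\Y}$. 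Second, I would transplant these functions back to $\X$: since $w_k$ has compact support in $\Y$, for each $k$ and $i$ large one can use the pmGH-convergence to build $w_{k,i}\in\LIP_c(\X)$ supported in a small ball $B_{\rho_{k,i}}(x_0)$ (with $\rho_{k,i}\to0$) whose rescaled Dirichlet energy and $L^{2^*}$-norm approximate those of $w_k$ — this is exactly the direction of the $\Gamma$-$\limsup$ for the $2$-Cheeger energy (equation \eqref{eq:Gammalimsup}) combined with $L^{2^*}$-strong convergence of the recovery sequence, which here one gets for free because all functions are uniformly bounded with uniformly small support. Third, I would estimate $Q_\sca(w_{k,i})$ on $\X$: the gradient term is $\||Dw_{k,i}|\|_{L^2(\mm)}^2$; the potential term $\int |w_{k,i}|^2\,\d\sca$ is controlled, using $\sca\in L^p(\mm)$ with $p>N/2$, by H\"older as $\|\sca\|_{L^p(\mm)}\|w_{k,i}\|_{L^{2p'}(\mm)}^2$, and since $2p'<2^*$ and $w_{k,i}$ lives on a ball of radius $\rho\to0$, this is $o\big(\||Dw_{k,i}|\|_{L^2(\mm)}^2\big)$ as $\rho\to0$ (concentration on a shrinking ball plus the scaling $2p'<2^*$ makes the lower-order term negligible relative to the gradient term). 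Passing to the limit first in $i$, then in $k$ (and optimizing over $b$), gives $\lambda_\sca(\X)\le Q_\sca(w_{k,i})\to \theta_N(x_0)^{2/N}\eucl(N,2)^{-2}$.

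\textbf{Main obstacle.} The delicate point is the transplantation step: one must construct the competitor $w_{k,i}\in\LIP_c(\X)$ on the \emph{fixed} space $\X$ out of a function on the \emph{blow-up} cone $\Y$, and control \emph{both} $\int|Dw_{k,i}|^2\,\d\mm$ from above \emph{and} $\int|w_{k,i}|^{2^*}\,\d\mm$ from below, with errors vanishing in the right order. Concretely, one rescales: the function on $\X$ at scale $\rho$ should look like $w_k(\sfd(x_0,\cdot)/\rho)$; the $L^{2^*}$-norm and the Dirichlet energy are then both governed by $\mm(B_{\rho r}(x_0))/(\omega_N(\rho r)^N)\to\theta_N(x_0)$, so one needs uniform (in $r$ on the support) convergence of these volume ratios, which is a consequence of the Bishop--Gromov monotonicity \eqref{eq:Bishop} together with $\lim_{r\to0}\omega_Nr^N/v_{K,N}(r)=1$. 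Making the chain rule rigorous for $|D(w_k\circ\sfd_{x_0}/\rho)|\le \rho^{-1}|w_k'|\circ(\sfd_{x_0}/\rho)$ uses that $\sfd_{x_0}$ is $1$-Lipschitz, exactly as in the proof of Lemma \ref{lem:extremal sequence cone}. I expect the remaining estimates — the H\"older bound on the $\sca$-term and the limit computation — to be routine once the competitor is in place; alternatively, one could shortcut the whole blow-up by invoking Theorem \ref{thm:alfa} together with a direct testing argument, but the cleanest self-contained route is the concentration construction just outlined.
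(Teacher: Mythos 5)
Your proposal is correct and follows essentially the same route as the paper's proof: blow-up at a point realizing $\min_\X\theta_N$, the almost-extremal radial functions on the limit cone from Lemma \ref{lem:extremal sequence cone}, transplantation via the $\Gamma$-$\limsup$ recovery sequence on the rescaled spaces (using scale invariance of the critical ratio), and the H\"older bound with the subcritical scaling $2p'<2^*$ to make the $\sca$-term negligible. The only deviations are cosmetic: the paper settles for weak $L^{2^*}$ lower semicontinuity instead of your $L^{2^*}$-strong convergence claim (which, if wanted, should be justified by truncation/uniform $L^\infty$ bounds as in Lemma \ref{lem:mixrecov} rather than by ``small support''), and it disposes of the trivial case $\min_\X\theta_N=+\infty$ in one line, which your argument implicitly assumes away.
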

\begin{proof}
	The argument is almost the same as for Theorem \ref{thm:A lower bound}. We start noticing that in the case $\min_\X \theta_N=+\infty$, evidently there is nothing to prove.  We are left then to deal with the case $0< \min_\X \theta_N<+\infty$. Let  $x \in \X$ such that $\theta_N(x)=\min_\X \theta_N.$ Then there exists a sequence $r_i\to 0$ such that the sequence of metric measure spaces $(\X_i,\sfd_i,\mm_i,x_i)\coloneqq(\X,\sfd/r_i,\mm/r_i^N,x)$ pmGH-converges to an $\RCD(0,N)$  space $(\Y,\sfd_\Y,\mm_\Y,\oo_\Y)$ satisfying $\mm_\Y(B_r(\soo_\Y))=\omega_N\theta_N(x) r^N$ for every $r>0$ (this space is actually a cone by \cite{GDP17}). In particular from Lemma \ref{lem:extremal sequence cone} for every $\eps>0$ there exists a non-zero $u \in \LIP_c(\Y)$ such that $\frac{\|u\|_{L^{2^*}(\mm_\Y)}^2}{\||Du|\|_{L^2(\mm_\Y)}^2}\ge  \frac{\eucl(N,2)^2-\eps}{\theta_N(x)^{2/N}}$. Then by  the $\Gamma$-convergences of the $2$-Cheeger energies  there exists a sequence $u_i \in W^{1,2}(\X_i)$ such that $u_i \to u$ strongly in $W^{1,2}$. Moreover, since $u_i$ are uniformly bounded in $W^{1,2}$ (meaning in $W^{1,2}(\X_i)$), by the Sobolev embedding (recall also the scaling property in \eqref{eq:scaling}) we have $\sup_i \|u_i\|_{L^{2^*}(\mm_i)}<+\infty$. In particular from the lower semicontinuity of the $L^{2^*}$-norm we get
	\begin{equation}\label{eq:lambda limit}    \liminf_i\frac{\|u_i\|_{L^{2^*}(\mm)}^2}{\||D u_i|\|_{L^2(\mm)}^2}=\liminf_i\frac{\|u_i\|_{L^{2^*}(\mm_i)}^2}{\||D u_i|_i\|_{L^2(\mm_i)}^2}\ge\frac{\|u\|_{L^{2^*}(\mm_\Y)}^2}{\||Du|\|_{L^2(\mm_\Y)}^2}\ge   \frac{\eucl(N,2)^2-\eps}{\min_\X \theta_N^{2/N}},
	\end{equation}
where $|D u_i|_i$ denotes the weak upper gradient computed in the space $\X_i.$

Denote by $p'\coloneqq p/(p-1)$ the conjugate exponent of $p$ and observe that by hypothesis $2p'<2^*.$ This and  the fact that $u_i$ are bounded in $L^{2^*}$, by Proposition \ref{prop:lp prop} $(viii)$ imply  that $u_i$ converges in $L^{2p'}$-strong to $u$. 
Finally using the H\"older inequality  we can write
		\begin{align*}
			\limsup_i Q_\sca(u_i)&\le \limsup_i  \frac{\int |D u_i|^2\, \d \mm}{\|u_i\|_{L^{2^*}(\mm)}^2}+ \limsup_i \frac{\int \sca|u_i|^2 \,\d \mm}{\|u_i\|_{L^{2^*}(\mm)}^2}\\
			&\overset{\eqref{eq:lambda limit}}{\le}  \frac{\min_\X \theta_N^{2/N}}{\eucl(N,2)^2-\eps} + \limsup_i \|\sca\|_{L^p(\mm)} \frac{\big(\int |u_i|^{2p'} \,\d \mm\big)^{1/p'}}{\|u_i\|_{L^{2^*}(\mm)}^2},\\
			&=\frac{\min_\X \theta_N^{2/N}}{\eucl(N,2)^2-\eps} + \limsup_i \|\sca\|_{L^p(\mm)} r_i^{N\left(\frac{1}{p'}-\frac{2}{2^*}\right)}\frac{\|u_i\|_{L^{2p'}(\mm_i)}^2}{\|u_i\|_{L^{2^*}(\mm_i)}^2}= \frac{\min_\X \theta_N^{2/N}}{\eucl(N,2)^2-\eps}.
		\end{align*}
	where we have used that $1/p'<2/2^*$, that  $\liminf_i \|u_i\|_{L^{2^*}(\mm_i)}\ge  \|u\|_{L^{2^*}(\mm_{\Y})}>0$ and as observed above $\|u_i\|_{L^{2p'}(\mm_i)}\to \|u\|_{L^{2p'}(\mm_{\Y})}$. From the arbitrariness of $\eps>0$ the proof is now concluded.
\end{proof}

\subsection{Continuity of $\lambda_\sca$ under mGH-convergence}\label{sec:continuity yamabe}
In \cite{Honda18} it has been proven in the setting of Ricci-limits a result about mGH-continuity of the generalized Yamabe constant, under some additional boundedness assumption on the sequence. In the following result we extend this fact in the setting of $\RCD$-spaces and we  remove such extra assumption.

We start proving that $\lambda_\sca$ is upper semicontinuous under mGH-convergence.
\begin{lemma}\label{lem:upper lambda}
    Let $(\X_n,\sfd_n,\mm_n)$ be a sequence of compact $\rcd KN$-spaces with $\mm(\X_n)=1$, $n \in \bar \N$, for some $K \in \R, N \in (2,\infty)$ and satisfying $\X_n\overset{mGH}{\to}\X_\infty$. Let also $\sca_n \in L^p(\mm_n)$ be $L^p$-weak convergent to $\sca$, for some $p>N/2$. Then, 
    \begin{equation}
       \limsup_{n\to \infty} \lambda_{\sca_n}(\X_n)\le \lambda_\sca(\X_\infty).\label{eq:lambdalimsup}
    \end{equation} 
\end{lemma}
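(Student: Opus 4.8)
The plan is to prove \eqref{eq:lambdalimsup} by a recovery-sequence argument. Fix an arbitrary $\eta>0$ and pick a competitor $u\in W^{1,2}(\X_\infty)$ with $\|u\|_{L^{2^*}(\mm_\infty)}=1$ and $Q_{\sca}(u)\le \lambda_{\sca}(\X_\infty)+\eta$; by truncation and density we may moreover take $u\in \LIP(\X_\infty)\cap L^\infty(\mm_\infty)$ (this only helps control the term tested against $\sca$, see below). First I would invoke Lemma \ref{lem:mixrecov} (with $q=2^*$) to produce $u_n\in W^{1,2}(\X_n)\cap L^{2^*}(\mm_n)$ converging both $W^{1,2}$-strong and $L^{2^*}$-strong to $u$; in particular $\|u_n\|_{L^{2^*}(\mm_n)}\to 1$ and $\int |Du_n|^2\,\d\mm_n\to \int|Du|^2\,\d\mm_\infty$. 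Since each $u_n$ is nonzero for $n$ large, $\lambda_{\sca_n}(\X_n)\le Q_{\sca_n}(u_n)$, so it suffices to show $\limsup_n Q_{\sca_n}(u_n)\le Q_{\sca}(u)$, which, given the convergence of the Dirichlet energies and of the $L^{2^*}$-norms, reduces to
\[
\limsup_{n\to\infty}\int |u_n|^2\,\d\sca_n\le \int |u|^2\,\d\sca.
\]

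The key step is therefore this convergence of the potential terms under $L^p$-weak convergence $\sca_n\rightharpoonup\sca$. Here I would use that $L^p$-weak convergence of $\sca_n$ pairs well with $L^{p'}$-strong convergence of test objects: from Proposition \ref{prop:lp prop}$(i)$ and $(vii)$, since $u_n\to u$ is $L^{2^*}$-strong and $2p'<2^*$ (because $p>N/2$ forces $p'<N/(N-2)=2^*/2$), one gets by Proposition \ref{prop:lp prop}$(viii)$ that $u_n\to u$ is $L^{2p'}$-strong, hence $|u_n|^2\to |u|^2$ is $L^{p'}$-strong. Combined with the $L^p$-weak convergence of $\sca_n$, a duality argument in the spirit of Proposition \ref{prop:coupling} (adapted from $L^2$-$L^2$ to the $L^p$-$L^{p'}$ pairing, which is routine) yields $\int |u_n|^2\,\d\sca_n\to \int|u|^2\,\d\sca$, in fact with equality rather than just $\limsup$. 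Strictly speaking the version of Proposition \ref{prop:coupling} stated in the paper is for conjugate exponents with a strong/weak split, and here I would need exactly that statement with $(q,p)$ replaced by $(p',p)$; if one wants to be conservative one can instead only claim the $\limsup$ inequality, which is all that is needed, by splitting $\sca_n=\sca_n^+-\sca_n^-$ using the uniform $L^p$ bound and weak lower semicontinuity of the positive part. Either way, chaining the three limits gives $\limsup_n Q_{\sca_n}(u_n)\le Q_\sca(u)\le \lambda_\sca(\X_\infty)+\eta$, and letting $\eta\downarrow 0$ proves \eqref{eq:lambdalimsup}.

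The main obstacle I anticipate is precisely the passage to the limit in $\int |u_n|^2\,\d\sca_n$: one must be careful that $\sca_n$ are only $L^p$-weakly convergent (no sign, no strong convergence), so the naive estimate $|\int |u_n|^2\,\d\sca_n-\int|u|^2\,\d\sca|\le \|\sca_n\|_{L^p}\||u_n|^2-|u|^2\|_{L^{p'}}+|\int|u|^2\,\d(\sca_n-\sca)|$ works only once one knows $|u_n|^2\to|u|^2$ in $L^{p'}$-strong, and the first term's $\|\sca_n\|_{L^p}$ factor is fine by the uniform bound implicit in $L^p$-weak convergence, while the second term is handled by testing the weak convergence against the \emph{fixed} function $|u|^2\in L^{p'}(\mm_\infty)$ — here one uses item $(vi)$ of Proposition \ref{prop:lp prop} to realize $|u|^2$ as the strong limit of some $w_n\in L^{p'}(\mm_n)$ and then duality again. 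The other, more bookkeeping-level, point is justifying the reduction to a Lipschitz (or at least bounded) competitor $u$ so that $u\in L^2(|\sca|)$ is unambiguous and the recovery sequence from Lemma \ref{lem:mixrecov} applies; this is standard given that $\LIP(\X_\infty)$ is dense in $W^{1,2}(\X_\infty)$ and $Q_\sca$ is continuous along such approximations thanks to $\sca\ge g\mm$ with $g\in L^p(\mm_\infty)$.
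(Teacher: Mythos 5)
Your proposal is correct and follows essentially the same route as the paper: build a recovery sequence via Lemma \ref{lem:mixrecov}, use Proposition \ref{prop:lp prop} $(vii)$--$(viii)$ (with $2p'<2^*$) to get $L^{p'}$-strong convergence of $|u_n|^2$, and pass to the limit in the potential term against the $L^p$-weakly convergent $\sca_n$ via Proposition \ref{prop:coupling}. Note only that Proposition \ref{prop:coupling} is already stated for arbitrary conjugate exponents, so no adaptation (and no truncation of the competitor, which lies in $L^{2^*}$ by the Sobolev embedding) is actually needed.
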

\begin{proof}
 Fix a non-zero $u \in W^{1,2}(\X_\infty)$. By the Sobolev embedding on $\X_\infty$ we know that $u \in L^{2^*}(\mm_\infty)$, therefore by Lemma \ref{lem:mixrecov} there exists  a sequence $u_n \in W^{1,2}(\X_n)$ that converge $W^{1,2}$-strong and $L^{2^*}$-strong to $u$. By definition of $\lambda_{\sca_n}(\X_n)$, we have 
		\[ \| u_n\|_{L^{2^*}(\mm_n)}^2 \lambda_{\sca_n}(\X_n) \le \int |Du_n|^2\, \d \mm_n + \int \sca_n |u_n|^2\, \d \mm_n, \qquad \forall n \in \N.\]
	From the assumption that $p>N/2$, we have that its conjugate exponent $p'$ satisfies $2p'<2^*$, therefore from $(vii)$, $(viii)$ in Proposition \ref{prop:lp prop} we have that $|u_n|^2$ $L^{p'}$-strongly converges to $u^2$. Recalling Proposition \ref{prop:coupling}, we get that all the above quantities pass to the limit and thus we reach
		\[  \| u\|_{L^{2^*}(\mm_\infty)}^2 \limsup_{n \to \infty}\lambda_{\sca_n}(\X_n) \le \int |Du|^2\, \d \mm_\infty + \int \sca |u|^2\, \d \mm_\infty.\]
		By arbitrariness of $u$, we conclude.
\end{proof}
We shall now come to the main continuity result.
\begin{theorem}[mGH-continuity of $\lambda_\sca$]\label{thm:GHstablelambda}
    Let $(\X_n,\sfd_n,\mm_n)$ be a sequence of compact $\rcd KN$-spaces with $\mm(\X_n)=1$, $n \in \bar \N$, for some $K \in \R, N \in (2,\infty)$ satisfying $\X_n\overset{mGH}{\to}\X_\infty$. Let also $\sca_n \in L^p(\mm_n)$ be $L^p$-weak convergent to $\sca \in L^p(\mm_\infty)$, for a given for $p>N/2$. Then, 
    \[ \lim_{n\to \infty}\lambda_{\sca_n}(\X_n) = \lambda_{\sca}(\X_\infty).\]
\end{theorem}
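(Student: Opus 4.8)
The plan is to prove the two inequalities $\limsup_n \lambda_{\sca_n}(\X_n) \le \lambda_\sca(\X_\infty)$ and $\liminf_n \lambda_{\sca_n}(\X_n) \ge \lambda_\sca(\X_\infty)$ separately. The first one is exactly the content of Lemma \ref{lem:upper lambda} and requires no further work. So the entire task reduces to establishing the lower semicontinuity bound $\liminf_n \lambda_{\sca_n}(\X_n) \ge \lambda_\sca(\X_\infty)$, and this is where the concentration-compactness machinery from Section \ref{sec:CC} enters, in complete analogy with the proof of Theorem \ref{thm:mGHAopt2}.

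\textbf{Key steps for the lower bound.} First I would pass to a subsequence realizing $\liminf_n \lambda_{\sca_n}(\X_n) =: L$ and pick, for each $n$, a minimizing-type sequence for $Q_{\sca_n}$, i.e. functions $u_n \in W^{1,2}(\X_n)$ with $\|u_n\|_{L^{2^*}(\mm_n)} = 1$ and $Q_{\sca_n}(u_n) \le \lambda_{\sca_n}(\X_n) + 1/n$; since $\lambda_{\sca_n}(\X_n)$ is bounded (via Proposition \ref{prop:lambdaalpha} from above and the $L^p$-bound on $\sca_n$ from below, exactly as in \eqref{eq:Yamabe lower bound}) and $\sca_n^- \le |g_n|\mm_n$ with $\|g_n\|_{L^p}$ bounded, one gets $\sup_n \|u_n\|_{W^{1,2}(\X_n)} < \infty$ using the estimate $\int|Du_n|^2 + |u_n|^2 \,\d\mm_n \le 1 + Q_{\sca_n}(u_n) + \|g_n\|_{L^p(\mm_n)}$ and Hölder. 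Then by Proposition \ref{prop:W12compact} and the $\Gamma$-$\liminf$ inequality \eqref{eq:Gammaliminf}, up to a further subsequence $u_n \to u_\infty$ in $L^2$-strong and $W^{1,2}$-weak with $\int|Du_\infty|^2\,\d\mm_\infty \le \liminf_n \int|Du_n|^2\,\d\mm_n$. Next, since $p > N/2$ the conjugate exponent satisfies $2p' < 2^*$, so by items $(vii)$–$(viii)$ of Proposition \ref{prop:lp prop} the sequence $|u_n|^2$ converges $L^{p'}$-strong to $|u_\infty|^2$, and since $\sca_n \to \sca$ is $L^p$-weak, Proposition \ref{prop:coupling} gives $\int \sca_n|u_n|^2\,\d\mm_n \to \int \sca|u_\infty|^2\,\d\mm_\infty$ (no lower-semicontinuity subtlety here because $\sca_n \in L^p$ genuinely, unlike the measure-valued $\sca$ in Section \ref{sec:existence yamabe}). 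Now apply the Concentration-Compactness Lemma \ref{lem:conccompII}: using that each $\X_n$ satisfies a uniform $(2^*,2)$-Sobolev inequality with constants $A_\eps, B_\eps$ (uniform in $n$ by Theorem \ref{thm:alfa} applied with the uniform lower Ricci bound — more directly, by Proposition \ref{prop:sobolev embedding general} one even has uniform constants $A(K,N,D), B$), passing to subsequences with $|Du_n|^2\mm_n \weakto \mu$, $|u_n|^{2^*}\mm_n \weakto \nu$ in $C_b(\Z)$, one obtains $\nu = |u_\infty|^{2^*}\mm_\infty + \sum_j \nu_j\delta_{x_j}$ and $\mu \ge |Du_\infty|^2\mm_\infty + \sum_j \mu_j\delta_{x_j}$ with $\nu_j^{2/2^*} \le A\mu_j$ for a suitable $A$ (the $\limsup$ of the Sobolev constants involved). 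Finally, choosing $\eps$ so small that $L < \min_{\X_\infty}\theta_N^{2/N}/(\eucl(N,2)^2 + \eps) =: \lambda_\eps$ would reach a contradiction \emph{unless} all the $\nu_j$ vanish — this is the chain of inequalities
\[
L = \lim_n Q_{\sca_n}(u_n) \ge \mu(\X_\infty) + \int\sca|u_\infty|^2\,\d\mm_\infty \ge \int|Du_\infty|^2\,\d\mm_\infty + \lambda_\eps\sum_j\nu_j^{2/2^*} + \int\sca|u_\infty|^2\,\d\mm_\infty,
\]
and then, comparing with $\lambda_\sca(\X_\infty)\|u_\infty\|_{L^{2^*}(\mm_\infty)}^2 \le \int|Du_\infty|^2 + \int\sca|u_\infty|^2$ (valid when $u_\infty \not\equiv 0$, and trivially handled otherwise), together with concavity of $t \mapsto t^{2/2^*}$ and $\nu(\X_\infty) = 1$, forces $\|u_\infty\|_{L^{2^*}(\mm_\infty)} = 1$, hence $\sum_j\nu_j = 0$; this in turn gives $u_n \to u_\infty$ in $L^{2^*}$-strong, $u_\infty$ is a genuine competitor for $\lambda_\sca(\X_\infty)$, and $L \ge \lambda_\sca(\X_\infty)$.

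\textbf{Main obstacle.} I expect the delicate point to be the case analysis at the very end — specifically ruling out the possibility that $u_\infty \equiv 0$ with full concentration $\nu = \delta_{x_0}$ at a single point. Exactly as in the proof of Theorem \ref{thm:mGHAopt2} via Lemma \ref{lem:concpoint}, one must argue that if $\|u_n\|_{L^2(\mm_n)} \to 0$ then a concentration point $y_0$ would satisfy (after extracting the $L^2$- and $L^{2^*}$-normalization and invoking the behavior at concentration points) $\theta_N(y_0)^{2/N} \le \eucl(N,2)^2 \limsup_n A_n$ where $A_n$ are the relevant local Sobolev constants, which when combined with $L \ge \lambda_\sca(\X_\infty) > 0$ (or the hypothesis defining $\lambda_\eps$) gives a contradiction; but here the presence of the potential term $\sca_n|u_n|^2$ — which has the right sign behavior only after splitting $\sca_n = \sca_n^+ - \sca_n^-$ and controlling $\sca_n^-$ by $|g_n|$ in $L^p$ — requires care, since one needs the $L^{2p'}$-strong convergence of $u_n$ (which does hold when $u_\infty = 0$, as then $u_n \to 0$ in $L^{2p'}$-strong by $(viii)$ of Proposition \ref{prop:lp prop}) to see that $\int\sca_n|u_n|^2\,\d\mm_n \to 0$, so that the potential term does not interfere with the blow-up estimate at $y_0$. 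Once this is in place, the argument closes in the same way as Theorem \ref{thm:mGHAopt2}.
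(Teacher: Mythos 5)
Your overall architecture (upper semicontinuity from Lemma \ref{lem:upper lambda}, minimizing sequences, compactness plus concentration-compactness, and Lemma \ref{lem:concpoint} together with Proposition \ref{prop:lambdaalpha} to handle a concentration point) is the same as the paper's, but there is a genuine gap at the central step. Your chain of inequalities needs the weight relation $\mu_j\ge \lambda_\eps\,\nu_j^{2/2^*}$, i.e. you need to feed Lemma \ref{lem:conccompII} a $(2^*,2)$-Sobolev inequality on the \emph{approximating} spaces $\X_n$ with leading constant $A_\eps=\lambda_\eps^{-1}\approx \eucl(N,2)^2/\min_{\X_\infty}\theta_N^{2/N}$ and \emph{uniformly bounded} additive constants $B_\eps$. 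Theorem \ref{thm:alfa} does not give this: it is a statement for each fixed space, the corresponding $B$ depends on the space, and there is no control of these constants along an mGH-converging sequence — this is exactly the obstruction the paper points out at the beginning of Section \ref{sec:almrigidity} as the reason for introducing Lemma \ref{lem:concpoint}. Your fallback, Proposition \ref{prop:sobolev embedding general}, does give uniform constants, but then the coefficient in front of $\sum_j\nu_j^{2/2^*}$ is only $A(K,N,D)^{-1}$, which can be far smaller than $\lim_n\lambda_{\sca_n}(\X_n)$; with that coefficient the concavity argument does not close, so you do not obtain the clean dichotomy (either $\|u_\infty\|_{L^{2^*}}=1$ or full concentration $|u_n|^{2^*}\mm_n\rightharpoonup\delta_{y_0}$ with $\|u_n\|_{L^2}\to0$). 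Without that dichotomy, partial concentration is not excluded, and Lemma \ref{lem:concpoint} — which requires full concentration at a single point — cannot be invoked, so the argument in your ``main obstacle'' paragraph does not get off the ground.

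The paper's way around this is to extract the Sobolev-type inequality \eqref{eq:disturbed 2} directly from the definition of $\lambda_{\sca_n}(\X_n)$ and H\"older, with constants $\lambda_{\sca_n}(\X_n)^{-1}$ and $\lambda_{\sca_n}(\X_n)^{-1}\|\sca_n\|_{L^p(\mm_n)}$ and with the $L^{2p/(p-1)}$-norm in place of the $L^2$-norm; these constants are uniformly bounded precisely when $\lim_n\lambda_{\sca_n}(\X_n)>0$, and they make the coefficient in front of $\sum_j\nu_j^{2/2^*}$ equal to $\lim_n\lambda_{\sca_n}(\X_n)$ itself, which (combined with \eqref{eq:lambdalimsup}) is what closes the concavity chain and produces the dichotomy. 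This forces the case analysis on the sign of $\lim_n\lambda_{\sca_n}(\X_n)$ that is missing from your proposal: for a negative limit the disturbed inequality is useless and one argues directly by lower semicontinuity of the Cheeger energy and Proposition \ref{prop:coupling} (no concentration-compactness at all); for a zero limit one uses the crude uniform constant $A(K,N,D)$, and the sign of the limit makes the chain degenerate favorably so that all $\nu_j$ vanish. Your ``choose $\eps$ small'' step is also only meaningful after reducing to the case $\lim_n\lambda_{\sca_n}(\X_n)<\lambda_\sca(\X_\infty)$ (otherwise there may be no room below $\min_{\X_\infty}\theta_N^{2/N}/\eucl(N,2)^2$), which is salvageable, but the multiplicative concavity step you rely on additionally requires the limit to be strictly positive. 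The remaining ingredients of your outline (uniform $W^{1,2}$ bound, $L^{p/(p-1)}$-strong convergence of $u_n^2$ and Proposition \ref{prop:coupling} for the potential term, and the treatment of the $\sca_n$-term inside Lemma \ref{lem:concpoint} via the $L^{2p/(p-1)}$-term) are correct and match the paper.
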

\begin{proof}
    In light of Lemma \ref{lem:upper lambda}, we only have to prove that $$ \liminf_{n\to \infty}\lambda_{\sca_n}(\X_n) \ge \lambda_{\sca}(\X_\infty).$$ It is not restrictive to assume that the $\liminf$ is actually a limit.	For every $n \in \N$, we take $u_n \in W^{1,2}(\X_n)$ non-zero so that $Q_{\sca_n}(u_n) - \lambda_{\sca_n}(\X_n) \le n^{-1}$. In other words
		\begin{equation}
			\|u_n\|_{L^{2^*}(\mm_n)}^2 \big( \lambda_{\sca_n}(\X_n)+ \tfrac 1n\big) \ge \int |Du_n|^2\, \d \mm_n + \int  \sca_n |u_n|^2\, \d \mm_n. \label{eq:lambdan}
		\end{equation}
	It is also clearly not restrictive to suppose that $u_n \in \LIP_c(\X_n)$ are non-negative and such  that $\|u_n\|_{L^{2^*}(\mm_n)}\equiv 1$. Hence, arguing as in the proof of Theorem \ref{thm:4.3} (using also \eqref{eq:lambdalimsup}), we get that $u_n$ is uniformly bounded in $W^{1,2}$. Then, by compactness (see Proposition \ref{prop:W12compact}), up to a not relabeled subsequence, we have that $u_n$ converge $L^2$-strong and $W^{1,2}$-weak to some $u_\infty \in W^{1,2}(\X_\infty)$. From $\|u_n\|_{L^{2^*}(\mm_n)}\equiv 1$ and the assumption $p>N/2$,  Proposition \ref{prop:lp prop} implies that $u_n^2$ converges $L^{p/(p-1)}$-strongly to $u_\infty^2$ and that $u_n$ converges $L^{2p/(p-1)}$-strongly to $u_\infty$. From this point we subdivide the proof in three cases to be handled separately.

		\noindent\textsc{Case 1: }$\lim_n \lambda_{\sca_n}(\X_n) < 0$. In this case, by \eqref{eq:lambdan} we know by lower semicontinuity of the $2$-Cheeger energy and Proposition \ref{prop:coupling}, we have that
		\[ 0> \lim_n \lambda_{\sca_n}(\X_n) \ge \int |Du_\infty|^2\, \d \mm_\infty + \int \sca u_\infty^2\,\d \mm_\infty. \]
		In particular, $u_\infty$ is not $\mm_\infty$-a.e.\ equal to zero and by weak-lower semicontinuity, we have that $0<\|u_\infty\|_{L^{2^*}(\mm_\infty)} \le 1$. Therefore
		\[ \|u_\infty\|_{L^{2^*}(\mm_\infty)} \lim_n \lambda_{\sca_n}(\X_n) \ge \lim_n \lambda_{\sca_n}(\X_n) \ge \int |Du_\infty|^2\, \d \mm_\infty + \int \sca u_\infty^2\,\d \mm_\infty\ge \lambda_{S}(\X_\infty) \|u_\infty\|_{L^{2^*}(\mm_\infty)} , \]
	which concludes the proof in this case.
	
		\noindent\textsc{Case 2: }$\lim_n \lambda_{\sca_n}(\X_n) >0 $. Before starting, notice that by using the H\"older inequality, for any $n \in \N$ and any $u \in W^{1,2}(\X_n)$ we have by the definition of $\lambda_{\sca_n}(\X_n) $  that
		\begin{equation}\label{eq:disturbed 2} 
			\|u\|_{L^{2^*}(\mm_n)}^2  \le \lambda_{\sca_n}(\X_n)^{-1}\int |Du|^2\, \d \mm_n + \lambda_{\sca_n}(\X_n)^{-1}\| \sca_n\|_{L^p(\mm_n)} \| u\|^2_{L^{2p/p-1}(\mm_n)}. \end{equation}
		Moreover, since all $\X_n$ are compact and renormalized, there are $\mu \in \MM_b^+(\Z), \nu \in \PP(\Z)$ so that, up to a not relabeled subsequence, $|Du_n|^2\mm_n \weakto \mu$ and $|u_n|^{2^*}\mm_n \weakto \nu$ in duality with $C(\Z)$ as $n$ goes to infinity, where $(\Z,\sfd_\Z)$ is a (compact) space realizing the convergences via extrinsic approach. Since we are assuming that $\lim_n \lambda_{\sca_n}(\X_n) >0$, the constant in \eqref{eq:disturbed 2} are uniformly bounded (for $n$ big enough) and  we are in position to apply Lemma \ref{lem:conccompII}. In particular we  get the existence of an at most countable set $J$, points $(x_j)_{j \in\ J}\subset \X_\infty$ and weights $ (\mu_j),(\nu_j) \subset \R ^+$, so that $\mu_j \ge \lim_n \lambda_{\sca_n}(\X_n) \nu_j^{2/2^*}$ with $j \in J$ and
		\begin{align*}
			&\nu=|u_\infty|^{2^*}\mm+\sum_{j\in J} \nu_j \delta_{x_j}, \quad \mu \ge |D u_\infty |^2\mm+\sum_{j \in J}\mu_j\delta_{x_j}.
		\end{align*}
		Moreover, recalling Proposition \ref{prop:coupling} we have
		\begin{equation} \mu(\X) + \int \sca u_\infty^2 \, \d \mm_\infty = \lim_{n\to \infty} Q_{\sca_n}(u_n) \overset{\eqref{eq:lambdan}}{\le} \lim_{n\to \infty}\lambda_{\sca_n}(\X_n),\label{eq:item1}
		\end{equation}
		and, arguing as in the proof of \eqref{eq:imporant observation}, $u_\infty$ is so that $\|u_\infty\|_{L^{2^*}(\mm_\infty)} \lambda_\sca(\X_\infty) \le \int |Du_\infty|^2\, \d \mm_\infty + \int \sca|u_\infty|^2 \, \d \mm_\infty$.
		Finally, we can perform the chain of estimates
		\begin{align*}
			\lim_{n\to \infty}\lambda_{\sca_n} (\X_n )  &\overset{\eqref{eq:item1}}{\ge} \mu(\X)+\int \sca u_\infty^2\, \d \mm_\infty \ge \int |D u_\infty |^2\d \mm_\infty +\lim_{n\to \infty} \lambda_{\sca_n}(\X_n) \sum_{j \in J}\nu_j^{2/2^*} + \int   \sca u_\infty ^2\, \d \mm_\infty \\
			&\ge  \lambda_{\sca}(\X_\infty)\|u_\infty\|_{L^{2^*}(\mm_\infty)}^2+\lim_{n\to \infty} \lambda_{\sca_n}(\X_n) \sum_{j \in J}\nu_j^{2/2^*}\\
			 &\overset{\eqref{eq:lambdalimsup}}{\ge} \lim_{n\to \infty} \lambda_{\sca_n}(\X_n)\Big(\|u_\infty\|_{L^{2^*}(\mm_\infty)}^2+\sum_{j \in J}\nu_j^{2/2^*}\Big)\\
			&\ge\lim_{n\to \infty} \lambda_{\sca_n}(\X_n) \Big( \int |u_\infty|^{2^*}\,\d \mm_\infty+\sum_{j \in J}\nu_j \Big)^{2/2^*} \ge\lim_{n\to \infty}\lambda_{\sca_n}(\X_n),
		\end{align*}
		where in the last line, we used the concavity of $t^{2/2^*}$ and the fact that $\nu \in \PP(\X)$. In particular, all inequalities must be equalities and by the strict concavity of $t^{2/2^*}$  either $\|u_\infty\|_{L^{2^*}(\mm_\infty)}=1$ and all $\nu_j=0$, or $u_\infty=0$ $\mm_\infty$-a.e.\  and all the weights are zero  except one $\nu_j=1$. The first situation is the easiest one, as in this case the above inequalities which are actually equalities imply that $ \lambda_{\sca}(\X_\infty)= \lim_n \lambda_{\sca_n}(\X_n)$, which is what we wanted. Therefore we suppose that we are in the second case, i.e. that there exists a point $y_0 \in \X_\infty$ so that $|u_n|^{2^*}\mm_n \weakto \delta_{y_0}$ in duality with $C(\Z)$ and that $u_n$ converges in  $L^2$-strong to zero. Moreover, from \eqref{eq:lambdan} and H\"older inequality we get
		\[ \|u_n\|_{L^{2^*}(\mm_n)}^2 \ge  \big( \lambda_{\sca_n}(\X_n)+ \tfrac 1n\big)^{-1}\Big( \int |Du_n|^2\, \d \mm_n - \|\sca_n\|_{L^p(\mm_n)}\| u_n\|^2_{L^{2p/(p-1)}(\mm_n)}\Big) ,\qquad \forall n \in \N.\]
		We can therefore apply Lemma \ref{lem:concpoint} to get that $\theta_N(y_0) \le \eucl(N,2)^N \lim_n\lambda_{\sca_n}(\X_n)^{N/2}$. Finally, we can rearrange and invoke Proposition \ref{prop:lambdaalpha} to get
		\[\lim_n\lambda_{\sca_n}(\X_n) \ge \frac{\theta_N(y_0)^{2/N}}{ \eucl(N,2)^2} \ge \lambda_\sca(\X_\infty). \]
		\noindent\textsc{Case 3: }$\lim_n \lambda_{\sca_n}(\X_n) = 0$. The argument is the same as in the previous case, only that we replace \eqref{eq:disturbed 2}  with the Sobolev inequality given in Proposition \ref{prop:sobolev embedding general}:
		\begin{equation}\label{eq:case 3}
			\|u\|_{L^q(\mm)}^2\le A(K,N,D)\||D u|\|_{L^2(\mm)}^2+\|u\|_{L^2(\mm_n)}^2, \qquad \forall u \in W^{1,2}(\X_n),
		\end{equation}
	where $D>0$ is constant such that $\diam(\X_n)\le D$. Then we can apply exactly as in the previous case Lemma \ref{lem:conccompII}, except that in this case we obtain $\mu_j \ge  A(K,N,D)^{-1} \nu_j^{2/2^*}$ for every $j \in J$. Then the above chain of estimates becomes
	\begin{align*}
	0=\lim_{n\to \infty}\lambda_{\sca_n} (\X_n )  &\overset{\eqref{eq:item1}}{\ge} \mu(\X)+\int \sca u_\infty^2\, \d \mm_\infty \\
	&\ge \int |D u_\infty |^2\d \mm_\infty +  A(K,N,D)^{-1}\sum_{j \in J}\nu_j^{2/2^*} + \int   \sca u_\infty ^2\, \d \mm_\infty \\
	&\ge  \lambda_{\sca}(\X_\infty)\|u_\infty\|_{L^{2^*}(\mm_\infty)}^2+  A(K,N,D)^{-1} \sum_{j \in J}\nu_j^{2/2^*}\\
	&\overset{\eqref{eq:lambdalimsup}}{\ge} \lim_{n\to \infty} \lambda_{\sca_n}(\X_n)\|u_\infty\|_{L^{2^*}(\mm_\infty)}^2+A(K,N,D)^{-1}\sum_{j \in J}\nu_j^{2/2^*}\ge0.
\end{align*}
Therefore we must have that $\nu_j=0$ for every $j \in J$. This forces $\|u_\infty\|_{L^{2^*}(\mm_\infty)}^2=1$ giving in turn that $\lambda_{\sca}(\X_\infty)=0$. Having examined all the three cases, the proof is now concluded.
\end{proof}

\medskip

\textbf{Acknowledgements.} We wish to thank A. Mondino and D. Semola for their useful comments and suggestions on a preliminary version of this manuscript.

%#######################################################################################

%\medskip

    %\addcontentsline{toc}{chapter}{Bibliography}
   %\parskip0pt 
	%\itemsep0pt
   \bibliographystyle{siam}

\end{document}